\documentclass{amsart}
\usepackage{amsmath,amsthm,amssymb}
\usepackage{enumerate}
\usepackage{graphicx}
\usepackage{cite}
\usepackage{comment}
\usepackage{oands}
\usepackage{tikz}
\usepackage{changepage}
\usepackage{bbm}
\usepackage{mathtools}
\usepackage[margin=1.2in]{geometry}
\usepackage{hyperref}
\usepackage{appendix}
\usepackage[pagewise,mathlines]{lineno}
\usepackage{multicol}

\theoremstyle{plain}
\newtheorem{thm}{Theorem}[section]
\newtheorem{cor}[thm]{Corollary}
\newtheorem{lem}[thm]{Lemma}
\newtheorem{prop}[thm]{Proposition}

\newtheorem{notation}[thm]{Notation}

\numberwithin{equation}{section}
 
\theoremstyle{definition}
\newtheorem{defn}[thm]{Definition}
\newtheorem{remark}[thm]{Remark}

\def\@rst #1 #2other{#1}

\hypersetup{
    backref,
    colorlinks=false,
    linktocpage,
    pdfborder = {0 0 1 [1 0] }
    }

\newcommand{\dsb}{\begin{adjustwidth}{2.5em}{0pt}
\begin{footnotesize}}
\newcommand{\dse}{\end{footnotesize}
\end{adjustwidth}}

\newcommand{\ssb}{\begin{adjustwidth}{2.5em}{0pt}}
\newcommand{\sse}{\end{adjustwidth}}

\newcommand{\aryb}{\begin{eqnarray*}}
\newcommand{\arye}{\end{eqnarray*}}
\def\alb#1\ale{\begin{align*}#1\end{align*}}
\newcommand{\eqb}{\begin{equation}}
\newcommand{\eqe}{\end{equation}}
\newcommand{\eqbn}{\begin{equation*}}
\newcommand{\eqen}{\end{equation*}}

\newcommand{\BB}{\mathbbm}
\newcommand{\ol}{\overline}
\newcommand{\ul}{\underline}
\newcommand{\op}{\operatorname}

\newcommand{\frk}{\mathfrak}
\newcommand{\eqD}{\overset{d}{=}}
\newcommand{\ep}{\epsilon}
\newcommand{\rta}{\rightarrow}

\newcommand{\wt}{\widetilde}
\newcommand{\wh}{\widehat} 
\newcommand{\mcl}{\mathcal}

\newcommand{\bdy}{\partial}

\newcommand*\tc[1]{\tikz[baseline=(char.base)]{\node[shape=circle,draw,inner sep=1pt] (char) {#1};}}
\newcommand*\tb[1]{\tikz[baseline=(char.base)]{\node[shape=rectangle,draw,inner sep=2.5pt] (char) {#1};}}

\let\originalleft\left
\let\originalright\right
\renewcommand{\left}{\mathopen{}\mathclose\bgroup\originalleft}
\renewcommand{\right}{\aftergroup\egroup\originalright}

\newcommand*\patchAmsMathEnvironmentForLineno[1]{  \expandafter\let\csname old#1\expandafter\endcsname\csname #1\endcsname
  \expandafter\let\csname oldend#1\expandafter\endcsname\csname end#1\endcsname
  \renewenvironment{#1}     {\linenomath\csname old#1\endcsname}     {\csname oldend#1\endcsname\endlinenomath}}\newcommand*\patchBothAmsMathEnvironmentsForLineno[1]{  \patchAmsMathEnvironmentForLineno{#1}  \patchAmsMathEnvironmentForLineno{#1*}}\AtBeginDocument{\patchBothAmsMathEnvironmentsForLineno{equation}\patchBothAmsMathEnvironmentsForLineno{align}\patchBothAmsMathEnvironmentsForLineno{flalign}\patchBothAmsMathEnvironmentsForLineno{alignat}\patchBothAmsMathEnvironmentsForLineno{gather}\patchBothAmsMathEnvironmentsForLineno{multline}}

\title[Scaling limits for the FK model I]{Scaling limits for the critical Fortuin-Kasteleyn model on a random planar map I: cone times} 
 
\author{Ewain Gwynne}

\author{Cheng Mao}

\author{Xin Sun}

\subjclass[2010]{Primary 60F17, 60G50; Secondary 82B27}
\keywords{Fortuin-Kasteleyn model, random planar maps, hamburger-cheeseburger bijection, random walks in cones, scaling limits, Liouville quantum gravity, conformal loop ensembles}

\address{Department of Mathematics\\
  Massachusetts Institute of Technology\\
  Cambridge, MA 02139}
\email{ewain@mit.edu \\ maocheng@mit.edu \\ xinsun89@math.mit.edu}

\begin{document}

\begin{abstract}
Sheffield (2011) introduced an inventory accumulation model which encodes a random planar map decorated by a collection of loops sampled from the critical Fortuin-Kasteleyn (FK) model. He showed that a certain two-dimensional random walk associated with an infinite-volume version of the model converges in the scaling limit to a correlated planar Brownian motion. We improve on this scaling limit result by showing that the times corresponding to FK loops (or ``flexible orders") in the inventory accumulation model converge in the scaling limit to the $\pi/2$-cone times of the correlated Brownian motion. This statement implies a scaling limit result for the joint law of the areas and boundary lengths of the bounded complementary connected components of the FK loops on the infinite-volume planar map. In light of the encoding of Duplantier, Miller, and Sheffield (2014), the limiting object coincides with the joint law of the areas and boundary lengths of the bounded complementary connected components of a collection of CLE$_\kappa$ loops on an independent Liouville quantum gravity cone.
\end{abstract}
 
\maketitle

\tableofcontents

\section{Introduction} 
\label{sec-intro}

\subsection{Overview}
\label{sec-overview}

A \emph{(critical) Fortuin-Kasteleyn (FK) planar map of size $n\in\BB N$ and parameter $q > 0$} is a pair $(M,S)$ consisting of a planar map $M$ with $n$ edges and a subset $S$ of the set of edges of $M$, sampled with weight $q^{K(S)/2}$ where $K(S)$ is the number of connected components of $S$ plus the number of complementary connected components of $S$. 
This model is critical in the sense that its partition function has power law decay as $n\rta\infty$ (this is established in the sequel~\cite{gms-burger-local} to the present paper). 
If $(M,S)$ is a critical FK planar map of size $n$ and parameter $q$, then the conditional law of $S$ given $M$ is that of the uniform measure on edge sets of $M$ weighted by $q^{K(S)/2}$. This law is a special case of the FK cluster model on $M$~\cite{fk-cluster}. The FK model is closely related to the critical $q$-state Potts model~\cite{baxter-potts} for general integer values of $q$; to critical percolation for $q=1$; and to the Ising model for $q = 2$. See e.g.\ \cite{kager-nienhuis-guide, grimmett-fk} for more on the FK model and its relationship to other statistical physics models. 
  
The edge set $S$ on $M$ gives rise to a dual edge set $S^*$, consisting of those edges of the dual map $M^*$ which do not cross edges of $S$; and a collection $\mcl L$ of loops on $M$ which form the interfaces between edges of $S$ and $S^*$. Note that $\#\mcl L = K(S)$. 
The collection of loops $\mcl L$ determines the same information as $S$, so one can equivalently view a critical FK planar map as a random planar map decorated by a collection of loops. 

The critical FK planar map is conjectured to converge in the scaling limit to a \emph{conformal loop ensemble ($\op{CLE}_\kappa$)} with $\kappa \in (4,8)$ satisfying $q = 2 + 2\cos(8\pi/\kappa)$ on top of an independent \emph{Liouville quantum gravity (LQG) surface} with parameter $\gamma = 4/\sqrt{\kappa}$. 
See~\cite{kager-nienhuis-guide,shef-burger} and the references therein for more details regarding this conjecture. 
We will not make explicit use of CLE or LQG in this paper, but we briefly recall their definitions (with references) for the interested reader. 
A $\op{CLE}_\kappa$ is a countable collection of random fractal loops which locally look like Schramm's SLE$_\kappa$ curves~\cite{schramm0,schramm-sle}, which was first introduced in~\cite{shef-cle}. Many of the basic properties of $\op{CLE}_{\kappa}$ for $\kappa \in (4,8)$ are proven in~\cite{ig1,ig2,ig3,ig4} by encoding CLE$_{\kappa}$ by means of a space-filling variant of SLE$_\kappa$ which traces all of the loops. For $\gamma \in (0,2)$, a $\gamma$-LQG surface is, heuristically speaking, the random surface parametrized by a domain $D\subset \BB C$ whose Riemannian metric tensor is $e^{\gamma h} \, dx\otimes dy$, where $h$ is some variant of the Gaussian free field (GFF) on $D$ and $dx\otimes dy$ is the Euclidean metric tensor. This object is not defined rigorously since $h$ is a distribution, not a function. However, one can make rigorous sense of an LQG surface as a random measure space (equipped with the volume form induced by $e^{\gamma h} \, dx\otimes dy$), as is done in~\cite{shef-kpz}. See also~\cite{shef-zipper,wedges,sphere-constructions} for more on this interpretation of LQG surfaces. 

In \cite{shef-burger}, Sheffield introduces a simple inventory accumulation model described by a word $X$ consisting of five different symbols which represent two types of ``burgers" and three types of ``orders"; and constructs a bijection between certain realizations of this model and triples $(M , e_0 , S)$ consisting of a planar map with $n$ edges, an oriented root edge $e_0$, and a set $S$ of edges of $M$. This bijection generalizes a bijection due to Mullin~\cite{mullin-maps} (which is explained in more detail in~\cite{bernardi-maps}) and is equivalent to the construction of~\cite[Section 4]{bernardi-sandpile} if one treats the planar map $M$ as fixed, although the latter is phrased in a different way (see~\cite[Footnote 1]{shef-burger} for an explanation of this equivalence).  

There is a family of probability measures for the inventory accumulation model, indexed by a parameter $p \in (0,1/2)$, with the property that the law of the triple $(M ,e_0, S)$ when the inventory accumulation model is sampled according to the probability measure with parameter $p$ is given by the uniform measure on such triples weighted by $q^{K(S)/2}$, where $q = 4p^2/(1-p)^2$. That is, the law of $(M,e_0,S)$ is that of an FK planar map with a uniformly chosen oriented root edge. As alluded to in \cite[Section 4.2]{shef-burger} and explained in more detail in~\cite{blr-exponents,chen-fk}, there is also an infinite-volume version of the bijection of~\cite{shef-burger} which encodes an infinite-volume limit (in the sense of~\cite{benjamini-schramm-topology}) of finite-volume FK planar maps, which we henceforth refer to as an infinite-volume FK planar map. 

The inventory accumulation model of~\cite{shef-burger} is equivalent to a model on non-Markovian random walks on $\BB Z^2$ with certain marked steps. 
In \cite[Theorem 2.5]{shef-burger}, it is shown that a random walk which describes the infinite-volume version of the inventory accumulation model converges in the scaling limit to a pair of Brownian motions with correlation depending on $p$. 

In \cite{wedges} (see also \cite{ig4}), it is shown that for $\kappa \in (4,8)$, a whole-plane $\op{CLE}_\kappa$ on top of an independent $4/\sqrt\kappa$-LQG cone (a type of infinite-volume quantum surface) can be encoded by a pair of correlated Brownian motions via a procedure which is directly analogous to the bijection of \cite{shef-burger}. This procedure is called the \emph{peanosphere} (or \emph{mating of trees}) construction. The correlation between this pair of Brownian motions is the same as the correlation between the pair of limiting Brownian motions in \cite[Theorem 2.5]{shef-burger} provided
\eqb \label{eqn-p-kappa}
p = \frac{\sqrt{2 + 2\cos (8\pi/\kappa)}}{2 + \sqrt{2 + 2\cos (8\pi/\kappa)}}  , 
\eqe  
which is consistent with the conjectured relationship between the FK model and CLE described above. Thus \cite[Theorem 2.5]{shef-burger} can be viewed as a scaling limit result for FK planar maps toward $\op{CLE}_\kappa$ on a quantum cone in a certain topology, namely the one in which two loop-decorated surfaces are close if their corresponding encoding functions are close. However, this topology does not encode all of the information about the FK planar map. Indeed, the non-Markovian walk on $\BB Z^2$ does not encode the FK loops themselves but rather a pair of trees constructed from the loops.

In this paper, we will improve on the scaling limit result of \cite{shef-burger} by showing that the times corresponding to FK loops (or ``flexible orders") in the infinite-volume inventory accumulation model converge in the scaling limit to the $\pi/2$-cone times of the correlated Brownian motion (see Theorem~\ref{thm-cone-limit} below for a precise statement). We thus obtain a scaling limit in a topology which encodes all of the information about the FK planar map. The $\pi/2$-cone times of the correlated Brownian motion in the setting of \cite{wedges} encode the $\op{CLE}_\kappa$ loops in a manner which is directly analogous to the encoding of the FK loops in Sheffield's bijection. Hence our results imply the convergence of many interesting functionals of the FK loops to the corresponding functionals of $\op{CLE}_\kappa$ loops on an independent quantum cone. As a particular application, we will obtain the joint scaling limit of the boundary lengths and areas of all of the macroscopic bounded complementary connected components of the FK loops surrounding a fixed edge in an infinite-volume FK planar map (see Theorem~\ref{thm-loop-limit} below). This statement partially answers \cite[Question 13.3]{wedges} in the infinite-volume setting. 
  
\begin{figure}[ht!]
 \begin{center}
\includegraphics{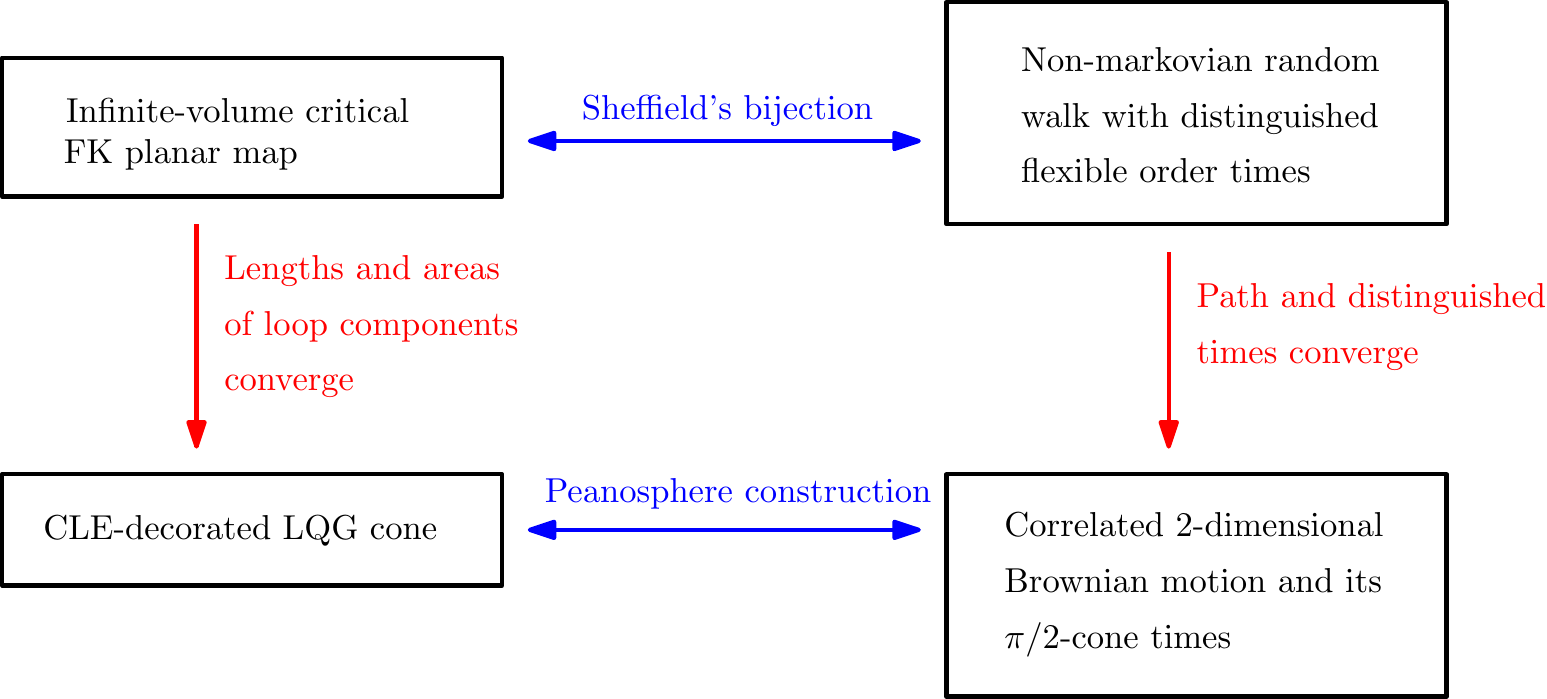} 
\caption{A schematic illustration of the bijections and scaling limit results involved in this paper. The top blue arrow corresponds to Sheffield's~\cite{shef-burger} encoding of critical FK planar maps via the inventory accumulation model. The bottom blue arrow corresponds to the encoding of a CLE-decorated LQG cone via correlated two-dimensional Brownian motion in~\cite{wedges}. The right red arrow corresponds to the scaling limit result for the non-Markovian random walk in~\cite{shef-burger} and our Theorem~\ref{thm-cone-limit}, which gives convergence of the flexible order times in the discrete model to the $\pi/2$-cone times of the correlated Brownian motion. The left arrow corresponds to our Theorem~\ref{thm-loop-limit}, which is deduced from the right arrow and the bijections in the figure. } \label{fig-models}
\end{center}
\end{figure} 
 
In the course of proving our main results, we will also prove several other results regarding the model of~\cite{shef-burger} which are of independent interest. We prove tail estimates for the laws of various quantities associated with this model, and in particular show that several such laws have regularly varying tails (see Sections~\ref{sec-reg-var} and~\ref{sec-no-order-reg-var}). We also obtain the scaling limit of the discrete path conditioned on the event that the reduced word contains no burgers, or equivalently the event that this path stays in the first quadrant until a certain time when run backward (Theorem~\ref{thm-no-burger-conv}) and the analogous statement when we instead condition on no orders (Theorem~\ref{thm-no-order-conv}). Scaling limit results for random walks with independent increments conditioned to stay in a cone are obtained in several places in the literature (see \cite{shimura-cone-walk,garbit-cone-walk,dw-cones} and the references therein). Our Theorems~\ref{thm-no-burger-conv} and~\ref{thm-no-order-conv} are analogues of these results for a certain random walk with non-independent increments. 

Although this paper is motivated by the relationship between the inventory accumulation model of~\cite{shef-burger}, FK planar maps, and $\op{CLE}_\kappa$ on a Liouville quantum gravity surface, our proofs use only basic properties of the inventory accumulation model and elementary facts from probability theory, so can be read without any background on SLE or LQG. 

This paper strengthens the topology of the scaling limit result of \cite[Theorem 2.5]{shef-burger}. Ideally, one would like to further strengthen this topology by embedding an FK planar map into the Riemann sphere and showing that the conformal structure of the loops converges in an appropriate sense to that of CLE loops on an independent quantum cone. We expect that proving this convergence is a substantially more difficult problem than proving the convergence statements of this paper. However, our result might serve as an intermediate step in proving such a stronger convergence statement.  See~\cite[Section 10.5]{wedges} for some (largely speculative) ideas regarding the relationship between convergence of the conformal structure of FK loops and the convergence statements proven in \cite{shef-burger} and the present paper. 

Stronger scaling limit results are known in the case of a uniformly chosen random planar map (which corresponds to the special case $p=1/3$ in the framework of~\cite{shef-burger}), without a collection of loops.
In particular, it is proven in~\cite{legall-uniqueness,miermont-brownian-map} that a uniformly chosen random quadrangulation with $2n$ edges converges in law in the Gromov-Hausdorff topology to a continuum random metric space called the \emph{Brownian map} (see also~\cite{bjm-uniform} for a proof of this result for a uniform planar map). 
This and similar scaling limit results are proven using a bijective encoding of planar quadrangulations in terms of labelled trees due to Schaefer~\cite{schaeffer-bijection}. Note that the bijection of~\cite{schaeffer-bijection} differs significantly from the bijection of~\cite{shef-burger}, in that the former encodes only a planar map (not a planar map decorated by a collection of edges) and more explicitly describes distances in the map. We refer the reader to the survey articles~\cite{miermont-survey,legall-sphere-survey} and the references therein for more details on uniform random planar maps and their scaling limits. It is shown in~\cite{qle,sphere-constructions,tbm-characterization,lqg-tbm1,lqg-tbm2,lqg-tbm3} that a $\sqrt{8/3}$-LQG cone can be equipped with a metric under which it is isometric to the Brownian plane~\cite{curien-legall-plane}. Hence the above scaling limit results can also be phrased in terms of LQG.
 
We end this subsection by pointing out some related works. 
This paper is the first of a series of three papers; the other two are~\cite{gms-burger-local,gms-burger-finite}. In~\cite{gms-burger-local}, the authors prove estimates for the probability that a reduced word in the inventory accumulation model of~\cite{shef-burger} contains a \emph{particular} number of symbols of a certain type, prove a related scaling limit result, and compute the exponent for the probability that a word sampled from this model reduces to the empty word. The work~\cite{gms-burger-finite} proves analogues of the scaling limit results of \cite{shef-burger} and of the present paper for the finite-volume version of the model of~\cite{shef-burger} (which encodes a finite-volume FK planar map). 

Shortly before this paper was first posted to the ArXiv, we learned of an independent work~\cite{blr-exponents} which calculates tail exponents for several quantities related to a generic loop on an FK planar map, and which was posted to the ArXiv at the same time as this work. In~\cite{sun-wilson-unicycle}, the third author and D. B. Wilson study unicycle-decorated random planar maps via the bijection of~\cite{shef-burger} and obtain the joint distribution of the length and area of the unicycle in the infinite volume limit. The work~\cite{chen-fk} studies some properties of the infinite-volume FK planar map at the discrete level. The recent work~\cite{gkmw-burger} uses a generalized version of Sheffield's inventory accumulation model to prove a scaling limit result analogous to that of~\cite{shef-burger} for a class of random planar map models which correspond to SLE$_\kappa$-decorated $\gamma$-Liouville quantum gravity surfaces for $\kappa >8$ and $\gamma =4/\sqrt\kappa < \sqrt 2$. 
 
The first author and J. Miller are currently preparing two papers which apply the results of the present paper and its sequels. The paper~\cite{gwynne-miller-cle} will use the scaling limit results of~\cite{shef-burger,gms-burger-local,gms-burger-finite} and the present paper to prove a scaling limit result which can be interpreted as the statement that FK planar maps converge to $\op{CLE}_{\kappa}$ on a Liouville quantum surface viewed modulo an ambient homeomorphism of $\BB C$. The paper~\cite{gwynne-miller-inversion} will use said scaling limit result to prove conformal invariance of whole-plane $\op{CLE}_{\kappa}$ for $\kappa \in (4,8)$ (see~\cite{werner-sphere-cle} for a proof of this statement in the case $\kappa \in (8/3,4]$).

\bigskip

\noindent{\bf Acknowledgments}
We thank Ga\"etan Borot, Jason Miller, and Scott Sheffield for helpful discussions, and Jason Miller for comments on an earlier version of this paper. We thank Nathana\"el Berestycki, Beno\^it Laslier, and Gourab Ray for sharing and discussing their work~\cite{blr-exponents} with us. We thank several anonymous referees for comments on an earlier version of this paper. We thank the Isaac Newton Institute for Mathematical Sciences, Cambridge, for support and hospitality during the Random Geometry programme, where part of this work was completed. The first author was supported by the U.S. Department of Defense via an NDSEG fellowship. The third author was partially supported by NSF grant DMS-1209044. 
 
\subsection{Inventory accumulation model}
\label{sec-burger-prelim}

The main focus of this paper will be the inventory accumulation model first introduced by Sheffield~\cite{shef-burger}, which we describe in this section. The notation introduced in this section will remain fixed throughout the remainder of the paper. 

Let $\Theta$ be the collection of symbols $\{\tc{H} , \tc{C} , \tb{H} , \tb{C} , \tb{F}\}$. We can think of these symbols as representing, respectively, a hamburger, a cheeseburger, a hamburger order, a cheeseburger order, and a flexible order. We view $\Theta$ as the generating set of a semigroup, which consists of the set of all finite words consisting of elements of $\Theta$, modulo the relations
\eqb \label{eqn-theta-ful}
\tc C \tb C = \tc H \tb H = \tc C \tb F = \tc H \tb F = \emptyset \qquad \text{(order fulfilment)}
\eqe
and
\eqb\label{eqn-theta-com}
\tc C \tb H = \tb H \tc C ,\qquad \tc H \tb C = \tb C \tc H \qquad \text{(commutativity).}
\eqe 
Given a word $x$ consisting of elements of $\Theta$, we denote by $\mathcal R(x)$ the word reduced modulo the above relations, with all burgers to the right of all orders. For example,
\eqbn
\mcl R\left( \tc H \tc C \tb H \tb F \tc H  \tb C  \right)  = \tb C \tc H .
\eqen
 In the burger interpretation, $\mcl R(x)$ represents the burgers which remain after all orders have been fulfilled along with the unfulfilled orders. We also write $|x|$ for the number of symbols in $x$ (regardless of whether or not $x$ is reduced). 

For $p\in [0,1]$ (in this paper we will in fact typically take $p\in (0,1/2)$, for reasons which will become apparent just below), we define a probability measure on $\Theta$ by 
\eqb \label{eqn-theta-prob}
\BB P\left(\tc{H}\right) = \BB P\left(     \tc{C} \right) = \frac14 ,\quad  \BB P\left( \tb{H} \right) = \BB P\left( \tb{C} \right) = \frac{1-p}{4} ,\quad \BB P\left(\tb{F} \right) =  \frac{p}{2} .
\eqe 
Let $X = \dots X_{-1} X_0 X_1 \dots$ be an infinite word with each symbol sampled independently according to the probabilities~\eqref{eqn-theta-prob}. For $a \leq b\in \BB R$, we set 
\eqb \label{eqn-X(a,b)}
X(a,b) := \mcl R\left(X_{\lfloor a\rfloor} \dots X_{\lfloor b\rfloor} \right) .
\eqe

\begin{remark}
There is an explicit bijection between words $x$ consisting of elements of $\Theta$ with $|x| = 2n$ and $\mcl R(x) = \emptyset$; and triples $(M,e_0,S)$, where $M$ is a planar map with $n$ edges, $e_0$ is an oriented root edge, and $S$ is a set of edges of $M$ \cite[Section~4.1]{shef-burger}. 
If $\dot X$ is a random word sampled according to the law of $X_1\dots X_{2n}$ (as above) with $p\in (0,1/2)$, conditioned on the event that $X(1,2n) =\emptyset$, then the law of the corresponding triple $(M,e_0, S)$ is that of a rooted FK planar map, as defined in Section~\ref{sec-overview}, with parameter $q = \frac{4p^2}{(1-p)^2}$. 

As alluded to in \cite[Section~4.2]{shef-burger} and explained more explicitly in~\cite{blr-exponents,chen-fk}, the unconditioned word $X$ corresponds to an infinite-volume limit of FK planar maps decorated by FK loops via an infinite-volume version of Sheffield's bijection. In this paper we focus on the infinite-volume case, and we will review the bijection in this case in Section~\ref{sec-burger-bijection}.   
\end{remark}
 
By \cite[Proposition 2.2]{shef-burger}, it is a.s.\ the case that each symbol $X_i$ in the word $X$ has a unique match which cancels it out in the reduced word (i.e.\ burgers are matched to orders and orders matched to burgers). Heuristically, the reduced word $X(-\infty,\infty)$ is a.s.\ empty. 

\begin{notation}\label{def-match-function}
For $i \in \BB Z$ we write $\phi(i)$ for the index of the match of $X_i$. 
We also write $\phi_*(i)$ for the index of the match of the rightmost order in $X(\phi(i) , i)$, or $\phi_*(i) = \phi(i)$ if $X(\phi(i) , i)$ contains no orders. 
\end{notation}

\begin{notation} \label{def-theta-count}
For $\theta\in \Theta$ and a word $x$ consisting of elements of $\Theta$, we write $\mcl N_{\theta}(x)$ for the number of $\theta$-symbols in $x$. We also let
\alb 
&d(x)  := \mcl N_{\tc H}(x) - \mcl N_{\tb H}(x ) ,\quad
d^*(x)  := \mcl N_{\tc C}(x) - \mcl N_{\tb C}(x)  ,\quad 
 D(x)  := \left(d(W) , d^*(x)\right) .
\ale
\end{notation}

The reason for the notation $d$ and $d^*$ is that these functions (applied to segments of the word $Y$ defined just below) give the distances to the root edge in the tree and dual tree obtained from the primal and dual edge sets in Sheffield's bijection; see Section~\ref{sec-burger-bijection}. 

For $i\in\BB Z$, we define $Y_i = X_i$ if $X_i \in \{\tc{H} , \tc{C} , \tb{H} ,  \tb{C}\}$; $Y_i = \tb{H}$ if $X_i = \tb F$ and $X_{\phi(i)} = \tc{H}$; and $Y_i = \tb{C}$ if $X_i = \tb F$ and $X_{\phi(i)} = \tc{C}$. For $a\leq b \in \BB R$, define $Y(a,b)$ as in~\eqref{eqn-X(a,b)} with $Y$ in place of $X$.  
 
Let $d(0) = 0$. For $n \in \BB N$, define $d(n) = d(Y(1,n))$ and $d(-n) = -d(Y( -n+1 , 0))$. Define $d^*(n)$ for $n\in\BB Z$ similarly and extend each of these functions from $\BB Z$ to $\BB R$ by linear interpolation. 

\begin{remark} \label{remark-minus-sign}
Note that we have inserted a minus sign in the definition of $d(n)$ and $d^*(n)$ when $n < 0$. This is done so that $d(\cdot) \eqD d(\cdot + n) - d(n)$ for each $n\in\BB Z$ and similarly for $d^*$. 
\end{remark}

For $t\in\BB R$, let
\eqb \label{eqn-discrete-paths}
  D(t) := (d(t) , d^*(t)) .
\eqe  
For $n\in\BB N$ and $t\in \BB R$, let 
\eqb \label{eqn-Z^n-def}
U^n(t) := n^{-1/2} d (n t) ,\quad V^n(t) := n^{-1/2} d^*(n  t) , \quad Z^n(t) := (U^n(t) , V^n(t)) .
\eqe 
For $p \in [0,1/2)$, we also let $Z= (U,V)$ be a two-sided two-dimensional Brownian motion with $Z(0) = 0$ and variances and covariances at each time $t\in \BB R$ given by 
\eqb \label{eqn-bm-cov}
\op{Var}(U(t) ) = \frac{1-p}{2} |t| \quad \op{Var}(V(t)) = \frac{1-p}{2} |t| \quad \op{Cov}(U(t) , V(t) ) = \frac{p}{2} |t| .
\eqe
It is shown in \cite[Theorem 2.5]{shef-burger} that as $n\rta \infty$, the random paths $U^n + V^n$ and $U^n - V^n$ converge in law in the topology of uniform convergence on compacts to a pair of independent Brownian motions, with respective variances 1 and $(1-2p)\vee 0$. The following result is an immediate consequence. 

\begin{thm}[Sheffield] \label{prop-burger-limit}
For $p \in (0,1/2)$, the random paths $Z^n$ defined in~\eqref{eqn-Z^n-def} converge in law in the topology of uniform convergence on compacts to the random path $Z$ of~\eqref{eqn-bm-cov}. 
\end{thm}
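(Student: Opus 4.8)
The plan is to obtain Theorem~\ref{prop-burger-limit} directly from the result of \cite{shef-burger} quoted immediately above, by applying an invertible linear change of coordinates together with the continuous mapping theorem. First I would set $A^n := U^n + V^n$ and $B^n := U^n - V^n$, regarded as random elements of $C(\BB R,\BB R)$ equipped with the topology of uniform convergence on compacts. The quoted result of \cite{shef-burger} (namely \cite[Theorem 2.5]{shef-burger}) states that the pair $(A^n, B^n)$ converges in law to $(A,B)$, where $A$ and $B$ are \emph{independent} two-sided Brownian motions with $\op{Var}(A(t)) = |t|$ and $\op{Var}(B(t)) = \bigl((1-2p)\vee 0\bigr)|t| = (1-2p)|t|$ for $t \in \BB R$; the last equality uses $p < 1/2$.

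Next I would apply the continuous mapping theorem to the linear map $\Phi(f,g) := \bigl( \tfrac12(f+g), \tfrac12(f-g) \bigr)$ on $C(\BB R,\BB R)^2$, which is continuous for uniform convergence on compacts because it is linear and bounded in the sup norm on each compact interval. Since $Z^n = \Phi(A^n, B^n)$ by the definition~\eqref{eqn-Z^n-def}, this yields $Z^n \Rta \Phi(A,B)$ as $n \to \infty$.

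Finally I would identify the limit $\Phi(A,B)$. Since $A$ and $B$ are two-sided Brownian motions, $\Phi(A,B)$ is a continuous, mean-zero, two-sided Gaussian process with stationary independent increments and vanishing at $0$, hence a two-sided two-dimensional Brownian motion; a one-line computation using the independence of $A$ and $B$ gives that its variances and covariance at time $t$ equal $\tfrac{1-p}{2}|t|$, $\tfrac{1-p}{2}|t|$, and $\tfrac{p}{2}|t|$ respectively, which match~\eqref{eqn-bm-cov}. Therefore $\Phi(A,B) \eqD Z$, and the convergence $Z^n \Rta Z$ follows.

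I do not expect a genuine obstacle: the statement is essentially a corollary of \cite[Theorem 2.5]{shef-burger}. The one point that warrants care is that the deduction uses the \emph{joint} convergence of $(A^n, B^n)$ — equivalently, the independence of the limiting pair $(A,B)$ — and not merely the convergence of the two marginals; but this joint statement, with the independence, is exactly what \cite{shef-burger} proves.
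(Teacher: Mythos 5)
Your proposal is correct and is exactly the deduction the paper intends: the paper states the theorem as an "immediate consequence" of \cite[Theorem 2.5]{shef-burger}, and your route — invert the linear change of coordinates $(U^n,V^n)\mapsto(U^n+V^n,U^n-V^n)$ via the continuous mapping theorem and check that the limiting covariances match~\eqref{eqn-bm-cov} — is the standard way to make that immediacy explicit. Your closing remark that the joint convergence (with independence of the limiting pair) is what is really being used, and that this is precisely what Sheffield proves, is the right point to flag.
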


Throughout the remainder of this paper, we fix $p \in (0,1/2)$ and do not make dependence on $p$ explicit.  
 
\subsection{Cone times}
\label{sec-cone-times}
 
The first main result of this paper is Theorem~\ref{thm-cone-limit} below, which says that the times for which $X_i = \tb F$ converge under a suitable scaling limit to the \emph{$\pi/2$-cone times} of $Z$, defined as follows.

\begin{defn}\label{def-cone-time}
A time $t$ is called a \emph{(weak) $\pi/2$-cone time} for a function $Z = (U,V) : \BB R \rta \BB R^2$ if there exists $t'< t$ such that $U(s) \geq U(t)$ and $V(s) \geq V(t)$ for $s\in [t'   , t ]$. Equivalently, $Z([t'   , t ])$ is contained in the cone $Z_{t } + \{z\in \BB C : \op{arg} z \in [0,\pi/2]\}$. We write $  v_Z(t)$ for the infimum of the times $t'$ for which this condition is satisfied, i.e. $  v_Z(t)$ is the last entrance time of the cone before $t$. We say that $t$ is a \emph{left (resp. right) $\pi/2$-cone time} if $V_t = V(v_Z(t))$ (resp. $U(t) = U(v_Z(t))$). Two $\pi/2$-cone times for $Z$ are said to be in the \emph{same direction} if they are both left or both right $\pi/2$-cone times, and in the \emph{opposite direction} otherwise. For a $\pi/2$-cone time $t$, we write $ u_Z(t)$ for the supremum of the times $t^* < t$ such that
\eqbn
\inf_{s\in [t^* , t]} U(s)  < U(t ) \quad \op{and} \quad \inf_{s\in [t^* , t]} V(s)  < V(t)  .
\eqen
That is, $ u_Z(t)$ is the last time before $t$ that $Z$ crosses the boundary line of the cone which it does not cross at time $ v_Z(t)$.  
\end{defn} 

\begin{figure}[ht!]
 \begin{center}
\includegraphics{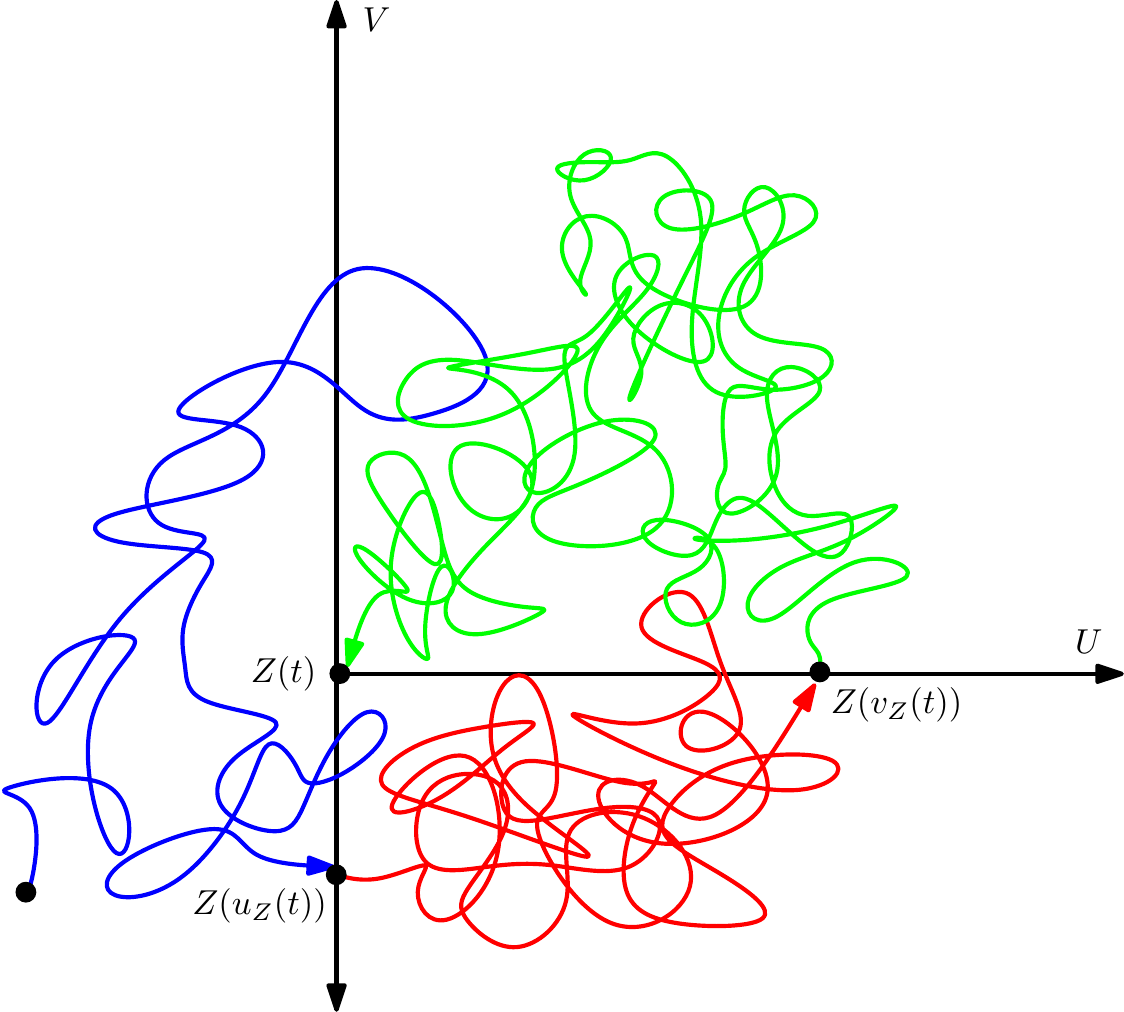} 
\caption{An illustration of a left $\pi/2$-cone time $t$ for a path $Z = (U,V)$. The set $Z([  u_Z(t) , v_Z (t)])$ is shown in red. The set $Z([  v_Z(t) , t])$ is shown in green. We note that we may have $V(u_Z(t)) < V(t)$ (as shown in the figure) or $V(u_Z(t)) \geq V(t)$. } \label{fig-cone-time}
\end{center}
\end{figure}

See Figure~\ref{fig-cone-time} for an illustration of Definition~\ref{def-cone-time}. The reader may easily check that if $i \in \BB Z$ is such that $X_i = \tb F$ and $i-\phi(i) \geq 2$, then $i/n$ and $(i-1)/n$ are both (weak) $\pi/2$-cone times for $Z^n$. Using Notation~\ref{def-match-function}, $v_{Z^n}((i-1)/n) = \phi(i)/n$ and $u_{Z^n}((i-1)/n)$ is equal to $n^{-1}$ times the largest $j < i$ for which $X(j , i)$ contains a burger of the type opposite $X_{\phi(i)}$. 
Equivalently, $u_{Z^n}((i-1)/n)$ is $n^{-1}$ times the largest $j < \phi_*(i)$ for which $X(j , \phi_*(i))$ contains a burger of the type opposite $X_{\phi(i)}$.
If $|X(\phi(i) , i) | \geq 1$, the direction of these $\pi/2$-cone times are determined by what type of burger $X_{\phi(i)}$ is.

A positively correlated Brownian motion a.s.\ has an uncountable fractal set of $\pi/2$-cone times~\cite{shimura-cone, evans-cone}. There is a substantial literature concerning cone times of Brownian motion; we refer the reader to~\cite[Sections 3 and 4]{legall-bm-notes}, \cite[Section 10.4]{peres-bm}, and the references therein for more on this topic. 
 
Our first main result states that the $\tb F$-times for $X$, re-scaled by $n^{-1}$, converge to the $\pi/2$-cone times of $Z$. One needs to be careful about the precise sense in which this convergence occurs. Indeed, there are uncountably many $\pi/2$-cone times for $Z$, but only countably many times for which $X_i = \tb F$. To get around this issue, we prove convergence of several large but countable sets of distinguished $\pi/2$-cone times which are dense enough to approximate most interesting functionals of the set of $\pi/2$-cone times for $Z$. One such set is defined as follows. 

\begin{defn}\label{def-maximal}
A $\pi/2$-cone time for a path $Z$ is called a \emph{maximal $\pi/2$-cone time} in an (open or closed) interval $I\subset \BB R$ if $[ v_Z(t) , t] \subset I$ and there is no $\pi/2$-cone time $t'$ for $Z$ such that $[v_Z(t') , t']\subset I$ and $[ v_Z(t) , t] \subset (v_Z(t') , t')$. An integer $i\in \BB Z$ is called a \emph{maximal flexible order time} in an interval $I\subset \BB R$ if $X_i = \tb F$, $[\phi(i) ,  i]_{\BB Z} \subset I$, and there is no $i' \in \BB Z$ with $X_{i'} = \tb F$, $[\phi(i) ,   i]_{\BB Z} \subset ( \phi(i')  , i')_{\BB Z}$, and $[\phi(i') , i']_{\BB Z} \subset I$. 
\end{defn}

\begin{thm} \label{thm-cone-limit}
Let $Z$ be a correlated Brownian motion as in~\eqref{eqn-bm-cov}. Fix a countable dense set $\mcl Q\subset \BB R$. There is a coupling of countably many instances $\{ X^n \}_{n\in\BB N}$ of the infinite word $X$ described in Section~\ref{sec-burger-prelim} with $Z$ such that when $Z^n$, $\phi^n$, and $\phi_*^n$ are defined as in~\eqref{eqn-Z^n-def} and Notation~\ref{def-match-function}, respectively, with $X^n$ in place of $X$, the following holds a.s.  
\begin{enumerate}
\item $Z^n \rta Z$ uniformly on compacts. \label{item-cone-limit-Z} 
\item Suppose given a bounded open interval $I \subset \BB R$ with endpoints in $\mcl Q$ and $a \in I \cap \mcl Q$. Let $t$ be the maximal (Definition~\ref{def-maximal}) $\pi/2$-cone time for $Z$ in $I$ with $a\in [ v_Z(t),t]$. For $n\in\BB N$, let $i_n$ be the maximal flexible order time (with respect to $X^n$) $i$ in $n I$ with $a n \in [\phi^n(i)   , i ]$ (or $i_n = \lfloor a n \rfloor$ if no such $i$ exists); and let $t_n := n^{-1} i_n$. Then $t_n \rta t$. \label{item-cone-limit-maximal}
\item For $r  > 0$ and $a\in \BB R$, let $\tau^{a,r}$ be the smallest $\pi/2$-cone time $t$ for $Z$ such that $t\geq a$ and $t - v_Z(t) \geq r$. For $n\in\BB N$, let $\iota_n^{a,r}$ be the smallest $i\in\BB Z$ such that $X^n_i = \tb F$, $i \geq a n$, and $i - \phi^n(i) \geq r n - 1$ (or $\iota_n^{a,r} =\infty$ if no such $i$ exists); and let $\tau_n^{a,r} := n^{-1} \iota_n^{a,r}$. Then $\tau_n^{a,r} \rta \tau^{a,r}$ for each $(a,r) \in \mcl Q\times (\mcl Q\cap (0,\infty))$. \label{item-cone-limit-stopping}
\item For each sequence of positive integers $n_k \rta \infty$ and each sequence $\{i_{n_k} \}_{k\in\BB N}$ such that $X^{n_k}_{i_{n_k}} = \tb F$ for each $k\in\BB N$, $n_k^{-1} i_{n_k} \rta t \in \BB R$, and $\liminf_{k\rta\infty} (i_{n_k} - \phi^{n_k}(i_{n_k}) ) > 0$, it holds that $t$ is a $\pi/2$-cone time for $Z$ which is in the same direction as the $\pi/2$-cone time $n_k^{-1} i_{n_k}$ for $Z^{n_k}$ for large enough $k$ and in the notation of Definition~\ref{def-cone-time}, 
\eqbn
\left(n_k^{-1} i_{n_k} , \, n_k^{-1} \phi^{n_k}(i_{n_k}) , \, n_k^{-1} \phi^{n_k}_*(i_{n_k}) \right) \rta \left( t , v_Z(t) , u_Z(t) \right) .
\eqen
\label{item-cone-limit-times}
\end{enumerate}
\end{thm}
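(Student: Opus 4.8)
\textbf{Overall strategy.} The plan is to use a single coupling coming from the Skorokhod representation theorem to get item~\eqref{item-cone-limit-Z}, and then deduce items~\eqref{item-cone-limit-maximal}--\eqref{item-cone-limit-times} by transferring between the discrete structure of $X^n$ and the continuum cone structure of $Z$. The central dictionary to keep in mind is the one spelled out in the text after Definition~\ref{def-cone-time}: if $X^n_i = \tb F$ with $i - \phi^n(i) \geq 2$, then $(i-1)/n$ is a $\pi/2$-cone time for $Z^n$ with $v_{Z^n}((i-1)/n) = \phi^n(i)/n$ and $u_{Z^n}((i-1)/n)$ equal to $n^{-1}$ times the largest $j < \phi_*^n(i)$ for which $X^n(j,\phi_*^n(i))$ contains a burger of the type opposite to $X^n_{\phi^n(i)}$; the direction is read off from the type of $X^n_{\phi^n(i)}$. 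Since the flexible-order times and their matches are built entirely from the word $X^n$, which in turn determines $Z^n$, all the convergences we want are statements about how cone structure of $Z^n$ passes to the limit.

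\textbf{Main steps.} First I would establish two one-sided estimates that form the analytic backbone. (i) \emph{Upper semicontinuity / no spurious cone times:} if $Z^n \to Z$ uniformly on compacts, $n_k^{-1} i_{n_k} \to t$, and $\liminf(i_{n_k} - \phi^{n_k}(i_{n_k})) > 0$, then $t$ is a $\pi/2$-cone time for $Z$, with $v_Z(t) \geq \limsup n_k^{-1}\phi^{n_k}(i_{n_k})$ and $u_Z(t) \geq \limsup n_k^{-1}\phi_*^{n_k}(i_{n_k})$; this is essentially a closedness statement for the cone condition under uniform convergence, using that the inequalities $U^{n_k}(s) \geq U^{n_k}(n_k^{-1} i_{n_k})$ etc.\ pass to the limit. (ii) \emph{Lower semicontinuity / cone times are approximable:} every $\pi/2$-cone time of $Z$ with $t - v_Z(t) > 0$ is a limit of flexible-order times for $X^n$ with the matches converging to $v_Z$ and $u_Z$; the natural mechanism is that near a genuine cone time $Z$ strictly enters the cone, so $Z^n$ does too for large $n$, forcing a flexible order to appear. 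This second direction is the place where I expect the real work to be and will be the main obstacle: one must rule out the possibility that $Z^n$ has a cone-like excursion with no $\tb F$ symbol having its match at the entrance time, i.e.\ one must show the discrete cone structure is ``rich enough.'' I would handle this using the characterization of $u_{Z^n}$ above together with the fact (from $Z^n \to Z$ and the definition of $u_Z$) that $Z$ crosses both boundary lines of the cone after $u_Z(t)$, which forces the relevant opposite-type burgers to be present and matched correctly in the discrete word; a second-moment or tightness input guaranteeing $t - v_Z(t) > 0$ translates to $i - \phi^n(i)$ being macroscopic, which is exactly the hypothesis we are allowed to assume in~\eqref{item-cone-limit-stopping} and \eqref{item-cone-limit-times}.

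\textbf{Deducing the four items.} Given (i) and (ii), item~\eqref{item-cone-limit-Z} is Theorem~\ref{prop-burger-limit} plus Skorokhod. For item~\eqref{item-cone-limit-stopping}, $\tau^{a,r}$ is a stopping time for the cone structure; I would show $\limsup \tau_n^{a,r} \leq \tau^{a,r}$ using (ii) (a cone time of $Z$ at $\tau^{a,r}$ with $t - v_Z(t) \geq r > r - \epsilon$ produces, for large $n$, an $\tb F$-time with $i \geq an$ and $i - \phi^n(i) \geq rn - 1$), and $\liminf \tau_n^{a,r} \geq \tau^{a,r}$ using (i) (any subsequential limit of $\tau_n^{a,r}$ is a cone time $t \geq a$ with $t - v_Z(t) \geq r$, hence $\geq \tau^{a,r}$); one must also check the boundary case where no such cone time exists, where $\tau^{a,r} = \infty$ and (i) rules out finite subsequential limits. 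Here is where the countability of $\mcl Q$ matters: intersecting over the countably many $(a,r)$ costs nothing, and the a.s.-unique-match property from \cite[Proposition 2.2]{shef-burger} plus the fact that a.s.\ $Z$ has no cone time $t$ with $t - v_Z(t)$ exactly equal to a given $r$ (a nowhere-dense exceptional set) lets me avoid ties. For item~\eqref{item-cone-limit-maximal}, the maximal cone time in $I$ through $a$ is the unique cone time $t$ with $[v_Z(t),t] \subset I$, $a \in [v_Z(t),t]$, and no strictly larger enclosing such interval; I would show the maximal flexible order time $i_n$ has $[\phi^n(i_n)/n, i_n/n]$ Hausdorff-converging to $[v_Z(t), t]$ by combining (i) (limit points are enclosed cone times through $a$) with (ii) (the maximal continuum interval is approximated from inside by discrete ones), and maximality is preserved because strict containment of intervals is an open condition. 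Item~\eqref{item-cone-limit-times} is then precisely statement (i) together with the matching lower bounds $v_Z(t) \leq \liminf n_k^{-1}\phi^{n_k}(i_{n_k})$ and $u_Z(t) \leq \liminf n_k^{-1}\phi_*^{n_k}(i_{n_k})$, which follow from (ii) applied in reverse (a cone time with a strictly earlier entrance would, by uniform convergence, already be detected discretely before $\phi^{n_k}(i_{n_k})$, contradicting that $\phi^{n_k}(i_{n_k})$ is the match); the direction statement is immediate once $v_Z(t)$ is pinned down, since the discrete direction is determined by the burger type at $\phi^{n_k}(i_{n_k})$ and $Z^{n_k} \to Z$ forces $Z$ to touch the corresponding boundary line at $v_Z(t)$. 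The one genuinely delicate point throughout, which I would isolate as a lemma, is (ii): that genuine continuum cone times are not ``missed'' by the discrete model — equivalently, that $Z^n$ does not develop macroscopic cone excursions whose entrance is not realized as $\phi^n(i)$ for some flexible order $i$. This I expect to rely on a quantitative statement that whenever $Z^n$ enters its cone with room to spare, the word structure forces a flexible order whose match sits at (or microscopically near) the entrance, which can be extracted from the definition of the $Y$-word and the matching function together with the regularity of Brownian cone excursions.
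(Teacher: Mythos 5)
Your deterministic half of the argument is sound and matches the paper's: the closedness statement (i) is Lemma~\ref{prop-cone-conv}, and the deduction of conditions~\ref{item-cone-limit-maximal} and~\ref{item-cone-limit-times} from conditions~\ref{item-cone-limit-Z} and~\ref{item-cone-limit-stopping} proceeds essentially as you describe. The gap is in your step (ii). You correctly flag it as the main obstacle, but the mechanism you propose to close it --- ``near a genuine cone time $Z$ strictly enters the cone, so $Z^n$ does too for large $n$, forcing a flexible order to appear,'' bolstered by the observation that $Z$ crosses both boundary lines after $u_Z(t)$ --- is exactly the line of reasoning that fails here. Uniform closeness of $Z^n$ to $Z$ near a cone time only gives times $t$ with $U^n(t) \leq U^n(s) + \ep$ and $V^n(t) \leq V^n(s)+\ep$ on $[t',t]$; at such a time there may still be of order $\ep n^{1/2}$ burgers of each type on the stack, and no deterministic property of the word forces these to be cleared out so that a flexible order has its match at the cone entrance. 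Cone times do not depend continuously on the path in the uniform topology, so no soft argument of the kind you sketch can produce them.

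What is actually needed is a quantitative probabilistic input: with probability tending to $1$ as $\delta \rta 0$ (uniformly in large $n$), there exists $i \in [\delta n , n]_{\BB Z}$ with $X_i = \tb F$ and $\phi(i) \leq 0$ (Proposition~\ref{prop-late-F}). This is where essentially all of the work of the paper lies: one first proves a regularity statement for the word conditioned to contain no burgers (Proposition~\ref{prop-F-regularity}), uses it to obtain the conditioned scaling limit (Theorem~\ref{thm-no-burger-conv}), deduces that the law of the first time $J$ at which $X(-j,-1)$ contains a burger is regularly varying with exponent $\mu = \kappa/8$ (Proposition~\ref{prop-J-reg-var}), and then applies the Dynkin--Lamperti theorem to the associated renewal process to get Proposition~\ref{prop-late-F}. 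With that in hand, the convergence $\tau_n^{a,r} \rta \tau^{a,r}$ in condition~\ref{item-cone-limit-stopping} is proved by stopping the word at a deterministically-defined approximate cone time $\wt\iota_n$ (a genuine stopping time for $X^n$ read forward), applying the strong Markov property, and invoking Proposition~\ref{prop-late-F} to produce a true $\tb F$-time with macroscopic excursion shortly afterward. Without this input, or something equivalent, your proof of items~\ref{item-cone-limit-maximal} and~\ref{item-cone-limit-stopping} does not go through.
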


We also prove a variant of Theorem~\ref{thm-cone-limit} in which we condition on the event that $X(-n,-1)$ contains no burgers; see Corollary~\ref{prop-cone-limit-no-burger} below. Furthermore, we can choose the coupling of Theorem~\ref{thm-cone-limit} in such a way that the statements of the theorem also hold with a certain class of times $i$ in place of $\tb F$-times; and $\pi/2$-cone times for the time reversal of $Z$ in place of $\pi/2$-cone times for $Z$. See Theorem~\ref{thm-cone-limit-forward} in Appendix~\ref{sec-no-order}. In the setting of \cite[Theorem 1.13]{wedges}, $\pi/2$-cone times for the time reversal of $Z$ correspond to ``local cut times" of the space-filling $\op{SLE}_\kappa$ curve (see the proof of \cite[Lemma 12.4]{wedges}). 
 
The main difficulty in the proof of Theorem~\ref{thm-cone-limit} is showing that there in fact exist ``macroscopic $\tb F$-excursions" in the discrete model with high probability when $n$ is large. More precisely, 
 
\begin{prop} \label{prop-late-F}
For $\delta >0$ and $n\in\BB N$, let $\mcl E_n(\delta)$ be the event that there is an $i\in \{\lfloor \delta n \rfloor ,\dots,n\}$ such that $X_i = \tb F$ and $\phi(i) \leq 0$. Then 
\eqbn
\lim_{\delta\rta 0} \liminf_{n\rta\infty} \BB P\left(\mcl E_n(\delta)\right) = 1. 
\eqen
\end{prop}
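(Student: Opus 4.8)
The plan is to produce, with probability bounded below uniformly in $n$ (for small $\delta$), at least one index $i \in \{\lfloor \delta n\rfloor, \dots, n\}$ with $X_i = \tb F$ whose match $\phi(i)$ lies at or before $0$. The natural strategy is to look for a \emph{macroscopic $\pi/2$-cone interval} of $Z$ straddling time $0$, and then to transfer its existence to the discrete word via Sheffield's scaling limit (Theorem~\ref{prop-burger-limit}) together with a "not too much has happened" a priori estimate on the reduced word. More precisely, since $Z$ is a two-sided correlated Brownian motion with positive correlation $p/2>0$, the cone $\{z : \arg z \in [0,\pi/2]\}$ is nondegenerate, so with positive probability there is a time $t \in (\delta_0, 1)$ (for some fixed $\delta_0 > 0$) which is a $\pi/2$-cone time of $Z$ with $v_Z(t) < 0 < t$ and, say, $t - v_Z(t) > \delta_0$ and the whole excursion $Z([v_Z(t), t])$ sitting at macroscopic distance inside the cone away from its two bounding rays except near the endpoints. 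I would first isolate such a "robust macroscopic cone event" $\mathcal A$ for the continuum path, with $\BB P(\mathcal A) \geq c > 0$, and phrase it so that it is an open condition on $Z|_{[-1,1]}$ in the uniform topology (so it is stable under small perturbations).

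Next I would transfer $\mathcal A$ to the discrete path. By Theorem~\ref{prop-burger-limit}, $Z^n \to Z$ in law uniformly on compacts, so by Skorokhod coupling we may assume $Z^n \to Z$ a.s., and then on $\mathcal A$, for all large $n$ the rescaled path $Z^n$ also lies in a slightly shrunk version of the cone on the corresponding time interval. Unwinding the scaling~\eqref{eqn-Z^n-def}, this means: for all large $n$ there is an integer interval $[j_n, k_n]_{\BB Z}$ with $j_n \leq 0 < \lfloor\delta_0 n\rfloor \leq k_n \leq n$ such that $d(m) \geq d(k_n)$ and $d^*(m) \geq d^*(k_n)$ for all $m \in [j_n, k_n]$, i.e.\ the $Y$-walk makes a macroscopic cone excursion. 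The point of working with $Y$ rather than $X$ is that a cone excursion of $(d,d^*)$ between two integer times $j_n$ and $k_n$ is, essentially by the definition of $d,d^*$ in terms of $\mcl N_{\tc H} - \mcl N_{\tb H}$ and $\mcl N_{\tc C} - \mcl N_{\tb C}$, equivalent to the statement that $Y(j_n+1, m)$ never consumes a burger present at time $j_n$, for any $m \le k_n$; hence every order in $Y(j_n+1, k_n)$ is matched within $(j_n, k_n]$, and more importantly the rightmost burger in $Y(j_n, k_n)$ is matched to an order at some time $\le k_n$. I would then argue, using the definition of $Y$ from $X$, that such an unmatched-across-$0$ configuration forces the existence of an $X_i = \tb F$ with $\phi(i) \le 0$ and $i \in \{\lfloor \delta n\rfloor, \dots, n\}$: a flexible order is exactly a burger-order whose match is not predetermined, and a sufficiently long cone excursion guarantees at least one order in the relevant range is flexible (alternatively, one refines $\mathcal A$ so that the excursion contains a macroscopic sub-excursion forcing an $\tb F$, using that $\BB P(\tb F) = p/2 > 0$ so flexible orders appear at positive density).

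The main obstacle, I expect, is the last transfer step: going from "$Z^n$ makes a macroscopic cone excursion across time $0$" to "some $X_i = \tb F$ with $\phi(i)\le 0$ in the right range." The subtlety is that $Z^n$ is built from $Y$, not $X$, and the relation $Y_i = X_i$ unless $X_i = \tb F$ means a cone excursion of the $Y$-walk does not by itself see flexible orders — one must additionally control the positions of $\tb F$ symbols and their matches relative to the excursion. I would handle this by strengthening the continuum event $\mathcal A$ to demand that the cone excursion of $Z$ contain two nested macroscopic cone sub-excursions across overlapping windows, one of which starts before $0$; combined with the fact that, conditionally on the cone-excursion structure of $Y$, each order symbol in the excursion is independently an $\tb F$-symbol with probability bounded below (a consequence of~\eqref{eqn-theta-prob} and the definition of $Y$), one gets that with conditional probability tending to $1$ there is an $\tb F$ among the $\Theta(n)$ order symbols whose match was the first burger symbol at the start of the excursion, which occurs at a time $\le 0$. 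A union bound / second-moment argument over the many order symbols in the macroscopic excursion then yields the required positive probability, and letting $\delta \to 0$ (so the constraint $i \ge \lfloor \delta n\rfloor$ becomes vacuous relative to the fixed $\delta_0$) gives $\liminf_n \BB P(\mcl E_n(\delta)) \ge c$; a final bootstrap or $0$-$1$-type argument (or simply iterating the construction on disjoint macroscopic blocks to boost $c$ to $1$) upgrades this to the claimed limit $1$.
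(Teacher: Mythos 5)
Your strategy is exactly the one the paper warns against in the paragraph immediately following the statement of Proposition~\ref{prop-late-F}, and the warning is fatal to the argument as written. The transfer step fails: uniform convergence $Z^n\rta Z$ on an event where $Z$ makes a macroscopic $\pi/2$-cone excursion across $0$ only yields times $i$ at which $U^n$ and $V^n$ stay within $\ep$ of the cone on the corresponding interval, i.e.\ the reduced word $X(j,i)$ contains at most $O(\ep n^{1/2})$ burgers of each type. But the event $\{X_i = \tb F,\ \phi(i)\leq 0\}$ requires $X(1,i-1)$ to contain \emph{no} burgers at all — an exact, non-open condition whose probability decays like $i^{-\mu+o_i(1)}$ (Proposition~\ref{prop-J-reg-var}) — so no perturbation or "shrunk cone" argument can produce it from the approximate event; one must genuinely clear the remaining $\ep n^{1/2}$ burgers off the stack. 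Your proposed patches do not touch this. The positive density of $\tb F$-symbols among the orders is irrelevant: the obstruction is not finding an $\tb F$ in $[\delta n, n]_{\BB Z}$ but ensuring that its match reaches back to a time $\leq 0$, which is a property of the burger count of the prefix, not of how many orders are flexible. And iterating over disjoint macroscopic blocks only produces excursions whose matches reach back to the start of the respective block, not to time $0$, so it cannot boost the constant $c$ to $1$ for the event $\mcl E_n(\delta)$ as stated.

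For comparison, the paper's proof takes a different route entirely. The times $i$ at which an $\tb F$ with $\phi(i)\leq 0$ can occur are governed by a renewal process whose increment is the first time $j$ at which $X(-j,-1)$ contains a burger (resp.\ no $\tb F$-symbols); Proposition~\ref{prop-J-reg-var} shows this increment has a regularly varying tail with index $\mu\in(0,1)$ — and proving that requires the full conditioned scaling limit of Theorem~\ref{thm-no-burger-conv}, hence the regularity analysis of Sections~\ref{sec-F-reg}--\ref{sec-no-burger}. The Dynkin--Lamperti theorem then shows the rescaled age $n^{-1}(n-\wt J_{M_n})$ converges to a generalized arcsine law with no atom at $0$, which is precisely the statement $\lim_{\delta\rta 0}\liminf_n \BB P(\mcl E_n(\delta)) = 1$. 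If you want to salvage a softer argument, you would at minimum need a lower bound of the form $\BB P(J>n)\geq n^{-\mu+o_n(1)}$ together with a second-moment (Paley--Zygmund) bound over the renewal times, as in Lemma~\ref{prop-F-pos}; even that only yields a uniformly positive lower bound, not a limit equal to $1$.
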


We will prove Proposition~\ref{prop-late-F} in Section~\ref{sec-reg-var}, via an argument which requires most of the results of Sections~\ref{sec-prob-estimates},~\ref{sec-F-reg}, and~\ref{sec-no-burger}.
Proposition~\ref{prop-late-F} is not obvious from the results of \cite{shef-burger}. At first glance, it may appear that one should be able to obtain large $\tb F$-excursions in the discrete model by applying \cite[Theorem 2.5]{shef-burger} and considering times $t$ which are ``close" to being $\pi/2$-cone times for $Z^n$. However, this line of reasoning only yields times $t$ at which $U^n(t) \leq U^n(s) + \ep$ and $V^n(t) \leq V^n(s) + \ep$ for each $s \in [t',t]$ for some $t' < t$. One still needs Proposition~\ref{prop-late-F} or something similar to clear out the remaining $\ep n^{1/2}$ burgers on the stack at time $\lfloor t n \rfloor$ and produce an actual $\tb F$-excursion. Said differently, the $\pi/2$-cone times of a path do not depend continuously on the path in the uniform topology.

\subsection{Implications for critical FK planar maps}
\label{sec-loop-result}

\subsubsection{Area, boundary length, and complementary connected components}
\label{sec-quasi}

Let $(M , e_0 , S)$ be an infinite-volume critical FK planar map, i.e.\ the object encoded by the bi-infinite word $X$ of Section~\ref{sec-burger-prelim} via Sheffield's bijection, and let $\mcl L$ be the set of FK loops on $M$. 
Theorem~\ref{thm-cone-limit} implies scaling limit statements for various quantities associated with $\mcl L$. The reason for this is that one can explicitly describe many such quantities in terms of the $\tb F$-times for the corresponding word $X$. To illustrate this idea, in this paper we will obtain the scaling limit of the areas and boundary lengths of the bounded complementary connected components of macroscopic FK loops. Scaling limit statements for other functionals of the FK loops, such as the intersections and self intersections of loops, will be proven in both the infinite-volume and finite-volume settings in the subsequent works~\cite{gms-burger-local,gms-burger-finite,gwynne-miller-cle}. 

To state our result formally, we first need to introduce some terminology. 
Let $M^*$ be the dual map of $M$ and let $Q = Q(M)$ be the graph whose vertex set is the union of the vertex sets of $M$ and $M^*$ (i.e.\ the set of vertices and faces of $M$), with two such vertices joined by an edge if and only if they correspond to a face of $M$ and a vertex incident to that face. Note that $Q$ is a quadrangulation and that each face of $Q$ is bisected by an edge of $M$ and an edge of $M^*$. We define the \emph{root edge} of $Q$ to be the edge $\BB e_0$ of $Q$ with the same initial vertex as $e_0$ and which is the next edge clockwise (among all edges of $M$ and $Q$ that start at that endpoint) from $e_0$.  
Let $S^*$ be the set of edges of $M^*$ which do not cross edges of $S$, so that each face of $Q$ is bisected by either an edge of $S$ or an edge of $S^*$, but not both. 
We view loops in $\mcl L$ as cyclically ordered sets of edges of $Q$ which separate connected components of $S$ and $S^*$.

\begin{defn} \label{def-discrete-length}
For a set of edges $U\subset Q$, the \emph{discrete area} of $U$, denoted by $\op{Area}(U)$, is the number of edges in $U$. For a set of edges $A\subset S\cup S^*$, the \emph{discrete length} of $A$, denoted by $\op{Len}(A)$, is the number of edges in $A$. 
\end{defn}

\begin{defn} \label{def-discrete-bdy}
Suppose $C $ is a simple cycle in $S$ (resp. $S^*$) and $U$ is the set of edges of $Q$ disconnected from $\infty$ by $C$. We write $C:= \partial U$.
\end{defn}
 
\begin{defn} \label{def-fk-component}
Let $\ell \in \mcl L$ be an FK loop. Let $A$ and $A^*$ be the clusters of edges in $S$ and $S^*$ which are separated by $\ell$ (so that $A$ and $A^*$ are connected). A \emph{primal (resp. dual) complementary connected component} of $\ell$ is a set of edges $U\subset Q$ such that the following is true. There exists a simple cycle $C$ of $S$ (resp. $S^*$) which is contained in $A$ (resp. $A^*$) such that $U$ is the set of edges of $Q$ disconnected from $\ell$ by $C$; and there is no set $U'$ of edges of $Q$ satisfying the above property which properly contains $U$.  
\end{defn}

See Figure~\ref{fig-loop-component} for an illustration of Definition~\ref{def-fk-component}.

\begin{figure}[ht!]
 \begin{center}
\includegraphics[scale=1.1]{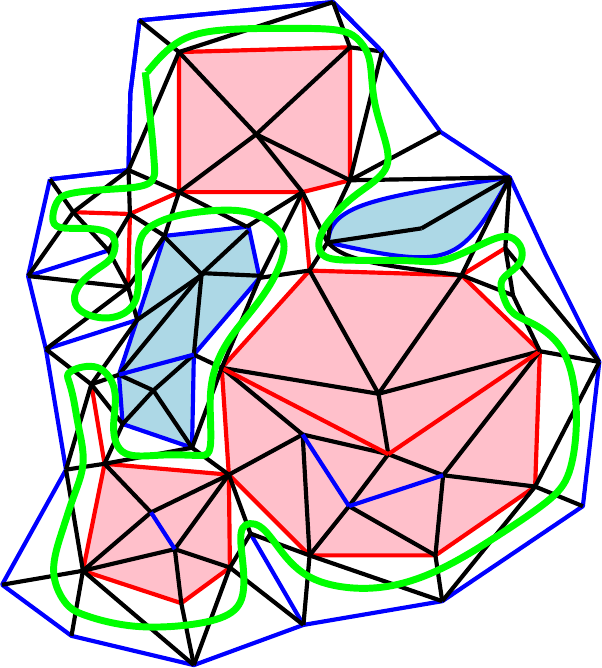} 
\caption{A subset of the quadrangulation $Q$ (black) and the edge sets $S$ and $S^*$ (red and blue) together with an FK loop (green) separating a primal cluster and a dual cluster. Primal (resp. dual) complementary connected components of the loop are shown in pink (resp. light blue). Other FK loops are not shown. } \label{fig-loop-component}
\end{center}
\end{figure}

\subsubsection{Statement of the scaling limit result} 
\label{sec-loop-limit}

Suppose we have coupled the sequence of words $\{X^n\}_{n\in\BB N}$ with the correlated two-dimensional Brownian motion $Z$ of~\eqref{eqn-bm-cov} in such a way that the conclusion of Theorem~\ref{thm-cone-limit} holds. 
For $n\in\BB N$, let $(M^n , e_0^n , S^n)$ be the infinite-volume critical FK planar map corresponding to $X^n$ under Sheffield's bijection. Also let $\mcl L^n$ be the corresponding set of FK loops. Let $\{\ell_j^n\}_{j\in\BB N}$ be the sequence of loops in $\mcl L^n$ which surround the root edge $e_0^n$. For $j\in\BB N$, let $U_{j,1}^n , \dots , U_{j,N_j}^n$ be the bounded complementary connected components of $\ell_j^n$, in order of decreasing area (with ties broken in some arbitrary manner). Let $M_j^{n,\infty}$ be the set of edges of $Q$ which are disconnected from $\infty$ by $\ell_j^n$ (including the edges on $\ell_j^n$). Also let $M_j^{n,\op{in}}$ be the set of edges of the quadrangulation $Q^n$ corresponding to $M^n$ which are surrounded by $\ell_j^n$ (including the edges on $\ell_j^n$). In other words, if $\ell_j^n$ surrounds a primal (resp. dual) cluster, then $M_j^{n,\op{in}}$ is obtained from $M_j^{n,\infty}$ by removing the dual (resp. primal) complementary connected components of $\ell_j^n$. 
 
Let $\{\sigma_j\}_{j\in\BB Z}$ be the ordered sequence of $\pi/2$-cone times (Definition~\ref{def-cone-time}) for $Z$ such that the following is true. We have $v_Z(\sigma_j) < 0 < \sigma_j$ and the largest $\pi/2$-cone time $t$ for $Z$ with $t < \sigma_j$ is in the opposite direction from $\sigma_j$. Also let $\Sigma_j$ be the set of maximal (Definition~\ref{def-maximal}) $\pi/2$-cone times $t$ for $Z$ in the interval $[v_Z(\sigma_j) ,\sigma_j]$ for which $u_Z(t) \geq v_Z(\sigma_j)$. Let $\Sigma_j^{\op{in}}$ be the set of $t\in \Sigma_j$ which are in the same direction as $\sigma_j$. Let $\{s_{j,k}\}_{k\in\BB N}$ be the elements of $\Sigma_j$, ordered so that $s_{j,k} - v_Z(s_{j,k})  > s_{j,k+1} - v_Z(s_{j,k+1})$ for each $k\in\BB N$. 

\begin{thm} \label{thm-loop-limit}
In the setting described just above (for any choice of coupling as in Theorem~\ref{thm-cone-limit}), the following is true almost surely. There is a random sequence of integers $\{b^n\}_{n \in \BB N}$ (the \emph{index shift}) such that for each $j,k\in\BB N$, we have (in the notation of Definitions~\ref{def-discrete-length} and~\ref{def-discrete-bdy}) 
\eqb \label{eqn-component-conv}
n^{-1} \op{Area} \left( U_{j + b^n , k}^n \right) \rta s_{j,k} - v_Z(s_{j,k}) \quad \op{and} \quad n^{-1/2} \op{Len} \left( \bdy U_{j + b^n , k}^n \right) \rta |Z_{v_Z(s_{j,k})} - Z_{s_{j,k}}|  .
\eqe 
Furthermore, for each $j\in\BB N$ we have
\eqb \label{eqn-full-conv}
n^{-1} \op{Area} \left( M_{j+b^n}^{n,\infty} \right) \rta  \sum_{t\in \Sigma_j} (t-v_Z(t)) .
\eqe 
and
\eqb \label{eqn-in-conv}
n^{-1} \op{Area} \left( M_{j+b^n}^{n,\op{in}} \right) \rta  \sum_{t\in \Sigma_j^{\op{in}} } (t-v_Z(t)) .
\eqe 
\end{thm}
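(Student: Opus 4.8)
The plan is to reduce the statement entirely to Theorem~\ref{thm-cone-limit} by making the bijective correspondence between FK loops (and their complementary components) and $\pi/2$-cone times completely explicit at the discrete level, and then transporting this correspondence through the scaling limit. First I would recall from Section~\ref{sec-burger-bijection} the discrete dictionary: under Sheffield's bijection the loops $\ell_j^n$ surrounding the root edge $e_0^n$ correspond exactly to the $\tb F$-times $i$ of $X^n$ for which $\phi^n(i) \leq 0 < i$ and for which the previous $\tb F$-time of this form is of the opposite burger type; this is the discrete analogue of the sequence $\{\sigma_j\}_{j\in\BB Z}$. For such an $i$, the pair $(\phi^n(i), i)$ plays the role of $(v_Z(\sigma_j), \sigma_j)$, the discrete area $\op{Area}(M_{j}^{n,\infty})$ equals (up to an $O(1)$ or $O(n^{1/2})$ error coming from the precise convention for including the loop edges) the number of indices in $[\phi^n(i), i]_{\BB Z}$, i.e.\ $n$ times the length of the corresponding time interval, and the discrete boundary length $\op{Len}(\bdy U)$ of a complementary component is, up to constants, $|D(Y(\cdot))|$ evaluated at the relevant endpoints, hence comparable to $n^{1/2}|Z^n_{v}-Z^n_{t}|$ for the appropriate $v,t$. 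The complementary connected components $U_{j,k}^n$ of $\ell_j^n$ correspond precisely to the maximal $\tb F$-excursions nested inside $[\phi^n(i),i]_{\BB Z}$ whose own ``$u$-time'' $\phi^n_*(\cdot)$ reaches back past $\phi^n(i)$ — this is the discrete version of the set $\Sigma_j$, and the primal-versus-dual distinction is exactly the same-direction-versus-opposite-direction distinction, giving $\Sigma_j^{\op{in}}$.

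With this dictionary in hand, the second step is to identify the index shift $b^n$: since Theorem~\ref{thm-cone-limit}\eqref{item-cone-limit-stopping} controls the smallest $\tb F$-time after a given time with a given minimal excursion length, I would use it (with $a$ slightly negative and $r$ small, ranging over $\mcl Q$) together with part~\eqref{item-cone-limit-times} to show that the $j$-th loop-around-the-root time $\sigma_j^n$ (discrete) converges, after relabeling by some integer shift $b^n$, to $\sigma_j$, along with $(\phi^n(\sigma_{j}^n),\phi_*^n(\sigma_j^n)) \to (v_Z(\sigma_j), u_Z(\sigma_j))$; the shift is forced because the labeling of the $\sigma_j$ starts from the ``first loop surrounding the origin on the right'' and the discrete model may have a few extra or missing small loops near time $0$ that get pushed off to $\pm\infty$ in the limit. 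Concretely, $b^n$ is chosen so that $\sigma_{1+b^n}^n$ is the discrete loop time matching $\sigma_1$ in the sense of part~\eqref{item-cone-limit-times}, and one checks this is consistent for all $j$ simultaneously using that only finitely many loops are relevant on any compact time interval and applying part~\eqref{item-cone-limit-maximal} to each of the countably many intervals with $\mcl Q$-endpoints.

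The third step handles the individual components. For fixed $j$, the component $U_{j+b^n,k}^n$ of area-rank $k$ corresponds to the discrete maximal $\tb F$-excursion in $[\phi^n(\sigma_{j+b^n}^n), \sigma_{j+b^n}^n]_{\BB Z}$ of the $k$-th largest excursion length; Theorem~\ref{thm-cone-limit}\eqref{item-cone-limit-maximal} (applied on the interval $I$ with $\mcl Q$-endpoints approximating $[v_Z(\sigma_j),\sigma_j]$) gives convergence of each such maximal $\tb F$-excursion time $t_n$ to the maximal $\pi/2$-cone time $s_{j,k}$, whence $n^{-1}\op{Area}(U^n_{j+b^n,k}) = n^{-1}(\text{excursion length}) \to s_{j,k}-v_Z(s_{j,k})$ and $n^{-1/2}\op{Len}(\bdy U^n_{j+b^n,k}) \to |Z_{v_Z(s_{j,k})}-Z_{s_{j,k}}|$ via part~\eqref{item-cone-limit-Z} and part~\eqref{item-cone-limit-times}. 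The one subtlety here is that the area-rank ordering need not be stable under the limit when two components have nearly equal area; I would deal with this by the standard argument of restricting to the a.s.\ event (in the Brownian motion) that the lengths $s_{j,k}-v_Z(s_{j,k})$ are pairwise distinct for each fixed $j$, which suffices since the conclusion \eqref{eqn-component-conv} is stated for fixed $j,k$. Finally, \eqref{eqn-full-conv} and \eqref{eqn-in-conv} follow by summing: $n^{-1}\op{Area}(M^{n,\infty}_{j+b^n})$ is, up to $o(1)$, the sum over all maximal $\tb F$-excursions in $[\phi^n(\sigma^n_{j+b^n}),\sigma^n_{j+b^n}]$ of their lengths, and one upgrades the componentwise convergence to convergence of the sum using a uniform tail bound: the total number of $\tb F$-excursions of length $\geq \epsilon n$ inside the loop is tight and the contribution of excursions of length $< \epsilon n$ is $O(\epsilon)$ in both the discrete and continuum pictures (for the continuum side this is finiteness of $\sum_{t\in\Sigma_j}(t-v_Z(t))$, which is bounded by $\sigma_j - v_Z(\sigma_j) < \infty$).

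The main obstacle I anticipate is the second and third steps together — namely, pinning down the index shift $b^n$ and proving that it is the \emph{same} shift that works for all $j$ and $k$ simultaneously, almost surely. The difficulty is that Theorem~\ref{thm-cone-limit} is phrased in terms of fixed $\mcl Q$-indexed intervals and fixed $(a,r)$ pairs, not in terms of the random, coupling-dependent sequence of loop times; bridging this gap requires a short but careful argument showing that a macroscopic FK loop surrounding the root in the discrete model, for $n$ large, must correspond to a genuine $\pi/2$-cone time surrounding the origin in $Z$ (this direction is essentially part~\eqref{item-cone-limit-times}) and, conversely, that every such $\pi/2$-cone time of $Z$ is realized by a nearby macroscopic discrete loop (this direction leans on Proposition~\ref{prop-late-F} and part~\eqref{item-cone-limit-stopping} to rule out the ``$\epsilon n^{1/2}$ leftover burgers'' scenario). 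Once the bijection between macroscopic loops and macroscopic cone times is established to be ``continuous'' in this sense on the coupling event, the index shift is simply the offset between the two enumerations, and everything else is bookkeeping plus the two elementary tightness/summability estimates above.
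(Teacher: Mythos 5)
Your overall architecture is the same as the paper's: establish the discrete dictionary from Sheffield's bijection, prove convergence of the loop-tracing times with a single index shift $b^n$ (the paper's Lemmas~\ref{prop-inner-conv} and~\ref{prop-outer-conv}, which define $b^n$ via the times $\iota_j^n$ at which the exploration leaves the component containing the root), then get the individual components from condition~\ref{item-cone-limit-maximal} of Theorem~\ref{thm-cone-limit} and the sums by combining componentwise convergence with the continuum summability $\sum_{t}(t-v_Z(t))\leq \sigma_j-v_Z(\sigma_j)$ (Lemmas~\ref{prop-all-bubble-conv} and~\ref{prop-bubble-sum}). Your identification of the main difficulty and the way you propose to resolve it (conditions~\ref{item-cone-limit-stopping} and~\ref{item-cone-limit-times} in one direction, Proposition~\ref{prop-late-F} via condition~\ref{item-cone-limit-stopping} in the other) is exactly how the paper proceeds.

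There is, however, a concrete error in your dictionary that would propagate to the wrong limit in~\eqref{eqn-full-conv}. You assert that $\op{Area}(M_j^{n,\infty})$ equals $\theta_j^n-\wt\theta_j^n$ up to an $O(n^{1/2})$ error, and that the complementary connected components of $\ell_j^n$ correspond to the nested maximal $\tb F$-excursions whose time $\phi_*^n(\cdot)$ \emph{reaches back past} $\phi^n(\theta_j^n)$. Both statements are inverted. The exploration path, during $[\wt\theta_j^n,\theta_j^n]_{\BB Z}$, also traces the bubbles $P(i)$ for $i\in I_j^n$ that hang \emph{outside} $M_j^{n,\infty}$ (they touch $\partial M_j^{n,\infty}$ from the far side); these are precisely the maximal excursions with $\phi_*^n(i)<\wt\theta_j^n$ (Lemma~\ref{prop-extra-bubbles-discrete}), their total length is macroscopic (converging to $\sum_{t\in T_j}(t-v_Z(t))>0$ a.s.), and they must be \emph{subtracted}: $\op{Area}(M_j^{n,\infty})=\theta_j^n-\wt\theta_j^n-\sum_{i\in I_j^n}(i-\phi^n(i))$ as in~\eqref{eqn-outer-area-discrete}. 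The complementary components are the remaining maximal excursions, namely those whose $\phi_*^n$ does \emph{not} reach back past $\wt\theta_j^n$ — matching the continuum definition $\Sigma_j=\{t: u_Z(t)\geq v_Z(\sigma_j)\}$. With your convention the limit in~\eqref{eqn-full-conv} would come out as $\sigma_j-v_Z(\sigma_j)$ rather than $\sum_{t\in\Sigma_j}(t-v_Z(t))$, and the sets feeding into~\eqref{eqn-component-conv} would be the wrong ones. The fix is purely combinatorial (it is the content of Lemmas~\ref{prop-L-nest},~\ref{prop-extra-bubbles-discrete}, and~\ref{prop-discrete-components}), and once the roles of $I_j^n$ and $\Theta_j^n$ are swapped back, the rest of your argument goes through as you describe.
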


The reason why we need the index shift $b^n$ in Theorem~\ref{thm-loop-limit} is that the FK loops surrounding the root edge in an FK planar map are naturally indexed by $\BB N$ (i.e., there is a smallest such loop) whereas the limiting times $\sigma_j$ are naturally indexed by $\BB Z$ (since a.s.\ there are infinitely many $\pi/2$-cone intervals for $Z$ surrounding 0). The shift $b^n$ can be chosen explicitly in several equivalent ways. For example, we can let $j_*^n$ for $n\in\BB N$ be the smallest $j\in\BB N$ for which the complementary connected component containing the root edge of the loop $\ell_j^n$ surrounding 0 has area at least $n$, let $j_*$ be the smallest $j\in\BB Z$ for which the maximal $\pi/2$-cone interval for $Z$ in $(v_Z(\sigma_j) , \sigma_j)$ which contains 0 has length at least 1, and let $b^n =  j_* - j_*^n $. 
 
Theorem~\ref{thm-loop-limit} will turn out to be a straightforward consequence of Theorem~\ref{thm-cone-limit}, once we have written down descriptions of the FK loops surrounding $\BB e_0$ and their complementary connected components in terms of the word $X$ (see Section~\ref{sec-fk-planar-map}).  
By re-rooting invariance of the planar maps $(M^n , e_0^n , S^n)$ (which is equivalent to translation invariance of the word $X$) and since the coupling of Theorem~\ref{thm-cone-limit} does not depend on the choice of root edge, Theorem~\ref{thm-loop-limit} immediately implies a joint scaling limit result for the sequences of FK loops surrounding countably many marked edges simultaneously.  

Note that Theorem~\ref{thm-loop-limit} does not include a scaling limit statement for the boundary length of the unbounded complementary connected components of FK loops. The description of this outer boundary length in terms of Sheffield's bijection is somewhat more complicated than that of the boundary lengths of the bounded complementary connected components (see Lemma~\ref{prop-discrete-outer} below), and proving that it converges requires estimates which are outside the scope of this paper. A scaling limit statement for the outer boundary lengths of FK loops will be proven in~\cite{gwynne-miller-cle}.

\begin{remark}
In this remark we explain how Theorem~\ref{thm-loop-limit} can be interpreted as a scaling limit result for FK loops toward a conformal loop ensemble on an independent Liouville quantum gravity cone.
It is not hard to see from the peanosphere construction of~\cite{wedges} together with some basic properties of CLE~\cite{shef-cle} and the LQG measure~\cite{shef-kpz} that the following is true. Let $\kappa $ be as in~\eqref{eqn-p-kappa} and let $\gamma = 4/\sqrt\kappa$. Let $(\mcl C , \Gamma)$ be the $\gamma$-LQG cone and independent CLE$_{\kappa }$ encoded by $Z$ as in~\cite[Theorems 1.13 and 1.14]{wedges}. Then the times $\sigma_j$ for $j\in\BB Z$ are in one-to-one correspondence with the CLE loops in $\Gamma$ surrounding the origin. Furthermore, for $j\in\BB Z$ the set $ \Sigma_j$ is in one-to-one correspondence with the set of bounded complementary connected components of the loop corresponding to $\sigma_j$. For $t\in \Sigma_j$, the quantum area and quantum boundary length of the corresponding complementary connected component are given by $t  - v_Z(t)$ and $|Z_{v_Z(t)} - Z_{t }|  $, respectively. Furthermore, $\Sigma_j^{\op{in}}$ corresponds to the set of complementary connected components which are surrounded by the loop. The proofs of these statements are straightforward once one has the results of~\cite{wedges} (essentially, these proofs are an exact continuum analogue of the descriptions of FK loops in terms of the inventory accumulation model found in Section~\ref{sec-fk-planar-map}). However, since we do not work directly with CLE or LQG here, these proofs are outside the scope of the present paper and will be given in~\cite{gwynne-miller-cle}. 
\end{remark}

\subsection{Basic notation}

Throughout the remainder of the paper, we will use the following notation. 

\begin{notation} \label{def-discrete-intervals}
For $a < b \in \BB R$, we define the discrete intervals $[a,b]_{\BB Z} := [a, b]\cap \BB Z$ and $(a,b)_{\BB Z} := (a,b)\cap \BB Z$. 
\end{notation}

\begin{notation}\label{def-asymp}
If $a$ and $b$ are two quantities, we write $a\preceq b$ (resp. $a \succeq b$) if there is a constant $C$ (independent of the parameters of interest) such that $a \leq C b$ (resp. $a \geq C b$). We write $a \asymp b$ if $a\preceq b$ and $a \succeq b$. 
\end{notation}

\begin{notation} \label{def-o-notation}
If $a$ and $b$ are two quantities which depend on a parameter $x$, we write $a = o_x(b)$ (resp. $a = O_x(b)$) if $a/b \rta 0$ (resp. $a/b$ remains bounded) as $x \rta 0$ (or as $x\rta\infty$, depending on context). We write $a = o_x^\infty(b)$ if $a = o_x(b^s)$ for each $s \in\BB R$. 
\end{notation}

Unless otherwise stated, all implicit constants in $\asymp, \preceq$, and $\succeq$ and $O_x(\cdot)$ and $o_x(\cdot)$ errors involved in the proof of a result are required to satisfy the same dependencies as described in the statement of said result.

\subsection{Outline}
The remainder of this paper is structured as follows. In Section~\ref{sec-fk-planar-map}, we assume Theorem~\ref{thm-cone-limit} and use it together with some elementary facts about Sheffield's bijection to deduce Theorem~\ref{thm-loop-limit}.

The remaining sections will be devoted to the proof of Theorem~\ref{thm-cone-limit}, which will require a number of estimates for the inventory accumulation model of~\cite{shef-burger}. 
In Section~\ref{sec-prob-estimates}, we prove a variety of probabilistic estimates related to this model. These include some estimates for Brownian motion, lower bounds for the probabilities of several rare events associated with the word $X$, and an upper bound for the number of flexible orders remaining on the stack at a given time which improves on~\cite[Lemma 3.7]{shef-burger}. 

In Section~\ref{sec-F-reg}, we prove a regularity result for the conditional law of the path $Z^n$ given that the word $X(-n,-1)$ contains no burgers. In Section~\ref{sec-no-burger}, we use said regularity result to prove convergence in the scaling limit of the conditional law of $Z^n|_{[-1,0]}$ given that $X(-n,-1)$ has no burgers (equivalently that $Z^n|_{[-1,0]}$ stays in the first quadrant) to the law of a correlated Brownian motion conditioned to stay in the first quadrant. In Section~\ref{sec-cone-conv}, we use the scaling limit result of Section~\ref{sec-no-burger} to obtain that a certain stopping time associated with the word $X$ has a regularly varying tail, deduce Proposition~\ref{prop-late-F} from this fact, and then deduce Theorem~\ref{thm-cone-limit} from Proposition~\ref{prop-late-F}. 

In Appendix~\ref{sec-no-order}, we will record analogues of some of the results of the paper when we consider words with no orders, rather than no burgers. These results are not needed for the proof of Theorems~\ref{thm-cone-limit} or~\ref{thm-loop-limit}, but are included for the sake of completeness and will be used in the subsequent papers~\cite{gms-burger-local,gms-burger-finite}.

For the convenience of the reader, we include an index of commonly used symbols in Appendix~\ref{sec-index}, along with the locations in the paper where they are first defined.

\section{Scaling limits for FK loops}
\label{sec-fk-planar-map}

In this section we will study the encoding of FK planar maps via Sheffield's bijection and see how Theorem~\ref{thm-cone-limit} implies Theorem~\ref{thm-loop-limit}. The rest of the paper will be devoted to the proof of Theorem~\ref{thm-cone-limit}. 
We start in Section~\ref{sec-burger-bijection} by reviewing the infinite-volume version of Sheffield's bijection, which encodes an infinite-volume FK planar map in terms of a bi-infinite word $X$ consisting of elements of $\Theta$ (recall Section~\ref{sec-burger-prelim}). In Sections~\ref{sec-detecting-inner-discrete} and~\ref{sec-fk-loops}, we will explain how this word $X$ encodes the complementary connected components of FK loops. Finally, in Section~\ref{sec-loop-limit-proof} we will explain how this encoding together with Theorem~\ref{thm-cone-limit} implies Theorem~\ref{thm-loop-limit}. 

\subsection{Sheffield's bijection} 
\label{sec-burger-bijection}

The primary reason for our interest in the inventory accumulation model of Section~\ref{sec-burger-prelim} is its relationship to FK planar maps via the bijection~\cite[Section 4.1]{shef-burger}.
Since the result of this paper primarily concern infinite-volume FK planar maps, in this subsection we will explain how to define an infinite-volume FK planar map and how to encode it by means of a bi-infinite word consisting of elements of $\Theta$. 

Fix $q \in (0,4)$. An infinite-volume (critical) FK planar map with parameter $q$ is a random triple $(M , e_0 , S)$ where $M$ is an infinite planar map, $e_0$ is an oriented root edge for $M$, and $S$ is a set of edges of $M$. This object is the limit in the Benjamini-Schramm topology~\cite{benjamini-schramm-topology} of finite-volume FK planar maps of size $n$ and parameter $q$ as $n\rta\infty$. The existence of this limit is alluded to in~\cite[Section 4.2]{shef-burger} and is explained more precisely in~\cite{chen-fk,blr-exponents}.
  
Suppose now that $(M , e_0 , S)$ is an infinite-volume FK planar map. We will describe how to associate a bi-infinite word with $(M , e_0 , S)$ which has the law of the word $X$ of Section~\ref{sec-burger-prelim}. The construction is essentially the same as the finite-volume bijection in~\cite[Section 4.1]{shef-burger} and is the inverse of the procedure described in~\cite[Proposition 9]{chen-fk}. See Figure~\ref{fig-hc-bijection} for an illustration of this construction.

\begin{figure}[ht!]
 \begin{center}
\includegraphics{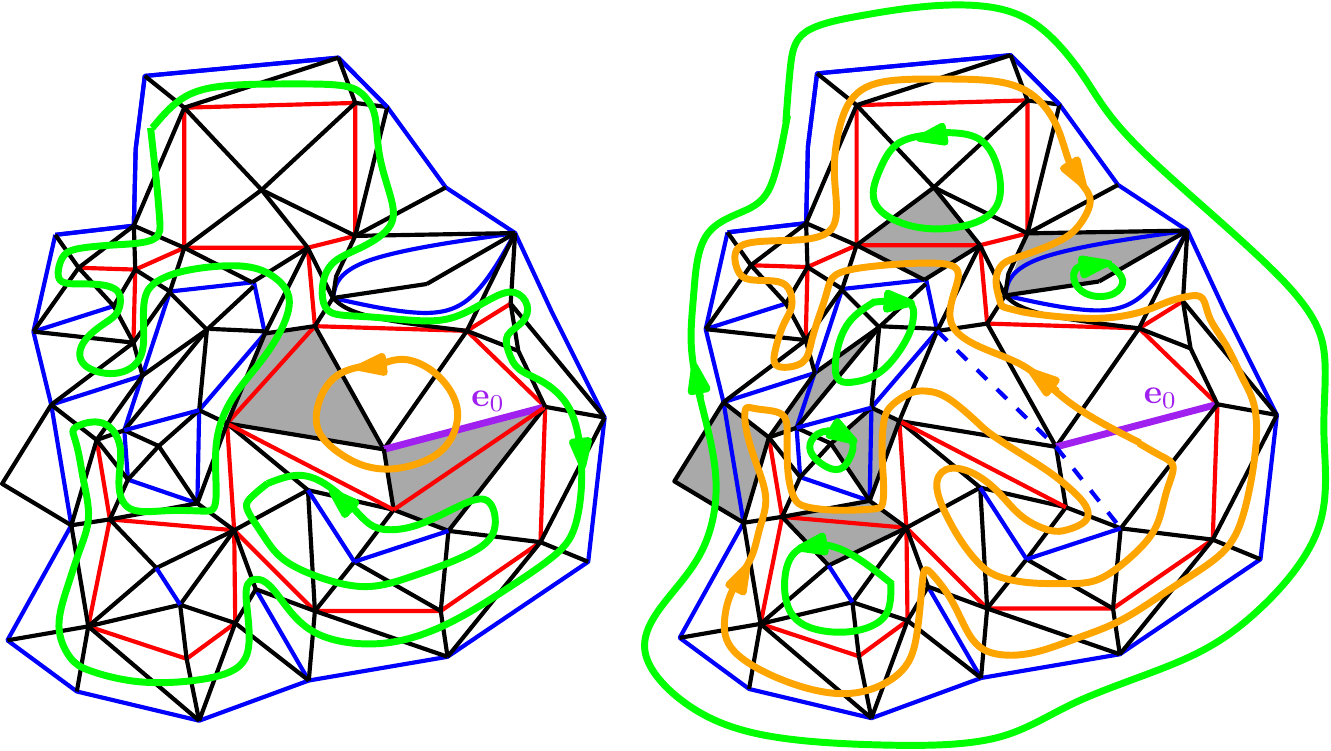} 
\caption{Illustration of the loop-joining procedure in Sheffield's bijection. Edges of $Q$ (resp. $S$, $S^*$) are shown in black (resp. blue, red).
The left panel shows the loop $\ell_0$ (orange) containing the root edge $\BB e_0$ of $Q$ (purple). The loops adjacent to $\ell_0$ are shown in green. The grey quadrilaterals are the last quadrilaterals crossed by $\ell_0$ which are also crossed by each of these loops. To join the orange and green loops into a single loop, we replace the edge of $S$ (resp. $S^*$) which bisects each of these grey quadrilaterals by the edge of $S^*$ (resp. $S$) which crosses it. 
The right panel shows the situation after making these replacements. We have joined the green loops in the left panel to the orange loop $\ell_0$ to get a new orange loop. 
The green loops in the right panel are the ones adjacent to this new orange loop, and the grey quadrilaterals are the ones in which we will replace an edge of $S$ or $S^*$ at the next stage of the construction.  
Iterating this procedure countably many times a.s.\ joins all of the loops together into a single space-filling path $\lambda$. 
 } \label{fig-hc-bijection}
\end{center}
\end{figure}


Define the dual map $M^*$, the rooted quadrangulation $(Q, \BB e_0)$, and the dual edge set $S^*$ as in Section~\ref{sec-quasi}. Also let $\mcl T$ be the graph whose edge set is the union of $S$, $S^*$, and the edge set of $Q$, and note that $\mcl T$ is a triangulation. 
 
Let $\mathcal C$ be the set of connected components of the graph obtained by removing all edges not in $S$ from $M$. Each element of $\mathcal C$ is surrounded by a loop $\ell$ (described by a cyclically ordered set of edges in $Q$) which passes through no edges in $S$ or $S^*$. Let $\mathcal L$ be the set of such loops and let $\ell_0$ be the loop in $\mcl L$ which passes through the root edge $\BB e_0$. 
Let $C_1 , \dots , C_k$ be the connected components in the set of triangles of $\mcl T$ obtained by removing the triangles crossed by $\ell_0$ from $\mcl T$. The boundary of each $C_j$ shares an edge with at least one triangle in $\ell_0$. Let $A_j$ be the last triangle sharing an edge with $\partial C_j$ hit by $\ell_0$ when it is traversed in the clockwise direction starting from $\BB e_0$. Let $a_j$ denote the shared edge. 

If $a_j \in S$, we replace $a_j$ by the edge in $M^*$ which it crosses, and if $a_j\in S^*$, we replace $a_j$ with the edge in $M$ which it crosses. Call the new edge a \emph{fictional edge}. Making these replacements for each $j\in [1,k]_{\BB Z}$ joins one loop in each of $C_1,\dots,C_k$ to the loop $\ell_0$. 
Since $(Q,S)$ is the local limit of finite-volume FK planar maps~\cite[Section 4.2]{shef-burger}, it follows that we can a.s.\ iterate this procedure countably many times (each time starting with a larger initial loop $\ell_0$) to join all of the loops in $\mcl L$ into a single bi-infinite path $\lambda$ which hits every edge of $Q$ exactly once and separates a spanning tree $ T$ of $M$ from a dual spanning tree $T^*$ of $M^*$. We view $\lambda$ as a function from $\BB Z$ to the edge set of $Q$, with $\lambda(0) = \BB e_0$.  
  
Each edge $\lambda(i)$ for $i\in\BB Z$ connects a vertex in $M$ to a vertex of $M^*$. For each $i\in  \BB Z $, write $d(i)$ for the distance in the primal tree $ T$ from the primal endpoint of $\lambda(i)$ to the primal endpoint of $\BB e_0$ and $d^*(i)$ for the distance in the dual tree $ T^*$ from the dual endpoint of $\lambda(i)$ to the dual endpoint of $\BB e_0$. We also write $D(i) = (d(i) ,d^*(i))$. We associate to the loop $\lambda$ a bi-infinite word $Y = \dots Y_{-1} Y_0 Y_1\dots$ with symbols in $\{\tc H,\tc C , \tb H , \tb C\}$ as follows. For $i\in \BB Z$, we set $Y_i = \tc H, \tc C,  \tb H, $ or $\tb C$ according to whether $D(i)  - D(i-1) = (1,0), (0,1) , (-1,0),$ or $(0,-1)$.   
Then in the terminology of Notation~\ref{def-theta-count}, we have
\eqbn
d(i) = d(Y(1,i))  \quad \op{and} \quad d^*(i) = d^*(Y(1,i))  , \quad \forall i \in \BB N
\eqen
where $Y(1,i)$ is as in~\eqref{eqn-X(a,b)} with $Y$ in place of $X$. 

Note that $\lambda$ crosses each quadrilateral of $Q$ twice. A burger in the word $Y$ corresponds to the first time at which $\lambda$ crosses some quadrilateral, and the order matched to this burger corresponds to the second time at which $\lambda$ crosses this quadrilateral. 

The bi-infinite word $X$ corresponding to the triple $(M,e_0,S)$ is constructed from $Y$ as follows. Whenever $\lambda$ crosses a quadrilateral bisected by a fictional edge for the second time at time $i$, we replace $Y_i$ by an $\tb F$-symbol. As explained in~\cite[Section 4.1]{shef-burger}, this does not change the match of any order in the word $Y$. Furthermore, passing to the infinite-volume limit in the finite-volume version of Sheffield's bijection shows that the symbols of $X$ are iid samples from the law~\eqref{eqn-theta-prob} with $p = \sqrt q/(2 + \sqrt q)$.

\subsection{Cycles and discrete ``bubbles"} 
\label{sec-detecting-inner-discrete}

Throughout the remainder of this section we continue to assume that $(M , e_0 , S)$ is an infinite-volume FK planar map and use the notation of Section~\ref{sec-burger-bijection}. 
In the next two subsections, we will give explicit descriptions of the objects involved in Theorem~\ref{thm-loop-limit} in terms of the bi-infinite word $X$ which encodes the infinite-volume FK planar map. We note that although the description given here is in the context of the infinite-volume version of Sheffield's bijection, a completely analogous description holds in the finite-volume case, with the same proofs.
  
Our first task is to describe how cycles in $S$ and $S^*$ are encoded by the word $X$. 
To this end, let $\mcl I$ be the set of $i\in \BB Z$ such that $X_i = \tb F$. Also let $\mcl I^L$ (resp. $\mcl I^R$) be the set of $i\in \mcl I$ such that $X_{\phi(i)} = \tc C$ (resp. $X_{\phi(i)} = \tc H$). We recall the notations $\phi(i)$ for the match of $i \in \BB Z$ and $\phi_*(i)$ for the index of the match of the rightmost order in $X(\phi(i) ,i)$ from Notation~\ref{def-match-function}.

The set $\mcl I$ is the discrete analogue of the set of $\pi/2$-cone times of the correlated Brownian motion $Z$. The match $\phi(i)$ of $i$ corresponds (modulo a constant-order error) to the time $v_Z(\cdot)$ in Definition~\ref{def-cone-time} and the time $\phi_*(i)$ corresponds (modulo a constant order error) to the time $u_Z(\cdot)$ in Definition~\ref{def-cone-time}. The sets $\mcl I^L$ and $\mcl I^R$ correspond to the left and right $\pi/2$-cone times of $Z$, respectively, which explains the choice of notation. 

Intervals $[\phi(i) , i-1]_{\BB Z}$ with $i \in \mcl I^L \cup \mcl I^R$ are closely related to cycles in $S\cup S^*$, as the following lemma demonstrates. 
  
\begin{lem} \label{prop-cone-loops-discrete}
Let $i\in \mcl I^L$ and let $U = \lambda([\phi(i) , i-1]_{\BB Z})$, so that $U$ is a set of edges of $Q$. There is a simple cycle $C\subset S$ such that $U$ is the set of edges of $Q$ disconnected from $\infty$ by $C$. In this case $\op{Area}(U) = i - \phi(i)$ and $\op{Len}(C) = |X(\phi(i) , i)|+1$ (recall Definition~\ref{def-discrete-length}). Furthermore, $\phi_*(i)$ is the first time at which $\lambda$ crosses a quadrilateral of $q$ bisected by an edge of $C$. The same holds with $\mcl I^R$ in place of $\mcl I^L$ and $S^*$ in place of $S$.   
\end{lem}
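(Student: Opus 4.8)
\textbf{Proof proposal for Lemma~\ref{prop-cone-loops-discrete}.}

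The plan is to track what the space-filling path $\lambda$ does on the interval $[\phi(i), i-1]_{\BB Z}$ and identify the boundary of the region it fills. First I would unpack the meaning of $i \in \mcl I^L$: by definition $X_i = \tb F$ and $X_{\phi(i)} = \tc C$, so the flexible order at time $i$ is matched to a cheeseburger placed at time $\phi(i)$, and moreover (crucially) this burger was placed on a \emph{fictional} edge, since flexible orders arise exactly when $\lambda$ re-crosses a quadrilateral bisected by a fictional edge. The key structural fact I would establish is that between times $\phi(i)$ and $i$, every symbol of $X$ is matched internally --- i.e., $\phi$ maps $[\phi(i), i-1]_{\BB Z}$ to itself (excluding the pair $\phi(i), i$ itself, which are matched to each other). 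This is because $X(\phi(i), i) = \mcl R(X_{\phi(i)} \cdots X_i)$: the burger at $\phi(i)$ is consumed by the order at $i$, so after reduction what remains is precisely the orders in $[\phi(i)+1, i-1]$ that were never matched within the interval, sitting to the left, and the burgers never matched within, sitting to the right --- and one checks these are exactly the symbols counted by $|X(\phi(i),i)|$. Since $\lambda$ visits each edge of $Q$ exactly once and crosses each quadrilateral exactly twice (once for the burger, once for the matching order), the self-matching property says that the quadrilaterals touched by $\lambda|_{[\phi(i),i]}$ form a region that $\lambda$ enters at time $\phi(i)$ and exits at time $i$ without ever straddling the boundary in between.

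Next I would identify the boundary cycle. The region $U = \lambda([\phi(i), i-1]_{\BB Z})$ is a connected union of quadrilaterals of $Q$ (connectivity is immediate since $\lambda$ restricted to a contiguous interval is a connected path). Its topological boundary consists of edges of the triangulation $\mcl T$ --- i.e., edges of $S \cup S^*$ --- that are crossed by $\lambda$ at some point, but with exactly one of their two adjacent quadrilaterals inside $U$. I claim this boundary is a simple cycle $C$ lying entirely in $S$. The reason it lies in $S$ rather than $S^*$: the loop $\ell$ through the burger-edge at $\phi(i)$ is part of a primal cluster (the match being a cheeseburger $\tc C$ corresponds to a left cone time, which the preamble tells us correspond to primal structure), and the cycle bounding the region filled during one ``excursion'' of $\lambda$ away from and back to a quadrilateral must be monochromatic in the $S$/$S^*$ coloring --- this follows because $\lambda$ separates the primal tree $T$ from the dual tree $T^*$, so any boundary edge of a $\lambda$-excursion region separates two vertices on the same side. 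Simplicity of $C$ follows from the fact that $U$ is simply connected: $\lambda|_{[\phi(i),i]}$ is a path in the planar map that does not surround any quadrilateral it fails to fill (again by the self-matching property, which prevents $\lambda$ from ``going around'' an unfilled region). Then $U$ is exactly the set of edges of $Q$ disconnected from $\infty$ by $C$.

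For the three numerical claims: $\op{Area}(U) = \#[\phi(i), i-1]_{\BB Z} = i - \phi(i)$ is immediate from Definition~\ref{def-discrete-length} and the fact that $\lambda$ is injective on edges. For $\op{Len}(C) = |X(\phi(i),i)| + 1$: each edge of the cycle $C$ is bisected by a quadrilateral that $\lambda$ crosses; the boundary edges correspond bijectively to the quadrilaterals straddling $\bdy U$, which in turn correspond to the symbols of $X$ in $(\phi(i), i)$ that get matched \emph{outside} this interval under the full word --- except these are exactly the symbols that survive into $X(\phi(i), i)$, giving $|X(\phi(i),i)|$ of them, plus one more for the fictional edge at $\phi(i)$ (which bisects a boundary quadrilateral but whose symbol $\tc C$ was consumed by $i$, so it is not counted in $X(\phi(i),i)$). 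For the $\phi_*(i)$ claim: $\phi_*(i)$ is by Notation~\ref{def-match-function} the match of the rightmost order in $X(\phi(i), i)$, i.e., the first time $\lambda$ crosses the quadrilateral of that rightmost surviving order --- and that quadrilateral is precisely the one bisected by the ``first'' edge of $C$ that $\lambda$ encounters after time $\phi(i)$, as one sees by noting that the rightmost surviving order corresponds to the boundary quadrilateral adjacent to the starting point of the excursion. The statement with $\mcl I^R$ and $S^*$ follows by the symmetry $\tc H \leftrightarrow \tc C$, $S \leftrightarrow S^*$, which swaps the roles of the primal and dual trees.

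\textbf{Main obstacle.} The hard part will be rigorously establishing that $U$ is simply connected with simple monochromatic boundary --- i.e., converting the intuitive picture (``$\lambda$ fills a bubble and returns'') into a clean combinatorial argument about the triangulation $\mcl T$, the trees $T$, $T^*$, and the matching structure of the word. In particular, proving that no edge of $S^*$ appears on $\bdy U$ requires carefully using that $\lambda$ separates $T$ from $T^*$ together with the fact that the excursion starts and ends at a \emph{fictional} edge, and proving simplicity requires ruling out the possibility that $\lambda|_{[\phi(i),i]}$ pinches the region at a vertex. Everything else is bookkeeping with the reduction rules~\eqref{eqn-theta-ful}--\eqref{eqn-theta-com}.
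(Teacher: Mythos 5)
Your central ``key structural fact'' --- that $\phi$ maps $[\phi(i), i-1]_{\BB Z}$ to itself, so that every symbol in the interval is matched internally --- is false, and the first paragraph's conclusions collapse with it. What is true is that $X(\phi(i)+1, i-1)$ contains no \emph{burgers} (any burger added after $\phi(i)$ must be consumed before the $\tb F$ at time $i$ can reach the cheeseburger at $\phi(i)$), but it generally does contain orders, necessarily hamburger orders, and these are matched to hamburgers at times \emph{before} $\phi(i)$. Indeed, if the interval were self-matched then $X(\phi(i),i)$ would be empty and the asserted boundary length $|X(\phi(i),i)|+1$ would always equal $1$, which is absurd. These externally matched orders are exactly the second crossings of the quadrilaterals bisected by the edges of $C\setminus\{a\}$, where $a$ is the edge of $S$ bisecting the quadrilateral crossed at times $\phi(i)$ and $i$; so your claim that $\lambda$ ``never straddles the boundary in between'' is also wrong --- $\lambda$ crosses each boundary quadrilateral exactly once during $(\phi(i),i)$. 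Your later bookkeeping (surviving symbols of $X(\phi(i),i)$ correspond to boundary quadrilaterals, plus one for the quadrilateral at $\phi(i)$) is the correct picture and is what the paper uses, but it contradicts the structural input you propose to derive it from. The simply-connectedness/simplicity step should instead rest on the no-burgers property of the reduced word (if the cycle enclosed an edge outside $\lambda([\phi(i),i-1]_{\BB Z})$, some enclosed quadrilateral would be first crossed during the interval and second crossed after time $i$, forcing a burger into $X(\phi(i),i)$); the paper also does not take the topological boundary of $U$ directly, but extracts a simple separating cycle from the connected graph of primal-tree edges bisecting the quadrilaterals touched during $[\phi(i),i]_{\BB Z}$, and only afterwards identifies $C\setminus\{a\}$ with the once-crossed quadrilaterals.

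Your identification of $\phi_*(i)$ is also garbled. The rightmost surviving order in $X(\phi(i),i)$ corresponds to the \emph{last} boundary quadrilateral crossed during the excursion, not to ``the first edge of $C$ that $\lambda$ encounters after time $\phi(i)$''; by the stack (LIFO) structure of the matching, its match is the \emph{earliest} of the first-crossing times of the boundary quadrilaterals, and all of these first crossings occur before $\phi(i)$, not after. That is the content of the assertion that $\phi_*(i)$ is the first time ever at which $\lambda$ crosses a quadrilateral bisected by an edge of $C$.
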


Lemma~\ref{prop-cone-loops-discrete} implies that one can interpret Theorem~\ref{thm-cone-limit} as a scaling limit result for the joint law of the areas and boundary lengths of certain macroscopic cycles of $S$ and $S^*$.

\begin{proof}[Proof of Lemma~\ref{prop-cone-loops-discrete}]
First consider a time $i\in\mcl I^L$. The construction of Sheffield's bijection implies that there is a quadrilateral $q$ of $Q$ bisected by an edge $a$ of $S$ such that $\lambda$ crosses $q$ for the first time at time $\phi(i)$ and for the second time at time $i$.  
The set $A$ of edges of $T$ which bisect quadrilaterals of $Q$ crossed (either once or twice) by $\lambda$ during the time interval $[\phi(i) , i]_{\BB Z}$ is a connected graph. Since each edge of $q$ is incident to an edge in $A$, the set $  A \cup \{a\}$ disconnects $\lambda([\phi(i) , i-1]_{\BB Z})$ from the root edge, so contains a simple cycle $C  \subset S$ which disconnects $\lambda([\phi(i) , i-1]_{\BB Z})$ from the root edge, none of whose edges are crossed by $\lambda$ except for $a$.
Since $\lambda$ cannot cross itself or $C \setminus \{a\}$ and hits every edge of $Q$, it must be the case that $U = \lambda([\phi(i) , i-1]_{\BB Z})  $ is precisely the set of edges of $Q$ disconnected from the root edge by $C$. 

We now claim that $C\setminus \{a\}$ is precisely the set of edges of $A$ which bisect quadrilaterals crossed only once by $\lambda$ during $[\phi(i ) , i]_{\BB Z}$. Indeed, if $b \in A$ is such an edge, then part of the quadrilateral bisected by $b$ is not disconnected from $\infty$ by $C$, so $b$ cannot be disconnected from $ \infty$ by $C$, so $b\in C \setminus \{a\}$. Conversely, if $b\in C\setminus \{a\}$, then some edge of the quadrilateral bisected by $b$ lies outside $C$, and this edge is not hit by $\lambda$ during $[\phi(i) , i]_{\BB Z}$. 
Since $X_i = \tb F$ and $X_{\phi(i)} = \tc C$, the word $X(\phi(i) , i)$ contains only hamburger orders, so the times during $[\phi(i) ,i]_{\BB Z}$ at which $\lambda$ crosses a quadrilateral bisected by an edge of $C\setminus \{a\}$ correspond precisely to the symbols in $X(\phi(i) , i)$.   

It is immediate from the above descriptions of $U$ and $C$ that $\op{Area}(U) = i - \phi(i)$ and $\op{Len}(C) = |X(\phi(i) , i)|+1$.
Furthermore, recalling Notation~\ref{def-match-function}, we see that $\phi_*(i)$ is the first time at which $\lambda$ crosses a quadrilateral of $q$ bisected by an edge of $S$ which is crossed for the second time during the time interval $[\phi(i)+1 ,i]_{\BB Z}$, i.e.\ the first time $\lambda$ crosses a quadrilateral of $q$ bisected by an edge of $C$.

The statement for $\mcl I^R$ follows from symmetry.
\end{proof}

In light of Lemma~\ref{prop-cone-loops-discrete}, it will be convenient to have a notation for the discrete ``bubble" corresponding to an $\tb F$-time $i\in\mcl I$. 
 
\begin{defn} \label{def-discrete-bubble-map}
For $i \in \mcl I $, we write $P(i ) := \lambda([\phi(i) ,   i-1]_{\BB Z})$.  
\end{defn}

We next state a partial converse to Lemma~\ref{prop-cone-loops-discrete}, giving conditions for a cycle in $S$ or $S^*$ to correspond to an $\tb F$.

\begin{defn} \label{def-max-cycle}
A \emph{maximal simple cycle} in the edge set $S$ (resp. $S^*$) is a simple cycle $C\subset S$ such that the following is true. Let $U$ be the set of edges of $Q$ disconnected from $\infty$ by $C$. There is no simple cycle $C'\subset S$ (resp. $C'\subset S^*$) such that $C'$ shares an edge with $C$ and $C'$ disconnects $U$ from $\infty$. 
\end{defn}

Our main example of a maximal simple cycle is the boundary of a complementary connected component of an FK loop (Definition~\ref{def-fk-component}).

\begin{lem} \label{prop-max-cycle}
Suppose $C \subset S \cup S^*$ is a maximal simple cycle. There exists $i\in \mcl I$ such that $P(i)$ is the set of edges of $Q$ disconnected from $\infty$ by $C$.  
\end{lem}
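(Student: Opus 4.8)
\textbf{Proof proposal for Lemma~\ref{prop-max-cycle}.}

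The plan is to take the maximal simple cycle $C$ and identify the edge of $C$ that is crossed by the space-filling path $\lambda$; this single edge will point us to the flexible order we want. Without loss of generality assume $C \subset S$ (the case $C \subset S^*$ is symmetric). Let $U$ be the set of edges of $Q$ disconnected from $\infty$ by $C$. The first observation is that, because $\lambda$ visits every edge of $Q$ exactly once and $U$ is a finite set bounded by $C$, the path $\lambda$ must at some point enter $U$ and later leave it; each such entrance/exit happens through a quadrilateral of $Q$ that is bisected by an edge of $C$. I would argue that in fact there is a \emph{unique} edge $a$ of $C$ whose bisected quadrilateral $\lambda$ crosses twice (once going in, once coming out), while every other edge of $C$ bisects a quadrilateral crossed by $\lambda$ only once. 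The point is that $\lambda$ separates a spanning tree $T$ of $M$ from a dual spanning tree $T^*$ of $M^*$: the edges of $C$, all lying in $S$, are edges of $M$, and since $C$ is a cycle it cannot be entirely contained in the tree $T$, so at least one edge of $C$ is \emph{not} in $T$, hence is a ``fictional'' or doubly-crossed edge; conversely a counting/planarity argument (the region $U$ is connected and its complement is connected) forces exactly one such edge. Call this edge $a$ and let $q$ be the quadrilateral it bisects.

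Next I would use the construction of Sheffield's bijection to convert the edge $a$ into an $\tb F$-symbol. The edge $a \in S$ is not in the primal tree $T$, so when $\lambda$ crosses $q$ the \emph{second} time, the corresponding symbol $Y_i$ (an order) is replaced by $\tb F$ in the word $X$: this is precisely the rule ``whenever $\lambda$ crosses a quadrilateral bisected by a fictional edge for the second time, replace the symbol by $\tb F$'' from Section~\ref{sec-burger-bijection}. Hence there is $i \in \mcl I$ with $X_i = \tb F$, where $i$ is the time of the second crossing of $q$ and $\phi(i)$ is the time of the first crossing (the replacement does not change the match). Since $a \in S$ we are in the case $X_{\phi(i)} = \tc C$, i.e.\ $i \in \mcl I^L$. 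Now Lemma~\ref{prop-cone-loops-discrete} applies to this $i$: it produces a simple cycle $C' \subset S$, obtained from the connected graph $A$ of edges of $T$ crossed by $\lambda$ during $[\phi(i),i]_{\BB Z}$ together with $a$, such that $\lambda([\phi(i),i-1]_{\BB Z}) = P(i)$ is exactly the set of edges of $Q$ disconnected from $\infty$ by $C'$.

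It remains to show $C' = C$, equivalently that $P(i)$ equals $U$. I would do this using the maximality hypothesis on $C$. On one hand, $C'$ shares the edge $a$ with $C$, and $C'$ disconnects $P(i)$ from $\infty$; since $a$ is the only edge of $C$ that $\lambda$ double-crosses and every edge of $C' \setminus \{a\}$ bisects a quadrilateral $\lambda$ crosses only once during $[\phi(i),i]_{\BB Z}$, one checks that all of $U$ lies on the bounded side of $C'$ — roughly, $\lambda$ cannot leave $U$ during $[\phi(i)+1,i-1]_{\BB Z}$ without crossing $C$ somewhere other than $a$, which it does not. Thus $C'$ disconnects $U$ from $\infty$, so maximality of $C$ forces $C' = C$ (a strictly larger cycle sharing an edge with $C$ and enclosing $U$ is forbidden), and therefore $P(i) = U$, as desired. \textbf{The main obstacle} I anticipate is the uniqueness-of-the-double-crossed-edge step and the verification that $\lambda$ cannot exit $U$ except through $a$: these are planar-topological facts about how the single space-filling path $\lambda$ interacts with the cycle $C$ and the tree/dual-tree decomposition, and making them rigorous requires carefully tracking which edges of $C$ lie in $T$ versus which are fictional, rather than any probabilistic input. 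Everything after that is a direct appeal to Lemma~\ref{prop-cone-loops-discrete} and the definition of maximality.
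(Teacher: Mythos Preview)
Your overall strategy---locate a flexible-order time $i$ by finding a fictional edge of $C$, apply Lemma~\ref{prop-cone-loops-discrete}, and invoke maximality---is the right shape and close to the paper's. The gap is exactly where you flag it: the uniqueness of the fictional edge.

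The claim that $C$ contains exactly one non-tree edge does not follow from ``$U$ and its complement are both connected.'' For a \emph{generic} spanning tree of $M$, a simple cycle can contain any number $\geq 1$ of non-tree edges; e.g.\ take a $4$-cycle $C$ together with one diagonal, all in $S$, and let the tree consist of the three edges at one endpoint of the diagonal---then $C$ is maximal in the sense of Definition~\ref{def-max-cycle} but has two non-tree edges. So if uniqueness holds at all it must come from the \emph{specific} tree produced by Sheffield's loop-joining procedure, and that is a nontrivial combinatorial statement you have not argued. (A minor side issue: ``every other edge of $C$ bisects a quadrilateral crossed by $\lambda$ only once'' is literally false, since $\lambda$ crosses every quadrilateral exactly twice. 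What you mean, and what is true, is that at tree edges of $C$ both crossings of the quadrilateral stay on the same side of $C$.)

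The paper sidesteps uniqueness entirely. It takes $i_U'$ to be the \emph{first} time $\lambda$ enters $U$ and sets $i_U := \phi(i_U')$; Sheffield's bijection then gives $X_{i_U} = \tb F$ directly, with no need to count fictional edges in $C$. The logical order of the two inclusions is also reversed from yours: the paper first uses maximality of $C$ to get $P(i_U) \subset U$, and then obtains $U \subset P(i_U)$ not from a topological ``$\lambda$ cannot leave $U$'' argument but from the word structure---an edge in $U \setminus P(i_U)$ would force a quadrilateral (bisected by an edge of $\partial P(i_U)$, with edges in $U$) crossed once during $[i_U', i_U-1]_{\BB Z}$ and once after $i_U$, which would put an unmatched burger into $X(\phi(i_U), i_U)$; but that reduced word contains only orders. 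This last step is what replaces your ``$\lambda$ cannot leave $U$ during $[\phi(i)+1,i-1]$'' and it works regardless of how many fictional edges $C$ has. Your argument can be repaired by adopting this choice of $i$ and this word-combinatorics step in place of the uniqueness claim.
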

\begin{proof}
By symmetry it suffices to treat the cases of cycles in $S$. 
Suppose $C\subset S$ is a maximal simple cycle and let $U$ be the set of edges of $Q$ disconnected from $\infty$ by $C$. Let $i_U'$ be the smallest $i\in \BB Z$ for which $\lambda(i) \in U$ and let $i_U  := \phi(i_U')$. By Sheffield's bijection $X_{i_U'} = \tc C$ and $X_{i_U} = \tb F$. Let $U' := P(i_U)$. We will show that $U' = U$. By Lemma~\ref{prop-cone-loops-discrete}, $C' := \partial U'$ is a simple cycle in $S$. Furthermore, $C' \cap C$ contains the edge of $S$ which bisects the quadrilateral of $Q$ crossed by $\lambda$ at times $i_U$ and $i_U'$. By maximality of $C$ we must have $U' \subset U$.  Now suppose by way of contradiction that there is an edge $e$ of $U$ which is not contained in $U'$. Then there is a quadrilateral $q$ of $Q$ with edges contained in $U$ (bisected by an edge of $C'$) which is crossed by $\lambda$ for the first time during the time interval $[i_U' , i_U-1]_{\BB Z}$ and for the second time after time $i_U$. This contradicts the fact that $X(\phi(i_U) , i_U)$ contains no burgers.
\end{proof}

Our next lemma allows us to identify when two cycles in $S$ or $S^*$ intersect in terms of the word $X$. 
 
\begin{lem} \label{prop-L-nest}
Let $i,i'\in \mcl I^L$ or $i,i' \in \mcl I^R$. Suppose $P(i)\subset P(i')$. Then $\partial P(i ) \cap \partial P(i') \not= \emptyset$ (Definition~\ref{def-discrete-bdy}) if and only if $\phi_*(i) \leq \phi(i')$ (Definition~\ref{def-match-function}). 
\end{lem}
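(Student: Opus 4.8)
The plan is to translate the geometric condition ``$\partial P(i) \cap \partial P(i') \neq \emptyset$'' into a statement about the words $X(\phi(i), i)$ and $X(\phi(i'), i')$, using the description of cycles in terms of the word $X$ provided by Lemma~\ref{prop-cone-loops-discrete}. Throughout I will treat the case $i, i' \in \mcl I^L$ (so both $X_{\phi(i)}$ and $X_{\phi(i')}$ are cheeseburgers, $\partial P(i), \partial P(i') \subset S$, and the words $X(\phi(i), i)$, $X(\phi(i'), i')$ contain only hamburger orders); the case $i, i' \in \mcl I^R$ is identical by symmetry.

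\emph{Setup.} By Lemma~\ref{prop-cone-loops-discrete}, $C := \partial P(i)$ is a simple cycle in $S$ whose bisected quadrilaterals are crossed by $\lambda$ exactly at: the time $\phi_*(i)$ (first crossing of the quadrilateral bisected by a generic edge of $C$), the pair of times $\phi(i), i$ (the quadrilateral bisected by the distinguished edge $a$ through which $\lambda$ passes), and, for each hamburger order in $X(\phi(i), i)$, the unique time in $[\phi(i)+1, i]_{\BB Z}$ at which $\lambda$ crosses the corresponding quadrilateral of $C\setminus\{a\}$ for the \emph{second} time. Analogously for $C' := \partial P(i')$ with $i'$ in place of $i$. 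Since $P(i) \subset P(i')$, i.e. $[\phi(i), i-1]_{\BB Z} \subset [\phi(i'), i'-1]_{\BB Z}$, the cycle $C$ is disconnected from $\infty$ by $C'$ (or equals part of it), so the two cycles $C, C'$ intersect precisely when they share an edge of $S$.

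\emph{Key step.} I claim that $C$ and $C'$ share an edge of $S$ if and only if $\phi_*(i) \leq \phi(i')$. First suppose $\phi_*(i) \leq \phi(i')$. The edge $a$ of $S$ which bisects the quadrilateral crossed by $\lambda$ at times $\phi(i)$ and $i$ lies on $C$. The point is that $a$ also lies on $C'$: the ``first-crossing'' time $\phi_*(i)$ of the cycle $C$ occurs before the ``first-crossing'' time of $C'$, which is $\phi_*(i')$, and by the characterization in Lemma~\ref{prop-cone-loops-discrete} one checks that $\phi_*(i') = \phi_*(i) \wedge (\text{something} \geq \phi(i'))$; since $\phi_*(i) \le \phi(i')$ this forces $\phi_*(i') = \phi_*(i)$, i.e. $\lambda$ enters the region bounded by $C'$ and $C$ at the same time, which (combined with the fact that $\lambda$ cannot cross $C' \setminus \{\text{its distinguished edge}\}$) forces $a$ to be the distinguished edge of $C'$ as well; hence $a \in C \cap C'$. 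Conversely, suppose $\phi_*(i) > \phi(i')$. Then during the time interval $[\phi(i'), \phi_*(i)-1]_{\BB Z}$ the path $\lambda$ is strictly between $C$ and $C'$ (it has entered the region bounded by $C'$ but not yet the region bounded by $C$); since $\lambda$ hits every edge of $Q$ exactly once and hits every edge of $C$ except $a$ exactly once from the outside, every edge of $C$ is separated from every edge of $C'$ by the strip of quadrilaterals $\lambda$ traverses in this interval, so $C \cap C' = \emptyset$. Converting ``share an edge of $S$'' back to ``$\partial P(i) \cap \partial P(i') \neq \emptyset$'' finishes the argument.

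\emph{Main obstacle.} The delicate point is the forward direction: showing that $\phi_*(i) \le \phi(i')$ actually forces the distinguished edge $a$ of $C$ to lie on $C'$, and not merely that the two cycles come geometrically close. This requires carefully tracking, via Sheffield's bijection, which quadrilaterals of $Q$ the space-filling path $\lambda$ has and has not yet visited at the relevant times, and using that $\lambda$ is non-self-crossing and space-filling to rule out the possibility that $C$ and $C'$ run parallel without sharing an edge. I expect this to be handled exactly as in the proofs of Lemmas~\ref{prop-cone-loops-discrete} and~\ref{prop-max-cycle}: one argues by contradiction that if $C$ and $C'$ were disjoint then some quadrilateral bisected by an edge of $C'$ inside $P(i')$ but outside $P(i)$ would be crossed by $\lambda$ for the first time in $[\phi(i'), \phi_*(i)-1]_{\BB Z}$ and for the second time \emph{after} $i$, contradicting that $X(\phi_*(i)$ or rather $X(\phi(i), i))$ contains no burgers when $\phi_*(i) \le \phi(i')$. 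All remaining steps are bookkeeping with the already-established dictionary between $\lambda$-crossings and symbols of $X$.
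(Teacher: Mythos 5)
Your overall plan---translating both sides into statements about first/second crossing times of the quadrilaterals bisected by edges of the two cycles, via Lemma~\ref{prop-cone-loops-discrete}---is the right one, and your separation argument for the direction ``$\phi_*(i) > \phi(i')$ implies disjoint boundaries'' is the contrapositive of what the paper actually does (the paper takes a shared edge $e$, lets $k$ be the first time $\lambda$ crosses the quadrilateral bisected by $e$, and reads off $\phi_*(i) \le k \le \phi(i')$ directly from the fact that $e$ lies on both cycles; this is cleaner than arguing about a separating strip of quadrilaterals, which as written is not rigorous). However, your forward direction contains a genuine error. The claim that the distinguished edge $a$ of $C = \partial P(i)$ lies on $C' = \partial P(i')$ is false whenever $i \neq i'$: the quadrilateral bisected by $a$ is crossed by $\lambda$ at times $\phi(i)$ and $i$, both of which lie in $[\phi(i'), i']_{\BB Z}$, whereas by the proof of Lemma~\ref{prop-cone-loops-discrete} every edge of $C' \setminus \{a'\}$ bisects a quadrilateral crossed only \emph{once} during $[\phi(i'), i']_{\BB Z}$ (and $a = a'$ would force $i = i'$). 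So $a$ lies strictly inside $P(i')$, and the shared edge, when it exists, is a non-distinguished edge of both cycles. The intermediate claim $\phi_*(i') = \phi_*(i)$ is likewise unjustified and false in general. Your fallback contradiction sketch in the last paragraph also does not parse: edges of $C'$ bisect quadrilaterals whose first crossing occurs at or before $\phi(i')$, not in $[\phi(i'), \phi_*(i)-1]_{\BB Z}$.

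The missing idea is much simpler than what you attempt: prove this direction in contrapositive form. If $\partial P(i) \cap \partial P(i') = \emptyset$, then the cycle $\partial P(i)$ is disconnected from $\infty$ by $\partial P(i')$, so every quadrilateral of $Q$ containing an edge of $P(i)$ has all of its edges contained in $P(i')$; hence every $k \in [\phi(i), i]_{\BB Z}$ satisfies $\phi(k) \in [\phi(i'), i']_{\BB Z}$, and in particular $\phi_*(i) > \phi(i')$. No tracking of distinguished edges or of where $\lambda$ enters the two regions is needed.
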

\begin{proof}
By symmetry we can assume without loss of generality that $i,i' \in \mcl I^L$.  

First suppose $\partial P(i) \cap \partial P(i') = \emptyset$. Then the cycle $\partial P(i)\subset S$ is disconnected from $\infty$ by $\partial P(i')$. Therefore each edge quadrilateral of $Q$ which contains an edge of $P(i)$ has all of its edges contained in $P(i')$. Consequently, each $k \in [\phi(i) , i]_{\BB Z}$ satisfies $\phi(k) \in [\phi(i') , i']_{\BB Z}$. In particular, $\phi_*(i) > \phi(i')$.  

Conversely, suppose $\partial P(i) \cap\partial P(i') \not=\emptyset$. Let $k$ be the first time at which $\lambda$ crosses a quadrilateral bisected by an edge of $\partial P(i)\cap\partial P(i')$. Then $k \leq \phi(i')$ and $\phi(k) \in [\phi(i) , i]_{\BB Z}$. Therefore $\phi_*(i) \leq k  \leq \phi(i')$. 
\end{proof}

\subsection{Complementary connected components of FK loops} 
\label{sec-fk-loops}

In this subsection we will describe the complementary connected components of FK loops on the infinite-volume FK planar map $(M , e_0 , S)$ in terms of the word $X$ (recall Definition~\ref{def-fk-component}).  

Let $\{\ell_j \}_{j\in\BB N}$ be the sequence of loops in $\mcl L$ which disconnect the root edge $\BB e_0$ from $\infty$, as in Section~\ref{sec-loop-limit}.
We assume that our numbering is such that if $j$ is odd then $\ell_j$ surrounds a component of $S$ and if $j$ is even then $\ell_j$ surrounds a component of $S^*$. This can be arranged by including (as $\ell_1$) the loop which contains $\BB e_0$ if and only if this loop surrounds a component of $S $. 
 
For $j \in \BB N$, let $M_j^0$ be the complementary connected component of $M\setminus \ell_j$ incident to the primal endpoint of $\BB e_0$ (Definition~\ref{def-fk-component}). If $j \geq 2$, this is the same as the complementary connected component of $M\setminus \ell_j$ containing $\BB e_0$. Also let $M_j^\infty$ be the union of $\ell_j $ and the set of edges in $Q$ which are disconnected from $ \infty$ by $\ell_j$. 

For $j\in\BB N$, let $\iota_j$ be the largest $i\in \BB N$ such that $\lambda(i-1) \in M_j^0$. Also let $\wt\theta_j$ (resp. $\theta_j$) be the time at which $\lambda$ starts tracing the loop $\ell_j $ (resp. the time immediately after $\lambda$ finishes tracing $\ell_j $). Let $I_j$ (resp. $\Theta_j$) be the set of maximal $\tb F$-times (Definition~\ref{def-maximal}) $i \in \mcl I $ in $(\wt\theta_j, \theta_j)_{\BB Z} $ such that the bubble $P(i)$ of Definition~\ref{def-discrete-bubble-map} is not (resp. is) contained in $M_j^\infty$. 

See Figure~\ref{fig-loop-sequence} for an illustration of these objects.

\begin{figure}[ht!]
 \begin{center}
\includegraphics{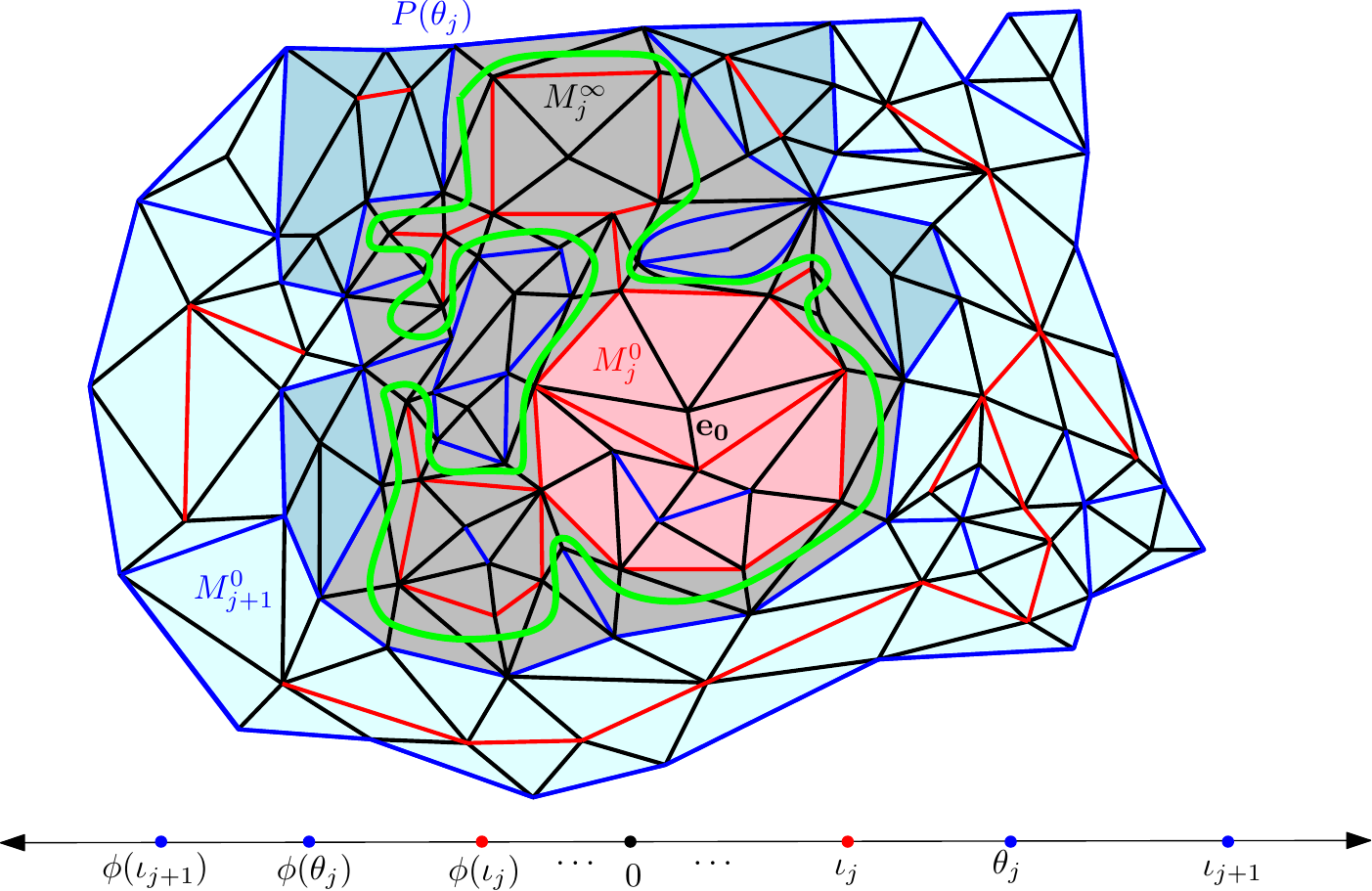} 
\caption{An illustration of the edge set $M_{j+1}^0 = P(\iota_{j+1}^0)$ and several other regions contained in it. The set $P(\theta_j)$ is the union of the blue and grey regions. The individual blue regions are sets of the form $P(i)$ for $i\in I_j$. The set $M_j^\infty$ is shown in grey, and is not traced by $\lambda$ in a single interval of time. The set $M_j^0 = P(\iota_j)$ is shown in pink. The loop $\ell_j $ is shown in green (other FK loops are not shown). The times which encode these sets are shown on a number line below the figure. } \label{fig-loop-sequence}
\end{center}
\end{figure}

We first describe the the times $\iota_j$ in terms of the word $ X$.
 
\begin{lem} \label{prop-discrete-inner}
Let $j\in \BB N$ be odd (resp. even). Then $\iota_j$ is the largest time $i \in \mcl I^L$ (resp. $i\in \mcl I^R$) such that $i   < \iota_{j+1}$ and $0 \in [\phi(i),  i]_{\BB Z}$. 
Furthermore, in the notation of Definition~\ref{def-discrete-bubble-map} we have $P(\iota_j) = M_j^0$.
\end{lem}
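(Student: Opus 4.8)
\textbf{Proof proposal for Lemma~\ref{prop-discrete-inner}.}

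The plan is to unfold the definitions in Section~\ref{sec-burger-bijection} carefully and identify, time by time, where $\lambda$ lies relative to the loops $\ell_j$ and the complementary connected components $M_j^0$. First I would treat the case $j$ odd (so $\ell_j$ surrounds a primal cluster), the even case following by the usual primal/dual symmetry. Recall that $M_j^0$ is the primal complementary connected component of $\ell_j$ incident to the primal endpoint of $\BB e_0$, and by Definition~\ref{def-fk-component} it is bounded by a maximal simple cycle $C\subset S$. By Lemma~\ref{prop-max-cycle} there is some $i\in\mcl I$ with $P(i)$ equal to the set of edges of $Q$ disconnected from $\infty$ by $C$, i.e.\ $P(i)=M_j^0$; since $C\subset S$, the reasoning in the proof of Lemma~\ref{prop-cone-loops-discrete} forces $X_{\phi(i)}=\tc C$, so in fact $i\in\mcl I^L$. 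This gives existence of a time of the claimed type; it remains to identify it as \emph{the largest} $i\in\mcl I^L$ with $i<\iota_{j+1}$ and $0\in[\phi(i),i]_{\BB Z}$.

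The key observations I would then assemble are: (1) $0\in[\phi(i),i]_{\BB Z}$ exactly says the bubble $P(i)=\lambda([\phi(i),i-1]_{\BB Z})$ contains the root edge $\BB e_0=\lambda(0)$ (being slightly careful that $\lambda(0)\in P(i)$ iff $0\in[\phi(i),i-1]_{\BB Z}$, with the boundary index $i$ contributing the cycle edge — I would check that $0\neq i$ always holds here since $X_0\ne\tb F$ when $\BB e_0$ lies in a primal cluster, or handle the degenerate boundary case directly); (2) the regions $M_1^0\supset M_2^0\supset\cdots$ containing (or incident to) $\BB e_0$ are nested, and $M_{j+1}^0=P(\iota_{j+1})$ by the same Lemma~\ref{prop-max-cycle}/Lemma~\ref{prop-cone-loops-discrete} argument (this is the induction hypothesis / the $j{+}1$ instance of the statement, so I can cite it); (3) a maximal simple cycle $C$ in $S$ whose interior contains $\BB e_0$ and is contained in $M_{j+1}^0$ must be $\partial M_j^0$ — this is where the \emph{maximality} in Definition~\ref{def-max-cycle} and the fact that consecutive loops $\ell_j,\ell_{j+1}$ surround clusters of opposite type (so there is no primal cycle strictly between $\partial M_j^0$ and $\partial M_{j+1}^0$ surrounding $\BB e_0$) does the work. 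Combining (1)–(3): any $i'\in\mcl I^L$ with $i'<\iota_{j+1}$ and $0\in[\phi(i'),i']_{\BB Z}$ has $\partial P(i')\subset S$ a simple cycle surrounding $\BB e_0$ with $P(i')\subset P(\iota_{j+1})=M_{j+1}^0$ (the containment $P(i')\subset M_{j+1}^0$ needs an argument: $i'<\iota_{j+1}$ together with $\lambda$ hitting $\BB e_0$ inside both forces nesting, using that $\lambda$ cannot cross $\partial P(\iota_{j+1})$ except at its single crossed edge), hence by maximality $P(i')\subseteq M_j^0=P(i)$, so $i'\le i$; and $i$ itself qualifies. Therefore $i=\iota_j$ in the sense of "the largest such time", and simultaneously $P(\iota_j)=M_j^0$.

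I would phrase the last coincidence — that "the largest $i\in\mcl I^L$ with $i<\iota_{j+1}$, $0\in[\phi(i),i]_{\BB Z}$" is literally the time $\iota_j$ defined via $M_j^0$ (largest $i$ with $\lambda(i-1)\in M_j^0$) — by noting that $\lambda$ traces $\BB e_0$ inside $P(i)$ and $\lambda$ finishes tracing $P(i)$ exactly when it traverses the boundary edge $a$ of the cycle $C$ for the second time, which by the construction of the bijection is the time $i$; so the last index with $\lambda(\cdot-1)\in P(i)=M_j^0$ is exactly $i$, matching the definition of $\iota_j$. The main obstacle, I expect, is step (3): ruling out an intermediate maximal primal cycle surrounding $\BB e_0$ strictly between $\partial M_j^0$ and $\partial M_{j+1}^0$. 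This requires using that $\ell_j$ and $\ell_{j+1}$ are \emph{consecutive} loops surrounding $\BB e_0$ with opposite cluster types, so that the region $M_j^0\setminus M_{j+1}^0$ (an annular region between the two loops) contains no primal cluster separating $\BB e_0$ from $\infty$ — equivalently, every primal cycle in $S$ surrounding $\BB e_0$ and contained in $M_{j+1}^0$ is already contained in the primal cluster bounded by $\ell_j$, hence (by maximality) equals $\partial M_j^0$. Once this topological fact about the loop sequence $\{\ell_j\}$ is pinned down, the rest is bookkeeping with $\phi$, $\phi_*$, and the definitions.
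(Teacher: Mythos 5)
Your overall architecture matches the paper's proof: both establish $P(\iota_j)=M_j^0$ and $\iota_j\in\mcl I^L$ via Lemmas~\ref{prop-cone-loops-discrete} and~\ref{prop-max-cycle}, both get $P(i')\subset P(\iota_{j+1})=M_{j+1}^0$ for any competitor $i'$ from the nesting of $\tb F$-intervals containing $0$, and both reduce maximality to the single containment $P(i')\subseteq M_j^0$. The difference is entirely in how that last containment is proved, and this is exactly the step you flag as ``the main obstacle'' without closing it. The paper's argument is dynamic rather than static: since $\partial P(i')$ consists of edges of $S$ while $\partial P(\iota_{j+1})$ consists of edges of $S^*$, the construction of Section~\ref{sec-burger-bijection} forces $\lambda$ to branch into the interior of one of the outermost loops $\ell'\in\mcl L$ contained in $P(\iota_{j+1})$ before it can enter $P(i')$; taking $i'$ to be the first time $\lambda$ traces $\ell'$ one gets $0\in[\phi(i'),i']_{\BB Z}$, and since $\ell'$ has the same orientation as $\ell_j$ it is either $\ell_j$ itself or disconnected from $\BB e_0$ by $\ell_j$, whence $P(i')\subset M_j^0$ and $i'\le\iota_j$. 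This sidesteps all of the cluster-topology bookkeeping your step (3) requires.

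On your step (3) as written there is also a small but real misstatement: a maximal simple primal cycle surrounding $\BB e_0$ and contained in $M_{j+1}^0$ need \emph{not} equal $\partial M_j^0$ (the cycle $\partial P(i')$ produced by Lemma~\ref{prop-cone-loops-discrete} is simple but not a priori maximal, and even among cycles contained in the cluster $A_j$ there are strictly smaller ones surrounding $\BB e_0$). What you actually need, and all you use downstream, is that its interior is contained in $M_j^0$; proving that requires first locating the cycle in one of the clusters $A_k$ ($k$ odd), ruling out $k\ge j+2$ because such a cycle would have to surround $\ell_{j+1}$ and hence could not have interior inside $M_{j+1}^0$, and then invoking the maximality in Definition~\ref{def-fk-component} for $k\le j$. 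If you want to keep your static route you should write out that case analysis; otherwise I recommend substituting the paper's branching argument, which delivers $P(i')\subseteq M_j^0$ in three sentences.
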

\begin{proof}
By symmetry we can assume without loss of generality that $j$ is odd. 
By the definition of the loops $\ell_j $ and by Lemmas~\ref{prop-cone-loops-discrete} and~\ref{prop-max-cycle}, we have $\iota_j \in \mcl I^L$ and $P(\iota_j) = M_j^0$. 
Since $0\in M_j^0 \subset M_{j+1}^0$, we consequently have $0 \in [\phi(\iota_{j}) ,  \iota_{j}]_{\BB Z} \subset [\phi(\iota_{j+1}) ,  \iota_{j+1}]_{\BB Z}$.  

Suppose $i \in \mcl I^L$ with $i  < \iota_{j+1}$ and $0\in  [ \phi(i),  i]_{\BB Z}$. Since $\partial P(i)$ consists of edges of $S$ and $\partial P(\iota_{j+1})$ consists of edges of $S^*$, it follows from the construction in Section~\ref{sec-burger-bijection} that $\lambda$ must branch into the interior of one of the outermost loops in $\mcl L$ contained in $P(\iota_{j+1})$ before entering $P(i)$, so $P(i)$ must be disconnected from $\infty$ by some loop $ \ell' \in \mcl L$ surrounded by $\ell_{j+1}$ with $\ell' \not= \ell_{j+1}$. Let $i'$ be the first time $\lambda$ traces an edge of this loop $\ell'$. By the construction in Section~\ref{sec-burger-bijection} we have $i' \in \mcl I^L \cap [\phi(i ) ,i]_{\BB Z}$ and by our choice of $i'$ we have $0 \in [\phi(i'), i']_{\BB Z}$. The loop $\ell' $ has the same orientation as $\ell_j $, so $\ell'  $ must either be equal to $\ell_j $ or disconnected from $\BB e_0$ by $\ell_j $. In the latter case, $\ell' $ and hence also $P(i)$ is contained in $M_j^0$. In particular, $i \leq \iota_j$. 

The final assertion of the lemma is immediate from Lemma~\ref{prop-cone-loops-discrete}.
\end{proof}

Next we will describe the times $\wt\theta_j$ and $\theta_j$ in terms of $ X$.

\begin{lem} \label{prop-discrete-outer}
Let $j \in [2,\infty)_{\BB Z}$ be odd (resp. even). In the notation of Section~\ref{sec-fk-loops}, we have that $\theta_j$ is the first time $i > \iota_j$ such that $i \in \mcl I^R$ (resp. $i\in \mcl I^L$) and $0\in [\phi(i), i]_{\BB Z}$; and $\wt\theta_j = \phi(\theta_j)$.  
Furthermore, 
\eqb \label{eqn-outer-area-discrete}
 \op{Area}\left( M_j^\infty\right)=  \theta_j - \wt\theta_j - \sum_{i  \in I_j} (i - \phi(i))  
\eqe
and
\eqb \label{eqn-nu-discrete} 
\op{Len} \left(\partial M_j^\infty \right) =  - |X(\wt\theta_j , \theta_j)|   +  \sum_{i  \in I_j } (  |X(\phi(i) , i)|   +1)    + 2 \# U_j + 1 , 
\eqe  
where $U_j$ is the set of $i \in [\wt\theta_j, \theta_j]_{\BB Z}$ with $X_i = \tb C$ (resp. $X_i = \tb H$) and $\phi(i)  < \wt\theta_j$ such that $i\notin [\phi(i' ) ,   i']_{\BB Z}$ for any $i'  \in I_j$. 
\end{lem}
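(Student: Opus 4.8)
The plan is to work with the space-filling path $\lambda$ of Section~\ref{sec-burger-bijection} and to exploit throughout the correspondence between $\tb F$-symbols of $X$ and quadrilaterals of $Q$ bisected by fictional edges.

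\emph{Identifying the times.} In the loop-joining construction of Section~\ref{sec-burger-bijection}, the loop $\ell_j$ is attached to the loop immediately surrounding it at a single quadrilateral $q_j$ of $Q$, whose bisecting edge becomes a fictional edge; one reads off from the construction that $\lambda$ first crosses $q_j$ exactly when it begins tracing $\ell_j$ and crosses it for the second time exactly at the step immediately before $\theta_j$. Consequently $X_{\theta_j} = \tb F$ and $\wt\theta_j = \phi(\theta_j)$. Inspecting the orientations shows that the fictional edge at $q_j$ lies in $S^*$ when $j$ is odd and in $S$ when $j$ is even, so $\theta_j \in \mcl I^R$ (resp. $\mcl I^L$); and since $\BB e_0 = \lambda(0)$ lies in $M_j^0 = P(\iota_j)$ (Lemma~\ref{prop-discrete-inner}), which is contained in $P(\theta_j) = \lambda([\wt\theta_j,\theta_j-1]_{\BB Z})$, we get $0 \in [\phi(\theta_j),\theta_j]_{\BB Z}$ as well as $\wt\theta_j \leq \phi(\iota_j) < \iota_j < \theta_j$. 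To see that $\theta_j$ is the \emph{first} $\tb F$-time with these properties after $\iota_j$, I would argue as in the proof of Lemma~\ref{prop-discrete-inner}: a time $i \in \mcl I^R$ (resp. $\mcl I^L$) with $\iota_j < i < \theta_j$ and $0 \in [\phi(i),i]_{\BB Z}$ would, via Lemmas~\ref{prop-cone-loops-discrete} and~\ref{prop-max-cycle}, yield a maximal simple cycle of $S^*$ (resp. $S$) enclosing $M_j^0$, enclosed by the boundary cycle of $P(\theta_j)$, and necessarily enclosing $\ell_j$; following the bubble down one deduces $\phi(\theta_j) = \phi(i)$, which is impossible since a burger has a unique order-match. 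Equivalently, such an $i$ would produce an FK loop of $\mcl L$ lying strictly between $M_j^0$ and $\ell_j$, contradicting the characterization of $\ell_j$ in Lemma~\ref{prop-discrete-inner}.

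\emph{The area identity.} Given the above, \eqref{eqn-outer-area-discrete} is bookkeeping. By Lemma~\ref{prop-cone-loops-discrete}, $\op{Area}(P(\theta_j)) = \theta_j - \phi(\theta_j) = \theta_j - \wt\theta_j$ and $\op{Area}(P(i)) = i - \phi(i)$ for $i \in \mcl I$. Partition $[\wt\theta_j,\theta_j-1]_{\BB Z}$ into the intervals $[\phi(i),i-1]_{\BB Z}$, as $i$ runs over the maximal $\tb F$-times in $(\wt\theta_j,\theta_j)$, together with the leftover indices; since $\lambda$ is injective this partitions the edge set $P(\theta_j)$. Each maximal bubble $P(i)$ is either contained in $M_j^\infty$ (so $i \in \Theta_j$) or disjoint from it (so $i \in I_j$), because its boundary cycle, being a simple cycle in $S \cup S^*$, cannot cross the loop $\ell_j$; and the leftover edges all lie on $\ell_j \subset M_j^\infty$. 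One checks from the construction that $M_j^\infty$ is exactly the union of those pieces of the partition contained in it, whence $\op{Area}(M_j^\infty) = \op{Area}(P(\theta_j)) - \sum_{i \in I_j} \op{Area}(P(i)) = \theta_j - \wt\theta_j - \sum_{i \in I_j}(i - \phi(i))$, which is~\eqref{eqn-outer-area-discrete}. In particular $\iota_j \in \Theta_j$, consistent with $M_j^0 \subset M_j^\infty$.

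\emph{The boundary-length identity, and the main obstacle.} Proving~\eqref{eqn-nu-discrete} is the delicate step, and I expect it to be where most of the work lies. It amounts to a signed count of the edges of $S \cup S^*$ on $\partial M_j^\infty$. The idea is that $\partial M_j^\infty$ is produced from the cycle $\partial P(\theta_j)$, of length $|X(\wt\theta_j,\theta_j)| + 1$ by Lemma~\ref{prop-cone-loops-discrete}, by excising the bubbles $P(i)$ with $i \in I_j$: each edge of $\partial P(\theta_j)$ bisecting a quadrilateral lying inside some such $P(i)$ is deleted, while the cycle $\partial P(i)$, of length $|X(\phi(i),i)| + 1$, is adjoined. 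The edges mis-counted by this replacement are precisely those bisecting quadrilaterals in the annular region between $\ell_j$ and $\partial P(\theta_j)$ that $\lambda$ crosses only once during $[\wt\theta_j,\theta_j]_{\BB Z}$ and that lie in no $I_j$-bubble; tracking the symbols of $X$ at those crossings identifies this set of quadrilaterals with $U_j$, and each contributes $2$ to the correction (once on each side of the annulus). Combining these contributions with the single unit coming from $q_j$ itself produces the $2\#U_j$, the $+1$, and the signs in~\eqref{eqn-nu-discrete}. The obstacle is exactly this combinatorial accounting: one must classify every boundary quadrilateral by whether $\lambda$ crosses it twice, once, or not at all within $[\wt\theta_j,\theta_j]_{\BB Z}$ and on which side of $\ell_j$ it sits, and verify that the signs cancel and combine as claimed. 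The case $j$ even follows from the symmetry between $S$ and $S^*$.
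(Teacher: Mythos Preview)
Your treatment of the time identification and of~\eqref{eqn-outer-area-discrete} is correct and matches the paper's argument: one uses the decomposition $M_j^\infty = P(\theta_j)\setminus\bigcup_{i\in I_j} P(i)$ together with Lemma~\ref{prop-cone-loops-discrete}. For minimality of $\theta_j$, the paper's argument is shorter than yours: it simply observes that $\ell_j$ is by definition the smallest loop in $\mcl L$ surrounding $M_j^0$, so $\theta_j$ is the smallest such $\tb F$-time.

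The gap you correctly flag in~\eqref{eqn-nu-discrete} is real, and your heuristic ``each $U_j$-edge contributes $2$, once on each side of the annulus'' is vague enough that it is not clear how to turn it into a proof. The paper closes this gap more cleanly than the edge-by-edge classification you propose. The key is the single identity
\[
\#\bigl(\partial M_j^\infty \cap \partial P(\theta_j)\bigr) = \# U_j + 1,
\]
established by observing that the times in $U_j$ are exactly the times at which $\lambda$ crosses a quadrilateral bisected by an edge of $\partial P(\theta_j)$ that lies in no $\partial P(i)$ for $i\in I_j$, and that any such edge (except the initial one at $q_j$) lies in $\partial M_j^\infty$; conversely, since each $\partial P(i)$ is a simple cycle, none of its edges can lie in both $\partial M_j^\infty$ and $\partial P(\theta_j)$. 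Once this is known, the arithmetic is forced: every edge of $\partial M_j^\infty\setminus\partial P(\theta_j)$ lies in some $\partial P(i)$, so
\[
-|X(\wt\theta_j,\theta_j)| + \sum_{i\in I_j}\bigl(|X(\phi(i),i)|+1\bigr)
= \#\bigl(\partial M_j^\infty\setminus\partial P(\theta_j)\bigr) + 1 - \#\{\text{edges of }\partial P(\theta_j)\text{ in no }\partial P(i)\},
\]
and the last subtracted term is again $\#U_j+1$ by the same characterization. Adding $2\#U_j+1$ therefore yields $\op{Len}(\partial M_j^\infty)$. The factor of $2$ thus arises because the \emph{same} count $\#U_j+1$ enumerates both the edges of $\partial P(\theta_j)$ retained in $\partial M_j^\infty$ and the edges of $\partial P(\theta_j)$ not absorbed by any bubble boundary --- this is the precise content behind your ``two sides of the annulus'' picture.
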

\begin{proof} 
Assume without loss of generality that $j$ is odd. It is clear from Sheffield's bijection that $\theta_j \in \mcl I^R$, $\wt\theta_j = \phi(\theta_j)$,  $\theta_j > \iota_j$, and $0\in [\wt\theta_j, \theta_j]_{\BB Z}$. Since $\ell_j $ is the smallest loop in $\mcl L$ which surrounds $M_j^0$, it follows that $\theta_j$ is in fact the smallest time in $\mcl I^R$ with these properties. 
 
To prove the formulas for $\op{Area}(M_j^\infty)$ and $\op{Len}(\partial M_j^\infty)$, we observe that each element of $P(I_j)$ is contained in a loop in $\mcl L$ which lies outside $M_j^\infty$. Therefore 
\eqb \label{eqn-loop-outer-decomp}
M_j^\infty = P(\theta_j) \setminus \bigcup_{i  \in I_j} P(i ) .
\eqe 
This together with the maximality condition in the definition of $I_j$ immediately implies the formula~\eqref{eqn-outer-area-discrete}. 

To prove~\eqref{eqn-nu-discrete}, we first argue that 
\eqb \label{eqn-U-char}
\#\left( \bdy M_j^\infty  \cap \bdy P(\theta_j) \right) = \# U_j + 1 .
\eqe 
Indeed, if $i \in U_j$ then at time $i$ the path $\lambda$ crosses a quadrilateral bisected by an edge of $\bdy P(\theta_j)$ which does not belong to $\bdy P(i)$ for any $i\in I_j$. By~\eqref{eqn-loop-outer-decomp}, such an edge must belong to $\bdy M_j^\infty$. On the other hand, each $\bdy P(i)$ for $i\in I_j$ is a simple cycle (Lemma~\ref{prop-cone-loops-discrete}) so no edge of $\bdy P(i)$ belongs to both $\bdy M_j^\infty$ and $\bdy P(\theta_j)$.
Hence every edge in $\bdy M_j^\infty \cap \bdy P(\theta_j)$ except for the first edge of $\bdy P(\theta_j)$ (equivalently $\bdy M_j^\infty$) crossed by $\lambda$ belongs to $U_j$. We must obtain~\eqref{eqn-U-char}. 

The relation~\eqref{eqn-loop-outer-decomp} implies that each edge of $\bdy M_j^\infty \setminus \bdy P(\theta_j)$ belongs to $\bdy P(i)$ for some $i\in I_j$. Recalling the formula for boundary length from Lemma~\ref{prop-cone-loops-discrete}, we find that the sum of the first two terms on the right in~\eqref{eqn-nu-discrete} is equal to $\# \left( \bdy M_j^\infty \setminus \bdy P(\theta_j) \right) + 1$ minus the number of edges in $\bdy P(\theta_j)$ which do not belong to $\bdy P(i)$ for any $i\in I_j$. By~\eqref{eqn-U-char}, the number of such edges is $\# U_j + 1$. By combining this with~\eqref{eqn-U-char}, we obtain~\eqref{eqn-nu-discrete}.  
\end{proof}

We will now describe the time set $I_j$ defined as in the beginning of this subsection solely in terms of $X$. 
 
\begin{lem} \label{prop-extra-bubbles-discrete}
 Let $j\in\BB Z$ be odd (resp. even). 
The set $I_j$ is the same as the set of maximal elements $i$ of $\mcl I$ in $(\wt\theta_j , \theta_j)$ such that $i  \in \mcl I^R$ (resp. $i\in \mcl I^L $) and $\phi_*(i ) < \wt\theta_j$.  
\end{lem}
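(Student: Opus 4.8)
The plan is to reduce the lemma to the bubble--cycle dictionary already established and to Lemma~\ref{prop-L-nest}, so that the only genuinely geometric input is a statement about which region of $P(\theta_j)$ a given bubble lands in. By the symmetry between $S$ and $S^*$ (equivalently between $\mcl I^L$, $\mcl I^R$ and between odd and even $j$), it suffices to treat $j$ odd. In this case $\ell_j$ surrounds a component of $S$, and by Lemma~\ref{prop-discrete-outer} we have $\theta_j\in\mcl I^R$ and $\wt\theta_j=\phi(\theta_j)$; by Lemma~\ref{prop-cone-loops-discrete} applied at $\theta_j$, the set $P(\theta_j)$ is the collection of edges of $Q$ cut off from $\infty$ by a simple cycle $C^*:=\partial P(\theta_j)\subset S^*$. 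For any maximal $\tb F$-time $i$ in $(\wt\theta_j,\theta_j)$, Definition~\ref{def-maximal} forces $[\phi(i),i]_{\BB Z}\subset(\wt\theta_j,\theta_j)$, so $\phi(i)>\wt\theta_j$ and $i<\theta_j$, and hence $P(i)\subset P(\theta_j)$. Since $I_j$ is by definition the set of maximal $\tb F$-times $i$ in $(\wt\theta_j,\theta_j)$ with $P(i)\not\subset M_j^\infty$, the lemma is exactly the assertion that, for every such $i$,
\[ P(i)\not\subset M_j^\infty \iff \bigl(i\in\mcl I^R \ \text{and}\ \phi_*(i)<\wt\theta_j\bigr). \]

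First I would dispose of the condition on $\phi_*(i)$ using Lemma~\ref{prop-L-nest}. Suppose $i\in\mcl I^R$ is maximal in $(\wt\theta_j,\theta_j)$; then $P(i)\subset P(\theta_j)$ and both left endpoints lie in $\mcl I^R$, so Lemma~\ref{prop-L-nest} gives $\partial P(i)\cap C^*\neq\emptyset$ if and only if $\phi_*(i)\leq\phi(\theta_j)=\wt\theta_j$. Moreover $\phi_*(i)=\wt\theta_j$ is impossible: if $X(\phi(i),i)\neq\emptyset$ then $\phi_*(i)$ is the index of a $\tc C$ whereas $X_{\wt\theta_j}=\tc H$, while if $X(\phi(i),i)=\emptyset$ then $\phi_*(i)=\phi(i)>\wt\theta_j$. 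Hence for $i\in\mcl I^R$ the condition $\phi_*(i)<\wt\theta_j$ is equivalent to $\partial P(i)$ sharing an edge with $C^*=\partial P(\theta_j)$, and the claim reduces to: for maximal $i$ in $(\wt\theta_j,\theta_j)$, (a) if $i\in\mcl I^L$ then $P(i)\subset M_j^\infty$, and (b) if $i\in\mcl I^R$ then $P(i)\not\subset M_j^\infty$ precisely when $\partial P(i)\cap C^*\neq\emptyset$.

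Both (a) and (b) I would read off from the loop-joining construction of Section~\ref{sec-burger-bijection} together with the decomposition $M_j^\infty=P(\theta_j)\setminus\bigcup_{i'\in I_j}P(i')$ from~\eqref{eqn-loop-outer-decomp} (in which the $P(i')$ are pairwise disjoint because, being attached to maximal $\tb F$-times in the interval, two of them cannot be strictly nested). For (a): a primal bubble $P(i)$ traced entirely during $(\wt\theta_j,\theta_j)$ lies either in the region surrounded by the primal loop $\ell_j$ or inside one of the dual complementary components of $\ell_j$, and in either case $P(i)\subset M_j^\infty$; it cannot equal one of the excursions $P(i')$, $i'\in I_j$ (those are dual bubbles, i.e.\ $i'\in\mcl I^R$), nor be strictly inside one, by maximality. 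For the forward direction of (b): if $a\in\partial P(i)\cap C^*$ then the quadrilateral of $Q$ bisected by $a$ has edges lying outside $P(\theta_j)\supset M_j^\infty$, so $P(i)\not\subset M_j^\infty$. For the reverse direction: if $\partial P(i)$ is disjoint from $C^*$, then $P(i)$ lies in the ``interior'' portion of $P(\theta_j)$ cut off from the outer boundary, which by~\eqref{eqn-loop-outer-decomp} and the description of the excursions is contained in $M_j^\infty$.

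The main obstacle is precisely the geometric bookkeeping in the previous paragraph: establishing rigorously, from the loop-joining procedure of Section~\ref{sec-burger-bijection}, that the excursions $P(i')$ with $i'\in I_j$ are exactly the maximal dual bubbles traced by $\lambda$ during $(\wt\theta_j,\theta_j)$ whose outer boundary runs along $C^*=\partial P(\theta_j)$, and that no primal bubble traced during that interval can be one of them. This requires carefully unwinding how $\lambda$ enters and leaves $M_j^\infty$ while tracing $\ell_j$, and keeping straight the orientation conventions ($\mcl I^L$ versus $\mcl I^R$, odd versus even $j$); it involves no analytic estimate, and once it is in place the rest of the argument is a direct translation through Lemmas~\ref{prop-cone-loops-discrete}--\ref{prop-L-nest}.
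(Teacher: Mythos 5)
Your overall architecture is right and part of it genuinely matches the paper: reducing the $\phi_*(i)<\wt\theta_j$ condition to $\partial P(i)\cap\partial P(\theta_j)\neq\emptyset$ via Lemma~\ref{prop-L-nest} is exactly how the paper handles the converse direction (and your observation that equality $\phi_*(i)=\wt\theta_j$ is ruled out by symbol type is a correct, worthwhile detail). But the geometric core that you isolate as ``the main obstacle'' --- that for a maximal $\tb F$-time $i$ in $(\wt\theta_j,\theta_j)$ one has $P(i)\not\subset M_j^\infty$ if and only if $i\in\mcl I^R$ and $\partial P(i)$ meets $\partial P(\theta_j)$ --- is not actually established, and the two arguments you sketch for it do not work as stated. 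Your step (a) is circular: to rule out $P(i)=P(i')$ for $i'\in I_j$ you invoke ``those are dual bubbles, i.e.\ $i'\in\mcl I^R$,'' which is precisely the containment $I_j\subset\mcl I^R$ the lemma is asserting. Your step (b), forward implication, is a non sequitur: from $a\in\partial P(i)\cap\partial P(\theta_j)$ you conclude that the quadrilateral bisected by $a$ has edges outside $P(\theta_j)$ and hence that $P(i)\not\subset M_j^\infty$; but those edges lie outside $P(\theta_j)\supset P(i)$, so they are not edges of $P(i)$ and say nothing about whether $P(i)\subset M_j^\infty$.

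The paper closes exactly this gap with three short arguments you would need to supply. For $I_j\subset\mcl I^R$: if $P(i)\not\subset M_j^\infty$ then $\lambda$ must branch \emph{outward} across $\partial M_j^\infty$ when it begins tracing $P(i)$, which by the loop-joining construction forces $i\in\mcl I^R$ (this replaces your circular step (a)). For $\phi_*(i)<\wt\theta_j$: pick an edge $a\in\partial P(i)\setminus\partial M_j^\infty$ (it exists because $\partial P(i)$ is a simple cycle not disconnected from $\infty$ by $\partial M_j^\infty$), let $k$ be the first time $\lambda$ crosses the quadrilateral bisected by $a$, and note $k<\wt\theta_j$ because otherwise $k$ would lie in $[\phi(i'),i']_{\BB Z}$ for some earlier $i'\in I_j$, contradicting that $X(\phi(i'),i')$ contains only orders; the paper does this directly rather than routing through Lemma~\ref{prop-L-nest}, though your route is also viable once the geometric facts are in place. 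For the converse: the correct replacement for your quadrilateral argument is that bounded complementary connected components of $\ell_j$ have boundaries disjoint from $\partial P(\theta_j)$, so $\partial P(i)\cap\partial P(\theta_j)\neq\emptyset$ (together with maximality) is incompatible with $P(i)\subset M_j^\infty$. Without these three ingredients the proposal is a correct reformulation of the lemma rather than a proof of it.
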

\begin{proof}
Assume without loss of generality that $j$ is odd. 
Suppose that $i  \in I_j$. Since $P(i ) \not\subset M_j^\infty$, it follows from Sheffield's bijection that $\lambda$ must branch outward from the loop $\ell_j $ when it begins tracing $P(i )$, i.e.\ it crosses $\partial M_j^\infty$. Therefore $i  \in \mcl I^R$. Since $\bdy P(i)$ is a simple cycle which is not disconnected from $\infty$ by $\bdy M_j^\infty$, 
we can find an edge $a\in \partial P(i) \setminus \partial M_j^\infty$. Let $q$ be the quadrilateral of $Q$ bisected by $a$. Note that the edge of $\partial P(i)$ which is crossed by $\lambda$ belongs to $\partial M_j^\infty$, so $a$ is not replaced by a fictional edge. Let $k$ be the first time $\lambda$ crosses the quadrilateral $q$. Then we have $X_k = \tc C$. We claim that $k  < \wt \theta_j$. If not, then $k \in [\phi(i') , i']_{\BB Z}$ for some $i'\in I_j$ with $i' < i$. But, $X(\phi(i') , i')$ contains only orders, so this is impossible. Hence $\phi_*(i) \leq k < \wt\theta_j$.  

Conversely, it follows from Lemmas~\ref{prop-cone-loops-discrete} and \ref{prop-L-nest} that any $i\in  \BB Z $ satisfying the conditions of the lemma is such that $\partial P(i) \cap \partial P(\theta_j) \not=\emptyset$ and $P(i)$ is not contained in $P(i')$ for any $i' \in \mcl I \cap (\wt\theta_j , \theta_j)_{\BB Z}$. Complementary connected components of $\ell_j$ which do not contain the root edge have boundaries disjoint from $\partial P(\theta_j)$. Therefore $P(i)$ cannot be contained in such a component, so we must have $i \in I_j$. 
\end{proof}

Finally, we describe the significance of the time set $\Theta_j$ (which we recall is the same as the set of maximal $\tb F$-times in $(\wt\theta_j , \theta_j)_{\BB Z}$ which are not contained in $I_j$). 

\begin{lem}\label{prop-discrete-components}
Let $j\in \BB N$ be odd (resp. even). Then $P$ maps $\Theta_j$ to the set of bounded complementary connected components of the loop $\ell_j$ (Definition~\ref{def-fk-component}). 
Elements of $\Theta_j \cap \mcl I^L$ (resp. $\Theta_j \cap \mcl I^R$) correspond to components in the interior of $\ell_j $ and elements of $\Theta_j \cap \mcl I^R$ (resp. $\Theta_j \cap \mcl I^L$) correspond to components which are not in the interior of $\ell_j$.
\end{lem}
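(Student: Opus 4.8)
The plan is to follow the same strategy used for Lemmas~\ref{prop-discrete-inner}, \ref{prop-discrete-outer}, and~\ref{prop-extra-bubbles-discrete}, translating the loop-tracing dynamics of Sheffield's bijection into statements about the word $X$. Assume without loss of generality that $j$ is odd, so that $\ell_j$ surrounds a primal cluster. First I would recall from~\eqref{eqn-loop-outer-decomp} that $M_j^\infty = P(\theta_j) \setminus \bigcup_{i\in I_j} P(i)$, so that $\ell_j = \partial M_j^\infty$ is obtained from the cycle $\partial P(\theta_j)$ by ``cutting out'' the bubbles indexed by $I_j$. The bounded complementary connected components of $\ell_j$ (Definition~\ref{def-fk-component}) are exactly the maximal sets of edges of $Q$ cut off from $\ell_j$ by a simple cycle in the primal cluster $A$ or the dual cluster $A^*$ separated by $\ell_j$. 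The key point is that the maximal simple cycles (Definition~\ref{def-max-cycle}) of $S$ or $S^*$ disconnecting such a component from $\infty$ are, by Lemmas~\ref{prop-cone-loops-discrete} and~\ref{prop-max-cycle}, exactly the cycles $\partial P(i)$ for $i\in\mcl I$ with $[\phi(i),i]_{\BB Z}\subset (\wt\theta_j,\theta_j)_{\BB Z}$ that are maximal among such, share an edge with $\ell_j$, and are not cut off by one of the $I_j$-bubbles.

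The main steps would be: (1) show $P$ injects $\Theta_j$ into the set of bounded complementary connected components. Given $i\in\Theta_j$, since $i$ is a maximal $\tb F$-time in $(\wt\theta_j,\theta_j)_{\BB Z}$ and $i\notin I_j$, the bubble $P(i)$ is contained in $M_j^\infty$; by Lemma~\ref{prop-cone-loops-discrete}, $\partial P(i)$ is a simple cycle in $S$ or $S^*$, which by the maximality of $i$ among $\tb F$-times inside the interval is in fact a maximal simple cycle. One then checks, using that $P(i)\subset M_j^\infty$ but $i$ is a proper sub-bubble (so $\partial P(i)$ shares an edge with $\ell_j$ exactly when that edge is crossed by $\lambda$ while tracing $\ell_j$), that $P(i)$ is disconnected from $\ell_j$ by $\partial P(i)$ and that no larger such set of edges works — i.e., $P(i)$ is a complementary connected component of $\ell_j$, and it is bounded since it lies inside $M_j^\infty$. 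Distinctness of $P(i)$, $P(i')$ for distinct $i,i'\in\Theta_j$ follows from maximality and the fact that a complementary connected component determines its boundary cycle, hence the $\tb F$-time by Lemma~\ref{prop-max-cycle}. (2) Show surjectivity: given a bounded complementary connected component $V$ of $\ell_j$, its boundary $C$ is a maximal simple cycle in $S$ or $S^*$ contained in the appropriate cluster, so by Lemma~\ref{prop-max-cycle} there is $i\in\mcl I$ with $P(i)$ equal to the set of edges disconnected from $\infty$ by $C$; because $C$ shares an edge with $\ell_j$ and $V$ is bounded, one deduces $[\phi(i),i]_{\BB Z}\subset (\wt\theta_j,\theta_j)_{\BB Z}$, that $i$ is maximal among $\tb F$-times in that interval (else $V$ would be strictly contained in another such bubble, contradicting that $V$ is a complementary component of $\ell_j$), and that $i\notin I_j$ (since $P(i)=V\subset M_j^\infty$, whereas elements of $I_j$ have bubbles not contained in $M_j^\infty$). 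Hence $i\in\Theta_j$. (3) The orientation statement: by Lemma~\ref{prop-discrete-outer}, $\theta_j\in\mcl I^R$ for $j$ odd, and $\partial P(\theta_j)$ is a cycle in $S^*$. A component $V$ with boundary cycle in $S$ lies in the interior of $\ell_j$ (it is cut off by a primal cycle, and $\ell_j$ surrounds a primal cluster), which by Lemma~\ref{prop-cone-loops-discrete} corresponds to $i\in\mcl I^L$; a component with boundary in $S^*$ is cut off by a dual cycle nested with $\partial P(\theta_j)$, and lies ``outside'' $\ell_j$ in the sense of Definition~\ref{def-fk-component}, corresponding to $i\in\mcl I^R$. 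So $\Theta_j\cap\mcl I^L$ gives interior components and $\Theta_j\cap\mcl I^R$ gives the others, as claimed.

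I expect the main obstacle to be step (2) together with the boundary-sharing verifications in step (1): carefully arguing, purely at the level of $\lambda$ crossing quadrilaterals, that the maximal simple cycle $C=\partial V$ necessarily shares an edge with $\ell_j=\partial M_j^\infty$ rather than being nested strictly inside some other sub-bubble $P(i')$ with $i'\in\mcl I\cap(\wt\theta_j,\theta_j)_{\BB Z}$ that is itself a complementary component. This is essentially the ``maximality transfers correctly'' bookkeeping: one must rule out that $V$ is a complementary connected component of some smaller loop rather than of $\ell_j$ itself, which uses that complementary connected components of a different loop $\ell'\in\mcl L$ contained in $M_j^\infty$ have boundaries disjoint from $\ell_j$ (the same fact invoked at the end of the proof of Lemma~\ref{prop-extra-bubbles-discrete}), combined with the observation that $\Theta_j$ collects precisely those maximal $\tb F$-bubbles whose boundary touches $\ell_j$. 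The rest — areas, lengths, and the correspondence of nesting with the order structure of $X$ — is then routine from Lemmas~\ref{prop-cone-loops-discrete} and~\ref{prop-L-nest}.
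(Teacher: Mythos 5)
Your proposal is correct and follows essentially the same route as the paper: both directions hinge on Lemma~\ref{prop-max-cycle} (every maximal simple cycle, in particular $\partial U$ for a bounded complementary component $U$, is of the form $\partial P(i)$) together with a nesting/maximality argument showing such an $i$ lies in $\Theta_j$. The one simplification worth noting is that the direct boundary-sharing verification you flag as the main obstacle in step (1) can be avoided entirely: once surjectivity is established, for $i\in\Theta_j$ one has $P(i)\subset M_j^\infty$, hence $P(i)\subset U=P(i')$ for some bounded component $U$, and maximality of $i$ forces $P(i)=U$.
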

\begin{proof}
Assume without loss of generality that $j$ is odd. 
Let $U$ be a bounded complementary connected component of $Q\setminus \ell_j $.  
The set $\partial U$ is a maximal simple cycle (Definition~\ref{def-max-cycle}). 
By Lemma~\ref{prop-max-cycle}, there exists $i \in \mcl I \cap (\wt\theta_j ,\theta_j)_{\BB Z}$ such that $P(i) = U$. This $i$ cannot belong to $I_j$ since $U\subset M_j^\infty $. To show that $i\in \Theta_j$ it remains to check that $i$ is maximal in $(\wt\theta_j , \theta_j)_{\BB Z}$. If not, then there is an $i' \in (\wt\theta_j , \theta_j)_{\BB Z} \cap \mcl I$ with $[\phi(i) , i]_{\BB Z}\subset (\phi(i'),i')_{\BB Z}$. By Lemma~\ref{prop-cone-loops-discrete}, $\partial P(i)$ is a cycle in either $S$ or $S^*$. Such a cycle cannot cross the loop $\ell_j$, so since it surrounds $\partial P(i)$ it must in fact surround $\ell_j$ (recall Definition~\ref{def-fk-component}). But then $P(i') \not\subset \lambda( (\wt\theta_j ,\theta_j)_{\BB Z}  )$, which contradicts our choice of $i'$. 

Conversely, suppose $i\in \Theta_j$. Since $i\notin I_j$, we have $P(i) \subset M_j^\infty$. Therefore $P(i) \subset U$ for some bounded complementary connected component $U$ of $\ell_j$. By Lemma~\ref{prop-max-cycle}, $U = P(i')$ for some $\tb F$-time $i' \in [\wt\theta_j , \theta_j]_{\BB Z}$. By maximality of $i$ we have $P(i) =U$. 
 
The distinction between $I_j\cap \mcl I^L$ and $I_j \cap \mcl I^R$ comes from the fact that $\ell_j$ surrounds a cluster of $S$. 
\end{proof}

\subsection{Proof of Theorem~\ref{thm-loop-limit}}
\label{sec-loop-limit-proof}
 
In this subsection we will prove scaling limit statements for the objects studied in Sections~\ref{sec-fk-loops} which will eventually lead to a proof of Theorem~\ref{thm-loop-limit}. 

Throughout this subsection, we fix a coupling of $\{X^n\}_{n\in\BB N}$ with $Z$ as in Theorem~\ref{thm-cone-limit} with $\mcl Q = \BB Q$ and let $\{(M^n , e_0^n , S^n)\}_{n\in\BB N}$ be the corresponding FK planar maps. We use the notation of Section~\ref{sec-fk-loops} but we add an extra superscript or subscript $n$ to each of the objects involved to denote which of the FK planar maps $\{(M^n , e_0^n , S^n)\}_{n\in\BB N}$ it is associated with. We define $\sigma_j > 0$ and $\Sigma_j \subset (v_Z(\sigma_j) , \sigma_j) $ for $j\in\BB Z$ as in Section~\ref{sec-loop-limit}. We also let $\tau_j$ be the largest $\pi/2$-cone time $t$ for $Z$ with $\tau_j < \sigma_j$, so that $\tau_j$ is in the opposite direction from $\sigma_j$ and is in the same direction as $\sigma_{j-1}$. Finally, we let $T_j$ be the set of maximal $\pi/2$-cone times $t$ for $Z$ in the interval $(v_Z(\sigma_j) , \sigma_j)$ which satisfy $u_Z(t)  < v_Z(\sigma_j)$, i.e.\ those which do not belong to $\Sigma_j$.  

The reader should note that the only inputs in the proofs of the results in this section are Theorem~\ref{thm-cone-limit} and the description of the FK loops in Section~\ref{sec-fk-loops}. In particular, if we had a finite-volume analogue of Theorem~\ref{thm-cone-limit} (which will be proven in~\cite{gms-burger-finite}) the argument of this subsection would immediately yield a finite-volume version of Theorem~\ref{thm-loop-limit}. 

Our first lemma gives convergence of the times corresponding to the connected component of a given loop which contains the root edge, and implies the existence of the index shift $b^n$ appearing in Theorem~\ref{thm-loop-limit}.

\begin{lem} \label{prop-inner-conv}  
For $n\in\BB N$, let $\tau_j^n := n^{-1} \iota_j^n$ (in the notation of Lemma~\ref{prop-discrete-inner}). Let $b^n$ be the smallest $j\in \BB N $ for which $\tau_j^n \geq (\tau_0 + \tau_{-1})/2 $, with $\tau_{-1}$ defined just above. Almost surely, for each $j\in\BB Z$ we have $\tau_{j+b^n}^n\rta \tau_j$ as $n\rta\infty$. 
\end{lem}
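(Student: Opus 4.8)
The plan is to first establish convergence of each individual time $\tau_j^n$ to $\tau_j$ along the (random, but $n$-independent once $j$ is fixed relative to a reference) indexing, and then to argue that the index shift $b^n$ is eventually the constant needed to align the $\BB N$-indexed discrete loops with the $\BB Z$-indexed continuum cone intervals. The key observation, from Lemma~\ref{prop-discrete-inner}, is that $\iota_j^n$ is a \emph{maximal flexible order time} in a suitable sense: $\iota_j^n$ is the largest $i \in \mcl I^L$ (or $\mcl I^R$) with $i < \iota_{j+1}^n$ and $0 \in [\phi^n(i), i]_{\BB Z}$. Dually, the continuum times $\sigma_j, \tau_j$ and more generally the $\tau_j$ are exactly the ordered $\pi/2$-cone times $t$ for $Z$ with $v_Z(t) < 0 < t$ whose direction alternates. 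So the whole lemma is a matter of transporting the discrete description through Theorem~\ref{thm-cone-limit}.

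First I would fix a reference point. The natural move is to use part~\eqref{item-cone-limit-maximal} of Theorem~\ref{thm-cone-limit} with $a = 0$: among $\pi/2$-cone times $t$ for $Z$ with $0 \in [v_Z(t), t]$, these are nested, and the sequence $\{\tau_j\}_{j \in \BB Z}$ enumerates the maximal ones in larger and larger intervals — precisely, $[v_Z(\tau_j), \tau_j]$ is increasing in $j$, exhausting $\BB R$ as $j \to \pm\infty$, with alternating direction. Correspondingly $P(\iota_j^n) = M_j^{0,n}$ is an increasing sequence of edge sets whose rescaled areas $\tau_j^n = n^{-1}\iota_j^n$ (more precisely $n^{-1}(\iota_j^n - \phi^n(\iota_j^n))$, but by Lemma~\ref{prop-cone-loops-discrete} the area is $\iota_j^n - \phi^n(\iota_j^n)$; note also $\tau_j^n$ as defined is $n^{-1}\iota_j^n$, which converges to $\tau_j$ once $n^{-1}\phi^n(\iota_j^n) \to v_Z(\tau_j)$ is controlled) should converge. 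To get the convergence $\tau_{j+b^n}^n \to \tau_j$ I would proceed in two steps: (i) show that for any fixed $j$, the maximal flexible order time $i$ in a large interval $I$ with $0 \in [\phi^n(i), i]$ and of the correct direction converges (after rescaling) to $\tau_j$, using part~\eqref{item-cone-limit-maximal} together with part~\eqref{item-cone-limit-times} to get the direction and the convergence of $\phi^n(i), \phi_*^n(i)$; (ii) check that the enumeration by maximality-in-$I$ as $I \uparrow \BB R$ matches the enumeration by $\iota_{j+1}^n$-nesting in Lemma~\ref{prop-discrete-inner}, so that the discrete sequence $\{\iota_j^n\}$, relabelled by the shift $b^n$, lines up with $\{\tau_j\}$.

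The definition of $b^n$ as the smallest $j \in \BB N$ with $\tau_j^n \geq (\tau_0 + \tau_{-1})/2$ is chosen so that $\tau_{b^n}^n$ is the first discrete loop-component-time exceeding a fixed threshold strictly between $\tau_{-1}$ and $\tau_0$; since $\tau_j^n \to (\text{the } j\text{-th continuum time})$ for a suitable alignment, and the continuum times have no accumulation point at any finite value (a.s.\ there are only finitely many $\tau_j$ in any bounded interval, because each carries a macroscopic cone interval around $0$), the first discrete time above the threshold must, for large $n$, be the discrete analogue of $\tau_0$. Once this anchoring is established, convergence of $\tau_{j+b^n}^n \to \tau_j$ for all other $j$ follows by the same argument applied to nested intervals $I$ with endpoints in $\mcl Q = \BB Q$ shrinking down to or growing out from $[v_Z(\tau_j), \tau_j]$. \textbf{The main obstacle} I anticipate is step (ii): one must rule out, with probability one, that there is a ``phantom'' maximal flexible order time in the discrete model — of the correct direction and straddling $0$ — that has no continuum counterpart, or vice versa, i.e.\ that the discrete and continuum enumerations can desynchronize by a bounded but nonzero amount as $n \to \infty$. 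This is exactly the kind of issue Proposition~\ref{prop-late-F} and part~\eqref{item-cone-limit-times} of Theorem~\ref{thm-cone-limit} are designed to handle (parts~\eqref{item-cone-limit-maximal} and \eqref{item-cone-limit-times} together say that maximal cone times are approximated by maximal flexible order times and that \emph{every} macroscopic flexible order excursion converges to a genuine cone time), so the proof should be a careful bookkeeping argument combining these two parts with the description in Lemma~\ref{prop-discrete-inner}, rather than requiring new estimates; still, matching the two indexings cleanly — in particular handling the direction-alternation condition and the role of $\iota_{j+1}^n$ as an upper cutoff — is where the care is needed.
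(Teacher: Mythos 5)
Your plan follows the same architecture as the paper's proof: anchor each $\tau_j$ as the maximal $\pi/2$-cone time straddling $0$ in a random interval $A_j$ with rational endpoints, use conditions~\ref{item-cone-limit-maximal} and~\ref{item-cone-limit-times} of Theorem~\ref{thm-cone-limit} to produce discrete approximants $t_j^n \rta \tau_j$ with matching directions, and then align the $\BB N$-indexed discrete sequence with the $\BB Z$-indexed continuum one via $b^n$, using Lemma~\ref{prop-discrete-inner} to translate between $\iota_j^n$ and maximal flexible order times. You also correctly isolate the crux: ruling out that the two enumerations desynchronize, i.e.\ that an extra discrete loop time with no continuum counterpart persists between the discrete analogues of $\tau_j$ and $\tau_{j+1}$.

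But you stop exactly there: ``careful bookkeeping\dots is where the care is needed'' is not a proof of the step you yourself flag as the main obstacle, and the tool you point to (Proposition~\ref{prop-late-F}) is not what closes it. The paper's argument runs as follows. Set $\psi^n(j)$ to be the largest index $j'$ with $\tau_{j'}^n \leq t_{j+1}^n$ and with the direction constraint from Lemma~\ref{prop-discrete-inner}; a compactness argument plus condition~\ref{item-cone-limit-times} gives $\tau_{\psi^n(j)}^n \rta \tau_j$. To show $\psi^n(j)+1 = \psi^n(j+1)$ eventually, suppose not along a subsequence $n_k$; then the times $\tau^{n_k}_{\psi^{n_k}(j_0)+1} \leq \tau^{n_k}_{\psi^{n_k}(j_0+1)-1}$ are sandwiched between quantities converging to $\tau_{j_0}$ and $\tau_{j_0+1}$, so after passing to a further subsequence they converge to cone times $t \leq t'$ straddling $0$, with $t$ in the opposite direction from $\tau_{j_0}$ and $t'$ in the opposite direction from $\tau_{j_0+1}$ (by condition~\ref{item-cone-limit-times}). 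Since $[v_Z(\tau_{j_0}),\tau_{j_0}]$ is a.s.\ the \emph{outermost} $\pi/2$-cone interval containing $0$ that is contained in $[v_Z(\tau_{j_0+1}),\tau_{j_0+1}]$ and is opposite in direction to $\tau_{j_0+1}$, one deduces $t'=\tau_{j_0}$, hence $t=\tau_{j_0}$, contradicting that $t$ is opposite in direction to $\tau_{j_0}$. This alternation-plus-outermostness argument is the missing idea; it is also what justifies your anchoring claim that $b^n$ eventually equals $\psi^n(0)$, so without it neither the index alignment nor the relabelled convergence $\tau_{j+b^n}^n \rta \tau_j$ is established.
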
 
\begin{proof}
Recall that each $\tau_j$ is a $\pi/2$-cone time for $Z$ with $v_Z(\tau_j) < 0 < \tau_j$ such that the next $\pi/2$-cone time $t> \tau_j$ for $Z$ with $v_Z(t) < 0 < t$ is in the opposite direction from $\tau_j$. Therefore, for each $j\in\BB Z$ there exists $\ep > 0$ such that there are no $\pi/2$-cone times $t$ for $Z$ in $[\tau_j , \tau_j + \ep]$ with $0 \in [v_Z(t) , t]$. Hence we can a.s.\ find a random open interval $A_j$ with rational endpoints such that $\tau_j$ is the maximal $\pi/2$-cone time $t$ for $Z$ in $A_j$ with $0 \in [v_Z(t) , t]$. For $n\in\BB N$, let $i_j^n$ be the maximal element $i$ of $\mcl I_n^L$ (if $j$ is even) or $\mcl I_n^R$ (if $j$ is odd) in $A_j$ with $ 0 \in [\phi^n(i) , i]_{\BB Z}$. Let $t_j^n := n^{-1} (i_j^n-1)$. 
By condition~\ref{item-cone-limit-maximal} in Theorem~\ref{thm-cone-limit}, we a.s.\ have
\eqb \label{eqn-tau-limits}
t_j^n \rta \tau_j \quad \op{and} \quad v_Z(t_j^n) \rta v_Z(\tau_j) \quad \forall j\in \BB N. 
\eqe 
Furthermore, the $\pi/2$-cone times $t_j^n$ and $\tau_j$ are in the same direction for sufficiently large $n$. 

For $j\in\BB Z$, let $\psi^n(j)$ be the largest $j' \in \BB N$ for which $\tau_{j'}^n \leq t_{j+1}^n$ and $\tau_{j'}^n$ is not in the same direction as $t_j^n$. By Lemma~\ref{prop-discrete-inner}, for large enough $n$, $\iota_{\psi^n(j) }^n$ is the largest $i \in \BB N$ such that $X_i^n = \tb F$, $i \leq i_{j+1}^n$, $\phi^n(i)  < 0 $, and $X_{\phi^n(i)}^n \not= X_{\phi^n(i_{j+1}^n)}^n$. 

We claim that a.s.\
\eqb \label{eqn-tau-psi-conv}
\lim_{n\rta\infty} \tau_{\psi^n(j)}^n  =  \tau_j .
\eqe
By~\eqref{eqn-tau-limits} and our characterization of $\iota_{\psi^n(j)}^n$, we have $ \tau_{\psi^n(j)}^n \geq    t_j^n $ for sufficiently large $n$. By compactness, from any sequence of positive integers tending to $\infty$, we can extract a subsequence $n_k\rta \infty$ such that $\tau_{\psi_{n_k}(j)}^{n_k} \rta \wt t \in [\tau_j , \tau_{j+1}]$. 
By~\eqref{eqn-tau-limits} and since any two $\pi/2$-cone intervals are either nested or disjoint, $\liminf_{n\rta\infty} ( \tau_{\psi_{n_k}(j)}^{n_k} - v_{Z^{n_k}}(\tau_{\psi_{n_k}(j)}^{n_k}) ) \geq \tau_j - v_Z(\tau_j) > 0$. Hence condition~\ref{item-cone-limit-times} in Theorem~\ref{thm-cone-limit} implies that $\wt t$ is a $\pi/2$-cone time for $Z$ in the opposite direction from $ \tau_{j+1}$ with $v_Z(\wt t) \leq v_Z(\tau_j) \leq \tau_j \leq \wt t$. Therefore $\wt t=\tau_j$. 

Next we claim that for each $j\in\BB N$, there a.s.\ exists a (random) $n_* = n_*(j) \in\BB N$ such that for $n\geq n_*$, we have 
\eqb \label{eqn-psi-add}
\psi^n(j)+1 = \psi^n(j+1)\quad \forall n\geq n_* .
\eqe
Suppose by way of contradiction that this is not the case, i.e.\ there exists $j_0 \in \BB Z$ and a sequence $n_k\rta\infty$ such that $\psi_{n_k}(j_0) < \psi_{n_k}(j_0+1) - 1$ for each $k$. For $k\in\BB N$ let $j_{n_k}  := \psi_{n_k}(j_0) + 1$ and $j_{n_k}' := \psi_{n_k}(j_0+1)-1$. Since
\eqbn
\tau_{\psi_{n_k}(j_0)}^{n_k} < \tau_{j_{n_k}}^n  \leq  \tau_{j_{n_k}'}^n  <  \tau_{\psi_{n_k}(j_0+1)}^{n_k}
\eqen
and the two times on the left and right converge to $\tau_{j_0}$ and $\tau_{j_0+1}$, respectively, as $k\rta\infty$, 
we can (by possibly passing to a further subsequence) arrange that $\tau_{j_{n_k} }^n \rta t$ and $\tau_{j_{n_k}' }^n \rta t'$ for some $t , t'\in [\tau_{j_0} , \tau_{j_0+1}]$ with $t\leq t'$. By condition~\ref{item-cone-limit-times} in Theorem~\ref{thm-cone-limit}, $t$ (resp. $t'$) is a $\pi/2$-cone time for $Z$ with $v_Z(t) < 0 < t$ (resp. $v_Z(t') < 0 < t'$), in the opposite direction from $\tau_{j_0}$ (resp. $\tau_{j_0+1}$). 
Since $[v_Z(\tau_{j_0}) , \tau_{j_0}]$ is the outermost $\pi/2$-cone interval for $Z$ containing 0 which is contained in $[v_Z(\tau_{j_0+1} , \tau_{j_0+1}]$ and is in the opposite direction from $\tau_{j_0+1}$, we infer that $t' = \tau_{j_0}$. Since $\tau_{j_0} \leq t\leq t'$ we have $t  = \tau_{j_0 }$. But, $t $ is in the opposite direction from $\tau_{j_0}$, so we obtain a contradiction and conclude that~\eqref{eqn-psi-add} holds.

To conclude the proof of the lemma, we observe that~\eqref{eqn-tau-psi-conv} implies that $b^n = \psi^n(0)$ for large enough $n$. By~\eqref{eqn-psi-add}, for each $j \in \BB Z$, it holds for sufficiently large $n\in\BB N$ that $j + b^n = \psi^n(j)$. Therefore~\eqref{eqn-tau-psi-conv} implies $\tau_{j+b^n}^n\rta \tau_j$ as $n\rta\infty$. 
\end{proof}

Next we prove convergence of the times when the exploration path $\lambda$ finishes tracing a loop. 

\begin{lem} \label{prop-outer-conv} 
For $n\in\BB N$, let $\sigma_j^n := n^{-1} \theta_j^n$ (in the notation of Lemma~\ref{prop-discrete-outer}). Let $b^n$ be as in Lemma~\ref{prop-inner-conv}. Almost surely, for each $j\in\BB Z$, we have $\sigma_{j+b^n}^n\rta \sigma_j$ as $n\rta\infty$. 
\end{lem}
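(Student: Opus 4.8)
The plan is to deduce the convergence $\sigma_{j+b^n}^n \to \sigma_j$ from Lemma~\ref{prop-inner-conv}, the discrete description of $\theta_j^n$ in Lemma~\ref{prop-discrete-outer}, and conditions~\ref{item-cone-limit-maximal} and~\ref{item-cone-limit-times} of Theorem~\ref{thm-cone-limit}. Recall from Lemma~\ref{prop-discrete-outer} (with Lemma~\ref{prop-discrete-inner}) that, once $j+b^n \geq 2$, the time $\theta_{j+b^n}^n$ is the \emph{smallest} $\tb F$-time $i > \iota_{j+b^n}^n$ which lies in the direction opposite to $\iota_{j+b^n}^n$ and satisfies $0 \in [\phi^n(i),i]_{\BB Z}$, and that the associated bubbles nest, $P(\iota_{j+b^n}^n) \subseteq P(\theta_{j+b^n}^n)$ (the root component of the loop $\ell_{j+b^n}^n$ is contained in the region it bounds). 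On the continuum side, Lemma~\ref{prop-inner-conv} and its proof give $\tau_{j+b^n}^n \to \tau_j$, and (via condition~\ref{item-cone-limit-times} applied to $\{\iota_{j+b^n}^n\}$) also $n^{-1}\phi^n(\iota_{j+b^n}^n) \to v_Z(\tau_j)$ and that $\iota_{j+b^n}^n$ is in the same direction as $\tau_j$ for all large $n$. Since $b^n \to \infty$, all of this holds once $n$ is large, and in particular $j+b^n \geq 2$ for large $n$.

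For the upper bound I would manufacture a competing discrete $\tb F$-excursion near $\sigma_j$. Because $\sigma_j$ is a $\pi/2$-cone time with $v_Z(\sigma_j) < 0 < \sigma_j$, because distinct $\pi/2$-cone times have distinct cone intervals, and because the $\pi/2$-cone intervals surrounding $0$ form a nested family, one can a.s.\ choose---exactly as $\tau_j$ is handled in the proof of Lemma~\ref{prop-inner-conv}---a random open interval $I$ with rational endpoints for which $\sigma_j$ is the unique maximal $\pi/2$-cone time of $Z$ in $I$ with $0 \in [v_Z(\cdot),\cdot]$. Condition~\ref{item-cone-limit-maximal} of Theorem~\ref{thm-cone-limit} then produces flexible order times $i_n$ (the maximal flexible order time in $nI$ with $0 \in [\phi^n(i_n),i_n]_{\BB Z}$) with $n^{-1}i_n \to \sigma_j$. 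Since $0 \in [\phi^n(i_n),i_n]_{\BB Z}$ forces $\phi^n(i_n) \leq 0$, we have $n^{-1}(i_n-\phi^n(i_n)) \geq n^{-1}i_n \to \sigma_j > 0$, so condition~\ref{item-cone-limit-times} applies to $\{i_n\}$ and shows that, for large $n$, $i_n$ is in the same direction as $\sigma_j$ and hence opposite to $\iota_{j+b^n}^n$. As $n^{-1}i_n \to \sigma_j > \tau_j = \lim_n n^{-1}\iota_{j+b^n}^n$, we also have $i_n > \iota_{j+b^n}^n$ for large $n$; thus $i_n$ satisfies every condition under which $\theta_{j+b^n}^n$ is a minimum in Lemma~\ref{prop-discrete-outer}, whence $\theta_{j+b^n}^n \leq i_n$ and $\limsup_n \sigma_{j+b^n}^n \leq \lim_n n^{-1}i_n = \sigma_j$.

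For the lower bound, note that $\sigma_{j+b^n}^n$ is eventually confined to a compact set---it exceeds $\tau_{j+b^n}^n \to \tau_j$ from below and, by the previous paragraph, is at most $\sigma_j+1$---and, from $P(\iota_{j+b^n}^n) \subseteq P(\theta_{j+b^n}^n)$,
\[
n^{-1}\bigl(\theta_{j+b^n}^n - \phi^n(\theta_{j+b^n}^n)\bigr) \;\geq\; n^{-1}\bigl(\iota_{j+b^n}^n - \phi^n(\iota_{j+b^n}^n)\bigr) \;=\; \tau_{j+b^n}^n - n^{-1}\phi^n(\iota_{j+b^n}^n) \;\longrightarrow\; \tau_j - v_Z(\tau_j) > 0 .
\]
Hence, along any subsequence with $\sigma_{j+b^n}^n \to s$, condition~\ref{item-cone-limit-times} applied to $\{\theta_{j+b^n}^n\}$ shows that $s$ is a $\pi/2$-cone time of $Z$ in the same direction as $\sigma_j$ (since $\theta_{j+b^n}^n$ is opposite to $\iota_{j+b^n}^n$, which is in the direction of $\tau_j$). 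In particular $s \geq \tau_j$, and $s \neq \tau_j$ because $\tau_j$ is in the opposite direction, so $s > \tau_j$; since $\tau_j$ is the largest $\pi/2$-cone time of $Z$ strictly below $\sigma_j$, there are no $\pi/2$-cone times in $(\tau_j,\sigma_j)$, and therefore $s \geq \sigma_j$. Combining the two bounds yields $\sigma_{j+b^n}^n \to \sigma_j$ for every $j\in\BB Z$.

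The step I expect to be the main obstacle is the manufacture of the competing excursions $i_n$: one must convert the single limiting cone time $\sigma_j$ into honest discrete $\tb F$-excursions that are forced to sit strictly between $\iota_{j+b^n}^n$ and $\theta_{j+b^n}^n$, and for this the bare convergence $Z^n \to Z$ is not enough (the relevant $\tb F$-excursions are not continuous functionals of the path), so one genuinely invokes the maximal-cone-time statement, condition~\ref{item-cone-limit-maximal}. The accompanying technical point is the a.s.\ selection of the rational interval $I$, i.e.\ verifying---using that the smallest $\pi/2$-cone interval surrounding $0$ that strictly contains $[v_Z(\sigma_j),\sigma_j]$ has right endpoint strictly larger than $\sigma_j$---that $\sigma_j$ is isolated from above within the nested family of $\pi/2$-cone intervals surrounding $0$; this proceeds just as the analogous construction for $\tau_j$ in the proof of Lemma~\ref{prop-inner-conv}, together with the routine bookkeeping that identifies $\iota_j^n$, $\theta_j^n$ and the matching functions $\phi^n(\cdot)$, $\phi_*^n(\cdot)$ with the continuum quantities $v_Z(\cdot)$, $u_Z(\cdot)$ up to $O(1)$ index errors.
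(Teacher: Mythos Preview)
Your argument is correct and follows essentially the same two-step skeleton as the paper's proof: for the lower bound you pass to a subsequential limit $s$ of $\sigma_{j+b^n}^n$ and use condition~\ref{item-cone-limit-times} to see that $s$ is a $\pi/2$-cone time in the direction of $\sigma_j$ with $0\in[v_Z(s),s]$ (this last inclusion is implicit in your use of $0\in[\phi^n(\theta_{j+b^n}^n),\theta_{j+b^n}^n]$ together with $n^{-1}\phi^n(\theta_{j+b^n}^n)\to v_Z(s)$; you should state it, since your sentence ``there are no $\pi/2$-cone times in $(\tau_j,\sigma_j)$'' is only true once restricted to cone times whose interval contains $0$). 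This is exactly what the paper does.

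The genuine difference is in the upper bound. The paper manufactures the competing excursion via condition~\ref{item-cone-limit-stopping}: it chooses $r\in\mcl Q$ slightly below $\sigma_j-v_Z(\sigma_j)$ so that $\sigma_j=\tau^{0,r}$, and then $\tau_n^{0,r}\to\sigma_j$ directly gives a discrete $\tb F$-excursion near $\sigma_j$ satisfying all the constraints defining $\theta_{j+b^n}^n$, whence $\sigma_{j+b^n}^n\le\tau_n^{0,r}$. You instead invoke condition~\ref{item-cone-limit-maximal}, constructing a rational interval $I$ in which $\sigma_j$ is the maximal cone time surrounding $0$ and taking the corresponding maximal flexible order times $i_n$. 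Both routes are valid; the paper's is a touch more economical because choosing a single rational $r$ avoids having to argue (as you do) that $\sigma_j$ is isolated from above among the cone intervals surrounding $0$ and that a suitable rational $I$ exists. Your route, on the other hand, is closer in spirit to how the $\tau_j$'s were handled in Lemma~\ref{prop-inner-conv}, so the two lemmas end up looking more uniform.
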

\begin{proof}
Recall that $\sigma_j$ is the smallest $\pi/2$-cone time $t > \tau_j$ for $Z$ such that $0 \in [v_Z(t) , t]$ and $t$ is in the opposite direction from $\tau_j$. By Lemma~\ref{prop-discrete-inner}, an analogous characterization holds for the times $\theta_j^n$. 

Since $\sigma_{j+b^{n }}^{n } \in [v_{Z^{n }}(\tau_{j+1+b^{n}}^{n }) ,\tau_{j+1+b^{n }}^{n }]$ and the endpoints of this interval converge (by Lemma~\ref{prop-inner-conv}), from any sequence of $n$'s tending to $\infty$, we can extract a subsequence $n_k \rta\infty$ such that $\sigma_{j+b^{n_k}}^{n_k}$ converges to some $t \in [0,2]$. By condition~\ref{item-cone-limit-times} in Theorem~\ref{thm-cone-limit}, this time $t$ is a $\pi/2$-cone time for $Z$ in the same direction as $\sigma_j$ with $0\in [v_Z(t) , t]$ and $t\in [ \tau_j,   \sigma_j]$. We must show that in fact $t = \sigma_j$. 

It is clear from the above characterization of $\sigma_j$ that $t\geq  \sigma_j$. On the other hand, we can a.s.\ find $r \in \BB Q \cap (0,2)$ such that $\sigma_j  = \tau^{0 , r}_n$ (defined in condition~\ref{item-cone-limit-stopping} from Theorem~\ref{thm-cone-limit}). Then Theorem~\ref{thm-cone-limit} implies $\tau^{0 , r}_n \rta \sigma_j$ and $v_{Z^n}(\tau^{0 , r}_n) \rta v_Z(\sigma_j)$. Hence for sufficiently large $n\in\BB N$, we have $\tau^{0 , r}_n  \geq \tau_{j+b^n}^n$, $0 \in [v_{Z^n}(\tau^{0 , r}_n)  , \tau^{0 , r}_n ]$, and $\tau^{0 , r}_n $ is in the same direction as $\sigma_j^n$. Therefore $\sigma_j^n \leq \tau^{0, r}_n$ for sufficiently large $n$. Passing to the limit along the subsequence $(n_k)$ we get $t \leq \sigma_j$. 
\end{proof}

Recall the set $\Theta_j^n$ and $I_j^n$ considered in Lemmas~\ref{prop-extra-bubbles-discrete} and~\ref{prop-discrete-components}, respectively, which correspond to excursions of the exploration path outside of the loop $\ell^n$ and bounded complementary connected components of $\ell^n$, respectively. 
Our next definition will be used to isolate the ``macroscopic" $\tb F$-times in $I_j^n$ and $\Theta_j^n$. 

\begin{defn} \label{def-big-max-cones}
For $j\in\BB Z$, let $T_j$ be defined as in the beginning of this subsection and for $n\in\BB N$ let $I_j^n$ be as in Section~\ref{sec-fk-loops}. For $\zeta>0$, let $T_j(\zeta)$ (resp. $I_j^n(\zeta)$) be the set of $t\in T_j$ (resp. $i\in I_j$) with $t-v_Z(t) \geq \zeta $ (resp. $i-\phi^n(i) \geq \zeta n$). 
Also let $\Sigma_j$ be as in Section~\ref{sec-loop-limit} and for $n\in\BB N$ let $\Theta_j^n$ be as in Section~\ref{sec-fk-loops}. For $\zeta>0$, let $\Sigma_j(\zeta)$ (resp. $\Theta_j^n(\zeta)$) be the set of $t\in\Sigma_j$ (resp. $i\in \Theta_j^n$) with $t-v_Z(t) \geq \zeta $ (resp. $i-\phi^n(i) \geq \zeta n$). 
\end{defn}  

Recall that $T_j \cup \Sigma_j$ is the set of maximal $\pi/2$-cone times for $Z$ in $(v_Z(\sigma_j) , \sigma_j)$. In particular, $T_j(\zeta) \cup \Sigma_j(\zeta)$ is a finite set. By Lemmas~\ref{prop-extra-bubbles-discrete} and~\ref{prop-discrete-components}, $I_j^n \cup \Theta_j^n$ is the set of maximal $\tb F$-times for $X^n$ in $(\wt\theta_j^n , \theta_j^n)_{\BB Z}$. 

The following lemma will imply~\eqref{eqn-component-conv} of Theorem~\ref{thm-loop-limit}. 
 
\begin{lem} \label{prop-all-bubble-conv} 
Fix $\zeta>0$ and $j\in\BB Z$. Let $t_1,\dots,t_{\BB m}$ be the elements of $ T_j(\zeta) \cup \Sigma_j(\zeta)$, listed in chronological order. 
For $n\in\BB N$, let $b^n$ be as in Lemma~\ref{prop-inner-conv} and let $i_1^n,\dots,i_{\BB m^n}^n$ be the elements of $I_{j+b^n}^n(\zeta)\cup \Theta_{j+b^n}^n(\zeta)$, listed in chronological order. Almost surely, for sufficiently large $n\in\BB N$ we have $\BB m^n = \BB m$. Furthermore, it is a.s.\ the case that for each $k\in [1,\BB m]_{\BB Z}$, it holds for sufficiently large $n\in\BB N$ that the $\pi/2$-cone times $t_k$ for $Z$ and $n^{-1}  i_k^n $ for $Z^n$ are in the same direction; $i_k^n \in I_j^n(\zeta)$ (resp. $i_k^n \in \Theta_j^n(\zeta)$) for large enough $n$ if and only if $t_k \in T_j(\zeta)$ (resp. $t_k \in \Sigma_j(\zeta)$); and
\begin{align} \label{eqn-component-conv-t}
 n^{-1} i_k^n \rta t_k ,
\quad n^{-1} \phi^n(i_k^n) \rta v_Z(t_k ) ,
\quad n^{-1} \phi^n_*(i_k^n) \rta u_Z(t_k) .
\end{align} 
\end{lem}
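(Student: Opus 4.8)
The plan is to deduce everything from Theorem~\ref{thm-cone-limit}, particularly parts \ref{item-cone-limit-maximal}, \ref{item-cone-limit-stopping}, and \ref{item-cone-limit-times}, together with the characterizations of the time sets $I_j^n$, $\Theta_j^n$ in Lemmas~\ref{prop-extra-bubbles-discrete} and~\ref{prop-discrete-components} and the convergences $\wt\theta_{j+b^n}^n/n \to v_Z(\sigma_j)$ and $\theta_{j+b^n}^n/n \to \sigma_j$ from Lemmas~\ref{prop-inner-conv} and~\ref{prop-outer-conv} (for the latter, note $\wt\theta_j = \phi(\theta_j)$ so its convergence follows from part~\ref{item-cone-limit-times}). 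First I would fix $j$ and $\zeta$, work on the a.s.\ event that all the hypotheses of Theorem~\ref{thm-cone-limit} hold, and also throw away the (probability-zero) events that some $t \in T_j \cup \Sigma_j$ has $t - v_Z(t) = \zeta$ exactly, or that $u_Z(t) = v_Z(\sigma_j)$ for some maximal cone time $t$ in $(v_Z(\sigma_j), \sigma_j)$. This ensures $T_j(\zeta) \cup \Sigma_j(\zeta)$ has a well-defined, strictly positive "gap" structure: there is $\eta > 0$ so that the $t_k$ are $\eta$-separated, each satisfies $t_k - v_Z(t_k) > \zeta + \eta$ or $< \zeta - \eta$ (no borderline case), and each $t_k \in \Sigma_j(\zeta)$ has $u_Z(t_k) > v_Z(\sigma_j) + \eta$ while each $t_k \in T_j(\zeta)$ has $u_Z(t_k) < v_Z(\sigma_j) - \eta$.

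The argument then splits into two directions. \emph{Lower bound on $\BB m^n$ / convergence of each $t_k$.} For a fixed $t_k$, choose (as in the proof of Lemma~\ref{prop-inner-conv}) a rational interval $A_k \ni t_k$ in which $t_k$ is the maximal $\pi/2$-cone time containing some rational point $a_k \in [v_Z(t_k), t_k] \cap \BB Q$, with $[v_Z(t_k), t_k] \subset (\wt\theta_j, \theta_j) = $ the limiting interval; part~\ref{item-cone-limit-maximal} of Theorem~\ref{thm-cone-limit} then produces maximal flexible order times $i_k^n$ in $nA_k$ with $n^{-1} i_k^n \to t_k$, $n^{-1}\phi^n(i_k^n) \to v_Z(t_k)$, and (via part~\ref{item-cone-limit-times}) $n^{-1}\phi^n_*(i_k^n) \to u_Z(t_k)$, and with the cone-time direction matching for large $n$. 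Because $\wt\theta_{j+b^n}^n/n \to v_Z(\sigma_j)$ and $\theta_{j+b^n}^n/n \to \sigma_j$, for large $n$ the interval $[\phi^n(i_k^n), i_k^n]_{\BB Z}$ lies in $(\wt\theta_{j+b^n}^n, \theta_{j+b^n}^n)_{\BB Z}$; maximality in that interval follows from maximality in $nA_k$ plus the separation $\eta$ (any larger flexible-order interval would have to contain $[\phi^n(i_k^n), i_k^n]_{\BB Z}$ and hence, in the limit, surround $[v_Z(t_k), t_k]$, contradicting maximality of $t_k$ in $A_k$ — here I use part~\ref{item-cone-limit-times} applied to the putative larger times). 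Then I read off from Lemmas~\ref{prop-extra-bubbles-discrete} and~\ref{prop-discrete-components} that $i_k^n \in I_{j+b^n}^n$ iff its cone-time direction is opposite to that of $\sigma_j$ and $\phi^n_*(i_k^n) < \wt\theta_{j+b^n}^n$, iff (in the limit, using $n^{-1}\phi^n_*(i_k^n) \to u_Z(t_k)$ and the gap $\eta$ at $v_Z(\sigma_j)$) $t_k \in T_j(\zeta)$; and $i_k^n - \phi^n(i_k^n) \geq \zeta n$ for large $n$ because $t_k - v_Z(t_k) > \zeta + \eta$. This gives $\BB m^n \geq \BB m$ eventually and all the claimed convergences.

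\emph{Upper bound on $\BB m^n$.} Suppose for contradiction that along a subsequence $n_\ell \to \infty$ there is a maximal flexible order time $i^{n_\ell} \in I_{j+b^{n_\ell}}^{n_\ell}(\zeta) \cup \Theta_{j+b^{n_\ell}}^{n_\ell}(\zeta)$ distinct from all the $i_k^{n_\ell}$ constructed above. Since $i^{n_\ell} - \phi^{n_\ell}(i^{n_\ell}) \geq \zeta n_\ell > 0$ and $n_\ell^{-1} i^{n_\ell} \in [\wt\theta_{j+b^{n_\ell}}^{n_\ell}/n_\ell, \theta_{j+b^{n_\ell}}^{n_\ell}/n_\ell] \to [v_Z(\sigma_j), \sigma_j]$, by compactness pass to a further subsequence so that $n_\ell^{-1} i^{n_\ell} \to t^*$, and apply part~\ref{item-cone-limit-times}: $t^*$ is a $\pi/2$-cone time for $Z$, in the limiting cone direction, with $n_\ell^{-1}\phi^{n_\ell}(i^{n_\ell}) \to v_Z(t^*)$, $n_\ell^{-1}\phi^{n_\ell}_*(i^{n_\ell}) \to u_Z(t^*)$, and $t^* - v_Z(t^*) \geq \zeta$. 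Maximality of each $i^{n_\ell}$ in $(\wt\theta^{n_\ell}, \theta^{n_\ell})_{\BB Z}$ forces, in the limit, that $[v_Z(t^*), t^*]$ is a maximal $\pi/2$-cone interval in $(v_Z(\sigma_j), \sigma_j)$ — if not, there would be a strictly larger maximal cone interval $[v_Z(t'), t']$ with $\zeta$-length, i.e.\ $t' = t_k$ for some $k$, and then for large $\ell$ the flexible-order interval $[\phi^{n_\ell}(i^{n_\ell}), i^{n_\ell}]_{\BB Z}$ would be strictly inside $(\phi^{n_\ell}(i_k^{n_\ell}), i_k^{n_\ell})_{\BB Z}$, contradicting maximality of $i^{n_\ell}$. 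Hence $[v_Z(t^*), t^*] \in T_j(\zeta) \cup \Sigma_j(\zeta)$, so $t^* = t_k$ for some $k$; but then $i^{n_\ell}$ and $i_k^{n_\ell}$ are two distinct flexible-order times whose intervals both converge to the single interval $[v_Z(t_k), t_k]$, which is impossible since two maximal flexible-order intervals in the same interval are nested-or-disjoint and their lengths would both be $\asymp (t_k - v_Z(t_k)) n_\ell$ while their union fits inside an interval of length $(t_k - v_Z(t_k) + o(1))n_\ell$ — more carefully, one of them, say $i^{n_\ell}$, would have to nest inside the other, contradicting maximality of $i^{n_\ell}$ for large $\ell$ once $n_\ell^{-1}\phi^{n_\ell}(i_k^{n_\ell}) \to v_Z(t_k)$ strictly below $n_\ell^{-1}\phi^{n_\ell}(i^{n_\ell}) \to v_Z(t_k)$...

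The subtle point, and what I expect to be \textbf{the main obstacle}, is precisely this last nesting-vs-coincidence dichotomy: ruling out the possibility that \emph{two} distinct discrete macroscopic flexible-order intervals both converge to the \emph{same} limiting maximal cone interval $[v_Z(t_k), t_k]$. The resolution is that if $i^{n_\ell} \neq i_k^{n_\ell}$ are both maximal in $(\wt\theta^{n_\ell}, \theta^{n_\ell})_{\BB Z}$ with overlapping intervals, one must strictly nest in the other, contradicting \emph{both} being maximal; and if their intervals are disjoint then one of the two endpoints cannot converge to $v_Z(t_k)$ or $t_k$ as required — so in fact one must appeal to a slightly finer statement than bare convergence, namely that distinct maximal flexible order times have disjoint interiors and hence cannot both be sandwiched into a shrinking neighborhood of the same limiting interval while both remaining maximal. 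I would handle this by noting that among all maximal flexible order times in $(\wt\theta^{n_\ell}, \theta^{n_\ell})_{\BB Z}$ whose interval is within Hausdorff distance $o(1)$ of $[v_Z(t_k), t_k]$, there can be at most one, since any two such would be nested-or-disjoint and the outer (or rightmost) one would violate the maximality of the inner (or leftmost) one once $n_\ell$ is large enough that the $\eta$-gaps kick in. This pigeonhole-type argument, combined with the lower bound, gives $\BB m^n = \BB m$ eventually and completes the proof.
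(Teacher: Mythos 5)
Your proposal is correct and follows essentially the same route as the paper: condition~\ref{item-cone-limit-maximal} of Theorem~\ref{thm-cone-limit} supplies a discrete maximal time converging to each $t_k$, condition~\ref{item-cone-limit-times} plus compactness identifies every subsequential limit of the discrete maximal times in $(\wt\theta^n_{j+b^n},\theta^n_{j+b^n})_{\BB Z}$ with some $t_k$, and your nested-or-disjoint dichotomy is the same mechanism the paper uses (phrased there as: consecutive discrete maximal times are $\zeta n$-separated, so the induced index map $k\mapsto\wh k$ is increasing, hence injective, and surjective by the containment argument). The only imprecision is your claim in the lower-bound step that the time produced by condition~\ref{item-cone-limit-maximal} is itself maximal in $(\wt\theta^n_{j+b^n},\theta^n_{j+b^n})_{\BB Z}$ — it need only be contained in some maximal flexible-order interval there, whose rescaled interval then also converges to $[v_Z(t_k),t_k]$ by maximality of $t_k$ in $(v_Z(\sigma_j),\sigma_j)$; this is exactly how the paper handles that step, and your own compactness argument already covers it.
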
 
\begin{proof} 
Let $\BB m_* := \lceil 2\zeta^{-1} (\sigma_j -v_Z(\sigma_j) ) \rceil$. Since elements of $T_j(\zeta) \cup \Sigma_j(\zeta)$ correspond to disjoint time intervals contained in $[v_Z(\sigma_j) , \sigma_j]$, we have $\BB m \leq \BB m_*$. 
Using Lemma~\ref{prop-outer-conv} and condition~\ref{item-cone-limit-times} in Theorem~\ref{thm-cone-limit}, we also have $\BB m^n \leq\BB m_*$ for large enough $n\in\BB N$. 
For $k\in [\BB m+1,\BB m_*]_{\BB Z}$ (resp. $k\in [\BB m^n+1,\BB m_*]_{\BB Z}$) let $t_k := t_{\BB m}$ (resp. $i_k^n := i_{\BB m^n}^n$).   

For each $k\in [1, \BB m_*]_{\BB Z}$, we can a.s.\ find an open interval $A_k \subset  (v_Z(\sigma_j) , \sigma_j)$ with rational endpoints and a rational $a_k \in A_k$ such that $t_k$ is the maximal $\pi/2$-cone time $t$ for $Z$ in $A_k$ with $a_k \in [v_Z(t) , t] $. 
For $k\in [1, \BB m]_{\BB Z}$, let $\wt i_k^n$ be the maximal $\tb F$-time for $X^n$ in $n I_k$ with $\phi^n(i) \leq  n a_k \leq i$ and let $\wt t_k^n  = n^{-1} \wt i_k^n$. By condition~\ref{item-cone-limit-maximal} in Theorem~\ref{thm-cone-limit}, we a.s.\ have $\wt t_k^n \rta t_k$. 
 
On the other hand, from any sequence of integers tending to $\infty$ we can extract a subsequence $n_l\rta\infty$ such that $(\wt t_k^{n_l} )$ converges to some $\wh t_k \in [v_Z(\sigma_j ) , \sigma_j]$ for each $k \in [1,  \BB m_*]_{\BB Z}$. By condition~\ref{item-cone-limit-times} in Theorem~\ref{thm-cone-limit}, $\wh t_k$ is a $\pi/2$-cone time for $Z$ with $\wh t_k - v_Z(\wh t_k) \geq \zeta $, and $v_Z(\wh t_k) = \lim_{l\rta\infty} n_l^{-1} \phi^{n_l}(\wh i_k^{n_l})$. 

We claim that $\wh t_k \not= \sigma_j$. Indeed, if this is not the case then $n_l^{-1}(\phi^{n_l}(i^{n_l}_k) - \wt\theta_j^{n_l}) \rta 0$ and $n_l^{-1}(\theta_{j + b^{n_l}}^{n_l} - i^{n_l}_k) \rta 0$ as $l\rta\infty$. This is a contradiction since $\iota_{j + b^{n_l}}^{n_l}$ is a maximal $\tb F$-time in $(\wt\theta_{j + b^{n_l}}^{n_l} , \theta_{j + b^{n_l}}^{n_l})$ (Lemma~\ref{prop-discrete-inner}) and $n_l^{-1}  \theta_{j + b^{n_l}}^{n_l}$ and $n_l^{-1}\iota_{j + b^{n_l}}^{n_l}$ converge a.s.\ to distinct times (Lemmas~\ref{prop-inner-conv} and~\ref{prop-outer-conv}). 

It follows that for each $k\in [1, \BB m]_{\BB Z}$, there is some $\wh k \in [1, \BB m]_{\BB Z}$ such that $[v_Z(\wh t_k) , \wh t_k]\subset   [v_Z(t_{\wh k}) , t_{\wh k}]$. Hence for each given $\ep   > 0$, it holds for sufficiently large $l\in\BB N$ that $t^{n_l}_k \in [v_{Z^{n_l}}(\wt t^{n_l}_{\wh k}) - \ep , \wt t^{n_l}_{\wh k} + \ep]$. By maximality of $i^{n_l}_k$, it is necessarily the case that for sufficiently large $l$, we have $t^{n_l}_k \in [\wt t^{n_l}_{\wh k} , \wt t^{n_l}_{\wh k} + \ep]$. Hence $t^{n_l}_k\rta t_{\wh k}$. 

The times $t^{n_l}_k$ and $t^{n_l}_{k+1}$ differ by at least $\zeta$ for $k\in [1, \BB m^n-1]_{\BB Z}$. Hence the mapping $k\mapsto \wh k$ is increasing on $[1,\BB m ]_{\BB Z}$. In particular this mapping is injective and $\BB m^{n_l} \leq \BB m$ for sufficiently large $l$. 

We next argue that for each $k_* \in [1,\BB m]_{\BB Z}$, there is some $k \in [1,\BB m_*]_{\BB Z}$ for which $\wh k = k_*$. To see this, first observe that a.s.\ $t_{k_*} - v_Z(t_{k_*}) > \zeta$, so it is a.s.\ the case that for each sufficiently large $l \in \BB N$ we have $n_l^{-1} (\wt i_{k_*}^{n_l} - \phi^{n_l}(\wt i_{k_*}^{n_l}))  > \zeta$ and $[\phi^{n_l}(\wt i_{k_*}^{n_l}) , \wt i_{k_*}^{n_l} ]_{\BB Z} \subset (\wt\theta_{j + b^{n_l}}^{n_l} ,\theta_{j + b^{n_l}}^{n_l} )_{\BB Z}$. For such an $l$ we have $[\phi^{n_l}(\wt i_{k_*}^{n_l}) , \wt i_{k_*}^{n_l} ]_{\BB Z} \subset [\phi^{n_l}(i_{k_l}^{n_l}) , i_{k_l}^{n_l}]_{\BB Z}$ for some $k_l \in [1,\BB m^{n_l}]_{\BB Z}$. Upon passing to the scaling limit, we find that there is some $ k\in [1,\BB m_*]_{\BB Z}$ for which $[v_Z(t_{k_*}) , t_{k_*}] \subset [v_Z(t_{\wh k}) , t_{\wh k}]$ which (by the argument above) implies $\wh k = k_*$. 

It follows that the mapping $k\mapsto \wh k$ is an increasing bijection from $[1,\BB m^{n_l}]_{\BB Z}$ to $[1,\BB m]_{\BB Z}$ for sufficiently large $l$, which implies that in fact $\BB m^{n_l} =\BB m$ for sufficiently large $l$ and $t_k^{n_l} \rta t_k$ for each $k\in [1,\BB m]_{\BB Z}$. Since our initial choice of sequence was arbitrary, we infer that $\BB m^n = \BB m$ for sufficiently large $n$ and $t_k^n \rta t_k$ for each $k\in [1,\BB m]_{\BB Z}$. 

By condition~\ref{item-cone-limit-times} in Theorem~\ref{thm-cone-limit}, it is a.s.\ the case that for each $k\in [1,\BB m]_{\BB Z}$, it holds for sufficiently large $n\in\BB N$ that the $\pi/2$-cone times $t_k$ for $Z$ and the $\pi/2$-cone times $t_k^n$ and $t_k^n + n^{-1}$ for $Z^n$ are in the same direction. Furthermore, $ n^{-1} \phi^n(i_k^n) \rta v_Z(t)$ and $ n^{-1} \phi^n_*(i_k^n) \rta u_Z(t)$. Hence~\eqref{eqn-component-conv-t} holds. 
  
By definition, we have $t_k \in T_j(\zeta)$ if and only if $u_Z(t_k) < \wt\sigma_j$. By Lemma~\ref{prop-extra-bubbles-discrete}, we have $i_k^n\in I_{j + b^{n }}^n(\zeta)$ if and only if $\phi^n_*(i_k^n) < \wt\theta_j^n$. Hence~\eqref{eqn-component-conv-t} implies that $i_k^n \in I_{j + b^{n }}^n(\zeta)$ (resp. $i_k^n \in \Theta_{j + b^{n}}^n(\zeta)$) for large enough $n$ if and only if $t_k \in T_j(\zeta)$ (resp. $t_k \in \Sigma_j(\zeta)$)
\end{proof}

Our next lemma will be used for the proof of~\eqref{eqn-full-conv} of Theorem~\ref{thm-loop-limit}. We prove a slightly more general statement than we need here, since the proof is no more difficult and the more general statement will be used in~\cite{gwynne-miller-cle}.

\begin{lem} \label{prop-bubble-sum}
For $n\in\BB N$, let $b^n$ be as in Lemma~\ref{prop-inner-conv}. Also fix $j\in\BB N$. The following is true almost surely. Let $\wt a ,a \in [v_Z(\sigma_j) , \sigma_j]_{\BB Z}$ be two times with $\wt a  < a$. Then  
\begin{align} \label{eqn-bubble-sum}
&n^{-1} \sum_{i\in I_{j+b^n}^n \cap (\wt a n  , a  n)_{\BB Z}} (i-\phi^n(i)) \rta \sum_{t\in T_j \cap (\wt a , a)} (t-v_Z(t)) \notag\\
&n^{-1} \sum_{i\in \Theta_{j+b^n}^n \cap \mcl I_n^L \cap (\wt a n  , a  n)_{\BB Z}} (i-\phi^n(i)) \rta \sum_{t\in \Sigma_j \cap \mcl T^L \cap (\wt a , a)} (t-v_Z(t))   \\
&n^{-1} \sum_{i\in \Theta_{j+b^n}^n \cap \mcl I_n^R \cap (\wt a n  , a  n)_{\BB Z}} (i-\phi^n(i)) \rta \sum_{t\in \Sigma_j \cap \mcl T^R \cap (\wt a , a)} (t-v_Z(t))   \notag
\end{align} 
where here $\mcl T^L$ (resp. $\mcl T^R$) denotes the set of left (resp. right) $\pi/2$-cone times for $Z$ (Definition~\ref{def-cone-time}).
\end{lem}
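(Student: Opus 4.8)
\emph{Proof proposal.} The plan is to reduce the statement to the convergence of \emph{macroscopic} bubbles, which is already provided by Lemma~\ref{prop-all-bubble-conv}, together with a sandwiching argument showing that the cumulative area of the \emph{microscopic} bubbles is asymptotically negligible. Fix $j$, write $\mcl B^n := I_{j+b^n}^n \cup \Theta_{j+b^n}^n$ for the set of maximal $\tb F$-times for $X^n$ in $(\wt\theta_{j+b^n}^n , \theta_{j+b^n}^n)_{\BB Z}$ (Lemmas~\ref{prop-extra-bubbles-discrete} and~\ref{prop-discrete-components}), and $\mcl B := T_j \cup \Sigma_j$ for the set of maximal $\pi/2$-cone times for $Z$ in $(v_Z(\sigma_j), \sigma_j)$. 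Since distinct maximal $\tb F$-times $i$ have pairwise disjoint intervals $[\phi^n(i), i]_{\BB Z} \subset [\wt\theta_{j+b^n}^n, \theta_{j+b^n}^n]_{\BB Z}$ and $P(i) = \lambda([\phi^n(i), i-1]_{\BB Z})$ with $\lambda$ injective, one has the a priori bound $\sum_{i\in\mcl B^n}(i - \phi^n(i)) \leq \theta_{j+b^n}^n - \wt\theta_{j+b^n}^n$. Likewise, distinct elements of $\mcl B$ give disjoint intervals $[v_Z(t),t]\subset[v_Z(\sigma_j),\sigma_j]$ (any two $\pi/2$-cone intervals being nested or disjoint), so $\sum_{t\in\mcl B}(t-v_Z(t))\leq\sigma_j-v_Z(\sigma_j)$; in fact equality holds a.s., because the set of $\pi/2$-cone times of $Z$ has zero Lebesgue measure, so the maximal first-generation cone intervals exhaust $[v_Z(\sigma_j),\sigma_j]$ up to a null set (a standard fact about Brownian cone times; cf.~\cite{shimura-cone,evans-cone,legall-bm-notes}).

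Next I would treat the macroscopic bubbles. For each fixed $\zeta > 0$, Lemma~\ref{prop-all-bubble-conv} produces, for all large $n$, a direction- and type-preserving bijection $i_k^n\leftrightarrow t_k$ between $I_{j+b^n}^n(\zeta)\cup\Theta_{j+b^n}^n(\zeta)$ and $T_j(\zeta)\cup\Sigma_j(\zeta)$ with $n^{-1}i_k^n\rta t_k$ and $n^{-1}\phi^n(i_k^n)\rta v_Z(t_k)$. For $\wt a,a$ outside the (a.s.\ countable) set of endpoints of $\pi/2$-cone intervals of $Z$, we have $i_k^n\in(\wt a n,an)_{\BB Z}$ for all large $n$ iff $t_k\in(\wt a,a)$; hence, for fixed $\zeta$, $n^{-1}\sum_{i\in I_{j+b^n}^n(\zeta)\cap(\wt a n,an)_{\BB Z}}(i-\phi^n(i))\rta\sum_{t\in T_j(\zeta)\cap(\wt a,a)}(t-v_Z(t))$, and the analogous convergences hold with $\Theta_{j+b^n}^n(\zeta)\cap\mcl I_n^L$, $\Theta_{j+b^n}^n(\zeta)\cap\mcl I_n^R$ in place of $I_{j+b^n}^n(\zeta)$, and $\Sigma_j(\zeta)\cap\mcl T^L$, $\Sigma_j(\zeta)\cap\mcl T^R$ in place of $T_j(\zeta)$. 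Since $\sum_{t\in\mcl B}(t-v_Z(t))<\infty$, each right-hand side converges, as $\zeta\searrow 0$, to the corresponding limit in~\eqref{eqn-bubble-sum}.

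It then remains to show that $\lim_{\zeta\searrow 0}\limsup_{n\rta\infty}n^{-1}\sum_{i\in\mcl B^n\setminus(I_{j+b^n}^n(\zeta)\cup\Theta_{j+b^n}^n(\zeta))}(i-\phi^n(i))=0$, since restricting this sum to $(\wt a n,an)_{\BB Z}$ only decreases it, after which combining with the previous paragraph finishes the proof. Here I would sandwich: on one side the a priori bound gives $\limsup_n n^{-1}\sum_{i\in\mcl B^n}(i-\phi^n(i))\leq\limsup_n n^{-1}(\theta_{j+b^n}^n-\wt\theta_{j+b^n}^n)=\sigma_j-v_Z(\sigma_j)$, using Lemmas~\ref{prop-inner-conv} and~\ref{prop-outer-conv} together with condition~\ref{item-cone-limit-times} of Theorem~\ref{thm-cone-limit} applied to the $\tb F$-time $\theta_{j+b^n}^n$ (whose excursion is macroscopic, since it contains that of the bubble around $0$, which is macroscopic by Lemma~\ref{prop-inner-conv}); on the other side, the macroscopic convergence above gives $n^{-1}\sum_{i\in I_{j+b^n}^n(\zeta)\cup\Theta_{j+b^n}^n(\zeta)}(i-\phi^n(i))\rta\sum_{t\in T_j(\zeta)\cup\Sigma_j(\zeta)}(t-v_Z(t))\nearrow\sigma_j-v_Z(\sigma_j)$ as $\zeta\searrow 0$. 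Subtracting yields the claimed limit. (Alternatively, this microscopic bound can be extracted directly from the regularly varying tail estimates for $\tb F$-excursion areas established in Sections~\ref{sec-reg-var} and~\ref{sec-cone-conv}.)

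I expect the main obstacle to be precisely this last step: the set of maximal $\tb F$-times in an order-$n$ window is large and the microscopic bubbles are numerous, so neither a termwise comparison nor any ``continuity of the cone-time map'' is available (cf.\ the discussion after Proposition~\ref{prop-late-F}); the sandwich works only because, in the scaling limit, the macroscopic cone intervals already account for the entire mass $\sigma_j-v_Z(\sigma_j)$, i.e.\ the sub-$1$ dimension of Brownian $\pi/2$-cone times forces the discrete microscopic bubbles to contribute nothing in the limit. Secondary, more routine, matters are the bookkeeping of the $\mcl I^L/\mcl I^R$ and $I/\Theta$ partitions across the limit, supplied by Lemma~\ref{prop-all-bubble-conv}, and the measure-zero care with the endpoints $\wt a,a$.
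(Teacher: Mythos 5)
Your proposal is correct and follows essentially the same route as the paper: truncate at bubble-size $\zeta$, apply Lemma~\ref{prop-all-bubble-conv} to get convergence of the macroscopic sums (with the $I/\Theta$ and $\mcl I^L/\mcl I^R$ bookkeeping), and kill the microscopic contribution by the same sandwich — disjointness of the intervals bounds the total discrete mass by $n^{-1}(\theta_{j+b^n}^n-\wt\theta_{j+b^n}^n)\rta\sigma_j-v_Z(\sigma_j)$, while the fact that the maximal continuum cone intervals exhaust $[v_Z(\sigma_j),\sigma_j]$ up to a null set forces the macroscopic part to already account for all of this mass. The paper phrases the sandwich with explicit $\ep$'s rather than a $\limsup$ subtraction, but the content is identical.
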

\begin{proof}
Almost surely, Lebesgue-a.e. $s\in [v_Z(\sigma_j) ,\sigma_j] $ belongs to $(v_Z(t) , t)$ for some $t\in T_j \cup \Sigma_j$. Hence for each $\ep  > 0$, there a.s.\ exists $\zeta >0$ such that 
\eqbn
\sum_{t\in  T_j(\zeta ) \cup \Sigma_j(\zeta)  } (t-v_Z(t)) \geq \sigma_j - v_Z(\sigma_j) -\ep ,
\eqen
so since intervals $[v_Z(t) , t] $ for distinct $t \in T_j \cup \Sigma_j$ are disjoint,
\eqb \label{eqn-bubble-small}
\sum_{t\in  (T_j \cup \Sigma_j) \setminus ( T_j(\zeta ) \cup \Sigma_j(\zeta) )  } (t-v_Z(t)) \leq \ep . 
\eqe 
By Lemma~\ref{prop-all-bubble-conv}, it is a.s.\ the case that for large enough $n\in\BB N$, we have
\eqbn
n^{-1}\sum_{i\in (I_{j+b^n}^n(\zeta) \cup \Theta_{j+b^n}^n(\zeta)) } (i-\phi^n(i)) \geq n^{-1} \theta_{j+b^n}^n -n^{-1} \wt\theta^n_{j+b^n} - 2\ep ,
\eqen
so since intervals $[\phi^n(i) , i]_{\BB Z}$ for distinct $i \in I_{j+b^n}^n \cup \Theta_{j+b^n}^n$ are disjoint, 
\eqb \label{eqn-bubble-sum-small-n}
n^{-1}\sum_{i\in (I_{j+b^n}^n \cup \Theta_{j+b^n}^n)\setminus  (I_{j+b^n}^n(\zeta) \cup \Theta_{j+b^n}^n(\zeta)) } (i-\phi^n(i))  \leq 2\ep .
\eqe 
By Lemma~\ref{prop-all-bubble-conv}, is is a.s.\ the case that for each $\wt a , a$ as in the statement of the lemma,  
\begin{align} \label{eqn-bubble-sum-zeta}
&n^{-1} \sum_{i\in I_{j+b^n}^n(\zeta) \cap (\wt a n  , a  n)_{\BB Z}} (i-\phi^n(i)) \rta \sum_{t\in T_j(\zeta)  \cap (\wt a , a)} (t-v_Z(t)) \notag\\
&n^{-1} \sum_{i\in \Theta_{j+b^n}^n(\zeta)  \cap \mcl I_n^L \cap (\wt a n  , a  n)_{\BB Z}} (i-\phi^n(i)) \rta \sum_{t\in \Sigma_j(\zeta)  \cap \mcl T^L \cap (\wt a , a)} (t-v_Z(t)) \\
&n^{-1} \sum_{i\in \Theta_{j+b^n}^n(\zeta)  \cap \mcl I_n^R \cap (\wt a n  , a  n)_{\BB Z}} (i-\phi^n(i)) \rta \sum_{t\in \Sigma_j(\zeta)  \cap \mcl T^R \cap (\wt a , a)} (t-v_Z(t)) .\notag 
\end{align} 
Since $\ep $ is arbitrary, we can now conclude by combining~\eqref{eqn-bubble-small},~\eqref{eqn-bubble-sum-small-n}, and~\eqref{eqn-bubble-sum-zeta}. 
\end{proof}

\begin{proof}[Proof of Theorem~\ref{thm-loop-limit}]
For $n\in\BB N$, let $ (M^n , e_0^n , S^n) $ be the infinite-volume FK planar map corresponding to $X^n$ under Sheffield's bijection. 
The convergence~\eqref{eqn-component-conv} follows from Lemma~\ref{prop-discrete-components} and Lemma~\ref{prop-all-bubble-conv}. 

To obtain~\eqref{eqn-full-conv}, recall the formula for $\op{Area}  \left(M_{j+b^n}^{n,\infty} \right) $ from Lemma~\ref{prop-discrete-outer}. By Lemma~\ref{prop-outer-conv} we a.s.\ have $n^{-1} (\theta_{j+b^n}^n -\wt\theta_{j+b^n}^n) \rta \sigma_j - v_Z(\sigma_j)$. By Lemma~\ref{prop-bubble-sum} we a.s.\ have $n^{-1} \sum_{i\in I_{j+b^n}^n } (i-\phi^n(i)) \rta \sum_{t\in T_j } (t-v_Z(t))$. Almost surely, Lebesgue-a.e.\ point of $[v_Z(\sigma_j) , \sigma_j]$ is contained in $(v_Z(t) , t)$ for some $t\in T_j\cup \Sigma_j$, so since these intervals are disjoint for different values of $t$,
\eqbn
 \sigma_j - v_Z(\sigma_j) - \sum_{t\in T_j } (t-v_Z(t)) = \sum_{t\in \Sigma_j } (t-v_Z(t)) .
\eqen 
Thus~\eqref{eqn-full-conv} holds a.s. 

To obtain~\eqref{eqn-in-conv}, we note that $\Sigma_j^{\op{in}} = \Sigma_j\cap \mcl T^L$ or $\Sigma_j\cap \mcl T^R$, depending on the direction of the $\pi/2$-cone time $\sigma_j$. Furthermore, by Lemma~\ref{prop-discrete-components}, if $j$ is odd we have
\[
 M_j^{n,\op{in}} = M_j^{n,\infty}\setminus \bigcup_{i\in \Theta_j \cap \mcl I_n^R} [\phi^n(i) , i-1]_{\BB Z}
 \]
and we have a similar formula if $j$ is even. We then conclude using a similar argument as in the proof of~\eqref{eqn-full-conv}. 
\end{proof}

\section{Probabilistic estimates}
\label{sec-prob-estimates}

Now that we have seen why Theorem~\ref{thm-cone-limit} implies our scaling limit result for FK loops (Theorem~\ref{thm-loop-limit}), we turn our attention to the proof of Theorem~\ref{thm-cone-limit}.  
In this section we will prove a variety of probabilistic estimates for the inventory accumulation model of~\cite{shef-burger}. In Section~\ref{sec-bm-prelim}, we will prove some estimates for Brownian motion, mostly using results from \cite{shimura-cone}, and make sense of the notion of a Brownian motion conditioned to stay in the first quadrant. In Section~\ref{sec-prob-lower}, we will use our estimates for Brownian motion to prove lower bounds for various rare events associated with the word $X$. In Section~\ref{sec-F-prob}, we will prove an upper bound for the number of $\tb F$-symbols in the reduced word $X(1,n)$, which is a sharper version of \cite[Lemma 3.7]{shef-burger}.

Throughout this section, we let $p \in (0,1/2)$ and $\kappa \in (4,8)$ be related as in~\eqref{eqn-p-kappa}. Many of the estimates in this section will involve the exponents
\eqb \label{eqn-cone-exponent}
\mu := \frac{\pi}{2\left( \pi - \arctan \frac{\sqrt{1-2p} }{p} \right) }  = \frac{\kappa}{8} ,\quad \mu' := \frac{\pi}{2\left( \pi + \arctan \frac{\sqrt{1-2p} }{p} \right) }   = \frac{\kappa }{4(\kappa -2)} .
\eqe

\subsection{Brownian motion lemmas}
\label{sec-bm-prelim}
 
In \cite[Theorem 2]{shimura-cone}, the author constructs for each $\theta\in (0,2\pi)$ a probability measure on the space of continuous functions $[0,1] \rta \BB R^2$ which can be viewed as the law of a standard two-dimensional Brownian motion started from 0 conditioned to stay in the cone $\{z\in\BB C \,:\, 0 \leq \op{arg} z  \leq \theta\}$ until time 1. We want to define a Brownian motion started from 0 with variances and covariances as in~\eqref{eqn-bm-cov}, conditioned to stay in the first quadrant. To this end, we define
\eqb \label{eqn-bm-matrix}
A := \sqrt{\frac{2(1-p)}{1-2p }} \left(  \begin{array}{cc}
1 &  - \frac{p}{1-p}   \\ 
0  &  \frac{\sqrt{1-2p}}{1-p}
\end{array}        \right) ,
\eqe
so that if $Z$ is as in~\eqref{eqn-bm-cov}, then $A Z$ is a standard planar Brownian motion. A Brownian motion with variances and covariances as in~\eqref{eqn-bm-cov} conditioned to stay in the first quadrant until time 1 is the process $\wh Z := A^{-1} \wh Z'$, where $\wh Z'$ is a standard linear Brownian motion conditioned to stay in the cone
\eqb \label{eqn-F-cone}
F_p  := \left\{w\in \BB C  \,:\, 0 < \op{arg} w < \pi - \arctan\frac{\sqrt{1-2p} }{p} \right\}  
\eqe
for one unit of time. By \cite[Equation 3.2]{shimura-cone} and Brownian scaling, the law of $\wh Z(t)$ for $t\in (0,1]$ is absolutely continuous with respect to Lebesgue measure on $(0,\infty)^2$ and its density is given by
\eqb \label{eqn-f-density-def}
  \frac{\op{det} A }{ 2^\mu \Gamma(\mu)  t^{1/2 + 2\mu}  } |Az|^{2\mu} e^{-|Az|^2/2t} \sin\left(2\mu \op{arg} (Az)\right) \BB P_z\left( T > 1-t \right)  \, dz ,
\eqe  
where here $\BB P_z$ denotes the law of $Z$ started from $z$ and $T$ is the first exit time of $Z$ from the first quadrant.
Note that our $\mu$ is equal to $1/2$ times the exponent $\mu$ of \cite{shimura-cone}. 
 
The law of the process $\wh Z$ is uniquely characterized as follows lemma, which is an analogue of~\cite[Theorem 3.1]{sphere-constructions}.

\begin{lem} \label{prop-bm-meander}
Let $\wh Z = (\wh U , \wh V)  : [0,1]\rta\BB R^2$ be sampled from the conditional law of $Z|_{[0,1]}$ given that it stays in the first quadrant. Then $\wh Z$ is a.s.\ continuous and satisfies the following conditions.
\begin{enumerate}
\item For each $t \in (0,1]$, a.s.\ $\wh U(t)>0$ and $\wh V(t)>0$. \label{item-bm-meander-pos}
\item For each $\zeta \in (0,1)$, the regular conditional law of $\wh Z|_{[ \zeta, 1]}$ given $\wh Z|_{[0,\zeta]}$ is that of a Brownian motion with covariances as in~\eqref{eqn-bm-cov}, starting from $\wh Z(\zeta)$, parametrized by $[\zeta,1]$, and conditioned on the (a.s.\ positive probability) event that it stays in the first quadrant.  \label{item-bm-meander-markov}
\end{enumerate}
If $\wt Z  = (\wt U , \wt V)  : [0,1]\rta\BB R^2$ is another random a.s.\ continuous path satisfying the above two conditions, then $\wt Z\eqD \wh Z$.
\end{lem}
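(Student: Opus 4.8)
The plan is to prove $\wt Z\eqD \wh Z$ by showing that all finite-dimensional distributions agree, reducing via condition~(2) to the equality of the one-dimensional time-marginals and then identifying the marginal of $\wt Z$ through an entrance-law uniqueness argument; this parallels the proof of \cite[Theorem~3.1]{sphere-constructions}. Throughout I would use (as is implicit, and as holds for $\wh Z$) that $\wt Z(0)=0$. First I would observe that for any $\zeta\in(0,1)$, condition~(2) makes the regular conditional law of $\wt Z|_{[\zeta,1]}$ given $\wt Z|_{[0,\zeta]}$ a measurable function of $\wt Z(\zeta)$ alone --- namely the law $\BB Q_z^\zeta$ of a Brownian motion with covariances~\eqref{eqn-bm-cov} started from $z$ at time $\zeta$ and conditioned to stay in the first quadrant on $[\zeta,1]$. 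Hence the law of $\wt Z|_{[\zeta,1]}$ is $\int \BB Q_z^\zeta\,\mu_\zeta(dz)$ with $\mu_\zeta:=\op{law}(\wt Z(\zeta))$, and the identical statement holds for $\wh Z$ with its marginal $\mu_\zeta^\ast$. So it suffices to show $\mu_\zeta=\mu_\zeta^\ast$ for every $\zeta\in(0,1)$: this gives $\wt Z|_{[\zeta,1]}\eqD \wh Z|_{[\zeta,1]}$ for all $\zeta$, and then a.s.\ continuity of both processes (which in particular determines the value at $0$ as a limit) upgrades this to $\wt Z\eqD \wh Z$.

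To compare $\mu_\zeta$ and $\mu_\zeta^\ast$, I would work with their densities. By~\eqref{eqn-f-density-def}, $\wh Z(\zeta)$ has a density $g_\zeta$ on $(0,\infty)^2$, and $\wt Z(\zeta)$ likewise has a density $f_\zeta$ for each $\zeta\in(0,1)$ (apply condition~(2) at a smaller time $\zeta'$ and use absolute continuity of the kernel $\BB Q_z^{\zeta'}$). Writing $h_u(z):=\BB P_z(T>u)$ for the probability that $Z$ started from $z$ stays in the first quadrant for time $u$ (with $T$ as in~\eqref{eqn-f-density-def}) and $p^{\op{kill}}_s(z,w)$ for the transition density of $Z$ killed on exiting the first quadrant, the transition kernel of $\BB Q_z^\zeta$ works out (by the Markov property of $Z$) to $p^{\op{kill}}_{t-\zeta}(z,w)\,h_{1-t}(w)/h_{1-\zeta}(z)$, so that for $0<\zeta<t<1$,
\eqbn
\frac{f_t(w)}{h_{1-t}(w)}=\int \frac{f_\zeta(z)}{h_{1-\zeta}(z)}\,p^{\op{kill}}_{t-\zeta}(z,w)\,dz .
\eqen
In other words $\rho_t:=f_t/h_{1-t}$ is an entrance law for the killed semigroup $(p^{\op{kill}}_s)$; the same computation shows $\rho_t^\ast:=g_t/h_{1-t}$ is also an entrance law, and by~\eqref{eqn-f-density-def}, after the linear change of variables $z\mapsto Az$ of~\eqref{eqn-bm-matrix} it becomes the minimal entrance law at the apex of the cone $F_p$ of~\eqref{eqn-F-cone}.

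Next I would argue that $\rho$ too is carried by the apex: since $\wt Z$ is continuous with $\wt Z(0)=0$, we have $f_\zeta\Rightarrow\delta_0$ as $\zeta\downarrow0$, and hence (after applying $A$) $\rho$ is an entrance law of killed Brownian motion in $F_p$ concentrating at the apex. Such entrance laws are unique up to a positive scalar --- a classical fact about Brownian motion in a cone. One way to see it: the ratio $\psi_t:=\rho_t/\rho_t^\ast=f_t/g_t$ satisfies $\psi_t=K_{\zeta,t}\psi_\zeta$ for the family of probability kernels $K_{\zeta,t}(w,dz)\propto\rho_\zeta^\ast(z)\,p^{\op{kill}}_{t-\zeta}(z,w)\,dz$, i.e.\ $\psi$ is space-time harmonic for the (conservative, time-reversed) diffusion obtained by Doob-transforming killed Brownian motion by the positive harmonic function $H(z)=|Az|^{2\mu}\sin(2\mu\op{arg}(Az))$ that vanishes on the boundary of the quadrant, so a Liouville-type theorem forces $\psi$ to be constant; alternatively one can invoke the Martin boundary description of killed Brownian motion in a Lipschitz cone (cf.\ the references on Brownian motion in cones in Section~\ref{sec-bm-prelim} and the analogue \cite[Theorem~3.1]{sphere-constructions}). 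Thus $\rho_t=c\,\rho_t^\ast$, hence $f_t=c\,g_t$, for all $t\in(0,1)$ and some $c>0$; integrating over $(0,\infty)^2$ and using that $f_t,g_t$ are probability densities gives $c=1$. This yields $\mu_\zeta=\mu_\zeta^\ast$ and completes the proof.

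The main obstacle is the uniqueness statement in the last step: showing that the entrance law of the relevant killed Brownian motion carried by the apex of the cone is unique up to scaling. In particular, making the Liouville-type argument rigorous requires controlling the regularity of $\psi_t=f_t/g_t$ (boundedness on compacts, and its behaviour near the apex and near the cone's boundary), and this is the delicate point. Steps 1 and 2 are comparatively routine given condition~(2) and~\eqref{eqn-f-density-def}, although the existence of the densities $f_\zeta$ and the precise form of the $\BB Q_z^\zeta$-transition kernel should be written out carefully.
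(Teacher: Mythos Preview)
Your approach to the uniqueness statement is genuinely different from the paper's, and considerably more involved. The paper does not go through entrance laws or Martin boundary theory at all. Instead it argues directly from Shimura's approximation \cite[Theorem~2]{shimura-cone}: for small $\zeta>0$ it builds an auxiliary path $\wt Z^\zeta$ with $\wt Z^\zeta(t)=\wt Z(t+\zeta)$ on $[0,1-\zeta]$, extended on $[1-\zeta,1]$ by an independent conditioned Brownian segment. By condition~\ref{item-bm-meander-markov}, the conditional law of $\wt Z^\zeta$ given $\wt Z|_{[0,\zeta]}$ depends only on $\wt Z(\zeta)$; and by \cite[Theorem~2]{shimura-cone}, this conditional law is within Prokhorov distance $\alpha/2$ of the law of $\wh Z$ once $|\wt Z(\zeta)|$ is small enough. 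Continuity of $\wt Z$ at $0$ then gives $\wt Z^\zeta\to\wh Z$ in law as $\zeta\to 0$, while obviously $\wt Z^\zeta\to\wt Z$ as well, forcing $\wt Z\eqD\wh Z$. This is a two-paragraph argument with no hard analytic input beyond what is already cited in Section~\ref{sec-bm-prelim}.

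Your route, by contrast, reduces to a uniqueness statement for entrance laws of killed Brownian motion in a cone concentrated at the apex, which you yourself flag as the main obstacle. That result is true but is not free: making the Liouville-type argument rigorous requires controlling $\psi_t=f_t/g_t$ near the boundary and the apex, and invoking the Martin boundary description imports machinery the paper nowhere uses. So while your strategy can be completed, it replaces a soft continuity argument with a substantially harder piece of potential theory. If you want to keep your line of reasoning, you should either cite a precise uniqueness theorem for the minimal entrance law at the apex, or---much more economically---bypass it entirely via the Shimura approximation as the paper does. Note also that your write-up addresses only the uniqueness half of the lemma; the paper separately verifies that $\wh Z$ itself satisfies conditions~\ref{item-bm-meander-pos} and~\ref{item-bm-meander-markov}, using~\eqref{eqn-f-density-def} and the $\delta\to 0$ limit in \cite[Theorem~2]{shimura-cone} respectively.
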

\begin{proof}
First we verify that $\wh Z$ satisfies the above two conditions. It is clear from the form of the density~\eqref{eqn-f-density-def} that condition~\ref{item-bm-meander-pos} holds. 
To verify condition~\ref{item-bm-meander-markov}, fix $\zeta >0$. By~\cite[Theorem 2]{shimura-cone}, $\wh Z$ is the limit in law in the uniform topology as $\delta \rta 0$ of the law of $Z|_{[0,1]}$ conditioned on the event $E_\delta$ that $U(t) \geq -\delta$ and $V(t) \geq -\delta$ for each $t\in [0,1]$. By the Markov property, for each $\zeta > 0$, the conditional law of $ Z|_{[\zeta,1]}$ given $Z|_{[0,\zeta]}$ and $E_\delta$ is that of a Brownian motion with covariances as in~\eqref{eqn-bm-cov}, starting from $ Z(\zeta)$, parametrized by $[\zeta,1]$, and conditioned to stay in the $\delta$-neighborhood of the first quadrant. As $\delta \rta 0$, this law converges to the law described in condition~\ref{item-bm-meander-markov}. 

Now suppose that $\wt Z = (\wt U , \wt V)  : [0,1]\rta\BB R^2$ is another random continuous path satisfying the above two conditions. For $\zeta>0$, let $\wt Z^\zeta : [0,1]\rta \BB R^2$ be the random continuous path such that $\wt Z^\zeta(t) = \wt Z(t+\zeta)$ for $t\in [0, 1-\zeta ]$; and conditioned on $\wt Z  |_{[0,1 ]}$, $\wt Z^\zeta$ evolves as a Brownian motion with variances and covariances as in~\eqref{eqn-bm-cov} started from $\wt Z(1)$ and conditioned to stay in the first quadrant for $t \in [1-\zeta,1]$. By condition~\ref{item-bm-meander-markov} for $\wh Z$ and \cite[Theorem 2]{shimura-cone}, we can find $\ep \in (0,\alpha/2)$ such that the Prokhorov distance (in the uniform topology) between the conditional law of $\wt Z^\zeta$ given any realization of $\wt Z|_{[0,\zeta]}$ for which $|\wt Z(\zeta)| \leq \ep$ is at most $\alpha/2$. By continuity, we can find $\zeta_0  > 0$ such that for $\zeta \in (0,\zeta_0]$, we have $\BB P\left(\sup_{t\in [0,\zeta]} |\wt Z(t)| \geq \alpha/2\right) \leq \alpha/2$.
Hence for $\zeta \in  (0,\zeta_0]$ the Prokhorov distance between the law of $\wt Z^\zeta$ and the law of $\wh Z$ is at most $\alpha$. Since $\alpha$ is arbitrary we obtain $\wt Z^\zeta \rta \wh Z$ in law. By continuity, $\wt Z^\zeta$ converges to $\wt Z$ in law as $\zeta\rta 0$. Hence $\wt Z \eqD \wh Z$. 
\end{proof}

We record an estimate for the probability that $Z$ has an approximate $\pi/2$-cone time or an approximate $3\pi/2$-cone time, which is essentially a consequence of the results of~\cite{shimura-cone}.

\begin{lem} \label{prop-bm-cone-asymp}
Let $Z = (U,V)$ be as in~\eqref{eqn-bm-cov} and let $\mu$ and $\mu'$ be as in~\eqref{eqn-cone-exponent}. For $  \delta > 0$ and $C > 1$, let 
\alb
E_\delta &:=  \left\{\text{$\inf_{t \in [0,1]} U(t) \geq -\delta^{1/2}$ and $\inf_{t \in [0,1]} V(t) \geq -\delta^{1/2}$ }\right\} \\
E_\delta' &:= \left\{\text{$ U(t) \geq -\delta^{1/2}$ or $  V(t) \geq -\delta^{1/2}$ for each $t\in [0,1]$ }\right\} \\
G(C) &:= \left\{\sup_{t \in [0,1]} |Z(t)| \leq C \right\} \cap \left\{\text{$U(1) \geq  C^{-1} $ and $V(1) \geq  C^{-1} $}\right\} .
\ale
For each $C>1$ we have
\eqb \label{eqn-bm-cone-asymp}
 \BB P\left(E_\delta\cap G(C) \right)  \asymp \BB P\left( E_\delta\right)  \asymp \delta^{  \mu} 
\eqe 
and
\eqb \label{eqn-bm-cone-asymp'}
\BB P\left(E_\delta' \cap G(C) \right)  \asymp \BB P\left(E_\delta'\right)  \asymp \delta^{   \mu'}
\eqe 
with the implicit constants independent of $\delta$. 
\end{lem}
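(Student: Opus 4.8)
The plan is to reduce everything to the cone-exit asymptotics of Brownian motion in a wedge, which is the content of Shimura's work~\cite{shimura-cone}, and then to upgrade the plain cone-exit probabilities to the versions with the extra event $G(C)$ by a Markov-property argument. First I would apply the linear map $A$ of~\eqref{eqn-bm-matrix}: since $AZ$ is a standard planar Brownian motion and $A$ maps the first quadrant to the wedge $F_p$ of~\eqref{eqn-F-cone}, which has opening angle $\pi - \arctan(\sqrt{1-2p}/p)$, the event $E_\delta$ becomes (up to a $\delta$-independent distortion of the $\delta^{1/2}$-neighborhood, which only changes implicit constants) the event that a standard planar Brownian motion stays in the $c\delta^{1/2}$-neighborhood of $F_p$ for unit time. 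By Brownian scaling, this equals the probability that a standard planar Brownian motion started from $0$ stays in $F_p$ for time $c^2\delta$, i.e.\ a cone-exit-time tail. The classical asymptotics for the exit time $T_{F_p}$ of a wedge of opening angle $\beta$ give $\BB P_0(T_{F_p} > s) \asymp s^{-\pi/(2\beta)}$ as $s\rta 0$ when the Brownian motion starts at the apex; plugging in $\beta = \pi - \arctan(\sqrt{1-2p}/p)$ produces exactly the exponent $\mu$ of~\eqref{eqn-cone-exponent}, giving $\BB P(E_\delta) \asymp \delta^{\mu}$. The event $E_\delta'$ is handled identically, except that requiring $U(t) \geq -\delta^{1/2}$ \emph{or} $V(t) \geq -\delta^{1/2}$ corresponds, after applying $A$, to staying in (a neighborhood of) the complement of the reflected wedge, which is a wedge of opening angle $\pi + \arctan(\sqrt{1-2p}/p)$; the same scaling argument then yields the exponent $\mu'$ and $\BB P(E_\delta') \asymp \delta^{\mu'}$. (Here one should be slightly careful that the relevant set is genuinely a cone of the stated angle and that the $\delta^{1/2}$-neighborhood is comparable to a dilation of the cone near the apex; this is a routine planar-geometry check.)

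Next I would prove the two-sided bounds involving $G(C)$. The upper bounds $\BB P(E_\delta \cap G(C)) \leq \BB P(E_\delta)$ and $\BB P(E_\delta' \cap G(C)) \leq \BB P(E_\delta')$ are trivial, so only the matching lower bounds $\BB P(E_\delta \cap G(C)) \succeq \delta^{\mu}$ and $\BB P(E_\delta' \cap G(C)) \succeq \delta^{\mu'}$ require work. For this I would decompose at a fixed intermediate time, say $t = 1/2$. On $[0,1/2]$ demand the walk stay in the (neighborhood of the) cone and in addition end up at time $1/2$ inside a fixed compact subset $K$ of the open cone, bounded away from the boundary lines and from $\infty$; the cost of this additional constraint is only a $\delta$-independent constant factor, because the Shimura asymptotics come with a matching lower bound and the conditional law of $Z(1/2)$ given $E_\delta$ (restricted to $[0,1/2]$) puts uniformly positive mass on such a $K$ as $\delta \rta 0$ — indeed from the density~\eqref{eqn-f-density-def} the conditioned endpoint law has a limit supported on the open cone. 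Then, on $[1/2,1]$, use the Markov property: starting from any point of $K$, the (unconditioned) probability that $Z$ stays in the first quadrant, has $\sup|Z| \leq C$, and ends with $U(1) \geq C^{-1}$, $V(1) \geq C^{-1}$ is bounded below by a positive constant depending only on $K$ and $C$ (for $C$ large enough relative to $K$), by a standard support-theorem/Gaussian-bound estimate. Multiplying the two pieces gives the lower bound with the same power of $\delta$. The event $E_\delta'$ is treated the same way, using the wider cone of angle $\pi + \arctan(\sqrt{1-2p}/p)$ and a corresponding compact $K'$.

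The main obstacle I anticipate is the bookkeeping in the first step: carefully checking that (i) the Euclidean $\delta^{1/2}$-neighborhood of the first quadrant is mapped by $A$ to something sandwiched between two $\delta$-independent dilations of neighborhoods of the wedge $F_p$ (so the exponent is unaffected), and (ii) the ``or'' event $E_\delta'$ really corresponds to a wedge of the claimed larger opening angle rather than to some more complicated region. Both are elementary but need to be done with care so that the exponents land exactly on $\mu$ and $\mu'$ as defined in~\eqref{eqn-cone-exponent}; the consistency check $\mu = \kappa/8$ and $\mu' = \kappa/(4(\kappa-2))$ via~\eqref{eqn-p-kappa} is a useful sanity test. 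The rest — extracting matching upper and lower cone-exit bounds from~\cite{shimura-cone} and running the Markov-property decomposition — is routine.
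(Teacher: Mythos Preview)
Your overall strategy matches the paper's: apply the linear map $A$ of~\eqref{eqn-bm-matrix} to reduce to standard Brownian motion in the wedge $F_p$ of~\eqref{eqn-F-cone} (opening angle $\pi - \arctan(\sqrt{1-2p}/p)$) for $E_\delta$, and in the complementary wedge of angle $\pi + \arctan(\sqrt{1-2p}/p)$ for $E_\delta'$, then invoke Shimura's asymptotics. However, your scaling step is written backwards. Staying in the $c\delta^{1/2}$-neighbourhood of $F_p$ for unit time from the origin is, by Brownian scaling, the same as staying in the $1$-neighbourhood of $F_p$ for time $c^{-2}\delta^{-1}$ --- a \emph{long} time --- which is in turn comparable to starting from a fixed interior point $z\in F_p$ and staying in $F_p$ for time $\asymp \delta^{-1}$. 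The relevant tail is $\BB P_z(T_{F_p} > s) \asymp s^{-\pi/(2\beta)}$ as $s \to \infty$ for $z$ in the open cone, not $\BB P_0(T_{F_p} > s)$ as $s \to 0$: the latter is identically zero, since Brownian motion from the apex exits any wedge of angle $<2\pi$ immediately. Your final exponent $\mu$ comes out correctly only because these two slips cancel. The paper writes this step as $\delta^\mu \BB P(\wt Z([0,1]) \subset F_p^\delta) = \delta^\mu \BB P(\wt Z([0,\delta^{-1}]) + z \subset F_p)$ for a fixed unit vector $z$ pointing into $F_p$, and then cites \cite[Equation 4.3]{shimura-cone} for convergence of this quantity to a positive constant as $\delta\to 0$.

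For the $G(C)$ lower bound, the paper takes a shorter route than your Markov decomposition at $t = 1/2$: it directly invokes \cite[Theorem 2]{shimura-cone}, which gives weak convergence in the uniform topology of the conditional law of $\wt Z|_{[0,1]}$ given $\{\wt Z([0,1]) \subset F_p^\delta\}$ to a limiting law $\wh{\BB P}$, and simply observes that $\wh{\BB P}(G(C)) > 0$ for each $C > 1$. This yields $\BB P(G(C) \mid E_\delta) \to \wh{\BB P}(G(C)) > 0$ and hence the first $\asymp$ in~\eqref{eqn-bm-cone-asymp}. Your decomposition is valid, but since you already appeal to the density~\eqref{eqn-f-density-def} --- i.e.\ Shimura's description of the limiting conditioned process --- to control $Z(1/2)$, you may as well apply \cite[Theorem 2]{shimura-cone} directly on all of $[0,1]$ and skip the splitting.
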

\begin{proof}
Let $A$ be as in~\eqref{eqn-bm-matrix}, so that $\wt Z  = (\wt U , \wt V):= A Z$ is a standard two-dimensional Brownian motion. Note that $A$ maps the first quadrant to the cone $F_p$ defined in~\eqref{eqn-F-cone} and the complement of the third quadrant to the cone
\eqb \label{eqn-F'-cone}
F_p'  := \left\{w\in \BB C  \,:\,   \op{arg} w \notin \left[\pi, 2\pi - \arctan\frac{\sqrt{1-2p} }{p} \right] \right\}   .
\eqe
Let $F_p^\delta$ be the $\delta^{1/2}$-neighborhood of $F_p$ and let $z := \exp\left(\frac{i}{2} \left(\pi - \arctan \frac{\sqrt{1-2p} }{p}  \right)   \right)$ be the unit vector pointing into $F_p$. We have
\eqbn
\{\wt Z([0,1]) \subset F_p^{c_1\delta} \} \subset E_\delta \subset   \{\wt Z([0,1]) \subset F_p^{c_2\delta} \}
\eqen 
for positive constants $c_1$ and $c_2$ depending only on $A$.
By scale invariance of Brownian motion, we have
\eqbn
\delta^{ \mu}     \BB P\left( \wt Z([0,1]) \subset F_p^\delta \right)
= \delta^{ \mu}     \BB P\left(   \wt Z([0,\delta^{-1} ])  +  z \subset F_p   \right)  .
\eqen
By \cite[Equation 4.3]{shimura-cone} 
this quantity converges to a finite positive constant as $\delta \rta 0$.  
We therefore obtain
$\BB P\left( E_\delta \right) \asymp \delta^{ \mu}$.
Similarly, $\BB P\left( E_\delta' \right) \asymp \delta^{ \mu'}$.
This proves the second proportions in~\eqref{eqn-bm-cone-asymp} and~\eqref{eqn-bm-cone-asymp'}. By \cite[Theorem 2]{shimura-cone}, the conditional law of $ \wt Z|_{[0,1]}$ given $\{ \wt Z([0,1]) \subset F_p^\delta\}$ converges in the uniform topology as $\delta\rta 0$ to the law $\wh{\BB P}$ of a continuous path $\wh Z : [0,1]\rta \BB C$ satisfying (with $G(C)$ as in the statement of the lemma)
\eqbn
\wh{\BB P}\left( G(C)  \right) > 0 \quad \forall C >1 , \quad \op{and} \quad  \lim_{C\rta\infty} \wh{\BB P}\left( G(C)  \right) = 1 . 
\eqen
By combining this observation with our argument above, we obtain the first proportionality in~\eqref{eqn-bm-cone-asymp}. We similarly obtain the first proportionality in~\eqref{eqn-bm-cone-asymp'}.
\end{proof}

\subsection{Lower bounds for various probabilities}
\label{sec-prob-lower}

In this section we will prove lower bounds for the probabilities of various rare events associated with the word $X$. This will be accomplished by breaking up a segment of the word $X$ of length $n$ into sub-words of length approximately $\delta^k n$ for $\delta$ small but independent from $n$; then estimating the probabilities of events for each sub-word using \cite[Theorem 2.5]{shef-burger} and Lemma~\ref{prop-bm-cone-asymp}. 
We start with a lower bound for the probability that a word of length $n$ contains either no burgers or no orders (plus some regularity conditions).

\begin{lem} \label{prop-X-asymp} 
Let $\mu$ be as in~\eqref{eqn-cone-exponent}. For $n\in\BB N$ and $C >1$, let $R_n(C)$ be the event that the following is true.
\begin{enumerate}
\item $X(-n,-1) $ contains no burgers.
\item $X(-n,-1)$ contains at least $C^{-1} n^{1/2}$ hamburger orders, at least $C^{-1} n^{1/2}$ cheeseburger orders, and at most $C n^{1/2}$ total orders. 
\end{enumerate}
Also let $R_n^*(C)$ be the event that the following is true. 
\begin{enumerate}
\item $X(1,n) $ contains no orders.
\item $X(1,n)$ contains at least $C^{-1} n^{1/2}$ burgers of each type and at most $C n^{1/2}$ total burgers.
\end{enumerate}
If $C > 4$, then  
\eqb \label{eqn-X-burger-asymp}
\BB P\left(  R_n(C) \right) \geq n^{-\mu + o_n(1)} 
\eqe 
and
\eqb \label{eqn-X-order-asymp}
\BB P\left( R_n^*(C)  \right) \geq n^{-\mu + o_n(1)} .
\eqe 
\end{lem}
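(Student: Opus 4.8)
The plan is to first rewrite $R_n^*(C)$ and $R_n(C)$ as events for the walk $Z^n$, and then establish the lower bounds by a multiscale argument. For $R_n^*(C)$: the condition that $X(1,n)$ contains no orders is equivalent to $d(m)\ge 0$ and $d^*(m)\ge 0$ for every $m\in\{0,\dots,n\}$, i.e.\ that $Z^n$ stays in the closed first quadrant on $[0,1]$; moreover, on this event the numbers of hamburgers and cheeseburgers in $X(1,n)$ are exactly $d(n)$ and $d^*(n)$ (using that reduction preserves $\mcl N_{\tc H}-\mcl N_{\tb H}$ and that every order in $[1,n]$ is then matched to a burger in $[1,n]$), so that $R_n^*(C)=\{Z^n\text{ stays in the closed first quadrant on }[0,1]\}\cap\{Z^n(1)\in K_C\}$, where $K_C:=\{(u,v)\in(0,\infty)^2:u,v\ge C^{-1},\ u+v\le C\}$ has nonempty interior bounded away from the axes since $C>4$. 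The event $R_n(C)$ admits a similar reformulation (on $[-1,0]$): $X(-n,-1)$ contains no burgers iff $Z^n$ stays in the closed first quadrant on $[-1,0]$, and the conditions on the order counts amount to $Z^n(-1)$ lying in a compact subset of $(0,\infty)^2$ off the axes, up to controlling the number of ``leftover'' $\tb F$-symbols in $X(-n,-1)$ (those matched to the left of $-n$), which is $O(n^{1/2})$ with conditional probability bounded below by a first-moment estimate; I will suppress this routine point and focus on the cleaner $R_n^*(C)$. The obstruction to a one-step argument is that $Z^n$ must touch the corner of the quadrant at an endpoint of its time interval, whereas the corresponding Brownian event has probability zero --- which is exactly what the multiscale decomposition circumvents.

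For the multiscale argument, fix $\delta\in(0,1)$ and a large constant $L=L(\delta)$. Set $m_k:=\lceil\delta^{-k}\rceil$, let $k_1$ be minimal with $m_{k_1}\ge L$ and $k_2$ maximal with $m_{k_2}\le n$, so that $k_2-k_1=\log n/\log(1/\delta)+O_\delta(1)$. Using translation invariance of the walk (Remark~\ref{remark-minus-sign}) and the (quasi-)Markov structure of the model --- conditionally on the reduced word at time $m_k$, the law of the rescaled future increments of $(d,d^*)$ is within $o(1)$, uniformly over admissible realizations, of that of the unconditioned path $Z^{\cdot}$ started from the corresponding point --- I will bound $\BB P(R_n^*(C))$ below by a product over: the initial segment $[0,m_{k_1}]$, the blocks $[m_k,m_{k+1}]$ for $k_1\le k<k_2$, and the final segment $[m_{k_2},n]$. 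At each junction $m_k$ ($k_1\le k\le k_2$) I require $(d,d^*)(m_k)/m_k^{1/2}$ to lie in a fixed compact $K_C'\subset\op{int}K_C$ bounded away from the axes, throughout I require the walk to stay in the closed first quadrant, and at time $n$ I require $(d(n),d^*(n))\in n^{1/2}K_C$, which gives $R_n^*(C)$.

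The initial segment has $O_\delta(1)$ steps and the required event on it has probability $\ge c_1(\delta,L)>0$ (it is realized by an explicit word). For a block $[m_k,m_{k+1}]$, rescaling space by $m_{k+1}^{-1/2}$ and time by $m_{k+1}^{-1}$ turns the walk into a copy of $Z^{m_{k+1}}$ on a time interval of length $\asymp 1$, started (by the entry condition) from a point $z$ with $|z|\asymp\delta^{1/2}$, bounded away from the axes. Shifting by $-z$, the event ``the walk stays $\ge 0$ and ends in $m_{k+1}^{1/2}K_C'$'' becomes ``$Z^{m_{k+1}}-z$ stays $\ge -z$ and ends in $K_C'-z$''; since $|z|\asymp\delta^{1/2}$, by Proposition~\ref{prop-burger-limit} together with the Portmanteau lemma (applied uniformly over the compact set of admissible $z$'s, using that the limiting Brownian event is a continuity set because the limiting path starts in the open quadrant) and Brownian scaling, the probability of this event converges, as $m_{k+1}\to\infty$, to a quantity $\asymp\delta^\mu$; this last estimate is a minor variant of Lemma~\ref{prop-bm-cone-asymp}, whose proof moreover shows that conditioning on the fattened-quadrant event the path converges to a law with a positive density at time $1$, handling the endpoint requirement. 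Choosing $L\ge L_0(\delta)$ so that every $m_k$ with $k\ge k_1$ lies above the threshold at which this approximation holds to the required precision, each block contributes a factor $\ge c_0\delta^\mu$ with $c_0>0$ \emph{independent of $\delta$ and $k$} (the implicit constants in Lemma~\ref{prop-bm-cone-asymp} being independent of $\delta$); the final segment likewise contributes a factor $\ge c_3(\delta)>0$. Hence $\BB P(R_n^*(C))\ge c_1(\delta,L)\,c_3(\delta)\,(c_0\delta^\mu)^{k_2-k_1}$, and taking logarithms,
\[
\liminf_{n\to\infty}\frac{\log\BB P(R_n^*(C))}{\log n}\ \ge\ -\mu\ -\ \frac{|\log c_0|}{\log(1/\delta)}\,.
\]
Since this holds for every $\delta\in(0,1)$ and the last term tends to $0$ as $\delta\to0$, we obtain $\liminf_n\log\BB P(R_n^*(C))/\log n\ge-\mu$, which is~\eqref{eqn-X-order-asymp}. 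The proof of~\eqref{eqn-X-burger-asymp} is identical, working on $[-n,-1]$ and with the suppressed bookkeeping for leftover $\tb F$-symbols.

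The main obstacle is the corner: since ``staying in the closed first quadrant from the corner for a positive time'' is Brownian-null, the scaling limit cannot be applied in one shot, and the multiscale scheme replaces it by $\asymp\log_{1/\delta}n$ applications at geometric scales, each contributing only a polynomial factor $\delta^\mu$. The real work lies in making the per-scale estimate $\gtrsim\delta^\mu$ uniform both over all scales between $L(\delta)$ and $n$ and over all admissible ``good'' starting configurations --- which is where the uniformity in Lemma~\ref{prop-bm-cone-asymp} and the (quasi-)Markov structure of the model enter --- and in propagating the good-endpoint (and, for $R_n$, the leftover-$\tb F$) conditions through the chaining.
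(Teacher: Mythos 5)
Your overall strategy is the same as the paper's: decompose the word into $\asymp \log n/\log \delta^{-1}$ blocks at the geometric scales $\delta^k n$, pay a factor $\asymp \delta^{\mu}$ per block via \cite[Theorem 2.5]{shef-burger} together with Lemma~\ref{prop-bm-cone-asymp}, and send $\delta \to 0$ at the end; the resulting $|\log c_0|/\log\delta^{-1}$ loss in the exponent is exactly how the paper absorbs the per-block constants. The difference is in how you glue the blocks, and that is where your write-up has a genuine soft spot. You condition on the walk's state at each junction $m_k$ and require the walk to stay in the exact first quadrant across each block, invoking a ``(quasi-)Markov structure'' to assert that the conditional law of the rescaled future increments is within $o(1)$ of that of a fresh walk, \emph{uniformly over admissible pasts}. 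But $(d,d^*)$ is not Markov: the increments on $[m_k,m_{k+1}]$ depend on the entire burger stack at time $m_k$ through every $\tb F$ in the block whose match lies before $m_k$. The uniform approximation you need is true, but only because the total number of $\tb F$'s in the block is $o(m_{k+1}^{1/2})$ with high probability, with a bound that is a function of the block's own symbols and hence independent of the past (\cite[Lemma 3.7]{shef-burger}); as written you assert this load-bearing step rather than derive it, and you explicitly suppress the corresponding leftover-$\tb F$ bookkeeping for $R_n(C)$.

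The paper's proof is organized precisely so that no such conditional-law comparison is ever needed. The per-block events $E_{n,k}$ constrain only the \emph{reduced word of block $k$ by itself}: at most $C^{-1}(\delta^k n)^{1/2}-1$ burgers of each type, at least $C^{-1}(\delta^{k-1}n)^{1/2}$ orders of each type, and at most $\xi_{\lfloor \delta^{k-1}n\rfloor}$ flexible orders for a deterministic $\xi_j = o_j(\sqrt j)$. These events are functions of disjoint blocks of i.i.d.\ symbols, hence \emph{exactly} independent, and their intersection combinatorially forces the concatenated word to reduce to one with no burgers, since each block's surviving burgers are absorbed by the next block's surplus of orders of each type (this is where $C>4$ and $\delta < 1/4C^2$ are used, to keep the accumulated order count below $Cn^{1/2}$ after the bookkeeping). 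If you want to salvage your version as written, you must either prove the uniform Prokhorov estimate for the conditional law from \cite[Lemma 3.7]{shef-burger} (including the transfer of the lower bound through a slightly shrunk quadrant, since the exact-quadrant event is not open), or, more simply, replace ``stays in the quadrant given the past'' by per-block reduced-word conditions of the paper's type. The rest of your argument — the reformulation of $R_n^*(C)$ as a cone event with endpoint in $K_C$, the $O_\delta(1)$ initial segment, and the $\delta\to 0$ limit — is correct and matches the paper.
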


In terms of the walk $D = (d ,d^*)$ defined in Section~\ref{sec-burger-prelim}, the event $R_n(C)$ of Proposition~\ref{prop-X-asymp} is the same as the event that the time reversal of $(D - D(-1))|_{[-n,-1]_{\BB Z}}$ stays in the first quadrant for $n$ units of time and ends up at distance of order $n^{1/2}$ away from the boundary of the first quadrant. The event $R_n^*(C)$ is equivalent to a similar condition for the walk $D|_{[1,n]_{\BB Z}}$. Hence the estimates of Lemma~\ref{prop-X-asymp} are natural in light of Lemma~\ref{prop-bm-cone-asymp} and the scaling limit result for $D$ (Theorem~\ref{prop-burger-limit}). 

\begin{remark}
We will prove a sharper version of the estimate~\eqref{eqn-X-burger-asymp} later, which also includes an upper bound (see Proposition~\ref{prop-J-reg-var} below). 
\end{remark}


\begin{proof}[Proof of Lemma~\ref{prop-X-asymp}]
We will prove~\eqref{eqn-X-burger-asymp}. The estimate~\eqref{eqn-X-order-asymp} is proven similarly, but with the word $X$ read in the forward rather than the reverse direction. 

Fix $C > 4$. Also fix $\delta < 1/4C^2$ to be chosen later independently of $n$. Let
\eqb \label{eqn-delta-floor-def}
\BB k_n := \left\lceil\frac{\log n}{\log \delta^{-1}} \right\rceil 
\eqe 
be the smallest integer $k$ such that $\delta^k n \leq 1$. 
Also fix a deterministic sequence $\xi = (\xi_j)_{j\in\BB N}$ with $\xi_j = o_j(\sqrt j)$ and $\xi_j \leq j^{1/2}$ (to be chosen later, independently of $n$) and for $k\in [1,  \BB k_n]_{\BB Z}$ let $E_{n,k}$ be the event that the following is true. 
\begin{enumerate}
\item $X(  - \lfloor \delta^{k-1} n \rfloor , -\lfloor \delta^k n \rfloor -1 )$ has at most $ 0\vee ( C^{-1} (\delta^k n)^{1/2} -1 )$ burgers of each type.
\item $  C^{-1}(\delta^{k-1} n)^{1/2} \leq \mcl N_\theta \left( X(- \lfloor \delta^{k-1} n \rfloor , -\lfloor \delta^k n \rfloor -1) \right)   \leq C (\delta^{k-1} n)^{1/2} $ for $\theta \in \{\tb H , \tb C\}$.  
\item $\mcl N_{\tb F}\left( X( - \lfloor \delta^{k-1} n \rfloor , -\lfloor \delta^k n \rfloor -1 ) \right) \leq \xi_{\lfloor \delta^{k-1} n \rfloor}$. 
\end{enumerate}
On $\bigcap_{k=1}^{\BB k_n  } E_{n,k}$, the word $X(-n,-1)$ contains no burgers (since each burger in $X(  - \lfloor \delta^{k-1} n \rfloor , -\lfloor \delta^k n \rfloor -1 )$ is cancelled by an order in $X(  - \lfloor \delta^{k} n \rfloor , -\lfloor \delta^{k+1} n \rfloor -1 )$) and at most
\[
2(C+1)n^{1/2} \sum_{k=1}^\infty \delta^{\frac{k-1}{2}} \leq (4C+4) n^{1/2}
\]
total orders. Furthermore, since $X(-n,- \lfloor \delta n \rfloor)$ contains at least $C^{-1} n^{1/2}$ hamburger orders and at least the same number of cheeseburger orders, so does $X(-n,-1)$. 
Consequently,  
\eqb \label{eqn-no-burgers-contain}
\bigcap_{k=1}^{\BB k_n } E_{n,k} \subset R_{n} (4 C+4)   .
\eqe   
The events $E_{n,k}$ for $k\in [1,  \BB k_n]_{\BB Z}$ are independent, so to obtain~\eqref{eqn-X-burger-asymp} (with $4C$ in place of $C$) we just need to prove a suitable lower bound for $\BB P(E_{n,k})$. 
We will do this using Lemma~\ref{prop-bm-cone-asymp} and the scaling limit result for the walk $D = (d,d^*)$ from Definition~\ref{def-theta-count}. 

We first define an event in terms of this walk. In particular, we let $\wt E_{n,k}$ be the event that the following is true.
\begin{enumerate}
\item $\inf_{j \in [ \lfloor \delta^k n\rfloor +1  , \lfloor \delta^{k-1} n \rfloor ]_{\BB Z} } ( d( -j) - d(-\lfloor \delta^k n \rfloor - 1) ) \geq  - \left( 0\vee ( C^{-1} (\delta^k n)^{1/2} -1  -  \xi_{\lfloor \delta^{k-1} n \rfloor}  )   \right) $ and similarly with $d^*$ in place of $d$.
\item $   C^{-1}(\delta^{k-1} n)^{1/2} +  \xi_{\lfloor \delta^{k-1} n \rfloor} \leq  d(-\lfloor \delta^{k-1} n \rfloor )- d(-\lfloor \delta^k n \rfloor - 1)  \leq  C (\delta^{k-1} n)^{1/2} -  \xi_{\lfloor \delta^{k-1} n \rfloor}$ and similarly with $d^*$ in place of $d$.
\item $\mcl N_{\tb F}\left( X( - \lfloor \delta^{k-1} n \rfloor , -\lfloor \delta^k n \rfloor -1 ) \right) \leq \xi_{\lfloor \delta^{k-1} n \rfloor }$. 
\end{enumerate}
The running infimum of $j\mapsto  d(X(-j,-1))$ up to time $m\in\BB N$ is equal to $-\mcl N_{\tc H}(X(-m,-1))$. A similar statement holds for $d^*$. 
From this, we infer that $\wt E_{n,k} \subset E_{n,k}$. 
By \cite[Lemma 3.7]{shef-burger}, we can choose the sequence $\xi$ in such a way that it holds with probability tending to 1 as $m\rta\infty$ that $X(1,m)$ has at most $\xi_{m}$ flexible orders. By \cite[Theorem 2.5]{shef-burger}, as $n\rta \infty$ ($k$ and $\delta$ fixed), the probability of the event $\wt E_{n,k}$ converges to the probability of the event that $ Z$ stays within the $C^{-1} \delta^{1/2}$-neighborhood of the first quadrant in the time interval $[0,1-\delta]$ and satisfies $C^{-1}  \leq -U(1) \leq C$ and $C^{-1} \leq -V(1) \leq C$. By~\eqref{eqn-bm-cone-asymp} of Lemma~\ref{prop-bm-cone-asymp} this latter event has probability $\succeq \delta^\mu$ with the implicit constant independent of $\delta$. Hence we can find $b\in (0,1)$, independent of $\delta$, and $ m_* = m_*(\delta , C ,\xi)$ such that whenever $\lfloor \delta^k n \rfloor \geq m_*$, we have $\BB P(\wt E_{n,k}  ) \geq b \delta^{ \mu }$.

Let $k_*$ be the largest $k\in [1, \BB k_n]_{\BB Z}$ for which $\lfloor \delta^k n \rfloor \geq m_*$. Then 
\eqbn
\BB P\left(\bigcap_{k=1}^{k_*} E_{n,k} \right) \geq b^{k_*} \delta^{k_* \mu} \geq b^{\BB k_n} \delta^{\BB k_n \mu} \geq n^{-\mu    + o_\delta(1) } ,
\eqen 
with the $o_\delta(1)$ independent of $n$. Since $\lfloor \delta^{k_*+1} n \rfloor  \leq m_*$, the event $\bigcap_{k=k_*+1}^{\BB k_n} E_{n,k} $ is determined by the word $X_{-m_* }\dots X_{-1}$, $\BB P\left(\bigcap_{k=k_*+1}^{\BB k_n} E_{n,k} \right) $ is at least a positive constant which does not depend on $n$. We infer from~\eqref{eqn-no-burgers-contain} that
\eqbn
\BB P\left( R_{n} (4C)  \right) \succeq n^{-\mu   + o_\delta(1) } ,
\eqen
with the implicit constant depending on $\delta$, but not $n$. Since $\delta$ is arbitrary, this implies~\eqref{eqn-X-burger-asymp}. 
\end{proof}

From Lemma~\ref{prop-X-asymp}, we obtain the following.

\begin{prop}\label{prop-infinite-F}
Almost surely, there are infinitely many $i \in \BB N$ for which $X(1,i)$ contains no burgers; infinitely many $j\in\BB N$ for which $X(-j,-1)$ contains no orders; and infinitely many $\tb F$-symbols in $X(1,\infty)$. 
\end{prop}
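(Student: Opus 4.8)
The plan is to deduce Proposition~\ref{prop-infinite-F} from the lower bound in Lemma~\ref{prop-X-asymp} by a Borel--Cantelli-type argument along a geometric subsequence. First I would handle the statement about burgers. For $m\in\BB N$, let $F_m$ be the event that $X(4^m , 2\cdot 4^m)$ contains no burgers. By translation invariance of $X$ and Lemma~\ref{prop-X-asymp} (applied to the word read forward with $n = 4^m$, say with $C = 5$), we have $\BB P(F_m) \geq (4^m)^{-\mu + o_m(1)} \geq 4^{-2\mu m}$ for large $m$, which is not summable since $\sum_m 4^{-2\mu m} < \infty$ would require... wait---actually $\sum_m 4^{-2\mu m}$ \emph{is} summable, so this crude approach fails and I need the events to be independent.

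The fix is to use genuinely disjoint blocks so that independence of the increments of $X$ applies. Partition $\BB N$ into consecutive blocks $J_m := [a_m , a_{m+1})_{\BB Z}$ with $|J_m| = m$ (or any slowly growing block size with $\sum_m |J_m|^{-\mu} = \infty$), where $a_m = \sum_{\ell < m} \ell$. Let $F_m$ be the event that $X(a_m , a_{m+1}-1)$ contains no burgers. The events $\{F_m\}_{m\in\BB N}$ are independent since they depend on disjoint sets of symbols of $X$, and by Lemma~\ref{prop-X-asymp} and translation invariance, $\BB P(F_m) \geq m^{-\mu + o_m(1)}$, so $\sum_m \BB P(F_m) = \infty$ because $\mu = \kappa/8 < 1$. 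By the second Borel--Cantelli lemma, a.s.\ infinitely many $F_m$ occur. Whenever $F_m$ occurs, let $i$ be the largest index in $[1, a_{m+1}-1]_{\BB Z}$ for which $X(1,i)$ contains no burgers; such an $i \geq a_{m+1}-1$ exists because, reading forward, any burger in $X(1, a_{m+1}-1)$ must either be cancelled by a later order or survive, but no burger appearing at a time $< a_m$ can survive past $X(a_m, a_{m+1}-1)$ without one of the symbols in that block being a burger in the reduced subword---more carefully, $\mcl R(X_1\cdots X_{a_{m+1}-1}) = \mcl R\big( \mcl R(X_1\cdots X_{a_m-1})\, X_{a_m}\cdots X_{a_{m+1}-1}\big)$, and since $X(a_m, a_{m+1}-1)$ has no burgers, all the burgers of the first factor get cancelled. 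Hence $X(1, a_{m+1}-1)$ has no burgers, giving infinitely many such $i$. The statement about $j$ with $X(-j,-1)$ containing no orders follows identically using $R_n^*$ in place of $R_n$ and reflecting.

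For the claim that there are infinitely many $\tb F$-symbols in $X(1,\infty)$: this is much softer. Since each $X_i$ is sampled independently with $\BB P(X_i = \tb F) = p/2 > 0$, the events $\{X_i = \tb F\}_{i\in\BB N}$ are independent with constant positive probability, so by Borel--Cantelli (second lemma) a.s.\ infinitely many $i\in\BB N$ have $X_i = \tb F$. The main obstacle in the whole argument is the bookkeeping in the previous paragraph---verifying that the occurrence of a ``no burgers'' block $X(a_m, a_{m+1}-1)$ actually forces $X(1, a_{m+1}-1)$ to be burger-free, i.e.\ that burgers cannot be created by the reduction and that earlier burgers are genuinely flushed. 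This is exactly the cancellation mechanism already exploited in the proof of Lemma~\ref{prop-X-asymp} (in the chain $\bigcap_k E_{n,k} \subset R_n(4C+4)$), so it can be imported verbatim; everything else is a routine application of the second Borel--Cantelli lemma to independent blocks.
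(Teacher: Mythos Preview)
Your argument has a genuine gap in both parts.

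\textbf{Parts 1--2 (the ``flush'' step is false).} The claim that ``since $X(a_m,a_{m+1}-1)$ has no burgers, all the burgers of the first factor get cancelled'' is simply not true. The reduced word $X(a_m,a_{m+1}-1)$ having no burgers only says it consists of orders; those orders need not be of the right type to cancel the burgers sitting in $X(1,a_m-1)$. For instance, if $X(1,a_m-1)$ ends in $\tc H$ and the entire block reduces to a single $\tb C$, then $X(1,a_{m+1}-1)=\mcl R(\cdots \tc H\,\tb C)=\cdots\tb C\,\tc H$ still contains a hamburger. The cancellation in the proof of Lemma~\ref{prop-X-asymp} that you invoke works for a different reason: there the blocks \emph{shrink} geometrically and each block is required to contain at least $C^{-1}(\delta^{k-1}n)^{1/2}$ orders of \emph{each} type, which outnumbers the at most $C^{-1}(\delta^k n)^{1/2}$ burgers of each type in the next (smaller) block. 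In your setup the blocks grow, so the surviving burgers in $X(1,a_m-1)$ can be of order $a_m^{1/2}\asymp m$, while block $m$ has only $\asymp m^{1/2}$ orders of each type even on the good event $R_m(C)$; you cannot flush them.

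The paper avoids this by using the renewal structure rather than independent blocks. Let $K_m$ be the $m$th smallest $i$ with $X(1,i)$ burger-free; since $X(1,K_{m-1})$ has no burgers, $X(1,i)$ is burger-free iff $X(K_{m-1}+1,i)$ is, so the increments $K_m-K_{m-1}$ are iid. Then $\sum_i \BB P(X(1,i)\text{ has no burgers})=\infty$ (from Lemma~\ref{prop-X-asymp}) forces $\BB P(K_1<\infty)=1$, hence all $K_m<\infty$ a.s.

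\textbf{Part 3 (wrong statement proved).} In this paper $X(1,n)$ always denotes the \emph{reduced} word, so ``infinitely many $\tb F$-symbols in $X(1,\infty)$'' means $\mcl N_{\tb F}(X(1,n))\to\infty$. Your Borel--Cantelli argument only shows that infinitely many raw symbols $X_i$ equal $\tb F$, which is trivial and not what is asked: an $\tb F$ is added to the reduced word only when the burger stack is empty at that moment. The paper's proof uses the renewal times $K_m$ again: the events $\{X_{K_m+1}=\tb F\}$ are iid with probability $p/2$, and whenever one occurs the new $\tb F$ lands on an empty burger stack and stays in $X(1,\cdot)$ forever.
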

\begin{proof}
For $m\in\BB N$, let $K_m$ be the $m$th smallest $i\in\BB N$ for which $X(1,i)$ contains no burgers (or $K_m = \infty$ if there are fewer than $m$ such $i$). Observe that $K_m$ can equivalently be described as the smallest $i \geq K_{m-1}+1$ for which $X(K_{m-1}+1 ,i)$ contains no burgers. Hence the words $X_{K_{m-1}+1} \dots X_{K_m}$ are iid. It follows that $\{K_m\}_{m\in\BB N}$ is a renewal process. Note that $i\in\BB N$ is equal to one of the times $K_m$ if and only if the word $X(1,i)$ contains no burgers. By Lemma~\ref{prop-X-asymp}, we thus have
\eqbn
\sum_{i=1}^\infty \BB P\left(\text{$i = K_m$ for some $m\in\BB N$}\right) \geq \sum_{i=1}^\infty i^{-\mu + o_i(1)} = \infty
\eqen
since $\mu < 1$. By elementary renewal theory, 
$K_1$ is a.s.\ finite, whence there are a.s.\ infinitely many $i\in\BB N$ for which $X(1,i)$ contains no burgers.  
We similarly deduce from~\eqref{eqn-X-order-asymp} that there are a.s.\ infinitely many $j\in\BB N$ for which $X(-j,-1)$ contains no orders. To obtain the last statement, we note that for each $m\in\BB N$, we have $\BB P\left(X_{K_m+1} = \tb F\right) = p/2$, so there are a.s.\ infinitely many $m\in\BB N$ for which $X_{K_m+1} = \tb F$. For each such $m$, an $\tb F$ symbol is added to the order stack at time $K_{m}+1$. 
\end{proof}

Next we consider an analogue of Lemma~\ref{prop-X-asymp} which involves $3\pi/2$-cone times instead of $\pi/2$-cone times.

\begin{lem}\label{prop-X-asymp'}
For $n\in\BB N$ and $C>4$, let $R_n'(C) $ be the event that the following is true.
\begin{enumerate}
\item $X(1,i)$ contains a burger for each $i\in [1,  n]_{\BB Z}$.\label{item-R'-burgers}
\item $X(1,n)$ contains at least $C^{-1} n^{1/2}$ hamburger orders and at least $C^{-1} n^{1/2}$ cheeseburger orders.\label{item-R'-orders}
\item $|X(1,n)| \leq C n^{1/2}$. \label{item-R'-length}
\end{enumerate}
 Also let $(R_n')^*(C)$ be the event that the following is true.
\begin{enumerate}
\item $X(-j,-1)$ contains either a hamburger order or a cheeseburger order for each $j\in [1, n]_{\BB Z}$.
\item $X(-n,-1)$ contains at least $C^{-1} n^{1/2}$ burgers of each type and at most $C n^{1/2}$ total burgers.
\item $|X(-n,-1)| \leq C n^{1/2}$. 
\end{enumerate}
For $C>4$ we have
\eqb \label{eqn-X-burger-asymp'}
\BB P\left(R_n'(C) \right) \geq n^{-\mu' + o_n(1)}
\eqe 
and
\eqb \label{eqn-X-order-asymp'}
\BB P\left((R_n')^*(C) \right) \geq n^{-\mu' + o_n(1)}
\eqe 
with $\mu'$ as in~\eqref{eqn-cone-exponent}.
\end{lem}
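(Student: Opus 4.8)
The plan is to mimic the proof of Lemma~\ref{prop-X-asymp}, replacing the event ``$X(-n,-1)$ contains no burgers'' (the discrete analogue of $Z$ staying in the first quadrant) by the event that $X(1,i)$ contains a burger for every $i\in[1,n]_{\BB Z}$ (the discrete analogue of $Z$ staying in the complement of the third quadrant, i.e.\ having a $3\pi/2$-cone time at time $1$ run backward), and correspondingly replacing the exponent $\mu$ by $\mu'$ via the estimate~\eqref{eqn-bm-cone-asymp'} of Lemma~\ref{prop-bm-cone-asymp} in place of~\eqref{eqn-bm-cone-asymp}. I will prove~\eqref{eqn-X-burger-asymp'}; the estimate~\eqref{eqn-X-order-asymp'} follows by the same argument with the word read in the reverse direction and burgers and orders interchanged. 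Fix $C>4$ and a small $\delta<1/4C^2$ to be chosen independently of $n$, and set $\BB k_n := \lceil \log n/\log\delta^{-1}\rceil$ as in~\eqref{eqn-delta-floor-def}. Break $[1,n]_{\BB Z}$ into the dyadic-type blocks $J_k := (\lfloor\delta^k n\rfloor, \lfloor\delta^{k-1}n\rfloor]_{\BB Z}$ for $k\in[1,\BB k_n]_{\BB Z}$, reading now in the \emph{forward} direction starting from time $1$.

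The key point is to choose, for each block $J_k$, a ``good'' event $E_{n,k}'$ so that on $\bigcap_k E_{n,k}'$ conditions~\ref{item-R'-burgers}--\ref{item-R'-length} hold, and so that the $E_{n,k}'$ are independent and each has probability $\succeq\delta^{\mu'}$ for $n$ large. Concretely, $E_{n,k}'$ should require that within the reduced word of the block $X(\lfloor\delta^k n\rfloor+1, \lfloor\delta^{k-1}n\rfloor)$ the path $D$ restricted to $J_k$ stays (up to an $o(\sqrt{\delta^{k-1}n})$ error coming from the flexible-order bound \cite[Lemma 3.7]{shef-burger}, as in the proof of Lemma~\ref{prop-X-asymp}) in the complement of the third quadrant relative to its starting point, and ends at distance of order $(\delta^{k-1}n)^{1/2}$ from \emph{both} coordinate axes, with each coordinate displacement at least $C^{-1}(\delta^{k-1}n)^{1/2}$ and the total block length at most $C(\delta^{k-1}n)^{1/2}$. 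The reason the ``stay out of the third quadrant'' condition at every scale forces condition~\ref{item-R'-burgers} globally is the same telescoping/matching argument as before: if $X(1,i)$ had no burger for some $i$, then the reduced word up to time $i$ would consist only of orders, forcing both running coordinates $d$ and $d^*$ to simultaneously drop below their values at the start of the block containing $i$ by more than the allotted error, contradicting the stay-out-of-third-quadrant condition for that block together with the lower bounds on the incoming displacements from earlier blocks. Summing the block lengths $\le C(\delta^{k-1}n)^{1/2}$ over $k$ gives a geometric series bounded by $(4C/(1-\sqrt\delta))\,n^{1/2}$, yielding condition~\ref{item-R'-length} with a slightly larger constant, and the order counts in the first block give condition~\ref{item-R'-orders}, so $\bigcap_{k=1}^{\BB k_n} E_{n,k}' \subset R_n'(C')$ for a suitable $C'$ depending only on $C$ and $\delta$; rescaling $C$ at the end absorbs this.

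For the probability lower bound on each block, I would pass to the rescaled walk and invoke Theorem~\ref{prop-burger-limit} (i.e.\ \cite[Theorem 2.5]{shef-burger}) together with the flexible-order bound \cite[Lemma 3.7]{shef-burger}: as $n\rta\infty$ with $k,\delta$ fixed, $\BB P(E_{n,k}')$ converges to the probability that $Z$ stays within the $C^{-1}\delta^{1/2}$-neighborhood of the complement of the third quadrant on $[0,1-\delta]$ and satisfies $C^{-1}\le U(1)\le C$, $C^{-1}\le V(1)\le C$, which by~\eqref{eqn-bm-cone-asymp'} of Lemma~\ref{prop-bm-cone-asymp} (the $E_\delta'\cap G(C)$ estimate, up to an affine change of coordinates and the observation that the neighborhood of the complement of the third quadrant corresponds to a neighborhood of $F_p'$) is $\succeq\delta^{\mu'}$ with implicit constant independent of $\delta$. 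So there is $b\in(0,1)$ and $m_*=m_*(\delta,C,\xi)$ with $\BB P(E_{n,k}')\ge b\delta^{\mu'}$ whenever $\lfloor\delta^k n\rfloor\ge m_*$. Letting $k_*$ be the largest such $k$ and using independence across blocks, $\BB P\big(\bigcap_{k=1}^{k_*}E_{n,k}'\big)\ge b^{\BB k_n}\delta^{\BB k_n\mu'} = n^{-\mu'+o_\delta(1)}$, while the remaining $\BB k_n-k_*$ blocks contribute a positive constant independent of $n$ (they are determined by a bounded-length prefix/segment). Since $\delta$ is arbitrary this gives~\eqref{eqn-X-burger-asymp'}. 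The main obstacle is setting up the ``stay out of the third quadrant at every scale $\Rightarrow$ a burger survives at every time'' combinatorial implication cleanly, keeping careful track of the $\xi_{\lfloor\delta^{k-1}n\rfloor}$ slack from flexible orders so that the running-infimum identities for $d$ and $d^*$ (as in the proof of Lemma~\ref{prop-X-asymp}, where the running infimum of $j\mapsto d(X(1,j))$ equals $-\mcl N_{\tb H}$ of the relevant segment up to the flexible-order correction) translate into the claimed control of $D$; once that bookkeeping is in place, the probabilistic input is a verbatim adaptation of the proof of Lemma~\ref{prop-X-asymp} with $\mu'$ replacing $\mu$.
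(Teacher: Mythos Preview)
Your proof plan is essentially the same as the paper's: decompose $[1,n]_{\BB Z}$ into blocks of length $\asymp\delta^{k-1}n$, define on each block an event requiring the walk to stay near the complement of the third quadrant (rather than the first quadrant) relative to its starting point, use independence across blocks, and bound each block's probability via \cite[Theorem~2.5]{shef-burger}, \cite[Lemma~3.7]{shef-burger}, and the estimate~\eqref{eqn-bm-cone-asymp'} to get $\succeq\delta^{\mu'}$. The paper carries this out with two nested events, one phrased in terms of $\mcl N_\theta$ counts and one in terms of $d,d^*$, and shows the second is contained in the first; you instead go directly to a walk-based event, which amounts to the same thing.

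There is one ingredient your description of $E_{n,k}'$ omits. As stated, your block event constrains the path to stay out of the (shifted) third quadrant and to \emph{end} with each coordinate displacement at least $C^{-1}(\delta^{k-1}n)^{1/2}$; this controls the burger count at the end of the block (and hence condition~\ref{item-R'-burgers} via your telescoping argument) and the total length (condition~\ref{item-R'-length}), but it does not by itself force $X(1,n)$ to contain $\succeq n^{1/2}$ orders of each type (condition~\ref{item-R'-orders}). The paper's event additionally requires that the running infimum of each coordinate within the block drop below $-C^{-1}(\delta^{k-1}n)^{1/2}$ at some time; this is what produces the surviving $\tb H$- and $\tb C$-orders in the reduced word. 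You should add this running-infimum condition to your $E_{n,k}'$; it costs nothing probabilistically (under the $3\pi/2$-cone conditioning the path a.s.\ visits both half-planes at macroscopic depth, so the Brownian limit of the refined event still has probability $\asymp\delta^{\mu'}$), and then ``the order counts in the first block give condition~\ref{item-R'-orders}'' becomes valid after subtracting the geometric-series contribution of the smaller blocks, exactly as the paper does. With that one addition, your plan matches the paper's proof.
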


In terms of the walk $D = (d, d^*)$, the event $R_n'(C)$ of Lemma~\ref{prop-X-asymp'} says that the coordinates $d$ and $d^*$ do not attain a simultaneous running infimum on the time interval $[1,n]_{\BB Z}$ and that $D$ does not come close to staying in the first quadrant during this time interval or get too far away from 0 during this time interval. The event $(R_n')^*(C)$ has a similar interpretation in terms of the time reversal of $D|_{[-n,-1]_{\BB Z}}$. 

 \begin{proof}[Proof of Lemma~\ref{prop-X-asymp'}]
We will prove~\eqref{eqn-X-burger-asymp'}. The estimate~\eqref{eqn-X-order-asymp'} is proven similarly, but with the word $X$ read in the reverse, rather than the forward, direction. The proof is similar to that of Lemma~\ref{prop-X-asymp}: we break the word $X$ into increments of length approximately $\delta^k n$ and estimate the probability of an event corresponding to each segment using~\cite[Theorem 2.5]{shef-burger} and Lemma~\ref{prop-bm-cone-asymp}. 

Fix $C>4$, $\delta \in (0, (8C)^{-2}]$, and a deterministic sequence $\xi = (\xi_j)_{j\in\BB N}$ with $\xi_j = o_j(\sqrt j)$ to be chosen later independently of $n$. We assume $\xi_j \leq \delta j^{1/2}$ for each $j\in\BB N$. Let $\BB k_n$ be as in~\eqref{eqn-delta-floor-def}. For $k\in [1, \BB k_n]_{\BB Z}$, let $ E_{n,k}'$ be the event that the following is true.  
\begin{enumerate}
\item For each $i\in [\lfloor \delta^k n \rfloor + 1 , \lfloor \delta^{k-1} n \rfloor  ]_{\BB Z}$, at least one of the following three conditions holds: $\mcl N_{\tb H} \left( X(\lfloor \delta^k n \rfloor + 1 , i) \right) \leq 0\vee \left( C^{-1} (\delta^k n)^{1/2} - \xi_{\lfloor \delta^{k-1} n \rfloor  } \right)$; $\mcl N_{\tb C} \left( X(\lfloor \delta^k n \rfloor + 1 , i) \right) \leq 0\vee \left( C^{-1} (\delta^k n)^{1/2} - \xi_{\lfloor \delta^{k-1} n \rfloor }\right)$; or $  X(\lfloor \delta^k n \rfloor+ 1 , i)  $ contains a burger.  \label{item-nonempty-conds} 
\item $  \mcl N_\theta \left( X( \lfloor \delta^k n \rfloor + 1 , \lfloor \delta^{k-1} n \rfloor ) \right)   \geq C^{-1} (\delta^{k-1} n)^{1/2} $ for $\theta \in \{\tc H , \tc C\}$.  \label{item-nonempty-burgers} 
\item $\mcl N_{\theta} \left(X(\lfloor \delta^k n \rfloor +1 , \lfloor \delta^{k-1} n \rfloor  )\right) \geq C^{-1} (\delta^{k-1} n)^{1/2} - \xi_{\lfloor \delta^{k-1} n \rfloor }$ for $\theta \in \{\tb H, \tb C\}$. \label{item-nonempty-orders}
\item $|X( \lfloor \delta^k n \rfloor +1 , \lfloor \delta^{k-1} n \rfloor )| \leq C (\delta^{k-1} n)^{1/2}$. \label{item-nonempty-length}
\item $\mcl N_{\tb F}\left( X(\lfloor \delta^k n \rfloor +1 , \lfloor \delta^{k-1} n \rfloor ) \right) \leq \xi_{ \lfloor \delta^k n \rfloor }$.  \label{item-nonempty-F}
\end{enumerate}
We claim that
\eqb  \label{eqn-nonempty-contain}
\bigcap_{k=1}^{\BB k_n } E_{n,k}' \subset R_{n} '(8C)  .
\eqe  
First we observe that conditions~\ref{item-nonempty-conds}, \ref{item-nonempty-burgers}, and~\ref{item-nonempty-F} in the definition of $E_{n,k}'$ imply that condition~\ref{item-R'-burgers} in the definition of $R_n'(8C)$ holds on $\bigcap_{k=1}^{\BB k_n } E_{n,k}' $. From condition~\ref{item-nonempty-orders} and~\ref{item-nonempty-length} in the definition of $E_{n,k}'$, we infer that on $\bigcap_{k=1}^{\BB k_n } E_{n,k}'$, we have for $\theta\in \{\tb H, \tb C\}$ that
\alb
\mcl N_\theta \left( X(1,n ) \right) &\geq   C^{-1} n^{1/2} - \xi_{n} - C n^{1/2} \sum_{k=2}^{\BB k_n}  \delta^{(k-1 )/2}\\
& \geq  \frac12 C^{-1} n^{1/2} -  2\delta^{1/2} C n^{1/2} \geq \frac18 C^{-1} n^{1/2}
\ale
where the last inequality is by our choice of $\delta$. Thus condition~\ref{item-R'-orders} in the definition of $R_n'(8C)$ holds. Finally, it is clear from condition~\ref{item-nonempty-length} in the definition of $E_{n,k}'$ that condition~\ref{item-R'-length} in the definition of $R_n'(8C)$ holds on $\bigcap_{k=1}^{\BB k_n } E_{n,k}' $. This completes the proof of~\eqref{eqn-nonempty-contain}. 

The events $E_{n,k}'$ for $k\in [1,  \BB k_n]_{\BB Z}$ are independent, so in light of~\eqref{eqn-nonempty-contain}, to obtain~\eqref{eqn-X-burger-asymp'} (with $8C$ in place of $C$) we just need to prove a suitable lower bound for $\BB P(E_{n,k}')$. 
To this end, for $k\in [1, \BB k_n]_{\BB Z}$ let $\wt E_{n,k}'$ be the event that the following is true. 
\begin{enumerate}
\item For each $i \in[ \lfloor \delta^k n \rfloor +1 , \lfloor \delta^{k-1} n \rfloor ]_{\BB Z}$, either $ d( i) - d( \lfloor \delta^k n \rfloor + 1) \geq 0\wedge \left(- C^{-1} (\delta^k n)^{1/2} +\xi_{\lfloor \delta^{k-1} n \rfloor   } \right)$ or $ d^*( i) - d^*( \lfloor \delta^k n \rfloor  + 1) \geq 0\wedge \left( -C^{-1} (\delta^k n)^{1/2} +\xi_{\lfloor \delta^{k-1} n \rfloor  } \right)$.  \label{item-nonempty-conds'}
\item  $d( \lfloor \delta^{k-1} n \rfloor   )  - d( \lfloor \delta^k n \rfloor  +1)   $ and $d^*(\lfloor \delta^{k-1} n \rfloor   )  - d( \lfloor \delta^k n \rfloor +1) $ are each at least $C^{-1} (\delta^{k-1} n)^{1/2}$. \label{item-nonempty-burgers'}
\item $\inf_{ i \in[ \lfloor \delta^k n \rfloor  +1 , \lfloor \delta^{k-1} n \rfloor  ]_{\BB Z}} \left( d(i) - d(\lfloor \delta^k n \rfloor +1)\right) \leq - C^{-1} (\delta^{k-1} n)^{1/2} - \xi_{\lfloor \delta^{k-1} n \rfloor }   $ and similarly with $d^*$ in place of $d$. \label{item-nonempty-orders'}
\item $ \sup_{i \in[\lfloor \delta^k n \rfloor  +1 , \lfloor \delta^{k-1} n \rfloor  ]_{\BB Z}} |D(i)|    \leq (C/2) (\delta^{k-1} n)^{1/2} - \xi_{ \lfloor \delta^{k-1} n \rfloor }  $. \label{item-nonempty-length'}
\item $\mcl N_{\tb F}\left( X( \lfloor \delta^k n \rfloor  + 1 , \lfloor \delta^{k-1} n \rfloor  ) \right) \leq \xi_{\lfloor \delta^{k-1} n \rfloor }$.  \label{item-nonempty-F'}
\end{enumerate}
We claim that $\wt E_{n,k}' \subset E_{n,k}'$. It is clear that conditions~\ref{item-nonempty-burgers'}, and~\ref{item-nonempty-F'} in the definition of $\wt E_{n,k}'$ imply the corresponding conditions in the definition of $E_{n,k}'$. Since the running infima of $i\mapsto d(X(1,i))$ and $i\mapsto d^*(X(1,i))$ up to time $m$ differ from $\mcl N_{\tb H}\left(X(1,m)\right)$ and $\mcl N_{\tb C}\left(X(1,m)\right)$, respectively, by at most $\mcl N_{\tb F}\left(X(1,n)\right)$, we find that conditions~\ref{item-nonempty-orders'} and~\ref{item-nonempty-length'} imply the corresponding conditions in the definition of $E_{n,k}'$. 

Suppose condition~\ref{item-nonempty-conds'} in the definition of $\wt E_{n,k}'$ holds. If $i \in [ \lfloor \delta^k n \rfloor  +1  ,  \lfloor \delta^{k-1} n \rfloor    ]_{\BB Z}$ and $X(\lfloor \delta^k n \rfloor   + 1 , i)$ contains no burgers, then the condition $ d( i) - d( \lfloor \delta^k n \rfloor  + 1) \geq 0\wedge \left(- C^{-1} (\delta^k n)^{1/2} +\xi_{\lfloor \delta^{k-1} n \rfloor  } \right)$ together with condition~\ref{item-nonempty-F'} in the definition of $\wt E_{n,k}'$ implies $\mcl N_{\tb H} \left( X(\lfloor \delta^k n \rfloor  + 1 , i) \right) \leq 0\vee \left( C^{-1} (\delta^k n)^{1/2} - \xi_{  \lfloor \delta^{k-1} n \rfloor } \right)$. A similar statement holds if $ d^*( i) - d^*( \lfloor \delta^k n \rfloor  + 1) \geq 0\wedge \left( -C^{-1} (\delta^k n)^{1/2} +\xi_{\lfloor \delta^{k-1} n \rfloor  } \right)$. This proves our claim. 

It now follows from~ \cite[Theorem 2.5 and Lemma 3.7]{shef-burger} together with~\eqref{eqn-bm-cone-asymp'} of Lemma~\ref{prop-bm-cone-asymp} (c.f.\ the proof of Lemma~\ref{prop-X-asymp}) that if $\xi$ is chosen appropriately (independently of $n$) then there is a constant $b \in (0,1)$, independent of $n$ and $\delta$, and a constant $m_* = m_*(\delta , C ,\xi )$ such that whenever $\lfloor \delta^k n \rfloor  \geq m_*$, we have $\BB P(E_{n,k}' ) \geq b \delta^{ \mu'  }$. We conclude exactly as in the proof of Lemma~\ref{prop-X-asymp}. 
\end{proof}

\subsection{Estimate for the number of flexible orders}
\label{sec-F-prob}

The main goal of this section is to prove the following more quantitative version of \cite[Lemma 3.7]{shef-burger} (which says that the number of $\tb F$'s in $X(1,n)$ is $o_n(n^{1/2})$ with high probability), which will turn out to be a relatively straightforward consequence of Lemma~\ref{prop-X-asymp'}.

\begin{lem} \label{prop-few-F}
Let $\mu'$ be as in~\eqref{eqn-cone-exponent}. For each $n\in\BB N$ and each $\nu > \mu'$ we have
\eqb \label{eqn-few-F}
\BB P\left(\text{$\exists i \geq n$ with $\mcl N_{\tb F}\left(X(1,i)\right) \geq i^\nu$}   \right)    = o_n^\infty(n)       
\eqe   
(recall notation~\ref{def-o-notation}). The same holds if we fix $C>1$, let $n' \in [n, Cn]_{\BB Z}$, and condition on the event $\{\text{$X(1,n')$ has no burgers}\}$, in which case the $o_n^\infty(n)$ depends on $C$ but not the particular choice of $n'$. 
\end{lem}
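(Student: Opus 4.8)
<br>

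The plan is to prove the estimate $\BB P(\exists i \geq n : \mcl N_{\tb F}(X(1,i)) \geq i^\nu) = o_n^\infty(n)$ by combining a union bound over dyadic scales with a good tail bound for $\mcl N_{\tb F}(X(1,m))$ at a single scale $m$. First I would reduce to showing that for each fixed $m \in \BB N$ and each $\nu > \mu'$,
\eqbn
\BB P\left(\mcl N_{\tb F}\left(X(1,m)\right) \geq m^\nu\right) = o_m^\infty(m) ,
\eqen
since then the full statement follows by noting that $\mcl N_{\tb F}(X(1,\cdot))$ is non-decreasing, splitting $[n,\infty)$ into dyadic blocks $[2^k, 2^{k+1}]$ for $2^k \geq n/2$, bounding the probability on each block by $\BB P(\mcl N_{\tb F}(X(1,2^{k+1})) \geq 2^{k\nu})$ (using monotonicity and choosing $\nu' \in (\mu',\nu)$ with $2^{k\nu'}$ eventually dominating $(2^{k+1})^{\nu'}$ type losses, or more simply just carrying $\nu$ through with a harmless constant), and summing the super-polynomially small terms.

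For the single-scale bound, the key observation is the link between $\tb F$-symbols remaining on the stack and $3\pi/2$-cone behavior of the walk $D = (d,d^*)$, exactly as exploited in Lemma~\ref{prop-X-asymp'}. Concretely, if $X(1,m)$ contains at least $\ell := m^\nu$ flexible orders, then in particular the walk $D$ restricted to $[1,m]_{\BB Z}$ must fail to attain a simultaneous running infimum of both coordinates — more precisely, the number of times $i \leq m$ at which $X(1,i)$ contains no burgers (equivalently $D(i)$ is at a simultaneous running minimum) is controlled, and between consecutive such times a flexible order can only survive if the relevant coordinate stays above its running infimum. I would make this precise by partitioning $[1,m]_{\BB Z}$ into the maximal sub-intervals delimited by the times at which $X(1,\cdot)$ reduces to contain no burgers; on each such interval the walk exhibits the $3\pi/2$-cone-type behavior whose probability is estimated by Lemma~\ref{prop-X-asymp'} (via~\eqref{eqn-X-burger-asymp'}), and the total number of $\tb F$'s is bounded by a sum over these intervals. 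An alternative, cleaner route: apply Lemma~\ref{prop-X-asymp'} directly to see that the event that $X(1,j)$ contains a burger for all $j \in [1,\ell]_{\BB Z}$ (a prerequisite, roughly, for $\ell$ flexible orders to accumulate before the first burger-free time) has probability $\geq \ell^{-\mu' + o_\ell(1)}$, but we want an \emph{upper} bound, so instead I would use the renewal structure from the proof of Proposition~\ref{prop-infinite-F}: the first time $K_1$ that $X(1,\cdot)$ has no burgers satisfies $\BB P(K_1 > \ell) \leq \ell^{-\mu' + o_\ell(1)}$ is the wrong direction too. The honest approach is: $\mcl N_{\tb F}(X(1,m))$ is stochastically dominated by a sum of iid copies of $\mcl N_{\tb F}$ over renewal blocks, and the number of $\tb F$'s contributed before the walk "clears out" is itself heavy-tailed with exponent governed by $\mu'$; a Markov/Chernoff-type bound on this sum, using that $\mu' < 1$ controls a truncated moment, gives the super-polynomial decay once $\ell = m^\nu$ with $\nu > \mu'$.

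For the conditioned statement, with $n' \in [n, Cn]_{\BB Z}$ and conditioning on $\{X(1,n') \text{ has no burgers}\}$, I would use that conditioning on this event has cost at most $\BB P(X(1,n') \text{ has no burgers})^{-1} \leq (n')^{\mu' + o_{n'}(1)} \leq (Cn)^{\mu' + o_n(1)}$ by the lower bound~\eqref{eqn-X-burger-asymp}, which is only polynomial in $n$ and hence is absorbed into the $o_n^\infty(n)$ factor from the unconditioned bound, uniformly over $n' \in [n,Cn]_{\BB Z}$; the only subtlety is that after conditioning the increments are no longer independent, but one can first apply the unconditioned bound of~\eqref{eqn-few-F} and then pay the polynomial conditioning cost. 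The main obstacle I anticipate is making the ``clearing out'' / renewal domination argument fully rigorous — specifically, showing that $\mcl N_{\tb F}(X(1,m))$ really is bounded (up to constants) by a sum of iid heavy-tailed increments with the correct tail exponent $\mu'$, and controlling the fluctuation of the number of renewal blocks in $[1,m]_{\BB Z}$; the clean way to do this is to invoke the regularly-varying tail estimates developed later in the paper (Sections~\ref{sec-reg-var}, \ref{sec-no-order-reg-var}), but since Lemma~\ref{prop-few-F} is proved early we must instead make do with the one-sided estimates of Lemma~\ref{prop-X-asymp'}, which suffices for an upper bound on the probability of accumulating many $\tb F$'s but requires care in the combinatorial bookkeeping of how flexible orders interleave with burger-free times.
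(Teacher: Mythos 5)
Your proposal correctly identifies the two main ingredients — the renewal structure at the times when $X(1,\cdot)$ is burger-free, and the role of Lemma~\ref{prop-X-asymp'} — and your treatment of the conditioned statement (pay a polynomial cost $\BB P(\text{no burgers})^{-1}$, which is absorbed into the superpolynomial bound; note the exponent from~\eqref{eqn-X-burger-asymp} is $\mu$, not $\mu'$, but this is immaterial) matches the paper. The gap is in the single-scale estimate, where you talk yourself out of the correct argument. You write that the tail bound on $K_1$ is ``the wrong direction,'' but it is exactly the right direction: Lemma~\ref{prop-X-asymp'} gives the \emph{lower} bound $\BB P(K_1 > \ell) \geq \ell^{-\mu' + o_\ell(1)}$ on the length of a renewal block, and a heavy lower tail on the increments forces the renewal counting function $M_i$ to be \emph{small} — which is what you want, since the key deterministic observation (which your proposal dances around but never states) is simply that every time $\mcl N_{\tb F}(X(1,\cdot))$ increases, the word $X(1,i)$ contains no burgers (an unmatched $\tb F$ at time $i$ can only be added when the burger stack is empty), so $\mcl N_{\tb F}(X(1,i)) \leq M_i$ with no constants and no ``sum of heavy-tailed increments'' to control.

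Your proposed replacement mechanism — stochastic domination by a sum of iid heavy-tailed $\tb F$-counts per block, controlled by a Chernoff/truncated-moment bound — does not match the structure of the problem: each renewal block contributes at most one unmatched $\tb F$ to the reduced word, so there is nothing heavy-tailed to sum; what is heavy-tailed is the block \emph{length}, and that works in your favor. The paper's route from here is: first, $\BB P(M_n \geq m) \leq \BB P(K_1 \leq n)^m \leq \bigl(1 - n^{-\mu'+o_n(1)}\bigr)^m$, giving $\BB E(M_n) \leq n^{\mu'+o_n(1)}$; then arbitrarily high moments $\BB E(M_n^k) \leq n^{k\mu'+o_n(1)}$ by induction on $k$ using the renewal identity $\BB P\bigl(\bigcap_k E_{i_k}\bigr) = \prod_k \BB P(E_{i_k - i_{k-1}})$ for the events $E_i = \{X(1,i) \text{ has no burgers}\}$ (Lemmas~\ref{prop-renewal-relation} and~\ref{prop-renewal-moment}); and finally Chebyshev with $k$ large plus a union bound over all $i \geq n$, which makes your dyadic decomposition unnecessary. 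Without the pointwise bound $\mcl N_{\tb F}(X(1,i)) \leq M_i$ and the high-moment bound on $M_i$, your argument as written does not close.
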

 
Since $\mu' \in (1/3,1/2)$ for each $p \in (0,1/2)$, we have in particular that~\eqref{eqn-few-F} holds for some $\nu  < 1/2$. In other words, with high probability the number of flexible orders in $X(1,i)$ is of strictly smaller polynomial order than the length of $X(1,i)$, for each $i \geq n$.

\begin{remark} \label{remark-few-F-stronger}
The exponent $\mu'$ in Lemma~\ref{prop-few-F} is not optimal. We will show in Corollary~\ref{prop-few-F-optimal} below that $\mu'$ can be replaced by $1-\mu \leq \mu'$. However, the proof of Corollary~\ref{prop-few-F-optimal} indirectly uses Lemma~\ref{prop-few-F}. 
\end{remark}

We will extract Lemma~\ref{prop-few-F} from the following general fact about renewal processes, which will also be used in the proof of the stronger version of Lemma~\ref{prop-few-F} mentioned in Remark~\ref{remark-few-F-stronger}.

\begin{lem} \label{prop-few-renewal}
Let $(Y_j)$ be a sequence of iid positive integer valued random variables and for $m\in\BB N$ let $S_m := \sum_{j=1}^m Y_j$. For $i\in\BB N$, let $E_i$ be the event that $i = S_m$ for some $m\in\BB N$ and for $n\in\BB N$, let $M_n := \sup\{m \,:\, S_m \leq n\}$ be the number of $i\leq n$ for which $E_i$ occurs. Suppose that for some $\alpha > 0$, either 
\eqb \label{eqn-few-renewal-upper}
\BB P(E_i)  \leq i^{-\alpha + o_i(1)} ,\qquad \forall \, i\in\BB N   
\eqe
or
\eqb
\BB P(Y_1	\geq n)  \geq n^{-(1-\alpha) +o_n(1)} , \qquad \forall \, n\in\BB N.  \label{eqn-few-renewal-lower}
\eqe
Then for each $\nu  > 1-\alpha$, 
\eqb \label{eqn-few-renewal}
\BB P\left(\text{$\exists i \geq n$ with $M_i \geq i^\nu$}   \right)    = o_n^\infty(n)      .  
\eqe
\end{lem}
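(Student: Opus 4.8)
The statement is a tail bound on a renewal counting process, so the plan is to reduce it to an exponential-moment or union-bound estimate on the renewal times $S_m$. The key observation is that the event $\{\exists i \geq n \text{ with } M_i \geq i^\nu\}$ is contained in $\bigcup_{j \geq \lfloor \log_2 n\rfloor} \{M_{2^{j+1}} \geq 2^{j\nu}\}$ (a dyadic decomposition: if $M_i \geq i^\nu$ for some $i \in [2^j, 2^{j+1}]$, then $M_{2^{j+1}} \geq M_i \geq i^\nu \geq 2^{j\nu}$), so it suffices to show $\sum_{j} \BB P(M_{2^{j+1}} \geq 2^{j\nu})$ decays faster than any polynomial in $n$, i.e.\ that $\BB P(M_m \geq m^{\nu'}) = o_m^\infty(m)$ for any fixed $\nu' \in (1-\alpha, \nu)$ — in fact I only need superpolynomial decay in $m$ for a single exponent strictly between $1-\alpha$ and $\nu$, then adjust.

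\textbf{Main steps.} First I would record the equivalence $\{M_m \geq k\} = \{S_k \leq m\}$, so $\BB P(M_m \geq k) = \BB P(S_k \leq m) = \BB P(\sum_{j=1}^k Y_j \leq m)$. Second, I would show that the hypothesis — either \eqref{eqn-few-renewal-upper} or \eqref{eqn-few-renewal-lower} — implies a lower bound of the form $\BB P(Y_1 \geq t) \geq t^{-(1-\alpha) + o_t(1)}$ on the tail of a single increment. Under \eqref{eqn-few-renewal-lower} this is immediate; under \eqref{eqn-few-renewal-upper} it follows from elementary renewal theory, since $\BB P(E_i) = \sum_{m} \BB P(S_m = i)$ and a polynomial upper bound $i^{-\alpha + o_i(1)}$ on $\BB P(E_i)$ forces the increments to have a heavy tail (if the increments had a lighter tail, the renewal mass at $i$ would be larger) — concretely, $\BB P(Y_1 \geq t) \preceq \sum_{i \geq t} \BB P(E_i)$ is false in general, but one can argue that $\sum_{i=1}^{t} \BB P(E_i) \asymp \BB E[M_t]$ and compare with the known asymptotics $\BB E[M_t] \asymp t / \BB E[Y_1 \wedge t]$ or use a direct comparison; the cleanest route is probably to invoke a standard result (e.g.\ from the theory of regularly varying renewal sequences) that $\BB P(E_i) \leq i^{-\alpha + o_i(1)}$ implies $\BB P(Y_1 > i) \geq i^{-(1-\alpha) + o_i(1)}$. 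Third, with a polynomial lower bound on the increment tail in hand, I would bound $\BB P(S_k \leq m)$ from above: if $S_k \leq m$ then $Y_j \leq m$ for all $j \leq k$, so $\BB P(S_k \leq m) \leq \BB P(Y_1 \leq m)^k = (1 - \BB P(Y_1 > m))^k \leq \exp(-k \, \BB P(Y_1 > m)) \leq \exp(-k \, m^{-(1-\alpha) + o_m(1)})$. Fourth, I would plug in $k = m^{\nu'}$ with $\nu' > 1-\alpha$: then $k \, m^{-(1-\alpha)+o_m(1)} = m^{\nu' - (1-\alpha) + o_m(1)} \to \infty$ as a positive power of $m$, so $\BB P(M_m \geq m^{\nu'}) \leq \exp(-m^{c})$ for some $c > 0$, which is certainly $o_m^\infty(m)$. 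Finally, summing over the dyadic scales $j \geq \log_2 n$ gives the claimed $o_n^\infty(n)$, after fixing $\nu' \in (1-\alpha,\nu)$ so the extra factor from $M_i \geq i^\nu \geq 2^{j\nu} \geq (2^{j+1})^{\nu'}$ (valid for $j$ large) is absorbed.

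\textbf{Main obstacle.} The one genuinely non-routine point is deriving the increment tail bound $\BB P(Y_1 > t) \geq t^{-(1-\alpha)+o_t(1)}$ from the renewal-mass bound \eqref{eqn-few-renewal-upper}; the reverse direction and everything after are soft. I expect to handle this via the identity $\sum_{i=1}^{t} \BB P(E_i) = \BB E[M_t]$ together with the elementary bound $\BB E[M_t] \geq \BB P(Y_1 \leq t)^{-1} \cdot (\text{something})$ — more precisely, using that $M_t$ dominates a geometric-type count, one gets $\BB E[M_t] \succeq 1/\BB P(Y_1 > t/2)$ or similar up to $t^{o_t(1)}$ corrections, so $t^{1-\alpha+o_t(1)} \succeq \sum_{i \leq t}\BB P(E_i) = \BB E[M_t] \succeq \BB P(Y_1 > t)^{-1 + o_t(1)}$, giving the desired lower bound. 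I would present this as a short lemma-internal computation rather than citing external heavy machinery, since only the crude polynomial order is needed and the $t^{o_t(1)}$ slack makes the estimates forgiving.
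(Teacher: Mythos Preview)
Your approach is clean and correct under hypothesis~\eqref{eqn-few-renewal-lower}: the bound $\BB P(M_m\ge k)=\BB P(S_k\le m)\le\BB P(Y_1\le m)^k\le\exp\!\bigl(-k\,\BB P(Y_1>m)\bigr)$ with $k=m^{\nu'}$ gives stretched-exponential decay, and the dyadic union bound finishes. This is in fact the same max-of-increments trick the paper uses to control $\BB E[M_n]$ under~\eqref{eqn-few-renewal-lower}, but you exploit it more directly and avoid the moment induction altogether.

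Under hypothesis~\eqref{eqn-few-renewal-upper}, however, your reduction has a real gap. The inequality you propose, $\BB E[M_t]\succeq 1/\BB P(Y_1>t/2)$, goes the wrong way: the overshoot identity $1=\sum_{i=0}^{t}\BB P(E_i)\,\BB P(Y_1>t-i)$ gives $\BB P(Y_1>t)\sum_{i\le t}\BB P(E_i)\le 1$, i.e.\ $\BB E[M_t]\le 1/\BB P(Y_1>t)$, and this upper bound can be enormously slack (think of $Y_1$ with exponentially light tails, where $\BB E[M_t]\asymp t$ while $1/\BB P(Y_1>t)$ is exponential). More to the point, \eqref{eqn-few-renewal-upper} does \emph{not} imply~\eqref{eqn-few-renewal-lower}: feeding $\BB P(E_i)\le i^{-\alpha+o(1)}$ into the overshoot identity only yields $\limsup_{t} t^{\gamma}\BB P(Y_1>t)=\infty$ for every $\gamma>1-\alpha$, a $\limsup$ statement that is too weak for your max-bound argument, which needs the tail lower bound at \emph{every} scale $m$.

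The paper sidesteps this by treating the two hypotheses on an equal footing via moments. It first obtains $\BB E[M_n]\le n^{1-\alpha+o_n(1)}$ directly (summing $\BB P(E_i)$ under~\eqref{eqn-few-renewal-upper}, or summing the geometric-type tail under~\eqref{eqn-few-renewal-lower}), then uses the renewal factorization $\BB P\bigl(\bigcap_k E_{i_k}\bigr)=\prod_k\BB P(E_{i_k-i_{k-1}})$ to bound $\BB E[M_n^k]\le n^{k(1-\alpha)+o_n(1)}$ inductively in $k$, and finishes with Chebyshev and a union bound. The renewal factorization is the missing ingredient in your treatment of~\eqref{eqn-few-renewal-upper}; once you have the first-moment bound from summing $\BB P(E_i)$, that identity lets you bootstrap to arbitrary moments without ever touching the increment tail.
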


We will prove Lemma~\ref{prop-few-renewal} by obtaining a moment bound for the quantities $M_n$. This, in turn, will be proven using the following recursive relation between the probabilities of the events $E_i$. 

\begin{lem} \label{prop-renewal-relation}
Suppose we are in the setting of Lemma~\ref{prop-few-renewal}. Suppose given integers $0 = i_0 < i_1 < \dots <  i_n  $. Then 
\eqb \label{eqn-E_i-split}
\BB P\left(\bigcap_{k=1}^n E_{i_k}\right) = \prod_{k=1}^n \BB P(E_{i_k - i_{k-1}}) .
\eqe
\end{lem}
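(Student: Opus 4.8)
The plan is to prove~\eqref{eqn-E_i-split} by induction on $n$, exploiting the renewal (strong Markov) structure of the process $(S_m)$. The base case $n = 1$ is a tautology: with the convention $S_0 = 0$ and $i_0 = 0$, the event $E_{i_1}$ is literally $E_{i_1 - i_0}$. So suppose $n \geq 2$ and that~\eqref{eqn-E_i-split} holds with $n-1$ in place of $n$.

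First I would record the following bookkeeping fact. Since the $Y_j$ are positive integers, the partial sums $S_0 = 0 < S_1 < S_2 < \cdots$ are strictly increasing, so on the event $E_{i_1}$ there is a \emph{unique} index $m_1 \in \BB N$ with $S_{m_1} = i_1$; in the notation of Lemma~\ref{prop-few-renewal} this index is $m_1 = M_{i_1}$. Extending $m_1$ arbitrarily off $E_{i_1}$, the random variable $m_1$ is a stopping time for the filtration generated by $(Y_j)$, because $\{m_1 = m\} \cap E_{i_1} = \{S_1 < i_1 , \dots , S_{m-1} < i_1 , S_m = i_1\}$ is $\sigma(Y_1 , \dots , Y_m)$-measurable and $E_{i_1} = \bigcup_m (\{m_1 = m\} \cap E_{i_1})$.

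Next I would apply the strong Markov property at the stopping time $m_1$: conditionally on $\{m_1 = m\} \cap E_{i_1}$, the sequence $(Y_{m_1 + j})_{j \geq 1}$ is i.i.d.\ with the same law as $(Y_j)_{j \geq 1}$ and is independent of $\sigma(Y_1 , \dots , Y_{m_1})$, hence independent of $E_{i_1}$ itself. On $E_{i_1}$ the shifted partial sums are $S_{m_1 + \ell} - i_1 = \sum_{j=1}^\ell Y_{m_1 + j}$, and since $i_k > i_1 = S_{m_1}$ for $k \geq 2$, strict monotonicity of $(S_m)$ gives that, in conjunction with $E_{i_1}$, the event $E_{i_k}$ holds if and only if $i_k - i_1$ is a partial sum of the shifted sequence $(Y_{m_1 + j})_{j \geq 1}$. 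Writing $E_v'$ for the event "$v$ is a partial sum of $(Y_{m_1+j})_{j\geq1}$", the independence noted above together with $(Y_{m_1 + j})_{j \geq 1} \eqD (Y_j)_{j \geq 1}$ yields
\eqbn
\BB P\left( \bigcap_{k=1}^n E_{i_k} \right) = \BB P\left( E_{i_1} \cap \bigcap_{k=2}^n E_{i_k - i_1}' \right) = \BB P(E_{i_1}) \, \BB P\left( \bigcap_{k=2}^n E_{i_k - i_1}' \right) = \BB P(E_{i_1}) \, \BB P\left( \bigcap_{k=2}^n E_{i_k - i_1} \right) .
\eqen
Applying the induction hypothesis to the $n-1$ integers $0 < i_2 - i_1 < \cdots < i_n - i_1$ rewrites $\BB P(\bigcap_{k=2}^n E_{i_k - i_1})$ as $\prod_{k=2}^n \BB P(E_{i_k - i_{k-1}})$, and combining with the displayed identity gives~\eqref{eqn-E_i-split}.

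There is no serious obstacle here; the only point that requires care is verifying that the first (and only) time the renewal set hits the level $i_1$ occurs at a stopping time, so that the strong Markov property legitimately decouples the pre-$i_1$ and post-$i_1$ portions of the walk. This is precisely where the hypothesis that the $Y_j$ are positive integers enters — it guarantees that each level is visited at most once, so that $m_1$ is well defined.
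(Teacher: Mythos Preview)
Your proof is correct and follows essentially the same route as the paper's: induction on $n$ combined with the strong Markov property at the first-passage time to level $i_1$ (the paper uses the equivalent stopping time $K_i = \inf\{m : S_m \geq i\}$, which avoids the need to extend off $E_{i_1}$). The only cosmetic difference is that the paper peels off the last event $E_{i_n}$ rather than the first event $E_{i_1}$ in the induction step.
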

\begin{proof}
Let $i' > i$ and let $K_i$ be the smallest $m\in\BB N$ for which $S_m \geq i$. Then $E_i =\{S_{K_i} = i\}$ so by the strong Markov property, 
\eqbn
\BB P\left(E_{i'} \,|\, Y_1 , \dots , Y_{K_i}  \right) \BB 1_{E_i} = \BB P\left(  \text{$i' - i = S_m - S_{M_n}$ for some $m > M_n$} \right)\BB 1_{E_i} =  \BB P\left( E_{i'-i} \right)\BB 1_{E_i}  .
\eqen
Hence, in the setting of~\eqref{eqn-E_i-split} we have
\[
\BB P\left(\bigcap_{k=1}^n E_{i_k} \,|\, \bigcap_{k=1}^{n-1} E_{i_k} \right) = \BB P\left(E_{i_n - i_{n-1}}\right) ,
\]
so 
\[
\BB P\left(\bigcap_{k=1}^n E_{i_k} \right)  = \BB P\left(E_{i_n-i_{n-1} }\right)  \BB P\left( \bigcap_{k=1}^{n-1} E_{i_k} \right) .
\]
We can now obtain~\eqref{eqn-E_i-split} by induction on $n$.  
\end{proof}

Now we can prove a $k$th moment bound for $M_n$ by induction on $k$. 
 
\begin{lem} \label{prop-renewal-moment}
Suppose we are in the setting of Lemma~\ref{prop-few-renewal}. Then for $k\in\BB N$ we have  
\eqb \label{eqn-renewal-moment}
\BB E\left(M_n^k \right) \leq  n^{k(1-\alpha)  + o_n(1)}    .
\eqe 
\end{lem}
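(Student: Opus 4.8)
The plan is to control the $k$th moment by reducing it to the first moment, the main tool being the exact factorisation of Lemma~\ref{prop-renewal-relation}. Write $u_i := \BB P(E_i)$, so that $\BB E(M_n) = \sum_{i=1}^n u_i$; throughout I take $\alpha \in (0,1)$, which is the case in all applications of the lemma (and is forced by~\eqref{eqn-few-renewal-lower} in any event). First I would establish the first-moment bound $\BB E(M_n) \leq n^{1-\alpha + o_n(1)}$. Under~\eqref{eqn-few-renewal-upper} this is immediate: for each $\ep \in (0,1-\alpha)$ one has $u_i \leq C_\ep i^{-\alpha+\ep}$ for all $i$, so $\BB E(M_n) \leq C_\ep \sum_{i=1}^n i^{-\alpha+\ep} \leq C_\ep' n^{1-\alpha+\ep}$, and then let $\ep \downarrow 0$. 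Under~\eqref{eqn-few-renewal-lower} instead, note that $M_n \geq m$ iff $S_m \leq n$, and that $S_m \leq n$ forces $\max_{j\leq m} Y_j \leq n$ since the $Y_j$ are positive; hence $\BB P(S_m \leq n) \leq (1-\BB P(Y_1>n))^m \leq e^{-m\BB P(Y_1 > n)}$, and summing the geometric series,
\eqbn
\BB E(M_n) = \sum_{m\geq1}\BB P(S_m \leq n) \leq \frac{1}{1-e^{-\BB P(Y_1>n)}} \leq \frac{2}{\BB P(Y_1 > n)} \leq n^{1-\alpha+o_n(1)},
\eqen
using $1-e^{-x}\geq x/2$ for small $x>0$ together with $\BB P(Y_1>n) = \BB P(Y_1 \geq n+1) \geq n^{-(1-\alpha)+o_n(1)}$.

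For the $k$th moment I would use that $\BB 1_{E_i}^2 = \BB 1_{E_i}$: expanding $M_n^k = \big(\sum_{i=1}^n \BB 1_{E_i}\big)^k$ and collecting $k$-tuples according to their set of distinct entries gives the exact identity
\eqbn
\BB E(M_n^k) = \sum_{\ell=1}^k c_{k,\ell} \sum_{1 \leq j_1 < \dots < j_\ell \leq n} \BB P\big(E_{j_1} \cap \dots \cap E_{j_\ell}\big),
\eqen
where $c_{k,\ell}$ is the number of surjections from a $k$-element set onto an $\ell$-element set (so the $c_{k,\ell}$ depend only on $k$ and $\ell$). Applying Lemma~\ref{prop-renewal-relation} with $i_0 = 0$ gives $\BB P(E_{j_1}\cap\dots\cap E_{j_\ell}) = \prod_{r=1}^\ell u_{j_r - j_{r-1}}$ (with $j_0 := 0$), and $(j_1,\dots,j_\ell) \mapsto (j_1, j_2-j_1, \dots, j_\ell - j_{\ell-1})$ is a bijection from $\{1\leq j_1 < \dots < j_\ell \leq n\}$ onto $\{(d_1,\dots,d_\ell) : d_r \geq 1,\ \sum_r d_r \leq n\}$, so
\eqbn
\sum_{1\leq j_1 < \dots < j_\ell \leq n}\BB P\big(E_{j_1}\cap\dots\cap E_{j_\ell}\big) = \sum_{\substack{d_1,\dots,d_\ell\geq1 \\ d_1+\dots+d_\ell\leq n}} \ \prod_{r=1}^\ell u_{d_r} \leq \Big(\sum_{d=1}^n u_d\Big)^\ell = \BB E(M_n)^\ell .
\eqen
By the first-moment bound and $\ell \leq k$ (here I use $1-\alpha > 0$), each of these is at most $n^{k(1-\alpha)+o_n(1)}$, and summing the finitely many terms yields~\eqref{eqn-renewal-moment}.

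I expect the only real content to be the $k$th-moment step, and the hypothesis~\eqref{eqn-few-renewal-upper} to be the more delicate case, since there one has no direct tail bound on $Y_1$ to iterate; Lemma~\ref{prop-renewal-relation} is precisely what allows the $k$th moment to be dominated by a power of the first moment. Everything else is bookkeeping: one should replace the error terms in the exponents by honest bounds $u_i \leq C_\ep i^{-\alpha+\ep}$ (resp.\ $\BB P(Y_1 > n) \geq c_\ep n^{-(1-\alpha)-\ep}$) valid for all $i$ (resp.\ $n$), carry out the estimates with $\ep$ fixed, and send $\ep \downarrow 0$ at the end. (Although the statement is phrased as an induction on $k$, the argument above disposes of all $k$ simultaneously once the first-moment bound is in hand; equivalently, the terms with $\ell < k$ may be quoted from the induction hypothesis.)
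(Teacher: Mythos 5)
Your proof is correct and follows essentially the same route as the paper's: the first-moment bounds under each hypothesis are the same computations, and the $k$th moment is reduced to powers of the first moment via the factorization of Lemma~\ref{prop-renewal-relation}. The only difference is organizational — you collapse repeated indices into distinct increasing tuples and factor completely in one pass, which replaces the paper's induction on $k$ with a closed-form bound $\BB E(M_n^k) \leq \sum_{\ell \leq k} c_{k,\ell}\, \BB E(M_n)^\ell$, but the underlying idea is identical.
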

\begin{proof}
First consider the case $k=1$. If the hypothesis~\eqref{eqn-few-renewal-upper} holds, then 
\eqbn
\BB E\left(M_n \right) = \sum_{i=1}^n \BB P\left(E_i\right) \leq  \sum_{i=1}^n i^{-\alpha + o_i(1)} = n^{1-\alpha + o_n(1) }. 
\eqen
Alternatively, if~\eqref{eqn-few-renewal-lower} holds, then for $m\in\BB N$, 
\alb
\BB P\left(M_n \geq m \right)  = \BB P\left(S_m \leq n \right) \leq  \BB P\left(\max_{j \in [1, m]_{\BB Z} } Y_j \leq n \right) = \BB P\left(Y_1 \leq n\right)^m .
\ale
By~\eqref{eqn-few-renewal-lower} we have
\eqbn
\BB P\left(Y_1 \leq n\right)^m \leq \left(1 - n^{-(1-\alpha) + o_n(1)} \right)^m .
\eqen
Hence
\eqbn
\BB E\left(M_n \right) = \sum_{m=1}^n \BB P\left(M_n \geq m  \right) \leq \sum_{m=1}^n\left(1 - n^{-(1-\alpha) + o_n(1)} \right)^m \leq n^{1-\alpha + o_n(1)} .
\eqen
This proves~\eqref{eqn-renewal-moment} for $k=1$. 

Now consider the case $k>1$.  
By Lemma~\ref{prop-renewal-relation},
\begin{align} \label{eqn-k-moment-expand}
\BB E\left(M_n^k \right)  
&\preceq   \sum_{i=1}^n \sum_{i \leq  j_1 ,  \dots , j_{k-1} \leq n} \BB P\left( E_i\cap E_{j_1} \cap\dots \cap E_{j_{k-1}} \right)\notag \\ 
&\preceq  \sum_{i=1}^n \BB P(E_i) + \sum_{i=1}^n \sum_{m=1}^{k-1}  \sum_{i <  j_1 < \dots < j_m \leq n} \BB P\left( E_i\cap E_{j_1} \cap\dots \cap E_{j_m} \right)\notag \\
&=  \sum_{i=1}^n \BB P(E_i) +    \sum_{i=1}^n \BB P(E_i) \sum_{m=1}^{k-1}  \sum_{i <  j_1 <\dots < j_m \leq n} \BB P\left( E_{j_1-i} \cap\dots \cap E_{j_m -i} \right) \notag\\
&\leq   \sum_{i=1}^n \BB P(E_i) +     \sum_{i=1}^n \BB P(E_i) \sum_{m=1}^{k-1} \sum_{1 \leq  j_1< \dots< j_m \leq n } \BB P\left( E_{j_1 } \cap\dots \cap E_{j_m  } \right) \notag \\
&\leq \BB E(M_n)  \sum_{m=0}^{k-1}  \BB E\left(M_n^m   \right),
\end{align}
with implicit constants depending on $k$, but not $n$. We can now obtain~\eqref{eqn-renewal-moment} by induction on $k$. 
\end{proof}

\begin{proof}[Proof of Lemma~\ref{prop-few-renewal}]
By Lemma~\ref{prop-renewal-moment} and the Chebyshev inequality, for $\nu > 1-\alpha$ and $k\in\BB N$, we have
\eqbn
\BB P\left(M_n \geq i^{ \nu} \right) \leq i^{k(1-\alpha-\nu)  + o_i(1)} .
\eqen
We conclude by applying the union bound. 
\end{proof}

\begin{proof}[Proof of Lemma~\ref{prop-few-F}]
Let $K_0 = 0$ and for $m\in\BB N$, let $K_m$ be the $m$th smallest $i\in\BB N$ such that $X(1,i)$ contains no burgers. The times $K_m - K_{m-1}$ are iid and each has the same law as $K_1$. If $X(1,i)$ contains a burger for each $i\in [1,n]_{\BB Z}$, then $K_1 > n$. By Lemma~\ref{prop-X-asymp'}, we therefore have
\eqbn
\BB P\left(K_1 > n\right)  \geq n^{-\mu' + o_n(1)}. 
\eqen
Each time $i$ at which $\mcl N_{\tb F}\left(X(1,i)\right)$ increases is necessarily one of the times $K_m$. Thus~\eqref{eqn-few-F} follows from Lemma~\ref{prop-few-renewal}. The conditional version of the lemma follows by combining the unconditional version with Lemma~\ref{prop-X-asymp}. 
\end{proof}

\section{Regularity conditioned on no burgers}
\label{sec-F-reg}

\subsection{Statement and overview of the proof}
\label{sec-F-reg-setup}

The goal of this section is to prove a regularity statement for the conditional law of the word $X(1,n)$ given the event that it contains no burgers. It will be convenient to read the word backwards, rather than forward, so we will mostly work with $X(-n,-1)$ instead of $X(1,n)$. 

We will use the following notation. Let $J$ be the smallest $j\in\BB N$ for which $X(-j,-1)$ contains a burger. Note that $\{J > n\} $ is the same as the event that $X(-n,-1)$ contains no burgers, or the event that the walk $D$, run backward from time 0, stays in the first quadrant for $n$ units of time.
Let $\mu'$ be as in Lemma~\ref{prop-few-F} and fix $\nu \in (\mu',1/2)$. Let $F_n$ be the event that $\mcl N_{\tb F} \left(X(-n,-1)\right) \leq n^\nu$, so that by Lemma~\ref{prop-few-F} we have $\BB P(F_n) \geq 1- o_n^\infty(n)$. For $\ep > 0$ and $n\in\BB N$, let $E_n(\ep)$ be the event that $J > n$ and $X(-n,-1)$ contains at least $\ep n^{1/2}$ hamburger orders and at least $\ep n^{1/2}$ cheeseburger orders. Let 
\eqb \label{eqn-a-def}
a_n(\ep) := \BB P\left( E_n(\ep) \,|\, J  >n   \right) .
\eqe 
The main result of this section is the following.

\begin{prop} \label{prop-F-regularity}
In the above setting, 
\eqb \label{eqn-a-lim}
\lim_{\ep\rta 0} \liminf_{n\rta\infty} a_n(\ep) = 1 .
\eqe 
\end{prop}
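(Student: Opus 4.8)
\emph{Proof proposal.} The event $\{J>n\}=\{X(-n,-1)\text{ has no burgers}\}$ and the counts $\mcl N_{\tb H}(X(-n,-1))$, $\mcl N_{\tb C}(X(-n,-1))$ are functions of the reduced word $X(-n,-1)$, whose law is that of $X(1,n)$; so it is equivalent, and more convenient, to estimate $1-a_n(\ep)$ for the forward word. By the symmetry of the weights~\eqref{eqn-theta-prob} under interchanging $\tc H\leftrightarrow\tc C$ and $\tb H\leftrightarrow\tb C$ (which preserves all of~\eqref{eqn-theta-ful}--\eqref{eqn-theta-com}), together with a union bound, it suffices to show
\eqbn
\lim_{\ep\rta0}\limsup_{n\rta\infty}\BB P\big(\mcl N_{\tb H}(X(1,n))<\ep n^{1/2}\ \big|\ X(1,n)\text{ has no burgers}\big)=0 ,
\eqen
and by Lemma~\ref{prop-few-F} we may restrict to the event $\{\mcl N_{\tb F}(X(1,n))\le n^\nu\}$ with $\nu<1/2$, off an event of probability $o_n^\infty(n)$; this will neutralize the flexible orders at the diffusive scale $n^{1/2}$.

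The engine is the renewal decomposition from the proof of Proposition~\ref{prop-infinite-F}: $\mcl K:=\{i\ge1:X(1,i)\text{ has no burgers}\}$ is a renewal set, and since concatenating all-order words requires no further reduction, on $\{n\in\mcl K\}$ one has the \emph{exact} identity $\mcl N_{\tb H}(X(1,n))=\sum_{m=1}^{M_n}\zeta_m$, where $M_n=\#(\mcl K\cap[1,n])$, the increment words are i.i.d., $\zeta_m=\mcl N_{\tb H}(X(K_{m-1}+1,K_m))$ with $0\le\zeta_m\le\ell_m:=K_m-K_{m-1}$, and $\sum_m\ell_m=n$. I would then argue: (a) by Lemma~\ref{prop-X-asymp'} the increment tail satisfies $\BB P(\ell_1>t)\ge t^{-\mu'+o_t(1)}$ with $\mu'<1$, so $\BB E[\ell_1]=\infty$ and, conditionally on $\{n\in\mcl K\}$, a ``one big jump'' estimate for heavy-tailed renewal bridges (quantified via the renewal moment bounds of Lemmas~\ref{prop-few-renewal}--\ref{prop-renewal-moment}) gives, for each $k\in\BB N$ and $\eta>0$, a $\delta=\delta(k,\eta)>0$ such that with conditional probability $\ge1-\eta$ there are at least $k$ blocks $m$ with $\ell_m\ge\delta n$; (b) conditionally on the block lengths the $\zeta_m$ are independent, and a block of length $\ell\ge\delta n$ has $\zeta_m<\ep n^{1/2}\le(\ep/\sqrt\delta)\ell^{1/2}$ with probability at most the scale-$\ell$ analogue $q_\ell(\ep/\sqrt\delta)$ of $1-a_\ell$; hence $\BB P(\sum_m\zeta_m<\ep n^{1/2}\mid n\in\mcl K)\le\eta+(\sup_{\ell\ge\delta n}q_\ell(\ep/\sqrt\delta))^k$, and passing to the limits gives, with $\ol\beta:=\lim_{\ep\rta0}\limsup_n(1-a_n(\ep))$, the relation $\ol\beta\le\eta+\ol\beta^{\,k}$ for all $\eta$ and corresponding $k$, so $\ol\beta\le\ol\beta^{\,k}$ and therefore $\ol\beta\in\{0,1\}$. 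To exclude $\ol\beta=1$ one needs $\liminf_n a_n(\ep_0)>0$ for some $\ep_0$: Lemma~\ref{prop-X-asymp} gives $R_n(C)\subseteq E_n(C^{-1})\cap\{J>n\}$, hence $\BB P(E_n(C^{-1})\cap\{J>n\})\ge n^{-\mu+o_n(1)}$, and this will suffice once combined with a matching upper bound $\BB P(J>n)\le n^{-\mu+o_n(1)}$ — obtained, for instance, by feeding the tail bound $\BB P(K_1>t)\ge t^{-\mu'+o_t(1)}$ through the renewal generating function (a Tauberian argument) and comparing with the lower bound already in hand.

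The main obstacle is that the walk $D$ associated to $X$ has \emph{non-independent} increments, so one cannot simply invoke the theory of random walks conditioned to stay in a cone; the strategy above is built precisely to bypass this by working with the genuinely independent renewal blocks of $X$ rather than with the steps of $D$, with Lemma~\ref{prop-few-F} handling the flexible orders that are the source of the dependence. The two points that require real work are the conditioned renewal (``renewal bridge'') statement (a) — controlling how the length $n$ is distributed over the blocks, and in particular that it is spread over many macroscopic blocks — and the kickstart excluding $\ol\beta=1$, where one must upgrade the one-sided polynomial lower bound of Lemma~\ref{prop-X-asymp} into a two-sided estimate for $\BB P(J>n)$ (this is delicate since the available inputs control $\sum_{i\le n}\BB P(i\in\mcl K)$ rather than $\BB P(n\in\mcl K)$ pointwise, and one passes to the pointwise statement using subadditivity of $-\log\BB P(n\in\mcl K)$). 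Once these are in place, the remaining steps — including the harmless passage between the conditionings $\{i\in\mcl K\}$ and $\{K_1=i\}$ and the identification of $q_\ell$ with $1-a_\ell$ — are routine.
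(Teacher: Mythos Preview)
Your proposal has a genuine circular gap in the ``kickstart'' step. You need $\liminf_n a_n(\ep_0)>0$ for some $\ep_0$, and you propose to get it from the lower bound $\BB P(E_n(C^{-1}))\ge n^{-\mu+o_n(1)}$ of Lemma~\ref{prop-X-asymp} together with a matching upper bound $\BB P(J>n)\le n^{-\mu+o_n(1)}$. The Tauberian route you sketch does not give this. From $\BB P(K_1>t)\ge t^{-\mu'+o_t(1)}$ (Lemma~\ref{prop-X-asymp'}) and Lemma~\ref{prop-renewal-moment} one gets $\sum_{i\le n}\BB P(J>i)=\BB E[M_n]\le n^{\mu'+o_n(1)}$; since $\BB P(J>n)$ is decreasing in $n$ this yields only $\BB P(J>n)\le n^{-(1-\mu')+o_n(1)}$. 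But from~\eqref{eqn-cone-exponent} one computes $\mu+\mu'=\kappa^2/(8(\kappa-2))>1$ for every $\kappa\in(4,8)$, so $1-\mu'<\mu$ strictly, and your bound leaves $a_n(\ep_0)\ge n^{-(\mu+\mu'-1)+o_n(1)}\to 0$. The sharp bound $\BB P(J>n)=n^{-\mu+o_n(1)}$ is Proposition~\ref{prop-J-reg-var}, proved \emph{after} Proposition~\ref{prop-F-regularity} via Theorem~\ref{thm-no-burger-conv}, so invoking it here is circular. The paper obtains the kickstart by an entirely different mechanism (Lemmas~\ref{prop-F-symmetry}, \ref{prop-K-regularity}, \ref{prop-F-regularity-subsequence}): a translation-invariance argument shows that the $\tb F$-excursion around $0$ has a positive fraction of its length on the left of $0$ with uniformly positive probability, and from this one reads off $a_{m_j}(\ep)\ge q_0$ along a subsequence \emph{without} any upper bound on $\BB P(J>n)$.

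There is a second gap in the bootstrap itself. Conditionally on the block lengths, the $m$-th block has the law of $X_1\dots X_\ell$ given $\{K_1=\ell\}$, i.e.\ given that $\ell$ is the \emph{first} renewal time --- strictly stronger than $\{J>\ell\}=\{\ell\in\mcl K\}$. Thus $q_\ell$ is not $1-a_\ell$, and the self-referential inequality $\ol\beta\le\ol\beta^{\,k}$ does not close: at best you get $\ol\beta\le\ol\gamma^{\,k}$ with $\ol\gamma$ the analogous limit under the $\{K_1=\ell\}$-conditioning, and you still need $\ol\gamma<1$, which faces the same obstruction as above. This passage is not ``harmless''. The paper sidesteps the issue by avoiding renewal blocks altogether: it works directly with the conditioning $\{J>n\}$, compares to Brownian motion in a quadrant (Lemma~\ref{prop-Z-cond-limit}), and runs a scale-by-scale induction (Lemmas~\ref{prop-pos-stack}--\ref{prop-F-induct}) that needs only the subsequence kickstart, not a pointwise lower bound on $a_m(\ep)$.
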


Proposition~\ref{prop-F-regularity} is the key input in the proof of Theorem~\ref{thm-no-burger-conv} below, which gives a scaling limit for the path $Z^n$ conditioned on the event $\{J > n\}$. This theorem, in turn, is the key input in the proof of Theorem~\ref{thm-cone-limit}. 
  
We now give a brief overview of the proof of Proposition~\ref{prop-F-regularity}. We will start by reading the word $X$ forward. For $n\in\BB N$, let $K_n$ be the last time $i\leq n$ for which $X(1,i)$ contains no burgers. We will argue (via an argument based on translation invariance of the word $X$) that $X(1,K_n)$ has uniformly positive probability to contain at least $\ep n^{1/2}$ hamburger orders and at least $\ep n^{1/2}$ cheeseburger orders if $\ep$ is chosen sufficiently small. For $m\in\BB N$, the conditional law of $X_1 \dots X_{m}$ given $\{K_n = m+1\}$ is the same as its conditional law given that $X(1,m)$ contains no burgers, which by translation invariance is the same as the law of $X(-m,-1)$ given $\{J>m\}$. This will allow us to extract a (possibly very sparse) sequence $m_j \rta \infty$ for which $\liminf_{j\rta\infty} a_{m_j}(\ep) >0$. This is accomplished in Section~\ref{sec-F-reg-forward}. 
 
In Section~\ref{sec-F-reg-cond}, we will prove a general result which, for $s\in (0,1)$, allows us to compare the conditional law of $Z^n(\cdot) - Z^n(-s)$ given $\{J > n\}$ and a realization of $X_{-\lfloor n s \rfloor} \dots X_{-1}$ to the law of $Z(\cdot) - Z(-s)$ conditioned to stay in a neighborhood of the third quadrant. 

In Section~\ref{sec-F-reg-reverse}, we will use the result of Section~\ref{sec-F-reg-cond} to show that if $a_m(\ep)$ is bounded below for some small $\ep > 0$ and $m$ is very large, then $a_n(\ep)$ is close to 1 for $n\geq m$ such that $m/n$ is of constant order. The intuitive reason why this is the case is that if $\ep$ is very small and $E_m(\ep)$ fails to occur, then it is unlikely that $J > n$; and if $E_m(\ep) \cap \{J>n\}$ occurs, then (by \cite[Theorem 2.5]{shef-burger}) $E_n(\ep)$ is likely to occur for small $\ep$. We will then complete the proof of Proposition~\ref{prop-F-regularity} using an induction argument and the results of Section~\ref{sec-F-reg-forward}. See Figure~\ref{fig-F-regularity} for an illustration of the basic idea of this argument.

\begin{figure}[ht!]
 \begin{center}
\includegraphics{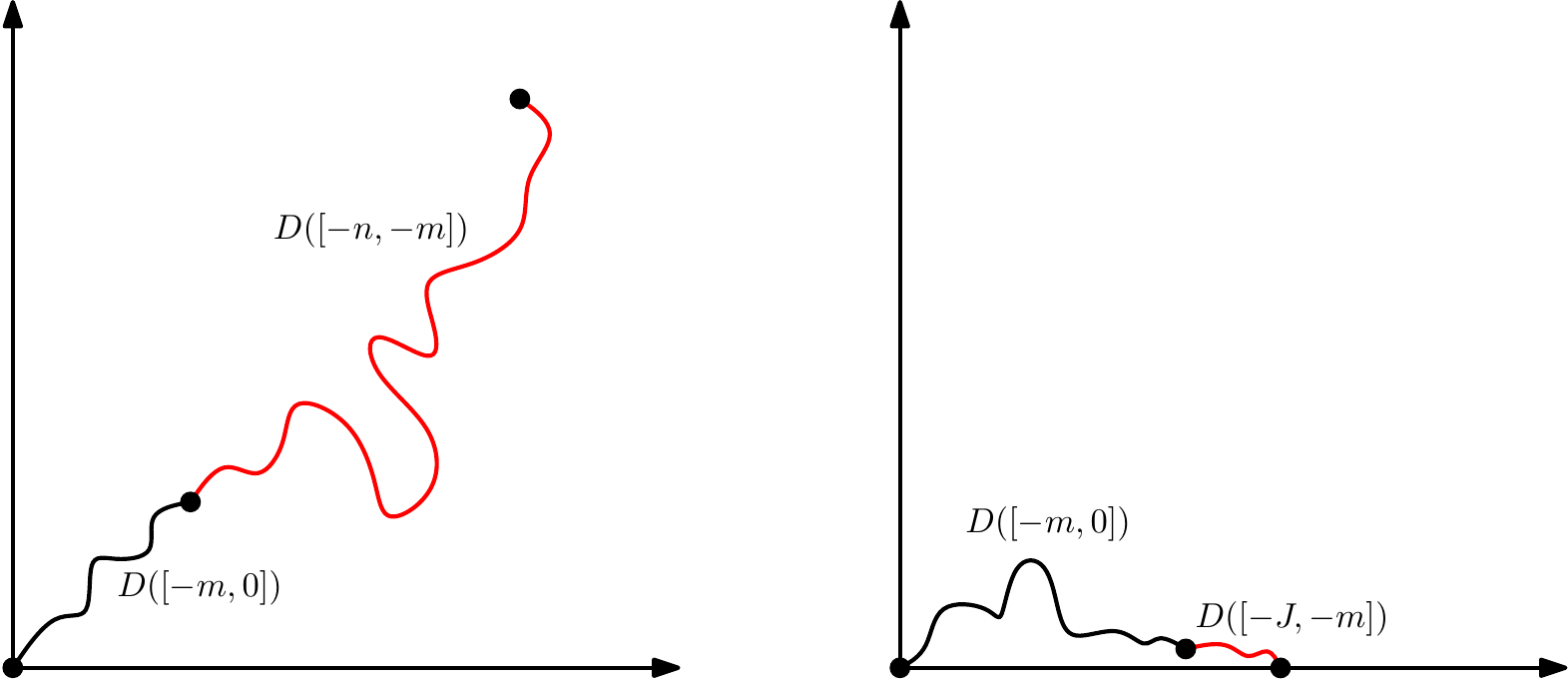} 
\caption{An illustration of the main ideas of the proof of Proposition~\ref{prop-F-regularity}. Fix $\delta > 0$ and suppose $m < n \in\BB N$ with $m \geq \delta n$. Left figure: suppose the event $E_m(\ep)$ occurs, i.e.\ the path $D$ (defined as in~\eqref{eqn-discrete-paths}) is at uniformly positive distance from the boundary of the first quadrant at time $m$. By Lemma~\ref{prop-Z-cond-limit}, if $m$ is very large then it holds with uniformly positive probability that $J>n$ and $E_n(\ep)$ occurs, i.e.\ $D$ stays in the first quadrant until time $n$ and ends up at uniformly positive distance away from the boundary. Right figure: if $E_m(\ep)$ fails to occur and $n$ is very large, then it is unlikely that $J>n$. Hence if we are given an $m$-independent lower bound for $a_m(\ep)$ for some $m\in\BB N$ and $\ep  >0$, then Bayes' rule and an induction argument imply that $a_n(\ep)$ is close to 1 for $n > 2m$, say. We prove the existence of arbitrarily large values of $m$ for which $a_m(\ep)$ is uniformly positive in Section~\ref{sec-F-reg-forward}.} \label{fig-F-regularity}
\end{center}
\end{figure}

\subsection{Regularity along a subsequence} 
\label{sec-F-reg-forward}

In this section we will prove the following result, which is a much weaker version of Proposition~\ref{prop-F-regularity}.  

\begin{lem}  \label{prop-F-regularity-subsequence}
In the notation of~\eqref{eqn-a-def}, there is a $\ep_0 > 0$ and a $q_0 \in (0,1)$ such that for $\ep \in (0,\ep_0]$ there exists a sequence of positive integers $m_j \rta \infty$ (depending on $\ep$) such that for each $j\in\BB N$, 
\eqb \label{eqn-subsequence-regularity} 
a_{m_j}(\ep) \geq  q_0 . 
\eqe 
\end{lem}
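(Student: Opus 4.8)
The plan is to prove Lemma~\ref{prop-F-regularity-subsequence} by switching to the forward-time word, using translation invariance to relate the conditional law given $\{J > m\}$ to the conditional law given $\{K_n = m+1\}$, and then extracting a good subsequence via an averaging/pigeonhole argument. Recall that $a_n(\ep) = \BB P(E_n(\ep) \mid J > n)$, where $E_n(\ep)$ is the event that $X(-n,-1)$ has no burgers and has at least $\ep n^{1/2}$ hamburger orders and at least $\ep n^{1/2}$ cheeseburger orders. By translation invariance of $X$, $\BB P(E_n(\ep) \mid J > n)$ equals the probability that, conditioned on $X(1,n)$ having no burgers, the reduced word $X(1,n)$ has at least $\ep n^{1/2}$ orders of each type. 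So it suffices to produce arbitrarily large $n$ for which this forward-time conditional probability is bounded below by a universal constant $q_0$.

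**First I would** set up the $K_n$ argument. Let $K_n$ be the last time $i \leq n$ for which $X(1,i)$ contains no burgers (this is well-defined and a.s.\ finite for large $n$ by Proposition~\ref{prop-infinite-F}, since there are infinitely many such times). For $m \in \BB N$, the conditional law of $X_1 \cdots X_m$ given $\{K_n = m+1\}$ depends only on the event that $X(1,m)$ contains no burgers (the constraint "$K_n = m+1$" is the conjunction of "$X(1,m)$ has no burgers", "$X_{m+1} \in \{\tc H, \tc C\}$ or the burger $X_{m+1}$ is never cancelled in $X(1,n)$", etc., but the part that touches $X_1 \cdots X_m$ is exactly "$X(1,m)$ has no burgers"). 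Hence
\eqbn
\BB P\left( \mcl N_{\tb H}(X(1,m)) \geq \ep m^{1/2} ,\ \mcl N_{\tb C}(X(1,m)) \geq \ep m^{1/2} \,\big|\, K_n = m+1 \right) = a_m(\ep) .
\eqen
Then I would show: there exist $\ep_0 > 0$ and $c_0 > 0$ such that for all large $n$, $\BB P\big( \mcl N_{\tb H}(X(1,K_n)) \geq \ep_0 n^{1/2} ,\ \mcl N_{\tb C}(X(1,K_n)) \geq \ep_0 n^{1/2} \big) \geq c_0$. This is the crux and follows from the scaling limit (Theorem~\ref{prop-burger-limit}) together with a re-rooting/stationarity argument: $K_n/n$ and the value $D(K_n)$ should, after rescaling by $n^{-1/2}$, have a nondegenerate limit supported away from the origin, because $Z$ a.s.\ has $K := \sup\{ t \le 1 : Z|_{[0,t]} \text{ stays in the first quadrant} \} > 0$ with $Z(K)$ a.s.\ not at the origin (one uses that the walk must actually leave the first quadrant near $K_n$, hence $D$ is genuinely inside the quadrant just before, and an order-$n^{1/2}$ amount of the ``margin'' persists).

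**The averaging step** then finishes: writing $p_n := \BB P\big(\mcl N_{\tb H}(X(1,K_n)) \geq \ep_0 n^{1/2},\ \mcl N_{\tb C}(X(1,K_n)) \geq \ep_0 n^{1/2}\big)$, decompose over the value of $K_n$,
\eqbn
p_n = \sum_{m=0}^{n-1} \BB P(K_n = m+1)\, \BB P\left( \mcl N_{\tb H}(X(1,m)) \geq \ep_0 n^{1/2},\ \mcl N_{\tb C}(X(1,m)) \geq \ep_0 n^{1/2} \,\big|\, K_n = m+1 \right) .
\eqen
Since $m \le n$, the inner probability is $\le a_m(\ep_0)$ once $\ep_0 n^{1/2} \ge \ep_0 m^{1/2}$, which always holds; so $c_0 \le p_n \le \sum_{m=0}^{n-1} \BB P(K_n = m+1)\, a_m(\ep_0)$, i.e.\ $a_{K_n}(\ep_0)$ is a weighted average that is at least $c_0$ in expectation, which forces $\BB P\big(a_{K_n}(\ep_0) \ge c_0/2\big) \ge c_0/2$ and in particular the set $\{ m : a_m(\ep_0) \ge c_0/2 \text{ and } m \text{ is a possible value of } K_n - 1 \text{ for arbitrarily large } n \}$ is unbounded. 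Taking $q_0 := c_0/2$ and extracting such $m$'s for each $n$ yields the desired sequence $m_j \to \infty$; monotonicity considerations let one pass from $\ep_0$ to any $\ep \in (0,\ep_0]$ (decreasing $\ep$ only makes $a_m(\ep)$ larger). **The main obstacle** I expect is the quantitative lower bound $p_n \ge c_0$: one must rule out the possibility that $K_n$ is typically $o(n)$ or that $D(K_n)$ is typically $o(n^{1/2})$ from the quadrant boundary, which requires a careful use of the invariance principle of Theorem~\ref{prop-burger-limit} at the random time $K_n$ — handling the random time is the delicate point, and here Lemma~\ref{prop-few-F} (controlling the number of flexible orders) and the continuity properties of Brownian motion are the tools I would lean on.
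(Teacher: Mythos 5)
Your overall architecture is the same as the paper's: pass to the forward word, introduce the last ``renewal'' time $K_n$, observe that the conditional law of the initial segment given $\{K_n = m+1\}$ is governed solely by the event that $X(1,m)$ contains no burgers, and then average over the value of $K_n$ to extract a sparse sequence $m_j$. The conditioning identity and the pigeonhole step at the end are fine (modulo an off-by-one in how you index $K_n$, and the fact that one should also invoke Lemma~\ref{prop-few-F} to pass between counts of $\tb H$/$\tb C$ orders and the coordinates of $D$).

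However, there is a genuine gap at what you yourself identify as the crux: the uniform lower bound $p_n \geq c_0$. You propose to get it from the invariance principle applied at the random time $K_n$, asserting that $n^{-1}K_n$ and $n^{-1/2}D(K_n)$ have a nondegenerate limit because ``$Z$ a.s.\ has $K := \sup\{t\le 1 : Z|_{[0,t]} \text{ stays in the first quadrant}\} > 0$.'' First, that continuum statement is false as written: a correlated Brownian motion started at the origin exits the first quadrant immediately, so $K=0$ a.s.\ (the relevant continuum object is rather the last $\pi/2$-cone-type time, i.e.\ the last $t$ with $Z(s)\ge Z(t)$ coordinatewise for $s\le t$). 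Second, and more fundamentally, even with the correct continuum statement the transfer to the discrete walk does not follow from Theorem~\ref{prop-burger-limit}: the event ``$X(1,i)$ contains no burgers'' is an \emph{exact} simultaneous-running-minimum condition on $D$, and the existence and location of such exact cone points is not a continuous functional of the path in the uniform topology. The invariance principle only yields times at which the walk is within $\ep n^{1/2}$ of satisfying the cone condition, and one still has to ``clear out'' the remaining $\ep n^{1/2}$ burgers --- this is precisely the obstruction the paper flags in the discussion following Proposition~\ref{prop-late-F}. The paper circumvents it with a combinatorial translation-invariance averaging argument over $\tb F$-excursions (Lemma~\ref{prop-F-symmetry}), which shows that with uniformly positive probability the match $\phi(K_n)$ reaches back to time $\le -\wt\ep\, K_n$, and then exploits the independence of $X_1X_2\cdots$ from $\cdots X_{-1}X_0$ (Lemma~\ref{prop-K-regularity}): the $\gtrsim K_n^{1/2}$ burgers of each type produced in $X(\phi(K_n),0)$ must be consumed by orders in $X(1,K_n)$, which forces the required order counts. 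Some argument of this combinatorial type, rather than a direct appeal to the scaling limit at the random time $K_n$, is needed to close your proof.
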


For the proof of Lemma~\ref{prop-F-regularity-subsequence}, we first need a result to the effect that the $\tb F$-excursions around 0, i.e.\ the discrete interval $[\phi(i) , i]_{\BB Z}$ containing 0 with $X_i = \tb F$, have uniformly positive probability to have a positive fraction of their length on the left side of 0.

\begin{figure}
 \begin{center}
\includegraphics{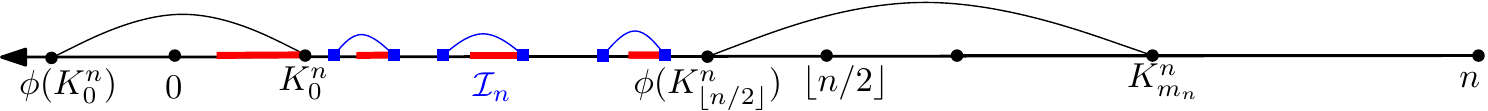} 
\caption{An illustration of the proof of Lemma~\ref{prop-F-symmetry} in the case when the event $Q_n$ occurs and $K_0^n < \phi(K_{\lfloor n/2\rfloor}^n)$.
Each pair of times corresponding to a flexible order and its match are joined by an arc. Intervals belonging to $\mcl I_n$ are shown in blue with square endpoints. Points shown in red are those for which we know $A_i^n(\ep)$ occurs. If we make $\ep > 0$ small enough, the red points occupy a uniformly positive fraction of $[0,n]$ with uniformly positive probability. Since $\BB P(A_i^n(\ep))$ does not depend on $i$, this yields a lower bound for $\BB P(A_i^n(\ep))$. On the other hand, if $Q_n$ occurs and $K_0^n \geq \phi(K_{\lfloor n/2\rfloor}^n)$ then $A_{\lfloor n/2\rfloor}^n(\ep)$ must occur, so we get a lower bound for $\BB P(A_{\lfloor n/2\rfloor}^n(\ep))$. We then get a lower bound for $\BB P(A_n(\ep)) = \BB P(A_0^n(\ep))$ using translation invariance.
} \label{fig-F-symmetry}
\end{center}
\end{figure}

\begin{lem} \label{prop-F-symmetry}
For $n\in\BB N$, let $K_n$ be the largest $i\in[1, n]_{\BB Z}$ for which $X_i = \tb F$ and $\phi(i) \leq 0$ (or $K_n = 0$ if no such $k$ exists). For $\ep \geq 0$, let $A_n(\ep)$ be the event that $K_n \not=0$ and $K_n \leq (1-\ep) (K_n - \phi(K_n))$. There exists $\ep_0 > 0$, $n_0\in\BB N$, and $q_0 \in (0,1/3)$ such that for each $\ep \in (0,\ep_0]$ and $n\geq n_0$, 
\eqbn
\BB P\left(   A_n(\ep) \right)  \geq 3 q_0 .
\eqen
\end{lem}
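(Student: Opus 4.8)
The plan is to combine translation invariance of the law of $X$ with a counting argument over reference points. For $i\in\BB Z$ and $\ep\geq 0$ let $A_n^{(i)}(\ep)$ be the event obtained from $A_n(\ep)$ by shifting the origin to $i$ --- so $K_n$ is replaced by the largest $k\in [i+1,i+n]_{\BB Z}$ with $X_k=\tb F$ and $\phi(k)\leq i$, and the condition becomes $K_n^{(i)}-i\leq (1-\ep)(K_n^{(i)}-\phi(K_n^{(i)}))$. Since $(X_j)_{j\in\BB Z}$ is i.i.d.\ we have $\BB P(A_n^{(i)}(\ep))=\BB P(A_n(\ep))$ for every $i$, hence $n\BB P(A_n(\ep))=\sum_{i=0}^{n-1}\BB P(A_n^{(i)}(\ep))\geq \BB E\big[\#\{i\in[0,n-1]_{\BB Z}:A_n^{(i)}(\ep)\text{ holds}\}\big]$. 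It therefore suffices to exhibit a constant $\beta>0$ and an event $Q_n$, with $\BB P(Q_n)\succeq 1$ uniformly in $n$ for $n$ large, on which at least $\beta n$ of the integers $i\in[0,n-1]_{\BB Z}$ satisfy $A_n^{(i)}(\ep)$, once $\ep$ is small. The geometric input is that $\tb F$-excursions $[\phi(k),k]_{\BB Z}$ containing a common point are totally ordered by inclusion, from which one checks: if $i\in[0,n-1]_{\BB Z}$ lies in a maximal $\tb F$-excursion contained in $[-n,2n]_{\BB Z}$ and is at distance at least $\ep$ times its length from its left endpoint, then $A_n^{(i)}(\ep)$ holds provided $\ep<1/2$. (If $K_n^{(i)}$ equals the right endpoint of that excursion the balance inequality is immediate; otherwise $K_n^{(i)}$ is the right endpoint of a strictly larger $\tb F$-excursion around $i$, which by maximality is not contained in $[-n,2n]_{\BB Z}$ and so has left endpoint $<-n$, whence $i-\phi(K_n^{(i)})>n\geq K_n^{(i)}-i$ and the inequality again holds.) Consequently $\#\{i\in[0,n-1]_{\BB Z}:A_n^{(i)}(\ep)\text{ holds}\}$ is at least $(1-\ep)$ times the total length of the maximal $\tb F$-excursions contained in $[-n,2n]_{\BB Z}$ that meet $[0,n-1]_{\BB Z}$.

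This reduces the lemma to the statement: \emph{with probability bounded below uniformly in $n$, the maximal $\tb F$-excursions contained in $[-n,2n]_{\BB Z}$ meeting $[0,n]_{\BB Z}$ have total length $\succeq n$}. This is the main obstacle --- a weak, positive-probability version of ``macroscopic $\tb F$-excursions exist'', the phenomenon that makes Proposition~\ref{prop-late-F} delicate; the point is that here we need only a soft version and can avoid the machinery of the later sections. To build $Q_n$ I would use the renewal structure already exploited in the proof of Proposition~\ref{prop-infinite-F}: let $K_1<K_2<\dots$ enumerate the times $i$ for which $X(1,i)$ contains no burgers, which form a renewal process, and recall from Lemma~\ref{prop-X-asymp'} that $\BB P(K_1>j)\geq j^{-\mu'+o_j(1)}$ with $\mu'<1$, so the expected number of renewal times in $[1,n]_{\BB Z}$ tends to $\infty$. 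A second-moment / elementary renewal estimate then gives, with probability bounded below, a renewal time $K_m\in[n/4,n/2]_{\BB Z}$; on the further independent event $\{X_{K_m+1}=\tb F\}$ of probability $p/2$, the symbol $X_{K_m+1}$ is matched to a burger at or before position $0$, so $[\phi(K_m+1),K_m+1]_{\BB Z}$ is a genuine $\tb F$-excursion straddling $0$ of length $\geq K_m+1\succeq n$. Intersecting with an event controlled via Theorem~\ref{prop-burger-limit} together with the exact identification of $\{X(-j,-1)$ has no burgers$\}$ with $\{Z^n$ run backward stays in the first quadrant$\}$ --- used to guarantee the burger stack to the left of $0$ is not too shallow --- forces $\phi(K_m+1)\preceq -n$, so this single excursion has length $\asymp n$ and supplies good reference points filling a positive fraction of $[0,n-1]_{\BB Z}$.

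Finally there are two bookkeeping points. First, $A_n(\ep)$ refers to the \emph{rightmost} $\tb F$-excursion around $0$ inside $[1,n]_{\BB Z}$ rather than to a maximal one; I would handle this by splitting on whether $K_0^n<\phi(K_{\lfloor n/2\rfloor}^n)$ or $K_0^n\geq\phi(K_{\lfloor n/2\rfloor}^n)$, with $K_{\lfloor n/2\rfloor}^n$ the analogous quantity based at $\lfloor n/2\rfloor$. In the first case the excursion of $K_0^n$ is disjoint from $\lfloor n/2\rfloor$ and the counting above, run over $i\in[0,n-1]_{\BB Z}$, applies cleanly; in the second case the nesting of $\tb F$-excursions around $0$ forces the relevant excursion to be large enough that $A_{\lfloor n/2\rfloor}^n(\ep)$ occurs outright, which again bounds $\BB P(A_n(\ep))$ from below since $\BB P(A_{\lfloor n/2\rfloor}^n(\ep))=\BB P(A_n(\ep))$ by translation invariance. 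Second, one fixes $\ep_0\leq 1/2$ (controlling the $\ep$ in the balance condition), $n_0$ (so the renewal estimate and Theorem~\ref{prop-burger-limit} have taken effect), and sets $3q_0:=\beta\cdot\inf_{n\geq n_0}\BB P(Q_n)>0$. The step I expect to require genuine care is the construction of $Q_n$ and the uniform lower bound $\BB P(Q_n)\succeq 1$ --- honestly producing a macroscopic $\tb F$-excursion with uniformly positive probability, rather than merely a near-$\pi/2$-cone time of $Z^n$; once that is in hand, the remainder is manipulation of the nesting structure of excursions and of translation invariance.
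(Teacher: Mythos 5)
Your reduction — translation invariance plus counting reference points that sit at relative depth at least $\ep$ inside their covering $\tb F$-excursion — is the same skeleton as the paper's proof, and the splitting on $K_0^n$ versus $\phi(K^n_{\lfloor n/2\rfloor})$ at the end also matches. The problem is the step you yourself flag as the main obstacle. You reduce to ``the maximal $\tb F$-excursions in a window have total length $\succeq n$ with uniformly positive probability,'' and then propose to prove this by exhibiting a \emph{single} macroscopic excursion: find a renewal time $K_m\in[n/4,n/2]$ via a second-moment argument, note $X_{K_m+1}=\tb F$ with probability $p/2$, and force $\phi(K_m+1)\preceq -n$ by a stack-depth argument. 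This route is circular at two points. First, the Paley--Zygmund argument for a renewal time in $[n/4,n/2]$ needs more than the lower bound $\BB P(X(1,i)\text{ has no burgers})\geq i^{-\mu+o_i(1)}$ of Lemma~\ref{prop-X-asymp} (note also that Lemma~\ref{prop-X-asymp'} gives the tail lower bound for $K_1$, which controls \emph{sparsity}, not density); to make the second moment comparable to the square of the first you need a matching comparison estimate such as Lemma~\ref{prop-J-compare}, and that lemma sits downstream of Proposition~\ref{prop-F-regularity}, hence of the very lemma you are proving. Second, to force $\phi(K_m+1)\preceq -n$ you need $X(1,K_m)$ to contain $\succeq\sqrt n$ orders of \emph{each} type: the flexible order at $K_m+1$ consumes only the topmost burger on the stack, so if, say, $X(1,K_m)$ reduces to hamburger orders only, the topmost surviving burger can be a cheeseburger at depth $O(1)$ and the excursion is microscopic. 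The two-sided order count conditioned on the no-burger event is exactly the content of Lemmas~\ref{prop-F-regularity-subsequence} and~\ref{prop-K-regularity}, which the paper \emph{derives from} Lemma~\ref{prop-F-symmetry}.

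The missing idea is that no macroscopic excursion is needed. Proposition~\ref{prop-infinite-F} (which rests only on Lemma~\ref{prop-X-asymp}) already gives $\BB P(K_i^n\neq i)\rta 1$ uniformly in $i$, i.e.\ with high probability each reference point $i$ is covered by \emph{some} excursion $[\phi(K_i^n),K_i^n]_{\BB Z}$; since the maximal such excursions are disjoint, their total length is automatically $\approx n/2$ in expectation — they may all be tiny, but they cover almost everything, and that is all your $(1-\ep)$-fraction count requires. What the paper's event $Q_n$ adds via Theorem~\ref{prop-burger-limit} is only the containment control (your ``Case B'': an excursion whose right endpoint exceeds $i+n$, for which $K_n^{(i)}$ is a strictly smaller nested excursion and your balance inequality can fail), plus the bound on the right boundary term $\lfloor n/2\rfloor-\phi(K^n_{\lfloor n/2\rfloor})$. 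So the lemma closes with soft inputs already available at this point; replacing your second paragraph with the coverage argument from Proposition~\ref{prop-infinite-F} both removes the circularity and simplifies the proof.
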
 
\begin{proof}
The idea of the proof is as follow. We look at a carefully chosen collection of disjoint discrete intervals $I = [\phi(j) ,  j]_{\BB Z}$ with $X_j = \tb F$. We will choose these intervals in such a way that for each such interval $I$, the event $A_n(\ep)$ occurs (with $i$ rather than 0 playing the role of the starting point of the word $X$) whenever $i \in I$ with $i \geq \ep(j-\phi(j)) + \phi(j)$ (i.e., for ``most" points of $I$). We then use translation invariance to conclude the statement of the lemma. See Figure~\ref{fig-F-symmetry} for an illustration.

For $n\in\BB N$ and $i\in\BB Z$, let $K_i^n$ be the largest $j\in [i+1, i+n]_{\BB Z}$ for which $X_j = \tb F$ and $\phi(j) \leq i$ (if such a $j$ exists) and otherwise let $K_i^n  = i$. For $\ep \geq 0$, let $A_i^n(\ep)$ be the event that $K_i^n \not=i$ and $K_i^n - i \leq (1- \ep)(K_i^n - \phi(K_i^n))$, so in particular $A_i^n(0) = \{K_i^n \not=i\}$. Note that $A_0^n(\ep) = A_n(\ep)$, and on the event $A_n(0)$ we have $K_n = K_0^n$. By translation invariance,
\eqb \label{eqn-A-translate}
\BB P\left(A_i^n(\ep) \right) = \BB P\left(A_n(\ep)\right) ,\qquad \forall i \in\BB Z,\qquad \forall \ep \geq 0.
\eqe
 
Let $Q_n$ be the event that the following is true (using the re-scaled discrete paths from~\eqref{eqn-Z^n-def}).
\begin{enumerate} 
\item For each $t\in   [1,2]$ we have (in the notation~\eqref{eqn-Z^n-def}) either $U^n(t) \geq U^n(1/2) + 1$ or $V^n(t) \geq V^n(1/2)+1$. \label{item-shift-event1,2}
\item For each $t\in [1/2 , 1]$, either $U^n(t) \geq U^n(0)+1$ or $V^n(t) \geq V^n(0)+1$. \label{item-shift-event1/2,1}
\item The events $A_0^n(0)$ and $A_{\lfloor n/2 \rfloor}^n(0)$ occurs. \label{item-shift-event-F}
\end{enumerate}
 By \cite[Theorem 2.5]{shef-burger} (to deal with the first two conditions) and Proposition~\ref{prop-infinite-F} (to deal with condition~\ref{item-shift-event-F}), there exists $\wt q_0 \in(0,1)$ and $\wt n_0 \in \BB N $ such that for $n\geq \wt n_0$, we have $\BB P\left(Q_n\right) \geq \wt q_0$. We observe that for each $i\in \BB Z$, $n^{-1} (K_i^n-1)$ is a $\pi/2$-cone time for $Z^n$ (Definition~\ref{def-cone-time}) with $v_{Z^n}(n^{-1}(K_i^n-1) ) \leq n^{-1} i$. Consequently, condition~\ref{item-shift-event1,2} in the definition of $Q_n$ implies $K_i^n \leq n$ for each $i\in [1,  n/2]_{\BB Z}$. Similarly, condition~\ref{item-shift-event1/2,1} in the definition of $Q_n$ implies $K_0^n < \lfloor n/2 \rfloor$. 

We claim that on $Q_n$, each $i\in [1,  K_0^n ]_{\BB Z}$ satisfies $K_i^n = K_0^n$. Since $K_i^n \leq n$, it follows from maximality of $K_i^n$ that either $K_i^n = K_0^n$ or $\phi(K_i^n) > 0$. Since two distinct discrete intervals between a $\tb F$ and its match are either nested or disjoint, if $\phi(K_i^n) >0$, then $[\phi(K_i^n) ,   K_i^n]_{\BB Z} \subset (\phi(K_0^n) ,  K_0^n)_{\BB Z}$, which contradicts maximality of $K_i^n$. Therefore we in fact have $K_i^n = K_0^n$. 

We next claim that on $Q_n$, we have $[\phi(K_i^n) , K_i^n]_{\BB Z}\subset  [K_0^n  +1 ,  \phi( K_{\lfloor n/2 \rfloor}^n)-1]_{\BB Z}$ for each $i \in [K_0^n +1 ,  \phi( K_{\lfloor n/2 \rfloor}^n)-1]_{\BB Z}$. 
Indeed, on $Q_n$ both $K_i^n$ and $K_{\lfloor n/2 \rfloor}^n$ are at most $n$, so if $K_i^n > K_{\lfloor n/2 \rfloor}^n$ then since $\phi(K_i^n) \leq i  < \lfloor n/2 \rfloor$, we contradict maximality of $K_{\lfloor n/2 \rfloor}^n$. 
Hence either $K_i^n \in [\phi(K_{\lfloor n/2 \rfloor}^n) , K_{\lfloor n/2 \rfloor}^n]_{\BB Z}$ or $K_i^n \in [i ,  \phi( K_{\lfloor n/2 \rfloor}^n)-1]_{\BB Z}$. The former case is impossible since two distinct discrete intervals between a $\tb F$ and its match are either nested or disjoint, so $K_i^n \in  [i ,  \phi( K_{\lfloor n/2 \rfloor}^n)-1]_{\BB Z}$. 
If $\phi(K_i^n) < 0$, then since $K_0^n + 1 \leq K_i^n \leq n$ we contradict maximality of $K_0^n$. 
Hence we must have $\phi(K_i^n) \geq 0$ so since distinct discrete intervals between a $\tb F$ and its match are either nested or disjoint, we have $\phi(K_i^n) \geq K_0^n +1$. 
 
Let $\mcl I_n$ be the set of maximal $\tb F$-intervals in $[K_0^n  +1 ,  \phi( K_{\lfloor n/2 \rfloor}^n)-1]_{\BB Z}$, i.e.\ the set of discrete intervals $I = [\phi(j) ,   j]_{\BB Z} \subset [K_0^n+1 ,  \phi( K_{\lfloor n/2 \rfloor}^n)-1 ]_{\BB Z}$ with $X_j = \tb F$ which are not contained in any larger such discrete interval. Note that we might have $\phi(K_{\lfloor n/2 \rfloor}^n) < K_0^n$, in which case $\mcl I_n$ is empty. 
For $I = [\phi(j) , j]_{\BB Z} \in \mcl I_n$, we write $|I| = j - \phi(j)$. 
 
We claim that if $Q_n$ occurs and $i \in [\phi(j)  , j -1]_{\BB Z}$ for some $I = [\phi(j) ,  j]_{\BB Z} \in\mcl I_n $, then $K_i^n = j$ (so in particular $A_i^n(0)$ occurs). Indeed, we have $[\phi(K_i^n)  , K_i^n]_{\BB Z}\subset  [K_0^n  +1 ,  \phi( K_{\lfloor n/2 \rfloor}^n)-1]_{\BB Z}$ (by the argument above) and $i\in [\phi(j)   , j - 1]_{\BB Z}$, so the claim follows from maximality of $I$ and of $K_i^n$. Conversely, if $i \in [K_0^n +1  , \phi(K_{\lfloor n/2 \rfloor}^n ) -1]_{\BB Z}$ and $A_i^n(0)$ occurs, then $[\phi(K_i^n) , K_i^n]_{\BB Z} \in \mcl I_n$. Thus $\mcl I_n$ can alternatively be described as the set of discrete intervals $[\phi(K_i^n) , K_i^n]_{\BB Z}$ for $i\in [K_0^n+1 , \phi(K_{\lfloor n/2 \rfloor}^n) ]_{\BB Z}$.
Consequently, if $i\in [K_0^n  +1 ,  \phi( K_{\lfloor n/2 \rfloor}^n)-1]_{\BB Z}$ and $A_i^n(0)$ occurs, then $i \in [\phi(j)  , j -1]_{\BB Z}$ for some $I = [\phi(j) ,  j]_{\BB Z} \in\mcl I_n $. 
By splitting $[1,n/2]_{\BB Z}$ into the three intervals $[0,K_0^n]_{\BB Z}$, $[K_0^n  +1 ,  \phi( K_{\lfloor n/2 \rfloor}^n)-1]_{\BB Z}$, and $[\phi(K_{\lfloor n/2 \rfloor}^n) , n/2]_{\BB Z}$, we obtain 
\eqb \label{eqn-A(0)-sum}
\sum_{i=1}^{\lfloor n/2 \rfloor} \BB 1_{A_i^n(0)} \leq \sum_{I \in \mcl I_n } |I|  + K_0^n    + \lfloor n/2 \rfloor - (\phi(K_{\lfloor n/2 \rfloor}^n) \vee 0 ) .
\eqe 

On the other hand, if $i\in [\phi(j), j-1]_{\BB Z}$ for some $I =[\phi(j),j]_{\BB Z} \in \mcl I_n$ and $i \geq \ep (j-\phi(j)) + \phi(j)$, then since $K_i^n = j$, the event $A_i^n(\ep)$ occurs. As argued above, on $Q_n$ we have $K_i^n = K_0^n$ for each $i\in [1, K_0^n]_{\BB Z}$ so if $i\in [\ep K_0^n ,  K_0^n]_{\BB Z}$ then $A_i^n(\ep)$ occurs.
Therefore, on $Q_n$ we have
\eqb \label{eqn-A(ep)-sum}
\sum_{i=1}^{\lfloor n/2 \rfloor} \BB 1_{A_i^n(\ep)} \geq (1-\ep) \sum_{I \in \mcl I_n } |I|   + (1- \ep)  K_0^n  .
\eqe 
By Proposition~\ref{prop-infinite-F}, $\BB P\left(A_i^n(0) \right) \rta 1$ as $n\rta \infty$ (uniformly in $i$ by translation invariance) so for sufficiently large $n$ (depending only on $\wt q_0$),
\eqbn
\BB E\left( \BB 1_{  Q_n} \sum_{i=1}^{\lfloor n/2 \rfloor} \BB 1_{A_i^n(0)} \right)  = \sum_{i=1}^{\lfloor n/2 \rfloor} \BB P\left(A_i^n(0) \cap  Q_n\right)\geq \left(\BB P(Q_n) - o_n(1) \right)  \lfloor n/2 \rfloor    \geq  \frac{ \wt q_0 }{2}    \lfloor n/2 \rfloor . 
\eqen
By~\eqref{eqn-A(0)-sum},
\eqbn
\BB E\left( \BB 1_{ Q_n} \sum_{I \in \mcl I_n } |I| \right) + \BB E\left(\BB 1_{  Q_n}   K_0^n   \right) +  \BB E\left( \BB 1_{ Q_n}( \lfloor n/2 \rfloor - (\phi(K_{\lfloor n/2 \rfloor}^n) \vee 0 ) ) \right)  \geq  \frac{ \wt q_0 }{2}  \lfloor n/2 \rfloor  . 
\eqen
By~\eqref{eqn-A(ep)-sum},
\eqb \label{eqn-P(A(ep))-sum}
\BB E\left(\BB 1_{ Q_n} \sum_{i=1}^{\lfloor n/2 \rfloor} \BB 1_{A_i^n(\ep)} \right) \geq (1-\ep) \frac{ \wt q_0  }{2} \lfloor n/2 \rfloor - \BB E\left( \BB 1_{  Q_n}( \lfloor n/2 \rfloor - (\phi(K_{\lfloor n/2 \rfloor}^n) \vee 0 ) ) \right)  - \ep \BB E\left(\BB 1_{Q_n} K_0^n\right)   .
\eqe 

On the event $A_{\lfloor n/2 \rfloor}^n(\ep)^c$, we have $ \lfloor n/2 \rfloor - \phi(K_{\lfloor n/2 \rfloor}^n) \leq \ep n$. Therefore,
\alb
\BB E\left( \BB 1_{  Q_n}( \lfloor n/2 \rfloor - (\phi(K_{\lfloor n/2 \rfloor}^n) \vee 0 ) ) \right) &\leq  \lfloor n/2 \rfloor \BB P\left(A_{\lfloor n/2 \rfloor}^n(\ep) \cap   Q_n\right)  +  \ep n \BB P\left(A_{\lfloor n/2 \rfloor}^n(\ep)^c \cap   Q_n  \right)  \\
&\leq  \lfloor n/2 \rfloor \BB P\left(A_{\lfloor n/2 \rfloor}^n(\ep)  \right)  +  \ep n .
\ale
By definition of $K_0^n$, $\BB E\left(\BB 1_{Q_n} K_0^n\right) \leq  n$, so~\eqref{eqn-P(A(ep))-sum} implies that for sufficiently large $n$, 
\alb
\BB E\left(\BB 1_{  Q_n} \sum_{i=1}^{\lfloor n/2 \rfloor} \BB 1_{A_i^n(\ep)} \right)  +   \lfloor n/2 \rfloor \BB P\left(A_{\lfloor n/2 \rfloor}^n(\ep)  \right)      \geq   (1-\ep) \frac{ \wt q_0 }{2} \lfloor n/2 \rfloor  - 2\ep n .
\ale
By~\eqref{eqn-A-translate},  
\eqbn
(1 + \ep ) \lfloor n/2 \rfloor \BB P\left(A_n(\ep)\right) \geq (1- \ep) \frac{\wt q_0}{2}  \lfloor n/2 \rfloor  - 2\ep n .
\eqen
Re-arranging this inequality implies the statement of the lemma for appropriate $\ep_0 >0$, $q_0 \in(0,1/3)$, and $n_0\in\BB N$.  
\end{proof}

From Lemma~\ref{prop-F-symmetry}, we can extract a lower bound for the number of leftover hamburger orders and cheeseburger orders in the word $X(1,K_n)$.

\begin{lem}\label{prop-K-regularity}
Let $K_n$ be defined as in the statement of Lemma~\ref{prop-F-symmetry}. For $\ep  >0$, let $G_n(\ep)$ be the event that $X(1,K_n)$ contains at least $\ep \sqrt{K_n}$ hamburger orders and at least $\ep \sqrt{K_n}$ cheeseburger orders. Let $q_0$ be as in Lemma~\ref{prop-F-symmetry}. There exists $\ep_0 > 0$ and $n_0\in\BB N$ (depending only on $q_0$) such that for $\ep \in (0,\ep_0]$ and $n\geq n_0$, 
\eqbn
\BB P\left(  G_n(\ep) \right)  \geq 2q_0 .
\eqen
\end{lem}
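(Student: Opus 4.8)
The plan is to derive this from Lemma~\ref{prop-F-symmetry} via a combinatorial description of $X(1,K_n)$ together with the scaling limit of \cite{shef-burger}. Since $G_n(\ep)$ increases as $\ep$ decreases, it is enough to exhibit a single $\ep_0 > 0$ (and $n_0$) with $\BB P(G_n(\ep_0)) \geq 2q_0$ for $n\geq n_0$; by Lemma~\ref{prop-F-symmetry} we have $\BB P(A_n(\ep)) \geq 3q_0$ for $\ep\leq\ep_0'$ and $n$ large, so the goal is to show that $A_n(\ep) \setminus G_n(\ep_0)$ has probability at most $q_0$ for a suitable $\ep_0 = \ep_0(\ep)$.

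First I would record a combinatorial identity. On $\{K_n \neq 0\}$ the reduced word $X(1,K_n)$ contains no burgers: $X(\phi(K_n),K_n)$ contains no burgers (being the reduced word of the $\tb F$-excursion $[\phi(K_n),K_n]_{\BB Z}$), and since $X(\phi(K_n),K_n) = \mcl R\big(X(\phi(K_n),0)\,X(1,K_n)\big)$, any leftover burger of $X(1,K_n)$ --- which lies to the right of all leftover orders of $X(1,K_n)$ --- would survive in $X(\phi(K_n),K_n)$, a contradiction. Hence both burger stacks built from $X_{\phi(K_n)}\cdots X_{K_n}$ are empty at time $K_n$, so $d(K_n) = \min_{\phi(K_n)\leq j\leq K_n} d(j)$ and $d^*(K_n) = \min_{\phi(K_n)\leq j\leq K_n} d^*(j)$; moreover the number of hamburger-type orders (the $\tb H$'s together with the $\tb F$'s matched to an $\tc H$) in $X(1,K_n)$ equals $-d(K_n)$, and the number of cheeseburger-type orders equals $-d^*(K_n)$. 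Writing $m := -\phi(K_n)$ and $D^\leftarrow(l) := D(-l) \eqD D(l)$ for the reversed walk, and using $d(K_n) \leq \min_{\phi(K_n)\leq j\leq 0} d(j)$, we obtain $\mcl N_{\tb H}(X(1,K_n)) \geq \max_{0\leq l\leq m}\big(-d^\leftarrow(l)\big) - \mcl N_{\tb F}(X(1,K_n))$, and likewise for $\mcl N_{\tb C}$ with $d^{*\leftarrow}$. Thus $G_n(\ep_0)$ holds whenever $\max_{0\leq l\leq m}(-d^\leftarrow(l)) \geq 2\ep_0\sqrt{K_n}$, $\max_{0\leq l\leq m}(-d^{*\leftarrow}(l)) \geq 2\ep_0\sqrt{K_n}$, and $\mcl N_{\tb F}(X(1,K_n)) \leq \ep_0\sqrt{K_n}$, and it remains to bound the probability that $A_n(\ep)$ holds but one of these three conditions fails.

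The three pieces are handled as follows. \emph{(a) Lower bound on $K_n$.} By Proposition~\ref{prop-infinite-F} the word $X$ a.s.\ has infinitely many $\tb F$-symbols added to the stack immediately after a burger-free prefix, each necessarily matched to a non-positive index, so there are infinitely many ``$\tb F$-excursions around $0$'' and $K_n\rta\infty$ a.s.; feeding the regularly varying tail bound $\BB P(K_1 > n)\geq n^{-\mu'+o_n(1)}$ from Lemma~\ref{prop-X-asymp'} (with $K_1$ the first burger-free prefix length, so that $\{K_1>n\}\supset R_n'(C)$) into renewal theory, I would upgrade this to: for every $\eta > 0$ there is $\epsilon > 0$ with $\limsup_n\BB P(K_n < \epsilon n) < \eta$. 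On $A_n(\ep)\cap\{K_n\geq\epsilon n\}$ one then has $m\geq \tfrac{\ep}{1-\ep}K_n \geq c_0 n$. \emph{(b) Flexible orders.} $\mcl N_{\tb F}(X(1,K_n))$ is at most the number of burger-free prefix lengths $\leq n$, which by Lemma~\ref{prop-few-renewal} is $n^{\mu'+o_n(1)} = o_n(n^{1/2})$ with probability $1 - o_n^\infty(n)$; combined with (a) this is $\leq \ep_0\sqrt{K_n}$ for $n$ large. \emph{(c) Walk dips.} On $A_n(\ep)\cap\{K_n\geq\epsilon n\}$ the bad event for the first inequality is contained in $\{\max_{0\leq l\leq c_0 n}(-d^\leftarrow(l)) < 2\ep_0 \sqrt n\}$, i.e.\ $\{\inf_{s\in[-c_0,0]} U^n(s) \geq -2\ep_0\}$ after rescaling; by Theorem~\ref{prop-burger-limit} and the Portmanteau theorem $\limsup_n\BB P(\inf_{s\in[-c_0,0]}U^n(s) \geq -2\ep_0) \leq \BB P(\inf_{s\in[-c_0,0]}U(s)\geq -2\ep_0) \rta 0$ as $\ep_0\rta 0$, and the same holds for $d^{*\leftarrow}$ and $V$. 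Choosing $\eta$, then $\epsilon$, then $\ep_0$ small gives $\BB P(A_n(\ep)\setminus G_n(\ep_0)) \leq q_0$ for $n$ large, completing the proof.

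The hard part will be step (a): the quantitative lower bound $\BB P(K_n < \epsilon n) < \eta$, controlling how far into the past the outermost $\tb F$-excursion around $0$ reaches, is not immediate from the scaling limit alone --- it is exactly the kind of statement that requires Proposition~\ref{prop-infinite-F} together with the regular-variation estimates of Section~\ref{sec-prob-estimates} and a renewal-theoretic (Dynkin--Lamperti type) argument, and it is a softer cousin of Proposition~\ref{prop-late-F}. One should also take care that the conditioning on $A_n(\ep)$ in step (c) is discharged only by \emph{dropping} $A_n(\ep)$ after extracting the inclusion above, so that the remaining probability is genuinely unconditional and amenable to Theorem~\ref{prop-burger-limit}.
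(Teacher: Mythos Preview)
Your proof has a genuine gap in step (a). You need $\BB P(K_n < \epsilon n) < \eta$ for small $\epsilon$, but observe that $\{K_n \geq \epsilon n\}$ is exactly the event $\mcl E_n(\epsilon)$ of Proposition~\ref{prop-late-F}: the largest $i \in [1,n]_{\BB Z}$ with $X_i = \tb F$ and $\phi(i) \leq 0$ exceeds $\epsilon n$ if and only if some such $i$ lies in $[\epsilon n, n]_{\BB Z}$. So step (a) is precisely the statement that $\liminf_n \BB P(\mcl E_n(\epsilon)) \to 1$ as $\epsilon \to 0$. As the paper emphasizes after Proposition~\ref{prop-late-F}, this is the main difficulty of the whole argument, and its proof (in Section~\ref{sec-reg-var}) uses regular variation of $J$, which in turn rests on Theorem~\ref{thm-no-burger-conv}, Proposition~\ref{prop-F-regularity}, Lemma~\ref{prop-F-regularity-subsequence}, and ultimately Lemma~\ref{prop-K-regularity} itself. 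Your proposed shortcut via Dynkin--Lamperti cannot be run with only the polynomial \emph{lower} bound $\BB P(K_1' > n) \geq n^{-\mu'+o_n(1)}$ from Lemma~\ref{prop-X-asymp'}: Dynkin--Lamperti requires regular variation (a matching upper bound, and at the correct exponent $\mu$ rather than $\mu'$), which is not available at this stage. You correctly flag (a) as ``the hard part'', but it is in fact circular here, not merely hard.

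The paper's proof sidesteps step (a) entirely by exploiting an independence that your approach discards. The key observation is that $K_n$, $G_n(\ep)$, and $F_{K_n}$ are all measurable with respect to the forward word $X_1 X_2 \dots$, hence independent of $\dots X_{-2} X_{-1}$. On $G_n(\ep)^c \cap F_{K_n}$, the reduced word $X(1,K_n)$ has at most $\ep K_n^{1/2} + K_n^\nu$ orders of one type, so $-\phi(K_n) \leq J_{K_n}^H(\ep) \vee J_{K_n}^C(\ep)$, where $J_k^\theta(\ep)$ is the first $j$ for which $X(-j,0)$ contains $\ep k^{1/2} + k^\nu + 1$ burgers of type $\theta$. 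Conditioning on the forward word (hence on $K_n$) and applying \cite[Theorem~2.5]{shef-burger} to the \emph{backward} word at scale $K_n$, one gets $\BB P(J_{K_n}^H(\ep) \vee J_{K_n}^C(\ep) \geq \wt\ep_0^2 K_n \mid K_n) \leq \alpha$ once $K_n \geq k_0$ for a \emph{fixed} constant $k_0 = k_0(\alpha,\ep,\wt\ep_0)$. The only control on $K_n$ needed is $\BB P(K_n \leq k_0) \to 0$, which is immediate from Proposition~\ref{prop-infinite-F}; no comparison of $K_n$ with $n$ is required. Combining with Lemma~\ref{prop-F-symmetry} (which gives $-\phi(K_n) \geq \wt\ep_0^2 K_n$ with probability $\geq 3q_0$) then yields $\BB P(G_n(\ep)^c) \leq 1 - 3q_0 + 3\alpha$. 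Your combinatorial identity and step (b) are morally the same ingredients the paper uses; the decisive difference is that the paper runs the Brownian approximation on the backward word at the random scale $K_n$, not at scale $n$, and this is what makes the argument non-circular.
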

\begin{proof}
The rough idea of the proof is as follows. By Lemma~\ref{prop-F-symmetry}, we know that for small enough $\wt \ep > 0$ we have $\phi(K_n) \leq -\wt \ep K_n$ with uniformly positive probability. By~\cite[Theorem 2.5]{shef-burger} (and since $X_1\dots X_{K_n}$ is independent from $\dots X_{-2} X_{-1}$), the word $X(-\wt \ep K_n , -1)$ is likely to contain at least of order $K_n^{1/2}$ burgers of each type. If this is the case and $X(1,K_n)$ contains too few burgers of either type, then $\phi(K_n)$ would have to be larger than $-\wt \ep K_n$. We now proceed wit the details. 

Let $\wt \ep_0 > 0$ and $\wt n_0 \in \BB N$ be chosen so that the conclusion of Lemma~\ref{prop-F-symmetry} holds (with $\wt \ep_0$ in place of $\ep_0$ and $\wt n_0$ in place of $n_0$). For $n\in\BB N$ let $ A_n(\wt\ep_0)$ be the event of that lemma (with $\ep = \wt\ep_0$). Then for $n\geq \wt n_0$, we have $\BB P\left(A_n(\wt\ep_0) \right) \geq 3q_0$. 

Fix $\alpha \in (0,1)$. Let $F_{K_n}$ be defined as in Section~\ref{sec-F-reg-setup} with $K_n$ in place of $n$ and $X(1,K_n)$ in place of $X(-K_n,-1)$. By Lemma~\ref{prop-few-F}, we can find $m \in \BB N$ such that the probability that there is even one $k\geq m$ such that $X(1,k)$ contains more than $k^\nu$ $\tb F$-symbols is at most $\alpha/2$. By Proposition~\ref{prop-infinite-F}, we can find $n_0' \geq \wt n_0$ such that for $n\geq n_0'$, we have $\BB P\left(K_n \geq m\right) \geq 1-\alpha/2$. For $n\geq n_0'$, we therefore have 
\eqb \label{eqn-K-few-F}
\BB P\left(F_{K_n}  \right) \geq 1 - \alpha .
\eqe 
For $\ep > 0$ and $k\in\BB N$, let $J_k^H(\ep)$ (resp. $J_k^C(\ep)$) be the smallest $j\in\BB N$ for which the word $X(-j,0)$ contains at least $\ep k^{1/2} + k^\nu + 1$ hamburgers (resp. cheeseburgers). By \cite[Theorem 2.5]{shef-burger}, the times $J_k^H(\ep)$ and $J_k^C(\ep)$ are typically of order $\ep^2 k  $. More precisely, we can find $\ep_0  \in (0,\wt\ep_0]$ and $k_0  \in \BB N$ such that for $k\geq k_0$ and $\ep \in (0,\ep_0]$, 
\eqbn
\BB P\left(J_k^H(\ep) \vee J_k^C(\ep) \geq  \wt\ep_0^2 k  \right) \leq \alpha .
\eqen
By Proposition~\ref{prop-infinite-F}, we can find $n_0  \geq n_0'$ such that for $n \geq n_0 $, we have $\BB P\left(K_n \leq   k_0\right) \leq  \alpha$. 

On the event $G_n(\ep)^c \cap F_{K_n}$, we have $-\phi(K_n) \leq J_{K_n}^H(\ep) \vee J_{K_n}^C(\ep)$. Since $G_n(\ep)^c \cap F_n \cap \{K_n \geq k_0\}$ is independent from $\dots X_{-2} X_{-1}$, it follows that for $n\geq n_0'$ we have
\alb
&\BB P\left(G_n(\ep)^c \cap F_{K_n} \cap \{-\phi(K_n) \geq \wt \ep_0^2 K_n    \}    \right) \\
 &\qquad \qquad \leq \BB P\left(K_n \leq   k_0 \right) + \BB E\left( \BB P\left(  G_n(\ep)^c \cap F_{K_n} \cap \{-\phi(K_n) \geq \wt \ep_0^2 K_n    \} \,|\, X_1 X_2 \dots        \right) \BB 1_{(K_n \geq   k_0)} \right) \\
 &\qquad \qquad \leq \alpha + \BB E\left( \BB P\left(J_{K_n}^H(\ep) \vee J_{K_n}^C(\ep) \geq \wt \ep_0^2 K_n \,|\, K_n \right) \BB 1_{(K_n \geq   k_0)} \right) 
 \leq 2\alpha .
\ale
By definition, on the event $A_n(\wt\ep_0)$ we have $-\phi(K_n) \geq \wt\ep_0^2 K_n$, so we have
\eqbn
\BB P\left(  -\phi(K_n) \geq \wt \ep_0^2 K_n  \right) \geq 3q_0 .
\eqen
Therefore,
\eqbn
\BB P\left(G_n(\ep)^c \cap F_{K_n} \right) \leq 1-3q_0 + 2\alpha .
\eqen
By combining this with~\eqref{eqn-K-few-F} we obtain
\eqbn
\BB P\left(G_n(\ep) \right) \geq 3q_0 - 3\alpha  .
\eqen
Since $\alpha$ is arbitrary this implies the statement of the lemma.
\end{proof}

\begin{proof}[Proof of Lemma~\ref{prop-F-regularity-subsequence}]
Let $q_0$ be as in Lemma~\ref{prop-F-symmetry}. For $n\in\BB N$, define the time $K_n$ as in Lemma~\ref{prop-F-symmetry}. Choose $\ep_0 > 0$ and $n_0\in\BB N$ such that the conclusion of Lemma~\ref{prop-K-regularity} holds, and fix $\ep \in (0,\ep_0]$. By Proposition~\ref{prop-infinite-F}, if we are given $j \in \BB N$, we can choose $n \geq n_0$ such that $\BB P\left( j+1 \leq K_n \leq n \right) \geq 1-q_0/2$. Henceforth fix such an $n$. Then with $G_n(\ep)$ as in the statement of Lemma~\ref{prop-K-regularity}, we have
\eqbn
\BB P\left(G_n(\ep ) \cap \{j+1 \leq K_n \leq n\}\right) \geq  \frac32 q_0 .
\eqen
We therefore have
\eqbn
 \frac32 q_0 \leq \sum_{k=j+1}^n \BB P\left(G_n(\ep) \,|\, K_n = k\right) \BB P\left(K_n = k\right) .
\eqen
Hence we can find some $m_j \in [j  , n-1]_{\BB Z}$ for which
\eqbn
\BB P\left(G_n(\ep) \,|\, K_n = m_j+1 \right) \geq  \frac32  q_0 .
\eqen
We can write $\{K_n = m_j+1\}$ as the intersection of the event that $X(1,m_j )$ contains no burgers; and the event that $X_{m_j+1} = \tb F$ and $\mcl N_{\tb F}\left( X(m_j+2,n) \right) = 0$. The latter event is independent of $X_1\dots X_{m_j}$, so the conditional law of $X_1\dots X_{m_j}$ given $\{K_n = m_j+1\}$ is the same as its conditional law given that $X(1,m_j)$ contains no burgers. The event $G_n(\ep) \cap \{K_n = m_j+1\}$ is the same as the event that $K_n = m_j+1$ and $X(1,m_j)$ contains at least $\ep (m_j+1)^{1/2}$ hamburger orders and at least $\ep (m_j+1)^{1/2}$ cheeseburger orders. By Lemma~\ref{prop-few-F} and translation invariance,~\eqref{eqn-subsequence-regularity} holds for this choice of $m_j$ (with a slightly smaller choice of $\ep$) provided $j$ is chosen sufficiently large. Since $m_j \geq j$ and $j \in \BB N$ was arbitrary, we conclude.
\end{proof}

\subsection{Conditioning on an initial segment of the word} 
\label{sec-F-reg-cond}

To state the main result of this subsection, we introduce the following notation for paths corresponding to sub-words of $X$ and their continuum analogues. 

\begin{notation} \label{def-Z-restrict}
For $t_1,t_2\in \BB R$ with $t_1 \leq t_2$, we write
\eqb \label{eqn-Z-restrict}
Z^n_{[t_1,t_2]} := \left(Z^n - Z^n\left(n^{-1}  \lfloor n t_2 -1 \rfloor  \right)\right) |_{\left[t_1, n^{-1}  \lfloor n t_2 -1 \rfloor \right]} \quad \op{and} \quad Z_{[t_1,t_2]}:=(Z  - Z (t_2)) |_{[t_1,t_2]} .
\eqe
We extend the definition of $Z^n_{[t_1,t_2]}$ to $[t_1,t_2]$ be defining it to be identically zero for $t \in \left[n^{-1}  \lfloor n t_2 -1 \rfloor , t_2\right]$. 
\end{notation}
 
The reason why we use $n^{-1} \lfloor n t_2 -1\rfloor$ instead of just $t_2$ in the definition of $Z^n_{[t_1,t_2]}$ is that this choice implies that $Z^n_{[t_1 , t_2]}$ is independent from the future word $X_{\lfloor t_2 n \rfloor} X_{\lfloor t_2 n \rfloor + 1} \dots$. 

In this subsection we will prove a lemma which allows us to estimate the conditional law of $Z^n_{[-1,-s]}$ for $s\in (0,1)$ given $\{J > n\}$ and a realization of $X_{-\lfloor n s \rfloor} \dots X_{-1}$.

\begin{lem} \label{prop-Z-cond-limit}
Fix $\lambda \in (0,1/2)$. For $n\in\BB N$ and $s \in [\lambda , 1-\lambda]$ define
\eqb \label{eqn-h-c}
h_n^s  :=  n^{-1/2} \mcl N_{\tb H}\left( X(-\lfloor sn \rfloor , -1)   \right)  \quad \op{and} \quad c_n^s := n^{-1/2}\mcl N_{\tb C}\left( X(-\lfloor sn \rfloor , -1)   \right)  .
\eqe 
For $\ep_1 , \ep_2  > 0$, let
\eqb \label{eqn-tilde-G^s-def}
\wt G^s(\ep_1 ,\ep_2) := \left\{ \inf_{t\in [-1,-s]} (U_t - U_{-s} ) \geq -s^{1/2} \ep_1 \right\} \cap \left\{ \inf_{t\in [-1,-s]} (V_t - V_{-s}) \geq -s^{1/2} \ep_2 \right\}.   
\eqe 

Suppose given $\ep > 0$ and $\alpha > 0$.
There exists $n_* \in\BB N$ and $\zeta >0$ (depending only on $\lambda$, $\ep$, and $\alpha$) such that the following holds. 
Suppose $n \geq n_*$; $s \in [\lambda , 1-\lambda]$; $\ep_1 , \ep_2 \geq \ep$; and $x$ is a realization of $X_{-\lfloor sn \rfloor} \dots X_{-1}$ for which $X(-\lfloor s n \rfloor , -1)$ contains no burgers, $|h_n^s - \ep_1| \leq \zeta$, $|c_n^s - \ep_2| \leq \zeta$, and $F_{\lfloor sn \rfloor}$ (as defined in Section~\ref{sec-F-reg-setup}) occurs. Then the Prokhorov distance (in the uniform metric) between the conditional law of $Z^n_{[-1,-s]}$ given $\{J > n\} \cap \{X_{-\lfloor s n \rfloor} \dots X_{-1} = x \}$ and the conditional law of $Z_{[-1,-s]}$ given $\wt G^s(\ep_1,\ep_2) $ is at most $\alpha$. Moreover, we can arrange that the same holds if we instead condition on $\{J > n\} \cap \{X_{-\lfloor s n \rfloor} \dots X_{-1} = x \}\cap F_n$. 
\end{lem}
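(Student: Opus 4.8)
The strategy is to condition on the word $X_{-\lfloor sn\rfloor}\dots X_{-1}=x$ and analyze what remains. Once we fix such an $x$ with $X(-\lfloor sn\rfloor,-1)$ containing no burgers, the reduced word $X(-\lfloor sn\rfloor,-1)$ consists entirely of orders, of which there are $h_n^s n^{1/2}$ hamburger orders and $c_n^s n^{1/2}$ cheeseburger orders (up to the $F_{\lfloor sn\rfloor}$-controlled contribution of flexible orders, which is at most $(sn)^\nu = o(n^{1/2})$ and so does not affect leading-order asymptotics). The event $\{J>n\}$ is then, conditionally on $x$, precisely the event that the word $X_{-n}\dots X_{-\lfloor sn\rfloor -1}$ (read in the order of decreasing index, i.e.\ backward) produces, together with the orders already sitting on the stack from $x$, no net burger: equivalently, the backward-run walk $D$ started from height $(h_n^s, c_n^s)n^{1/2}$ (in appropriately signed coordinates) stays in the first quadrant for the remaining $(1-s)n$ steps. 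This is a one-sided conditioning on a random-walk-like path with non-independent increments, but the increments of $X_{-n}\dots X_{-\lfloor sn\rfloor-1}$ are iid, so $Z^n_{[-1,-s]}$ conditioned on $\{J>n\}\cap\{X_{-\lfloor sn\rfloor}\dots X_{-1}=x\}$ is the scaled version of a walk conditioned to stay in a quadrant with a prescribed starting distance $\approx(\ep_1,\ep_2)s^{1/2}n^{1/2}$ from the boundary.

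**Reduction to a scaling limit / continuity statement.** I would then invoke Sheffield's scaling limit (Theorem~\ref{prop-burger-limit}, i.e.\ \cite[Theorem 2.5]{shef-burger}) together with \cite[Lemma 3.7]{shef-burger} (or our Lemma~\ref{prop-few-F}) to control the number of flexible orders, to say that under the unconditioned law, the pair $\big(h_n^s, c_n^s, Z^n_{[-1,-s]}\big)$ converges jointly to $\big(\text{(something determined by }Z_{[-1,-s]}),\ Z_{[-1,-s]}\big)$; more precisely, $n^{-1/2}\mcl N_{\tb H}(X(-\lfloor sn\rfloor,-1))$ converges to $-\inf_{t\in[-s,0]}(U_t - U_{-s})$ shifted appropriately, using that the running infimum of the $d$-coordinate gives minus the hamburger order count (this identity is used repeatedly in Sections~\ref{sec-prob-estimates} and following). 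The event $\{J>n\}$ becomes, in the limit, exactly $\wt G^s(\ep_1,\ep_2)$, an event of positive probability (positivity is needed and follows because a correlated BM with positive covariance has positive probability of staying in a quadrant — cf.\ Lemma~\ref{prop-bm-cone-asymp} and the absolute continuity in \eqref{eqn-f-density-def}). I would then argue convergence of the conditional laws by a standard argument: convergence of the joint laws plus the fact that the limiting conditioning event $\wt G^s(\ep_1,\ep_2)$ has a boundary of measure zero (for the correlated BM, the event of exactly touching the boundary at the infimum is a null event) and positive probability, which together give convergence of conditional laws in the Prokhorov metric, uniformly over $s\in[\lambda,1-\lambda]$ and over $(\ep_1,\ep_2)$ in a compact set bounded away from $0$. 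The uniformity in $s$ comes from the fact that the relevant Brownian functionals depend continuously (and the conditioned laws depend continuously) on $(s,\ep_1,\ep_2)$ on the compact parameter region, combined with a diagonal/subsequence argument to promote pointwise convergence to uniform convergence over the compact set; the choice of $\zeta$ and $n_*$ is then extracted from this uniform statement. This is exactly the kind of argument that produces the $n_*,\zeta$ depending only on $\lambda,\ep,\alpha$.

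**The extra conditioning on $F_n$.** For the last sentence ("Moreover, we can arrange that the same holds if we instead condition on $\{J>n\}\cap\{X_{-\lfloor sn\rfloor}\dots X_{-1}=x\}\cap F_n$"), I would note that $F_n$ is an event of probability $1 - o_n^\infty(n)$ by Lemma~\ref{prop-few-F}, and moreover it is "almost measurable with respect to the already-fixed portion plus a high-probability tail": conditioning on $F_n$ in addition changes the conditional law of $Z^n_{[-1,-s]}$ by an amount tending to $0$ in total variation, since on the event $\{J>n\}$ we have $X(-n,-1)$ contains no burgers so $\mcl N_{\tb F}(X(-n,-1)) = \mcl N_{\tb F}(X(-n,-1))$ is controlled by Lemma~\ref{prop-few-F} applied at scale $n$, making $\mathbb P(F_n^c \mid \{J>n\}\cap\{X_{-\lfloor sn\rfloor}\dots X_{-1}=x\})$ small uniformly (here one uses that $\mathbb P(J>n)\geq n^{-\mu+o_n(1)}$ by Lemma~\ref{prop-X-asymp}, which dominates the $o_n^\infty(n)$ failure probability of $F_n$). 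Hence conditioning additionally on $F_n$ perturbs the law by $o_n(1)$, which can be absorbed into $\alpha$ by shrinking $\zeta$ and increasing $n_*$.

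**Main obstacle.** The hard part is the scaling-limit-of-conditioned-laws step done \emph{uniformly} in $s$, $\ep_1$, $\ep_2$: Sheffield's theorem gives convergence of the unconditioned paths, but converting this into uniform-in-parameters convergence of laws conditioned on a one-sided staying-in-a-cone event requires (a) showing the limiting conditioning events have null boundary and uniformly positive probability on the compact parameter set, (b) handling the flexible-order discrepancy between the $h_n^s,c_n^s$ count and the true stack height, and (c) a compactness argument to upgrade to uniformity. Point (b) is where the hypotheses $F_{\lfloor sn\rfloor}$ and $F_n$ and the choice $\nu<1/2$ are essential, since they guarantee the flexible orders contribute $o(n^{1/2})$ and thus do not affect the scaling limit. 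I expect the bookkeeping in (a)–(c), rather than any deep new idea, to be the bulk of the work, and it parallels the structure of the proof of Lemma~\ref{prop-bm-meander} and the meander convergence results already cited from \cite{shimura-cone}.
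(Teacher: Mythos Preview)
Your approach is correct and essentially coincides with the paper's. The paper implements your ``convergence of conditional laws via positive-probability conditioning event with null boundary'' step by an explicit sandwich: it defines events $\ul G_n^s(\ep_1,\ep_2)$ and $\ol G_n^s(\ep_1,\ep_2)$ (the event that $X(-n,-\lfloor sn\rfloor-1)$ contains at most $\ep_i n^{1/2}$, resp.\ $\ep_i n^{1/2}+(sn)^\nu$, burgers of each type), observes that on $F_{\lfloor sn\rfloor}$ one has $\{x\}\cap\ul G_n^s \subset \{x\}\cap\{J>n\}\subset \{x\}\cap\ol G_n^s$, and then uses independence of $Z^n_{[-1,-s]}$ from $X_{-\lfloor sn\rfloor}\dots X_{-1}$ together with Sheffield's theorem to bound the ratio. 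The uniformity in $(s,\ep_1,\ep_2)$ is obtained directly by first choosing $\zeta$ so that $|\BB P(\wt G^s(\ep_1+\zeta,\ep_2+\zeta))-\BB P(\wt G^s(\ep_1-\zeta,\ep_2-\zeta))|\leq r\alpha$ uniformly (with $r$ the infimum of $\BB P(\wt G^s)$ over the compact set), rather than via a diagonal argument. One minor point: your sentence about joint convergence of $(h_n^s,c_n^s,Z^n_{[-1,-s]})$ is unnecessary, since $h_n^s,c_n^s$ are fixed by the conditioning on $x$ and $Z^n_{[-1,-s]}$ is independent of $x$; the paper exploits this independence directly.
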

\begin{proof}
Let $\nu$ be as in Section~\ref{sec-F-reg-setup}. For $\ep_1 , \ep_2 > 0$, let $\ul G_{n}^s( \ep_1,\ep_2  )$ be the event that $\mcl N_{\tc H}\left(X(-n , -\lfloor sn \rfloor -1 ) \right) \leq \ep_1 n^{1/2}$ and $\mcl N_{\tc C}\left(X(-n , -\lfloor sn \rfloor -1 ) \right) \leq \ep_2 n^{1/2} $. Let $\ol G_{ n}^s(  \ep_1  ,\ep_2)$ be the event that $\mcl N_{\tc H}\left(X(-n , -\lfloor sn \rfloor-1 ) \right) \leq \ep_1 n^{1/2}  + (sn)^\nu $ and $\mcl N_{\tc C}\left(X(-n , -\lfloor sn \rfloor -1 ) \right) \leq \ep_2 n^{1/2} + (sn)^\nu$.    

Let $x$ be a realization of $X_{-\lfloor sn \rfloor}\dots X_{-1}$ for which $F_{\lfloor sn \rfloor}$ occurs and let $\frk h_n^s$ and $\frk c_n^s$ be the corresponding realizations of $h_n^s$ and $c_n^s$. Then
\begin{align}  \label{eqn-X-cond-G}
& \{X_{-\lfloor sn \rfloor} \dots X_{-1} = x \} \cap   \ul G_{ n}^s(\frk h_n^s , \frk c_n^s )  \subset \{X_{-\lfloor sn \rfloor} \dots X_{-1} = x \} \cap  \{J > n\} \notag\\
 & \qquad \qquad \subset \{X_{-\lfloor sn \rfloor} \dots X_{-1} = x \} \cap   \ol G_{n}^s( \frk h_n^s , \frk c_n^s   ).
\end{align} 
By~\eqref{eqn-X-cond-G} and independence of $X_{-\lfloor sn \rfloor} \dots X_{-1}$ from $Z^n_{[-1,-s]}$, we obtain that for any open subset $\mcl U$ of the space of continuous functions $[-1,-s]\rta \BB R^2$ in the uniform topology,
\begin{align} \label{eqn-Z-cond-G}
\frac{\BB P\left( Z^n_{[-1,-s]} \in \mcl U ,\,     \ul G_{n}^s(   \frk h_n^s , \frk c_n^s   )   \right) }{ \BB P\left(\ol G_n^s(\frk h_n^s , \frk c_n^s)   \right)   }       
 \leq   \BB P\left(Z^n_{[-1,-s]} \in \mcl U    \,|\, J > n,\,  X_{-\lfloor sn \rfloor} \dots X_{-1} =  x  \right)  
 \leq  \frac{\BB P\left( Z^n_{[-1,-s]} \in \mcl U ,\,  \ol G_n^s(  \frk h_n^s , \frk c_n^s  )   \right)  }{ \BB P\left(\ul G_n^s(\frk h_n^s , \frk c_n^s  )  \right)   } .
\end{align} 
Let
\eqbn
r := \inf_{\substack{\ep_1 , \ep_2 \geq \ep \\ s\in [\lambda , 1-\lambda]}} \BB P\left( \wt G^s(\ep_1,\ep_2) \right) .
\eqen
Then $r$ is a positive constant depending only on $\ep$ and $\lambda$. 
We can find $\zeta \in (0,\alpha)$ depending only on $r$ and $\alpha$ such that for $\ep_1 , \ep_2 \geq \ep$ and $s \in [\lambda , 1-\lambda]$,  
\eqb \label{eqn-tilde-G^s-close}
\left| \BB P\left( \wt G^s\left( \ep_1 + \zeta , \ep_2 +  \zeta  \right) \right) - \BB P\left( \wt G^s\left( \ep_1 - \zeta , \ep_2 -  \zeta  \right)\right) \right| \leq  r \alpha  .
\eqe 
By \cite[Theorem 2.5]{shef-burger}, we can find an $n_*   \in \BB N$ depending only on $r$ and $\alpha$ such that for $n\geq n_*$, the Prokhorov distance between the unconditional law of $Z^n|_{[-1,0]}$ and the law of $Z|_{[-1,0]}$ is at most a constant (depending only on $\ep$) times $r \alpha$. By Lemma~\ref{prop-few-F}, by possibly further increasing $n_*$, we can arrange that the same holds with the law of $Z^n|_{[-1,-s]}$ replaced by the conditional law of $Z^n_{[-1,-s]}$ given $F_n$ for each choice of $s\in [\lambda ,1-\lambda]$.
By combining this with our choice of $\zeta$ in~\eqref{eqn-tilde-G^s-close}, we obtain that whenever $n \geq n_*$ and $\ep_1^n , \ep_2^n > 0$ with $|\ep_1^n - \ep_1|$ and $|\ep_2^n - \ep_2|$ each smaller than $\zeta$,  
\eqbn
\left| \BB P\left(   \ol G_n^s(  \ep_1^n , \ep_2^n   )    \right)  -   \BB P\left(  \wt G^s(\ep_1,\ep_2)   \right)     \right| \preceq r \alpha ,
\eqen
with the implicit constant depending only on $\ep$, and similarly with $\ul G_n^s(\ep_1^n ,\ep_2^n)$ in place of $\ol G_n^s(\ep_1^n,\ep_2^n)$. Since $\alpha$ is arbitrary the statement of the lemma now follows from~\eqref{eqn-Z-cond-G}. 
\end{proof}

\subsection{Regularity at all sufficiently large times} 
\label{sec-F-reg-reverse}

In this section we will deduce Proposition~\ref{prop-F-regularity} from Lemma~\ref{prop-F-regularity-subsequence} and an induction argument. See Figure~\ref{fig-F-regularity} for an illustration of the argument. Our first lemma tells us that if $n > m$ with $n\asymp m$ and we condition on the event $\{J  > n\}$ and on a ``good" realization of $X_{-m}\dots X_{-1}$ (i.e.\ one for which $E_m(\ep)$ occurs), then it is likely that we also obtain a good realization of $X_{-n} \dots X_{-1}$. 

\begin{lem}\label{prop-pos-stack}
Let $q\in (0,1)$ and $\lambda \in (0,1/2 )$. There is a $\delta_0 > 0$ (depending only on $q$ and $\lambda$) such that for each $\delta \in (0,\delta_0]$ and each $\ep > 0$, there exists $n_* = n_*(\lambda,\delta,\ep ) \in \BB N$ such that for $n\geq n_*$ and $m \in \BB N$ with $\lambda \leq m/n \leq 1-\lambda$, the following holds. Let $x = x_{-m} \dots x_{-1}$ be any realization of $X_{-m} \dots X_{-1}$ for which $E_m(\ep) \cap F_m$ occurs. Then 
\eqbn
\BB P\left(E_n(\delta) \,|\, X_{-m} \dots X_{-1} = x    ,\, J > n \right) \geq 1-q .
\eqen
\end{lem}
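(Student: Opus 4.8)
The plan is to condition on the word $X_{-m}\dots X_{-1}$ and decompose the event $\{J>n\}$ according to whether or not $E_n(\delta)$ occurs, then use Lemma~\ref{prop-Z-cond-limit} to control the relevant conditional probabilities in the scaling limit. Write $s := m/n \in [\lambda,1-\lambda]$. Since $E_m(\ep)\cap F_m$ occurs for the realization $x$, the quantities $h_n^s = n^{-1/2}\mcl N_{\tb H}(X(-m,-1))$ and $c_n^s = n^{-1/2}\mcl N_{\tb C}(X(-m,-1))$ are each at least $\ep s^{1/2} \geq \ep\lambda^{1/2}=:\ep'$, and (using $F_m$) are each at most $s^{1/2} + (sn)^{\nu-1/2}n^{1/2}\cdot\ldots$, i.e.\ bounded above by a constant depending only on $\lambda$. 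So $\ep_1^n := h_n^s$ and $\ep_2^n := c_n^s$ lie in a compact subinterval of $(0,\infty)$ determined by $\lambda$ and $\ep$. Note also that on $\{X_{-m}\dots X_{-1}=x\}$, conditioning on $\{J>n\}$ is the same as conditioning on the event that $X(-n,-m-1)$ contains no burgers of total number exceeding the burgers available to cancel, which translates (since $X(-m,-1)$ has no burgers and carries $h_n^s n^{1/2}$ hamburger orders, $c_n^s n^{1/2}$ cheeseburger orders) into the event $\wt G^s(\ep_1^n,\ep_2^n)$-type condition for the path $Z^n_{[-1,-s]}$, up to the error coming from $\mcl N_{\tb F}\leq (sn)^\nu = o(n^{1/2})$ that is controlled by $F_m$ (and $F_n$ if we include it).

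Next I would argue as follows. By Lemma~\ref{prop-Z-cond-limit}, for each fixed $\alpha>0$ there are $n_*$ and $\zeta$ (depending on $\lambda,\ep,\alpha$) so that for $n\geq n_*$ the conditional law of $Z^n_{[-1,-s]}$ given $\{J>n\}\cap\{X_{-m}\dots X_{-1}=x\}$ is within Prokhorov distance $\alpha$ of the conditional law of $Z_{[-1,-s]}$ given $\wt G^s(\ep_1^n,\ep_2^n)$. Now the event $E_n(\delta)$, intersected with $\{X_{-m}\dots X_{-1}=x\}\cap\{J>n\}$, is (up to $o(n^{1/2})$ corrections controlled by $F_m$, via $\nu<1/2$) essentially the event that $Z^n_{[-1,-s]}$ stays in the appropriate neighborhood of the quadrant and moreover the terminal stack $D(-n)$ is at distance $\geq\delta n^{1/2}$ from both axes. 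For the limiting Brownian motion $Z_{[-1,-s]}$ conditioned on $\wt G^s(\ep_1^n,\ep_2^n)$, the corresponding event is a closed (or open) set of paths; since $\ep_1^n,\ep_2^n$ range over a fixed compact set bounded away from $0$, there is (by standard properties of a nondegenerate Gaussian, e.g.\ that the endpoint has a density bounded below near any interior point of the quadrant, plus continuity of the conditional laws in $(\ep_1,\ep_2)$) a uniform lower bound, say $1-q/2$, on the probability of this limiting event once $\delta$ is taken small enough depending only on $q$ and $\lambda$. Choosing $\delta_0$ to realize this and then $\alpha := q/2$ (which fixes $n_*$ and $\zeta$), and absorbing the $o(n^{1/2})$ burger-count discrepancies into a further shrinking of $\delta$ and enlarging of $n_*$, yields $\BB P(E_n(\delta)\mid X_{-m}\dots X_{-1}=x, J>n)\geq 1-q$ for $n\geq n_*$. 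The same argument with $F_n$ adjoined works since Lemma~\ref{prop-Z-cond-limit} includes that case.

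The main obstacle I anticipate is the uniformity: I need the lower bound on the limiting conditional probability of $E_n(\delta)$-type events to be uniform over $s\in[\lambda,1-\lambda]$ and over $(\ep_1,\ep_2)$ in a compact set, and I need to make sure the discrete-to-continuum translation of ``$J>n$'' and ``$E_n(\delta)$'' into events for the rescaled path is genuinely correct up to negligible errors — this is exactly where $F_m$ (number of flexible orders $\leq m^\nu$ with $\nu<1/2$) and Lemma~\ref{prop-few-F} get used, since without controlling the $\tb F$-count one cannot pass between $\mcl N_{\tc H}(X(-m,-1))$, the running infimum of $d$, and the walk $D$. A secondary technical point is that $\wt G^s$ is defined via a \emph{weak} (closed-type) inequality while $E_n(\delta)$ asks for a strict margin $\delta n^{1/2}$; this is harmless because we shrink $\delta$, but it should be stated carefully so that the Portmanteau direction (lower bound for an open set under weak convergence) is the one being used. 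Once these uniformity and translation issues are handled, the conclusion is immediate from Lemma~\ref{prop-Z-cond-limit} and a compactness argument, and the stated dependence of $\delta_0$ on only $q,\lambda$ (and of $n_*$ on $\lambda,\delta,\ep$) falls out.
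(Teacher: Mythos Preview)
Your overall strategy---reduce via Lemma~\ref{prop-Z-cond-limit} to a Brownian estimate of the form
\[
\inf_{s\in[\lambda,1-\lambda]}\inf_{\ep_1,\ep_2}\BB P\bigl(G^s(\delta)\,\big|\,\wt G^s(\ep_1,\ep_2)\bigr)\geq 1-q/2
\]
and then choose $\delta_0$ small---is exactly the paper's approach. But there is a genuine gap in how you obtain the uniformity, and it is precisely the point the paper flags as nontrivial.

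First, a minor error: your claimed upper bound on $h_n^s,c_n^s$ is false. The event $F_m$ bounds only $\mcl N_{\tb F}(X(-m,-1))$, not $\mcl N_{\tb H}$ or $\mcl N_{\tb C}$; and $E_m(\ep)$ gives only a lower bound. So $(\ep_1^n,\ep_2^n)$ need not lie in any fixed compact subset of $(0,\infty)^2$.

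Second, and more seriously: even granting that large values of $\ep_1,\ep_2$ are harmless (the conditioning event then has probability bounded below, so one can bound $\BB P(G^s(\delta)^c\mid\wt G^s)$ by $\BB P(G^s(\delta)^c)/\BB P(\wt G^s(\ep',\ep'))$), your compactness argument uses the \emph{lower} bound $\ep_1,\ep_2\geq\ep'=\ep\lambda^{1/2}$. Any $\delta_0$ produced this way will depend on $\ep'$ and hence on $\ep$. The lemma explicitly requires $\delta_0=\delta_0(q,\lambda)$ independent of $\ep$; as the paper remarks, this independence is ``the main point,'' and without it the lemma is a trivial consequence of \cite[Theorem 2.5]{shef-burger}.

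The paper closes this gap by invoking \cite[Theorem 2]{shimura-cone}: the conditional laws $\BB P(\,\cdot\mid\wt G^s(\ep_1,\ep_2))$ converge weakly to a nondegenerate limit as $(\ep_1,\ep_2)\to 0$ (and also when only one coordinate tends to $0$, via the half-plane case). This gives the needed bound uniformly over \emph{all} $\ep_1,\ep_2>0$, so $\delta_0$ depends only on $q$ and $\lambda$. Your ``standard properties of a nondegenerate Gaussian plus continuity'' is not enough here; you need the Shimura-type limit to push the uniformity down to the boundary $\ep_1,\ep_2\to 0$.
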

 
The main point of Lemma~\ref{prop-pos-stack} is that $\delta_0$ does not depend on $\ep$ (indeed, the lemma is a trivial consequence of \cite[Theorem 2.5]{shef-burger} without this requirement).  

\begin{proof}[Proof of Lemma~\ref{prop-pos-stack}]
We will deduce the lemma from an analogous estimate for Brownian motion and the scaling limit result~\cite[Theorem 2.5]{shef-burger}. 
For $s\in [0,1]$ and $\delta >0$, let 
\eqbn
G^s(\delta )  :=  \left\{\text{$U_1 - U_s \geq  \delta$ and $V_1 - V_s \geq  \delta$}\right\}  .
\eqen
For $\ep_1 , \ep_2> 0$ define the event $\wt G^s(\ep_1,\ep_2)$ as in~\eqref{eqn-tilde-G^s-def}. By Lemma~\ref{prop-Z-cond-limit}, for each choice of $\delta >0$ we can find $n_* \in \BB N$ (depending on $\ep$, $\delta$, $q$, and $\lambda$) such that the following holds. Suppose $n \geq n_*$; $s \in [\lambda , 1-\lambda]$; and $x$ is a realization of $X_{-\lfloor sn \rfloor} \dots X_{-1}$ for which $F_{\lfloor sn \rfloor}$ occurs, $h_n^s \geq \ep $, and $ c_n^s \geq \ep$, with $h_n^s$ and $c_n^s$ as in~\eqref{eqn-h-c}. Let $\frk h_n^s$ and $\frk c_n^s$ be the corresponding realizations of $h_n^s$ and $c_n^s$. Then
\alb
\BB P\left(E_n(\delta) \,|\, X_{-\lfloor sn \rfloor} \dots X_{-1} = x    ,\, J > n \right) &\geq  \BB P\left(G^s(2\delta ) \,|\, \wt G^s(  \frk h_n^s   ,    \frk c_n^s )    \right) - \frac{q}{2} .
\ale
Taking $m = \lfloor s n \rfloor$, we see that it suffices to prove that for sufficiently small $\delta> 0$, we have 
\eqb \label{eqn-delta-inf}
\inf_{\substack{\ep_1,\ep_2 > 0 \\ s\in [\lambda , 1-\lambda]}} \BB P\left(G^s(\delta ) \,|\, \wt G^s( \ep_1 , \ep_2)    \right)  \geq 1 - \frac{q}{2}.
\eqe 
By \cite[Theorem 2]{shimura-cone} (c.f.\ the proof of Lemma~\eqref{prop-bm-cone-asymp}) the conditional laws $\BB P\left(  \cdot \,|\,    \wt G^s ( \ep_1,\ep_2  ) \right)$ converge weakly as $(\ep_1 , \ep_2) \rta 0$ to a non-degenerate limiting distribution. Hence we can find $\wt\delta_0 > 0$ and $\wt\ep_0 > 0$ depending only on $q$ and $\lambda$ such that whenever $\delta\in (0,\wt\delta_0]$ and $\ep_1 , \ep_2 \in (0, \ep_0]$, we have
\eqb \label{eqn-G-bm-inf}
\inf_{s\in [\lambda,1-\lambda]}  \BB P\left(   G^s ( \delta   )  \,|\,    \wt G^s ( \ep_1,\ep_2  ) \right) \geq 1-q  .
\eqe 
Moreover, by taking the opening angle of the cone in \cite[Theorem 2]{shimura-cone} to be $\pi$ and applying a linear transformation, we find that the  conditional laws  $\BB P\left(  \cdot \,|\,    \wt G^s ( \ep_1,\ep_2  ) \right)$ also converge weakly to a (different) non-degenerate limiting distribution if we send one of $\ep_1$ or $\ep_2$ to 0 and leave the other fixed. Hence we can find $\delta_0 \in (0,\wt\delta_0]$ depending only on $q$, $\lambda$, and $\ep_0$ such that~\eqref{eqn-G-bm-inf} holds whenever $\delta \in (0,\delta_0]$ and one of $\ep_1$ or $\ep_2$ is at least $\ep_0$. Hence if $\delta\in(0,\delta_0]$,~\eqref{eqn-G-bm-inf} holds for every choice of $\ep_1 , \ep_2 > 0$. This completes the proof of the lemma.
\end{proof}

 Our next lemma tells us that if $n > m$, then it is more likely for $\{J > n\}$ to occur if $E_m(\zeta)$ occurs than if $E_m(\zeta)^c$ occurs. Intuitively, the reason why this is the case is that if $E_m(\zeta)^c$ occurs, then at time $-m$ the walk $D$ is close to the boundary of the first quadrant, so it is likely to exit the first quadrant between times $-m$ and $-n$. 

\begin{lem} \label{prop-empty-burger-ratio}
Fix $\lambda \in (0,1/2)$, $q_0 \in (0,1)$, and $\ep > 0$. Suppose we are given $m_0 \in \BB N$ such that $a_{m_0}(\ep) \geq q_0$. Then for $m \in\BB N$ with $\lambda \leq m_0/m \leq 1-\lambda$, $n\in\BB N$ with $\lambda \leq m/n \leq 1-\lambda$, and $\zeta \in (0,1)$ we have  
\eqb \label{eqn-empty-burger-ratio}
\frac{\BB P\left(J > n \,|\, E_m(\zeta) \right)}{\BB P\left(J >n \,|\, E_m(\zeta)^c ,\, J > m \right)} \succeq \frac{ 1  }{   \zeta  + o_{m_0}(1)    }  ,
\eqe  
where the implicit constant depends only on $q_0$, $\lambda$, and $\ep$; and the rate of the $o_{m_0}(1)$ depends only on $\lambda$, $\ep$, and $\zeta$.
\end{lem}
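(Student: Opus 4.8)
The plan is to decompose the event $\{J > n\}$ according to the middle segment $X_{-m}\dots X_{-1}$ and compare, conditional on a realization of that segment, the probability that the word extends with no burgers from time $-m$ to time $-n$. First I would observe that by the Markov-type structure of the word (concatenation of iid symbols), conditionally on $\{J > m\}$ and on a realization $x$ of $X_{-m}\dots X_{-1}$, the event $\{J > n\}$ depends only on $X_{-n}\dots X_{-m-1}$ together with the \emph{number} of leftover hamburger orders and cheeseburger orders in $x$ — say $h = \mcl N_{\tb H}(x)$ and $c = \mcl N_{\tb C}(x)$ after reduction (equivalently, the running-infimum data $-d(\text{segment})$, $-d^*(\text{segment})$). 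Indeed, for $X(-n,-1)$ to contain no burgers, every burger produced in $X_{-n}\dots X_{-m-1}$ must be cancelled by an order inside that same sub-word or by one of the $h+c$ leftover orders from $x$; so the conditional probability of $\{J>n\}$ given $x$ is a monotone function of $(h,c)$: larger stacks of leftover orders make it \emph{easier} to avoid net burgers. This monotonicity is the conceptual heart of~\eqref{eqn-empty-burger-ratio}.

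Next I would make this quantitative. Write $p_{h,c}(m,n) := \BB P(J > n \mid J > m,\ (\mcl N_{\tb H}, \mcl N_{\tb C})(X(-m,-1)) = (h,c))$. On the event $E_m(\zeta)$ we have $h,c \geq \zeta^{1/2}\,?$ — no: $E_m(\zeta)$ means $h \geq \zeta m^{1/2}$ and $c \geq \zeta m^{1/2}$; on $E_m(\zeta)^c \cap \{J>m\}$ one of $h,c$ is $\leq \zeta m^{1/2}$. So the numerator is an average of $p_{h,c}(m,n)$ over realizations with $h\wedge c \geq \zeta m^{1/2}$, while the denominator averages over realizations with $h\wedge c \leq \zeta m^{1/2}$. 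For the denominator I would produce an \emph{upper} bound: if (say) $h \leq \zeta m^{1/2}$, then for $J > n$ to occur the walk $d$ started at time $-m$ with at most $\zeta m^{1/2}$ units of cushion must not go below $-h \geq -\zeta m^{1/2}$ over $n-m \asymp m$ further steps; by the scaling limit (Theorem~\ref{prop-burger-limit} / \cite[Theorem 2.5]{shef-burger}) and basic Brownian estimates this has probability $\preceq \zeta + o_{m_0}(1)$ (a one-dimensional walk of length comparable to $m$ started within $\zeta m^{1/2}$ of a barrier stays above the barrier with probability $O(\zeta)$, uniformly in the ratios constrained by $\lambda$). For the numerator I would produce a \emph{lower} bound bounded away from $0$: using $a_{m_0}(\ep) \geq q_0$ together with Lemma~\ref{prop-pos-stack} (with $\delta$ chosen small depending only on $q_0,\lambda$ and the roles of $m_0, m, n$), the event $E_m(\ep')$ has uniformly positive conditional probability given $E_{m_0}(\ep)$ for a suitable $\ep'$, and then applying Lemma~\ref{prop-pos-stack} / \cite[Theorem 2.5]{shef-burger} once more from time $-m$ to time $-n$ shows $\BB P(J>n \mid E_m(\zeta))$ is bounded below by a constant depending only on $q_0,\lambda,\ep$ (here one also uses Lemma~\ref{prop-few-F} to intersect with $F_m$ at negligible cost, so that Lemma~\ref{prop-pos-stack} applies). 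Dividing the lower bound on the numerator by the upper bound $\preceq \zeta + o_{m_0}(1)$ on the denominator gives~\eqref{eqn-empty-burger-ratio}.

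The main obstacle I anticipate is the uniformity bookkeeping: I must track that all the constants in the scaling-limit inputs depend only on the ratio constraints $\lambda \le m_0/m \le 1-\lambda$ and $\lambda \le m/n \le 1-\lambda$, not on $m_0,m,n$ individually, and that passing through the conditioning on $F_m$ (via Lemma~\ref{prop-few-F}) and on $E_{m_0}(\ep)$ (via $a_{m_0}(\ep)\ge q_0$ and Lemma~\ref{prop-pos-stack}) does not secretly introduce a dependence on $m_0$ except through the harmless $o_{m_0}(1)$ terms. The other delicate point is the $O(\zeta)$ barrier estimate for the denominator: I would prove it by the Brownian scaling limit plus the elementary fact that $\BB P(\min_{[0,T]} B_t \ge -\zeta) \asymp \zeta$ for fixed $T$ and small $\zeta$ (where $B$ is the appropriate one-dimensional marginal of $D$, taken with the variance from~\eqref{eqn-bm-cov}), being careful that on $E_m(\zeta)^c$ only \emph{one} of the two coordinates is controlled — which is exactly why a one-sided, one-coordinate barrier estimate suffices and why we get $\zeta$ rather than $\zeta^{2\mu}$ or the like.
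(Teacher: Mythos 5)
Your proposal is correct and follows essentially the same route as the paper's proof: lower-bound the numerator by a constant via $a_{m_0}(\ep)\ge q_0$, Bayes' rule, and Lemma~\ref{prop-pos-stack}, and upper-bound the denominator by the one-coordinate barrier estimate $\preceq \zeta + o_{m_0}(1)$ coming from \cite[Theorem 2.5]{shef-burger}. The one adjustment is that the intersection with $F_m$ (Lemma~\ref{prop-few-F}) is needed in the \emph{denominator} rather than the numerator: the cushion available to cancel burgers produced in $X(-n,-m-1)$ consists of the leftover hamburger (or cheeseburger) orders \emph{plus} the leftover flexible orders, so one must restrict to $F_m$ there (checking, as the paper does via Lemma~\ref{prop-X-asymp}, that $\BB P(E_m(\zeta)^c\cap F_m \mid J>m)$ is at least polynomially large in $m_0$ so the $o_{m_0}^\infty(m_0)$ error is negligible) and absorb the resulting $O(n^\nu)$ correction into the $o_{m_0}(1)$.
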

\begin{proof} 
Let $\delta_0 > 0$ be chosen so that the conclusion of Lemma~\ref{prop-pos-stack} holds with given $\lambda$ and $q =1/2$. Let $n_* = n_*(\lambda,\delta_0 ,\ep ) \in \BB N$ be as in that lemma. For $m_0 \geq n_*  $ and $m$ as in the statement of the lemma,  
\eqbn \label{eqn-ep-to-delta}
\BB P\left(E_m(\delta_0 ) \,|\, E_{m_0}(\ep)    ,\, J > m \right) \geq \frac12 .
\eqen   
Hence if $m_0 \geq n_*$ and $\zeta \in (0,1)$, then
\begin{align}
\BB P\left(E_m(\delta_0 ) \,|\, E_{m}(\zeta) \right)   &\geq \BB P\left(E_{m}(\delta_0 ) \,|\, J  > m  \right)\notag \\
&\geq \BB P\left(E_{m}(\delta_0 ) \,|\, E_{m_0}(\ep)    ,\, J  > m \right)  \BB P\left(E_{m_0}(\ep) \,|\, J  > m \right) \notag \\
&\geq \frac12 \BB P\left(E_{m_0}(\ep) \,|\, J  > m \right) . \label{eqn-delta0-split}
\end{align}
By Bayes' rule,
\begin{align}
\BB P\left(E_{m_0}(\ep) \,|\ J  >  m \right) &= \frac{\BB P\left(  J  > m \,|\, E_{m_0}(\ep) \right) \BB P\left(E_{m_0}(\ep) \,|\, J > m_0 \right) }{\BB P\left(  J  > m   \,|\,   J>m_0 \right)} \notag \\
&\geq  \frac{\BB P\left(  J  > m \,|\, E_{m_0}(\ep) \right) a_{m_0}(\ep)  }{\BB P\left(J > m\,|\, E_{m_0}(\ep) \right) a_{m_0}(\ep) + \BB P\left(J  > m ,\,  E_{m_0}(\ep)^c  \,|\, J > m_0 \right) }  .  \label{eqn-ep-to-delta-bayes}
\end{align} 
By \cite[Theorem 2.5]{shef-burger} and our hypothesis on $a_{m_0}(\ep)$, this quantity is bounded below by a constant depending only on $q_0$, $\lambda$, and $\ep$ (not on $\zeta$). By~\eqref{eqn-delta0-split}, we arrive at
\eqbn  
\BB P\left(E_m(\delta_0 ) \,|\, E_m(\zeta) \right) \succeq 1 .
\eqen
By combining this with \cite[Theorem 2.5]{shef-burger} we obtain
\eqb  \label{eqn-top-bound}
\BB P\left(J > n \,|\, E_m(\zeta) \right) \geq \BB P\left(J > n \,|\, E_m(\delta_0) \right)\BB P\left(E_m(\delta_0 ) \,|\, E_m(\zeta) \right) \succeq 1 .
\eqe 
 
Next we consider the denominator in~\eqref{eqn-empty-burger-ratio}. By Lemma~\ref{prop-few-F}, 
\begin{align}
\BB P\left( J> n \,|\, E_m(\zeta)^c   ,\, J > m  \right) &= \frac{\BB P\left(J > n ,\, E_m(\zeta)^c  \,|\, J> m \right)}{\BB P\left( E_m(\zeta)^c   \,|\, J > m \right)} \notag  \\
&\leq \frac{\BB P\left(J > n ,\, F_m ,\, E_m(\zeta)^c  \,|\, J> m \right) + o_{m_0}^\infty(m_0)}{\BB P\left( E_m(\zeta)^c \cap F_m  \,|\, J > m \right)}  .\label{eqn-F-to-no-F}
\end{align}
We have
\alb
 \BB P\left( E_m(\zeta)^c \cap F_m   \,|\, J > m \right)  &\geq \BB P\left( E_m(\zeta)^c \cap F_m   \,|\, E_{m_0}(\ep) ,\, J > m\right) \BB P\left(E_{m_0}(\ep) \,|\, J >  m \right)    \\
&\geq \BB P\left( E_m(\zeta)^c \cap F_{m}   \,|\, E_{m_0}(\ep) \right) \frac{ \BB P\left(E_{m_0}(\ep) \right) }{\BB P\left(J > m\right)} .
\ale
By \cite[Theorem 2.5]{shef-burger}, $\BB P\left(E_{m}(\zeta)^c \,|\, E_{m_0}(\ep) ,\, J > m \right)$ is at least a positive constant depending on $\ep$, $\lambda$, and $\zeta$ but not on $m_0$ (provided $m_0$ is sufficiently large). By Lemma~\ref{prop-X-asymp}, $\frac{ \BB P\left(E_{m_0}(\ep) \right) }{\BB P\left(J >m\right)}$ is bounded below by a constant (depending only on $\ep$ and $\lambda$) times a power of $m_0$. Hence~\eqref{eqn-F-to-no-F} implies
\eqbn
\BB P\left( J> n \,|\,  E_{m}(\zeta)^c   ,\, J > m  \right) \leq \BB P\left( J> n \,|\,  E_{m}(\zeta)^c ,\, F_{m} ,\, J> m  \right) + o_{m_0}^\infty(m_0) .
\eqen 
If $ E_{m}(\zeta)^c \cap F_{m}$ occurs and $J> n$, then $X(-n , -m-1)$ contains either at most $\zeta m^{1/2} + O_n(n^\nu)$ hamburgers or at most $\zeta m^{1/2} + O_n(n^\nu)$ cheeseburgers. By \cite[Theorem 2.5]{shef-burger}, we therefore have
\eqb \label{eqn-bottom-bound}
\BB P\left( J> n \,|\,  E_{m}(\zeta)^c   ,\, J > m  \right)   \preceq \zeta+ o_{m_0}(1) .
\eqe 
We conclude by combining~\eqref{eqn-top-bound} and~\eqref{eqn-bottom-bound}. 
\end{proof}

The following lemma is the main input in the induction argument used to prove Proposition~\ref{prop-F-regularity}.

\begin{lem} \label{prop-F-induct}
Let $q , q_0\in (0,1)$ and $\lambda \in (0,1/2)$. There is a $\ep_0 > 0$ (depending only on $q , q_0$, and $\lambda$) such that for each $\ep \in (0,\ep_0]$ we can find $m_* = m_*(q,q_0,\lambda,\ep) \in \BB N$ with the following property. Suppose $ m <n \in \BB N$ with $m \geq m_*$ and 
\eqb  \label{eqn-m/n-relation}
  \lambda \leq m/n \leq 1-\lambda .
\eqe 
Suppose further that $a_m(\ep) \geq q_0$. Then $a_n(\ep) \geq 1-q$. 
\end{lem}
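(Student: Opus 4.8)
The plan is to prove Lemma~\ref{prop-F-induct} by a Bayes'-rule decomposition of $a_n(\ep)$ according to whether $E_m(\zeta)$ occurs, for a carefully chosen small threshold $\zeta$ depending on $q$, and then using Lemma~\ref{prop-pos-stack} to propagate regularity from time $m$ to time $n$ and Lemma~\ref{prop-empty-burger-ratio} to control the ``bad'' branch.

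First I would fix $\lambda$, $q$, $q_0$ and apply Lemma~\ref{prop-pos-stack} with the error parameter (say) $q/3$ in place of $q$ to obtain $\delta_0 > 0$; we will take $\ep_0 := \delta_0$, and work with some $\ep \in (0,\ep_0]$. The starting point is the identity
\eqbn
a_n(\ep) = \BB P\left(E_n(\ep) \,|\, J > n\right) \geq \BB P\left(E_n(\delta_0) \,|\, J > n\right),
\eqen
since $E_n(\delta_0) \subset E_n(\ep)$ when $\ep \leq \delta_0$. Now I would condition on the value of $X_{-m}\dots X_{-1}$ and split according to whether $E_m(\zeta) \cap F_m$ occurs, where $\zeta \in (0,1)$ is a small constant to be fixed at the end. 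On the event $\{J>n\}$ we automatically have $\{J > m\}$, so
\eqbn
a_n(\ep) \geq \BB P\left(E_n(\delta_0) ,\, E_m(\zeta) ,\, F_m \,|\, J > n\right) = \BB E\left[ \BB 1_{E_m(\zeta) \cap F_m \cap \{J>n\}} \BB P\left(E_n(\delta_0) \,|\, X_{-m}\dots X_{-1} ,\, J > n\right) \right] \big/ \BB P(J > n).
\eqen
On the conditioning event, the realization of $X_{-m}\dots X_{-1}$ is one for which $E_m(\zeta)\cap F_m$ holds; but Lemma~\ref{prop-pos-stack} is stated for $E_m(\ep)\cap F_m$ with a fixed $\ep$, and here $\zeta$ may differ from $\ep$. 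This is fine: I can simply run Lemma~\ref{prop-pos-stack} with its parameter $\ep$ replaced by our $\zeta$ (the statement holds for \emph{each} $\ep > 0$, with $n_*$ allowed to depend on it), which gives that for $n \geq n_*(\lambda,\delta_0,\zeta)$ and $m$ with $\lambda \leq m/n \leq 1-\lambda$, the inner conditional probability is $\geq 1 - q/3$ on every such realization. Hence
\eqbn
a_n(\ep) \geq (1 - q/3)\, \BB P\left(E_m(\zeta) \cap F_m \,|\, J > n\right) \geq (1-q/3)\left(\BB P\left(E_m(\zeta) \,|\, J > n\right) - \BB P(F_m^c \,|\, J > n)\right).
\eqen
By Lemma~\ref{prop-few-F}, $\BB P(F_m^c \,|\, J>n) = \BB P(F_m^c ,\, J>n)/\BB P(J>n) \leq o_m^\infty(m)/\BB P(J>n)$, and since $\BB P(J > n) \geq n^{-\mu + o_n(1)}$ by Lemma~\ref{prop-X-asymp} together with $\lambda \leq m/n \leq 1-\lambda$, this term is $o_m(1)$, absorbed into the choice of $m_*$.

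It remains to bound $\BB P\left(E_m(\zeta) \,|\, J>n\right)$ from below; equivalently to show $\BB P\left(E_m(\zeta)^c \,|\, J > n\right)$ is small. Here I would decompose $\BB P(J > n)$ itself: $\BB P(J>n) = \BB P(J>n ,\, E_m(\zeta)) + \BB P(J>n ,\, E_m(\zeta)^c)$, and since $\{J>n\} \subset \{J>m\}$, the second term equals $\BB P(J>n \,|\, E_m(\zeta)^c ,\, J>m)\BB P(E_m(\zeta)^c ,\, J>m)$. Dividing,
\eqbn
\BB P\left(E_m(\zeta)^c \,|\, J>n\right) = \frac{\BB P(J>n \,|\, E_m(\zeta)^c ,\, J>m)\, \BB P(E_m(\zeta)^c ,\, J>m)}{\BB P(J>n \,|\, E_m(\zeta))\, \BB P(E_m(\zeta)) + \BB P(J>n \,|\, E_m(\zeta)^c ,\, J>m)\, \BB P(E_m(\zeta)^c ,\, J>m)} \leq \frac{\BB P(J>n \,|\, E_m(\zeta)^c ,\, J>m)}{\BB P(J>n \,|\, E_m(\zeta))}\cdot\frac{\BB P(E_m(\zeta)^c,\,J>m)}{\BB P(E_m(\zeta))},
\eqen
using $x/(y+x) \leq x/y$. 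Now Lemma~\ref{prop-empty-burger-ratio} (whose hypothesis $a_{m_0}(\ep) \geq q_0$ is exactly our standing assumption with $m_0 = m$, after noting it wants $\lambda \leq m_0/m' \leq 1-\lambda$ for the intermediate scale — I would apply it with an intermediate time between $m$ and $n$, or directly to $m,n$ by inserting a dummy scale as in its proof) shows the first ratio is $\preceq \zeta + o_m(1)$. For the second ratio: $\BB P(E_m(\zeta)^c ,\, J > m) \leq \BB P(J > m)$ and $\BB P(E_m(\zeta)) \geq \BB P(E_m(\zeta) ,\, J > m) = a_m(\zeta)\BB P(J>m) \geq$ (by a crude bound, since $E_m(\zeta) \subset \{J>m\}$ for small enough... actually $E_m(\zeta)$ already includes $J > m$)... more carefully, $\BB P(E_m(\zeta)) = \BB P(E_m(\zeta) ,\, J>m)$ since $E_m(\zeta) \subset \{J>m\}$ by definition, so the second ratio is $(1 - a_m(\zeta))/a_m(\zeta)$; but I don't have a lower bound on $a_m(\zeta)$ a priori for small $\zeta$ — \textbf{this is the main obstacle}. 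The resolution: I should instead compare to $a_m(\ep) \geq q_0$ rather than $a_m(\zeta)$. Since $\ep \leq \delta_0$ is fixed and $\zeta$ can be taken $\leq \ep$, we have $E_m(\ep) \subset E_m(\zeta)$, hence $\BB P(E_m(\zeta)) \geq \BB P(E_m(\ep)) = a_m(\ep)\BB P(J>m) \geq q_0 \BB P(J>m)$, so the second ratio is $\leq 1/q_0$. Combining, $\BB P(E_m(\zeta)^c \,|\, J>n) \preceq (\zeta + o_m(1))/q_0$; choosing $\zeta$ small enough (depending on $q, q_0$) and $m_*$ large enough makes this $< q/3$, say. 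Feeding this back, $a_n(\ep) \geq (1-q/3)(1 - q/3 - o_m(1)) \geq 1 - q$ for $m \geq m_*$, which is the claim. Throughout, one must check the chain of dependencies of the various $n_*$'s and $m_*$'s is consistent (all ultimately depend only on $q, q_0, \lambda$, and $\ep$), and that the hypotheses of Lemmas~\ref{prop-pos-stack} and~\ref{prop-empty-burger-ratio} on the ratios $m/n$ are met; splitting $[m,n]$ into two comparable sub-intervals handles the case where $m/n$ is tiny, at the cost of a harmless factor.
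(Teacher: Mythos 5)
Your overall strategy matches the paper's — a Bayes'-rule decomposition of $a_n(\ep)$ according to a small-threshold event, with Lemma~\ref{prop-pos-stack} propagating regularity forward on the good branch and a ratio estimate of the type in Lemma~\ref{prop-empty-burger-ratio} controlling the bad branch — and the first half of your argument (the reduction to showing $\BB P(E_m(\zeta)^c \,|\, J>n)$ is small, the handling of $F_m^c$, and the resolution of what you call the main obstacle via $E_m(\ep)\subset E_m(\zeta)$ for $\zeta\leq\ep$) is correct. The one step that does not go through as written is the invocation of Lemma~\ref{prop-empty-burger-ratio}. That lemma requires \emph{three} separated scales $m_0<m'<n$ with $\lambda\leq m_0/m'\leq 1-\lambda$, where $a_{m_0}(\ep)\geq q_0$ is known at the smallest scale and the split on $E_{m'}(\zeta)$ happens at a strictly larger intermediate scale; you split on $E_m(\zeta)$ at the very scale $m$ at which $a_m(\ep)\geq q_0$ is assumed, so the hypothesis fails. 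Neither of your suggested repairs obviously works: splitting at ``an intermediate time between $m$ and $n$'' changes the event being decomposed on (and then you must separately lower-bound $a_{\wt m}(\zeta)$ for the new scale $\wt m$, which is exactly the extra Bayes step the paper performs with $\wt m=(m+n)/2$), while rerunning the proof of Lemma~\ref{prop-empty-burger-ratio} with $m_0=m'$ breaks down where that proof lower-bounds $\BB P(E_{m'}(\zeta)^c\cap F_{m'}\,|\,J>m')$ via $\BB P(E_{m'}(\zeta)^c\cap F_{m'}\,|\,E_{m_0}(\ep))$: with $m_0=m'=m$ and $\zeta\leq\ep$ one has $E_m(\ep)\cap E_m(\zeta)^c=\emptyset$, so that bound is vacuous.

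Fortunately your own telescoping rescues the argument: the product of your two ratios equals $\BB P(J>n,\,E_m(\zeta)^c)/\BB P(J>n,\,E_m(\zeta))$, so you never actually need a conditional lower bound on $\BB P(E_m(\zeta)^c\,|\,J>m)$. You should therefore prove the two joint-probability estimates directly rather than quoting the lemma: (i) $\BB P(J>n,\,E_m(\zeta)^c)\preceq(\zeta+o_m(1))\,\BB P(J>m)$, obtained by intersecting with $F_m$ (the complement costs $o_m^\infty(m)=o_m(1)\,\BB P(J>m)$ by Lemma~\ref{prop-few-F} and the polynomial lower bound of Lemma~\ref{prop-X-asymp}) and noting that on $\{J>n\}\cap E_m(\zeta)^c\cap F_m$ the word $X(-n,-m-1)$ — independent of $X_{-m}\dots X_{-1}$ — contains at most $\zeta m^{1/2}+m^\nu$ burgers of one of the two types, an event of probability $\preceq_\lambda\zeta+o_n(1)$ by \cite[Theorem 2.5]{shef-burger}; and (ii) $\BB P(J>n,\,E_m(\zeta))\geq\BB P(J>n,\,E_m(\ep))\geq c(\ep,\lambda)\,q_0\,\BB P(J>m)$, using $a_m(\ep)\geq q_0$ and the Brownian lower bound for $\BB P(J>n\,|\,E_m(\ep))$. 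These are precisely the ingredients of the proof of Lemma~\ref{prop-empty-burger-ratio} rearranged for the two-scale setting, and with them in place your dependency chain closes and the proof is complete — indeed slightly shorter than the paper's, which introduces the intermediate scale $\wt m$ precisely so that Lemma~\ref{prop-empty-burger-ratio} can be quoted verbatim.
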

\begin{proof}
Fix $q\in (0,1)$. Let $\wt m := \frac{m+n}{2}$.
 By Lemma~\ref{prop-pos-stack} we can find $\ep_0 > 0$ (depending only on $  q$ and $\lambda$) such that for $\ep \in (0,\ep_0]$ and $\zeta \in (0,\ep]$, there exists $\wt m_* = \wt m_*(\zeta , \ep,  q,\lambda)\in \BB N$ such that if $m\geq \wt m_*$ and~\eqref{eqn-m/n-relation} holds, then
\begin{align}\label{eqn-last-given-mid}
\BB P\left(E_n(\ep) \,|\, E_{\wt m}(\zeta) ,\, J> n \right)  \geq  1- q  \quad \op{and} \quad \BB P\left(E_{\wt m}(\zeta) \,|\, E_m(\ep),\, J> \wt m\right)  \geq  1- q   . 
\end{align} 
Henceforth fix $\ep \in (0,\ep_0]$. 

Fix $\alpha \in (0,1)$ to be chosen later (depending on $q, q_0, \lambda$, and $\ep$). By Lemma~\ref{prop-empty-burger-ratio}, we can find $  \zeta  \in (0, \ep]$ (depending on $\lambda$, $\alpha$, $q_0$, and $\ep$) and $m_*   \geq \wt m_*$ (depending on $\lambda$, $\alpha$, $q_0$, $\ep$, and $\zeta$) for which the following holds. If $m\geq m_*$,~\eqref{eqn-m/n-relation} holds, and $a_m(\ep) \geq q_0$, then 
\eqb \label{eqn-ratio-compare}
  \BB P\left( J > n \,|\, E_{\wt m}(\zeta)^c ,\, J > \wt m \right)  \leq  \alpha \BB P\left(J >n \,|\, E_{\wt m}(\zeta) \right) .
\eqe
 
Hence if $m\geq m_* $,~\eqref{eqn-m/n-relation} holds, and $a_m(\ep) \geq q_0$ then
\begin{align}
a_n(\ep) &= \frac{\BB P(E_n(\ep))}{\BB P(J >n )}
\geq \frac{ \BB P\left(E_n(\ep) \,|\, E_{\wt m}(\zeta) \right) a_{\wt m}(\zeta) }{\BB P\left(J > n \,|\, E_{\wt m}(\zeta) \right)  a_{\wt m}(\zeta)   + \BB P\left( J > n \,|\, E_{\wt m}(\zeta)^c,\, J > \wt m \right) (1-a_{\wt m}(\zeta))     } \notag \\
&\geq \frac{   \BB P\left(E_n(\ep) \,|\, E_{\wt m}(\zeta) \right)   }{\BB P\left(J > n \,|\, E_{\wt m}(\zeta) \right) } \times   \frac{ a_{\wt m}(\zeta) }{  a_{\wt m}(\zeta)   + \alpha(1-a_{\wt m}(\zeta))     }  . \label{eqn-a-induct1}
\end{align}
By~\eqref{eqn-last-given-mid},
\alb
\frac{   \BB P\left(E_n(\ep) \,|\, E_{\wt m}(\zeta) \right)   }{\BB P\left(J > n \,|\, E_{\wt m}(\zeta) \right) }  = \BB P\left(E_n(\ep) \,|\, E_{\wt m}(\zeta) ,\, J > n\right) \geq 1-  q .
\ale
Furthermore,
\eqb \label{eqn-a(ep)-lower}
a_{\wt m}(\zeta) \geq \BB P\left(E_{\wt m}(\zeta) \,|\, E_m(\ep) ,\, J >\wt m \right) \BB P\left(E_m(\ep) \,|\, J > \wt m\right) \geq (1- q ) \BB P\left(E_m(\ep) \,|\, J > \wt m\right) .
\eqe 
By Bayes' rule,
\begin{align}
\BB P\left(E_{m}(\ep) \,|\ J  >  \wt m \right) &= \frac{\BB P\left(  J  > \wt m \,|\, E_{m}(\ep) \right) \BB P\left(E_{m}(\ep) \,|\, J > m \right) }{\BB P\left(  J  > \wt m   \,|\,   J> m \right)} \notag \\
&\geq  \frac{\BB P\left(  J  > \wt m \,|\, E_{m}(\ep) \right) a_{m}(\ep)  }{\BB P\left(J > \wt m\,|\, E_{m}(\ep) \right) a_{m}(\ep) + \BB P\left(J  > \wt m ,\,  E_{ m}(\ep)^c  \,|\, J >m \right) }  .  
\end{align} 
By \cite[Theorem 2.5]{shef-burger} and our assumption on $a_m(\ep)$, this quantity is at least a positive constant $c $ depending on $q_0$, $\lambda$ and $\ep$ (but not on $\zeta$). Therefore,~\eqref{eqn-a(ep)-lower} implies $a_{\wt m}(\zeta) \geq (1-  q ) c $, so~\eqref{eqn-a-induct1} implies
\eqbn
a_n(\ep )\geq \frac{(1-  q)^2 c }{(1-  q )c  + \alpha } .
\eqen
If we choose $\alpha$ sufficiently small relative to $c $ (and hence $\zeta$ sufficiently small and $m$ sufficiently large), we can make this quantity as close to $1-q$ as we like. Since $q \in (0,1)$ is arbitrary we obtain the statement of the lemma.
\end{proof}

\begin{proof}[Proof of Proposition~\ref{prop-F-regularity}]
Let $q_0$ be as in the conclusion of Lemma~\ref{prop-F-regularity-subsequence}. Also fix $q\in (0,1-q_0]$ and $\lambda \in (0,1/2)$. Let $\ep_0  > 0$ and $m_*  = m_*(q,q_0,\lambda,\ep_0) \in \BB N$ be chosen so that the conclusion of Lemma~\ref{prop-F-induct} holds with this choice of $q_0$. By Lemma~\ref{prop-F-regularity-subsequence} we can find $m \geq m_*$ such that $a_m(\ep_0) \geq q_0$. It therefore follows from Lemma~\ref{prop-F-induct} that $a_n(\ep_0) \geq 1-q$ for each $n \in \BB N$ with $(1-\lambda)^{-1} m \leq n \leq \lambda^{-1} m$. By induction, for each $k\in\BB N$ and each $n\in\BB N$ with $(1-\lambda)^{-k} m \leq n \leq \lambda^{-k} m$, we have $a_n( \ep_0) \geq 1-q \geq q_0$. For sufficiently large $k\in\BB N$, the intervals $[(1-\lambda)^{-k} m , \lambda^{-k} m]$ and $[(1-\lambda)^{-k-1} m , \lambda^{-k-1} m]$ overlap, so it follows that for sufficiently large $n\in\BB N$, we have $[n,\infty)\subset \bigcup_{k\in\BB N} [(1-\lambda)^{-k} m , \lambda^{-k} m]$. Hence $a_n(\ep_0) \geq 1-q$ for each such $n$. Thus~\eqref{eqn-a-lim} holds. 
\end{proof}

\section{Convergence conditioned on no burgers}
\label{sec-no-burger}

\subsection{Statement and overview of the proof} \label{sec-no-burger-setup}
In this section we will prove the following theorem, which is of independent interest but is also needed for the proof of Theorem~\ref{thm-cone-limit}. 
 
\begin{thm} \label{thm-no-burger-conv}
As $n\rta \infty$, the conditional law of $Z^n|_{[-1,0]}$ given the event that $X(-n,-1)$ contains no burgers converges to the law of $\wh Z(-\cdot)$, where $\wh Z$ has the law of a Brownian motion as in~\eqref{eqn-bm-cov} started from 0 and conditioned to stay in the first quadrant until time 1 (as defined just above Lemma~\ref{prop-bm-meander}). 
\end{thm}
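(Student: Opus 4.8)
\textbf{Proof strategy for Theorem~\ref{thm-no-burger-conv}.}

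The plan is to use the characterization of $\wh Z$ provided by Lemma~\ref{prop-bm-meander}: any a.s.\ continuous process $\wt Z$ satisfying positivity at all positive times together with the Markov-type consistency property (conditions (1) and (2) of that lemma) must equal $\wh Z$ in law. So I would define a candidate subsequential limit of the conditional law of $Z^n|_{[-1,0]}$ (read backward), verify tightness so that such limits exist, and then check that any subsequential limit satisfies the two conditions of Lemma~\ref{prop-bm-meander}, forcing uniqueness of the limit and hence convergence of the full sequence.

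\emph{Tightness.} First I would establish tightness of the conditional law of $Z^n|_{[-1,0]}$ given $\{J>n\}$ (equivalently given that $X(-n,-1)$ contains no burgers) in the uniform topology on $C([-1,0],\BB R^2)$. The key point is that on the event $F_n$ (which has conditional probability $\geq 1 - o_n^\infty(n)/\BB P(J>n)$, using Lemma~\ref{prop-few-F} together with the lower bound $\BB P(J>n) \geq n^{-\mu+o_n(1)}$ from Lemma~\ref{prop-X-asymp}), the path $Z^n$ differs from the walk $(d,d^*)$ by at most $n^{\nu-1/2} \to 0$ in each coordinate, and the running infima of $d$ and $d^*$ over $[-n,-1]$ stay uniformly bounded (they equal $-\mcl N_{\tc H}$ and $-\mcl N_{\tc C}$ of the reversed subword, which are zero on $\{J>n\}$ up to the $\tb F$-corrections). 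Standard modulus-of-continuity estimates for the unconditioned walk, transferred to the conditioned law via the lower bound on $\BB P(J>n)$, then give tightness. This is essentially the argument already used to prove Lemma~\ref{prop-Z-cond-limit}.

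\emph{Identifying the limit.} Let $\wt Z$ be the time reversal of any subsequential limit; I must check conditions (1) and (2) of Lemma~\ref{prop-bm-meander}. For condition (2), the Markov/consistency property, I would fix $\zeta\in(0,1)$ and $s=1-\zeta$, condition further on the value of $X_{-\lfloor sn\rfloor}\dots X_{-1}$, and invoke Lemma~\ref{prop-Z-cond-limit}: on the event $E_{\lfloor sn \rfloor}(\ep)\cap F_{\lfloor sn\rfloor}$, the conditional law of $Z^n_{[-1,-s]}$ given $\{J>n\}$ and the initial subword is close (in Prokhorov distance) to the law of $Z_{[-1,-s]}$ conditioned on $\wt G^s(\ep_1,\ep_2)$, i.e.\ a correlated Brownian motion conditioned to stay in (a neighborhood of) the third quadrant relative to its endpoint — which, upon time reversal, is exactly a Brownian motion conditioned to stay in the first quadrant started from the appropriate point, as in condition (2). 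Here Proposition~\ref{prop-F-regularity} is the crucial input: it guarantees that $E_{\lfloor sn\rfloor}(\ep)$ occurs with conditional probability close to $1$ (after first sending $n\to\infty$ and then $\ep\to 0$), so the ``bad'' initial subwords where $Z^n(-s)$ is too close to the boundary are negligible, and the limiting endpoint $\wt Z(\zeta)=Z(-s)$ is a.s.\ in the open first quadrant. Condition (1), strict positivity of $\wt U(t),\wt V(t)$ for each $t\in(0,1]$, follows from the same analysis applied at time $s=1-t$: Proposition~\ref{prop-F-regularity} gives that $n^{-1/2}d(-\lfloor sn\rfloor)$ and $n^{-1/2}d^*(-\lfloor sn\rfloor)$ are bounded below away from $0$ with conditional probability tending to $1$ as $n\to\infty$ then $\ep\to 0$, which passes to the limit. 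Since these two conditions uniquely determine the law (Lemma~\ref{prop-bm-meander}), every subsequential limit is $\wh Z(-\cdot)$, and the full conditional law converges.

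\emph{Main obstacle.} The heavy lifting is all contained in Proposition~\ref{prop-F-regularity} and Lemma~\ref{prop-Z-cond-limit}, which are proved in Section~\ref{sec-F-reg}; given those, the argument here is a relatively routine tightness-plus-uniqueness argument. The one genuinely delicate point at this stage is the endpoint continuity: one must rule out the possibility that in the limit $\wt Z(\zeta)$ lies on the boundary of the first quadrant, since the conditioned-Brownian-motion law $\BB P(\,\cdot\mid \wt G^s(\ep_1,\ep_2))$ is only well-behaved for $\ep_1,\ep_2$ bounded away from $0$. This is precisely what the double limit $\lim_{\ep\to 0}\liminf_{n\to\infty}$ in Proposition~\ref{prop-F-regularity} is designed to handle, and carrying the $\ep\to 0$ limit through the Prokhorov-distance bound of Lemma~\ref{prop-Z-cond-limit} — uniformly in $s$ over a compact subinterval of $(0,1)$ — is the part requiring the most care.
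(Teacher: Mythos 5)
Your identification-of-the-limit step matches the paper's: tightness plus the uniqueness characterization of Lemma~\ref{prop-bm-meander}, with condition (2) verified via Lemma~\ref{prop-Z-cond-limit} and Proposition~\ref{prop-F-regularity} controlling the conditional law of the tail of the word given a good initial segment. (The paper additionally routes the conclusion through Lemma~\ref{prop-cond-law-conv} to justify that the limit of the conditional laws is the regular conditional law of the limit, but that is a technicality you could supply.)

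The genuine gap is in your tightness argument. You propose to take ``standard modulus-of-continuity estimates for the unconditioned walk'' and transfer them to the conditional law ``via the lower bound on $\BB P(J>n)$.'' That transfer fails. Fix $\zeta>0$ and a target probability $q$; tightness requires a $\delta>0$, independent of $n$, such that the conditional probability of an oscillation of size $\zeta$ over some interval of length $\delta$ is at most $q$. For fixed $(\zeta,\delta)$ the unconditional probability of such an oscillation converges (by \cite[Theorem 2.5]{shef-burger}) to the corresponding Brownian probability, which is a positive constant not decaying in $n$; dividing by $\BB P(J>n)= n^{-\mu+o_n(1)}$ gives a bound that blows up like $n^{\mu}$. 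The exponential tail bound \cite[Lemma 3.13]{shef-burger} only beats this polynomial factor if the oscillation threshold is taken of order $\log n\cdot n^{1/2}$ rather than $\zeta n^{1/2}$, which is not enough for equicontinuity. Your parenthetical observation that the running infima of $d,d^*$ vanish on $\{J>n\}$ only gives a one-sided (lower) bound on the path, not an upper bound or a modulus of continuity. This is exactly why the paper spends Sections~\ref{sec-burger-times-pos} and~\ref{sec-upper-reg} proving Lemmas~\ref{prop-F-pos}, \ref{prop-upper-regularity-subsequence} and~\ref{prop-initial-reg}: one first finds, for each $\zeta$, a scale $m_n\asymp\zeta^2 n/(\log q^{-1})^2$ at which $\sup_{j\le m_n}|X(-j,-1)|\le \zeta n^{1/2}$ holds with high probability conditionally on $\{J>m_n\}$ (obtained by a renewal/translation-invariance argument at an exponentially sparse set of scales, then a Bayes-rule argument to upgrade the conditioning from $\{J>m_n\}$ to $\{J>n\}$), and only then does Lemma~\ref{prop-Z-cond-limit} control the remaining portion $[-1,-m_n/n]$, where the conditioning is soft because the initial segment already carries $\ep n^{1/2}$ orders of each type. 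Without this input your proof of tightness does not go through, and everything downstream is conditional on it.
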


\begin{remark}
There is an analogue of Theorem~\ref{thm-no-burger-conv} when we condition on the event that $X(1,n)$ contains no orders, rather than the event that $X(-n,-1)$ contains no orders, which is proven in a similar manner as Theorem~\ref{thm-no-burger-conv}. See Appendix~\ref{sec-no-order-conv}. 
\end{remark}

Throughout this section, we continue to use the notation of Section~\ref{sec-F-reg-setup}, so in particular $J$ is the smallest $j\in\BB N$ for which $X(-j,-1)$ contains a burger.  

The basic outline of the proof of Theorem~\ref{thm-no-burger-conv} is as follows. First, in Section~\ref{sec-burger-times-pos}, we will prove a result to the effect that when $N \in \BB N$ is large, it holds with uniformly positive probability that there is an $i\in [n,Nn]_{\BB Z}$ such that $X(1,i)$ contains no burgers. Using this, in Section~\ref{sec-upper-reg} we will prove a result to the effect that $X(-m_n ,-1)$ is unlikely to have too \textit{many} orders when we condition on $\{J  >n\}$, for $m_n \leq n$ with $m_n\asymp n$ (this complements Proposition~\ref{prop-F-regularity}, which says that $X(-n,-1)$ is unlikely to have too few orders under this conditioning). In Section~\ref{sec-no-burger-tight}, we will use these results to prove tightness of the conditional laws of $Z^n|_{[-1,0]}$ given $\{J>n\}$. In Section~\ref{sec-no-burger-proof}, we will complete the proof of Theorem~\ref{thm-no-burger-conv} by using Lemma~\ref{prop-bm-meander} to identify a subsequential limiting law. 

\subsection{Times with empty burger stack}
\label{sec-burger-times-pos}

In this section, we will prove the following straightforward consequence of Lemma~\ref{prop-F-regularity}, which is a weaker version of Proposition~\ref{prop-late-F} (but which is indirectly needed for the proof of Proposition~\ref{prop-late-F}).

\begin{lem} \label{prop-F-pos}
Fix $N\in\BB N$ and for $n\in\BB N$, let $\mcl E_n = \mcl E_n(N)$ be the event that there is an $i \in [n ,    Nn]_{\BB Z}$ such that $X(1,i)$ contains no burgers. There is a constant $b > 0$ and an $N_*\in\BB N$ (independent of $n$) such that for $N\geq N_*$ and $n\in\BB N$, 
\eqb \label{eqn-F-pos}
\BB P\left(\mcl E_n \right) \geq b  ,\quad \forall n\in\BB N.
\eqe 
\end{lem}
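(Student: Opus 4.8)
\textbf{Proof proposal for Lemma~\ref{prop-F-pos}.}

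The plan is to deduce this from Proposition~\ref{prop-F-regularity} together with the renewal structure of the times at which the burger stack is empty, read in the forward direction. Recall that $K_m$ denotes the $m$th smallest $i \in \BB N$ for which $X(1,i)$ contains no burgers, and that (as established in the proof of Proposition~\ref{prop-infinite-F}) the increments $K_m - K_{m-1}$ are i.i.d., so $\{K_m\}$ is a renewal process; moreover $K_1 < \infty$ a.s. The event $\mcl E_n(N)$ is precisely the event that some renewal point falls in $[n, Nn]_{\BB Z}$. So it suffices to show that the renewal process cannot have arbitrarily long ``gaps'' straddling $n$ with probability bounded away from $0$ uniformly in $n$, once $N$ is large enough.

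First I would restate the problem in terms of $J$. By translation invariance of $X$, the event that $X(-n,-1)$ contains no burgers has the same probability as the event that $X(1,n)$ contains no burgers, and the event $\{J > n\}$ is exactly the former. Now fix $\ep > 0$ small enough and $n_0$ large enough that $a_n(\ep) = \BB P(E_n(\ep) \mid J > n) \geq 1/2$ for all $n \geq n_0$; this is possible by Proposition~\ref{prop-F-regularity}. The key point is that on the event $E_n(\ep) \cap \{J > n\}$, the word $X(-n,-1)$ contains at least $\ep n^{1/2}$ hamburger orders and at least $\ep n^{1/2}$ cheeseburger orders and no burgers; by translation invariance, equivalently, with probability at least $\tfrac12 \BB P(J>n)$ the word $X(1,n)$ has this property. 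Starting from such a configuration at time $n$, there is a uniformly positive probability (by \cite[Theorem 2.5]{shef-burger}, or just by a direct estimate on the walk $D$) that within the next, say, $C \ep^{-2} n$ steps the remaining $\asymp \ep n^{1/2}$ orders of each type get cancelled against fresh burgers and the stack empties out — i.e.\ there is an $i \in [n, n + C\ep^{-2} n]_{\BB Z}$ with $X(1,i)$ containing no burgers. Combined with the a.s.\ finiteness of the first renewal time (so $\BB P(K_1 \le n) \to 1$, giving $\BB P(J > n)$ not too small relative to... — here I need to be slightly careful, since $\BB P(J>n) \to 0$).

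To handle the decay of $\BB P(J > n)$, the cleaner route is the renewal-theoretic one: I would argue directly that the probability that the renewal process $\{K_m\}$ has no point in $[n, Nn]_{\BB Z}$ tends to $0$ as $N \to \infty$, uniformly in $n$. Condition on the last renewal point $K_{M} \le n$; write $n - K_M = \ell$. If $\ell \le n/2$, then $K_M \ge n/2$ is large, and the scaling-limit/stack-emptying argument of the previous paragraph applied from time $K_M$ (where the stack is genuinely empty) shows that with uniformly positive probability the next renewal occurs within $O(n)$ steps, hence in $[n, Nn]$ for $N$ large. If $\ell > n/2$, then the increment $K_{M+1} - K_M$ of the renewal process is already at least $\ell > n/2$ and has not yet completed; using the inflation estimate for $\BB P(J > m)$ from Lemma~\ref{prop-X-asymp} (namely $\BB P(J>m) \ge m^{-\mu + o_m(1)}$ with $\mu < 1$, so the increment distribution has a heavy enough tail that, conditioned on exceeding $\ell$, it lands below $N\ell/2 \le Nn$ with probability bounded below uniformly as $N \to \infty$), one again gets a renewal point in $[n, Nn]$ with uniformly positive probability. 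Averaging over the two cases gives the bound $\BB P(\mcl E_n(N)) \ge b$ for $N \ge N_*$.

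The main obstacle I anticipate is the second case: making the ``conditioned-to-exceed-$\ell$ increment lands in a bounded multiple of $\ell$'' step uniform in both $n$ and the position of the last renewal, using only the one-sided tail bound from Lemma~\ref{prop-X-asymp} (which has an $o_m(1)$ in the exponent and no matching upper bound at this stage of the paper). One cannot yet invoke regular variation of $K_1$'s tail — that is Proposition~\ref{prop-J-reg-var}, proven later — so the argument must get by with the lower bound $\BB P(K_1 > m) \ge m^{-\mu + o_m(1)}$, $\mu<1$, and crude upper bounds. The honest fix is probably to avoid conditioning on $K_M$ altogether and instead run the argument of the first paragraph above from time $n/2$: with probability $\ge \tfrac12 \BB P(J > n/2)$ the stack at time $n/2$ looks like $E_{n/2}(\ep)$, and then one needs that, \emph{conditionally on this}, a renewal occurs in $[n, Nn]$ with probability $\to 1$ as $N\to\infty$; this last step is a statement purely about the walk $D$ started from a point at distance $\asymp n^{1/2}$ from both axes and asks for it to hit the ``stack empties'' configuration, which follows from Proposition~\ref{prop-F-regularity} applied at scale $\asymp n$ together with \cite[Theorem 2.5]{shef-burger}. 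I would organize the write-up around this cleaner version, keeping the renewal language only to translate ``stack empties in $[n,Nn]$'' into the event $\mcl E_n(N)$.
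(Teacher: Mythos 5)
There is a genuine gap here, and it is structural rather than a matter of detail. Every version of your argument lower-bounds $\BB P(\mcl E_n)$ by the probability of one specific good scenario — the walk is in an $E_m(\ep)$-type configuration with an empty burger stack at some time $m\asymp n$, after which something favorable happens. But the probability of that scenario is at most $\BB P(J>m)\rta 0$ (your own expressions $\ge \tfrac12 \BB P(J>n)$ and $\ge\tfrac12\BB P(J>n/2)$ make this explicit), so multiplying it by a "uniformly positive conditional probability," or even by a probability tending to $1$, can never produce a bound $b>0$ uniform in $n$. The event $\mcl E_n$ has uniformly positive probability only because there are $\asymp Nn$ individually unlikely chances $i\in [n,Nn]_{\BB Z}$, and converting the first moment of the count $B_n:=\#\{i\in [n,Nn]_{\BB Z}: X(1,i)\text{ has no burgers}\}$ into a probability requires a second-moment input. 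That is exactly what the paper does: Lemma~\ref{prop-renewal-relation} gives the exact factorization $\BB P(E_i\cap E_j)=\BB P(E_i)\BB P(E_{j-i})$, the one-sided comparison $\BB P(E_{Ni})\succeq N^{-\mu}\BB P(E_i)$ of Lemma~\ref{prop-J-compare} (which uses only Proposition~\ref{prop-F-regularity} and the lower-bound machinery you cite) yields $\BB E(B_n^2)\preceq \BB E(B_n)+N^\mu\BB E(B_n)^2$, and the Paley--Zygmund inequality gives $\BB P(B_n>0)\succeq N^{-\mu}$. No upper bounds on any tail are needed.

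Your overshoot/"case 2" route has a second, independent obstruction that you partly sensed: making "the increment straddling $n$ lands below $Nn$ with positive conditional probability" uniform requires an upper bound $\sup_m\BB P(J>Nm\mid J>m)\le 1-\delta$, not the lower bound of Lemma~\ref{prop-X-asymp}. Proving such an upper bound requires knowing that, conditionally on $\{J>m\}$, the reduced word $|X(-m,-1)|$ is $O_m(m^{1/2})$ with probability bounded below — and in the paper that upper-regularity statement (Section~\ref{sec-upper-reg}) is itself deduced from Lemma~\ref{prop-F-pos}, so this route is circular with the tools available at this point. Finally, a small but telling misreading in your first paragraph: if $X(1,n)$ contains no burgers, then $i=n$ already witnesses $\mcl E_n$, and the leftover orders in $\mcl R(X(1,n))$ are matched to symbols at times $\le 0$ and are never cancelled by future burgers; nothing needs to "empty out," since the defining condition concerns burgers only.
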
 

First we need the following lemma.

\begin{lem} \label{prop-J-compare}
Let $J$ be as in Section~\ref{sec-F-reg-setup} and let $\mu$ be as in~\eqref{eqn-cone-exponent}. For each $N\in\BB N$, we have
\eqbn
 \BB P\left( J  > N n \,|\, J > n \right) \succeq N^{-\mu} + o_n(1) ,
\eqen
with the implicit constant independent of $n$ and $N$. 
\end{lem}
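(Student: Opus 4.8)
The plan is to prove Lemma~\ref{prop-J-compare} by combining the lower bound for $\BB P(J > Nn)$ coming from Lemma~\ref{prop-X-asymp} with a matching upper bound for $\BB P(J > n)$, and then to deduce Lemma~\ref{prop-F-pos} from it via a renewal/second-moment argument. For the lower bound, note that $\{J > Nn\}$ is the event that $X(-Nn,-1)$ contains no burgers, and by Lemma~\ref{prop-X-asymp} (applied with $Nn$ in place of $n$) we have $\BB P(J > Nn) \geq \BB P(R_{Nn}(C)) \geq (Nn)^{-\mu + o_{Nn}(1)}$ for any fixed $C > 4$. The subtlety is that the $o_{Nn}(1)$ must be handled uniformly in $N$; I would fix $N$ first and take $n \rta \infty$, so that for each fixed $N$, $\BB P(J > Nn) \geq (Nn)^{-\mu}(1 + o_n(1))$ where the $o_n(1)$ depends on $N$. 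This gives $\BB P(J > Nn) \geq N^{-\mu} n^{-\mu}(1 + o_n(1))$.

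For the upper bound on $\BB P(J > n)$, I would use the renewal structure from the proof of Proposition~\ref{prop-infinite-F}: recall $K_m$ is the $m$th smallest $i \in \BB N$ with $X(1,i)$ containing no burgers, $\{K_m\}$ is a renewal process, and $\{J > n\}$ has the same probability (by translation invariance) as the event that $X(1,n)$ contains no burgers, i.e.\ that $n$ is one of the renewal times $K_m$. So $\BB P(J > n) = \BB P(\text{$n = K_m$ for some $m$}) = \BB P(E_n)$ in the notation of Lemma~\ref{prop-few-renewal} (with $Y_j = K_j - K_{j-1}$). By Lemma~\ref{prop-renewal-moment} applied with $k = 1$ — using that $\BB P(E_i) = \BB P(J > i) \geq i^{-\mu + o_i(1)}$ gives the hypothesis~\eqref{eqn-few-renewal-lower} is \emph{not} quite what I want here; rather I want an upper bound. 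Actually the cleanest route is: $\BB P(J > n) \leq \BB P(J > \lfloor n/N \rfloor)$ trivially, so this direction is easy — what I really need is a \emph{lower} bound on the conditional probability, i.e.\ $\BB P(J > Nn)/\BB P(J > n) \succeq N^{-\mu}$. So I need an \emph{upper} bound $\BB P(J > n) \leq n^{-\mu + o_n(1)}$ to pair with the lower bound $\BB P(J > Nn) \geq (Nn)^{-\mu}(1+o_n(1))$. Wait — the conclusion only claims $\succeq N^{-\mu} + o_n(1)$, i.e.\ a lower bound on the ratio, so I need $\BB P(J > Nn) \geq$ (something) and $\BB P(J > n) \leq$ (something). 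The upper bound $\BB P(J > n) \leq n^{-\mu + o_n(1)}$ is exactly the content of a sharper estimate; since Remark after Lemma~\ref{prop-X-asymp} flags that a two-sided bound (Proposition~\ref{prop-J-reg-var}) is proven later, and the present lemma is used before that, I should instead argue more cleverly.

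The right approach, avoiding circularity, is to use subadditivity/renewal theory directly. Set $p_n := \BB P(J > n) = \BB P(X(1,n)\text{ has no burgers})$. The renewal identity from Lemma~\ref{prop-renewal-relation} (with $i_k - i_{k-1}$ all equal, or more simply the strong Markov property at the last renewal before time $n$) gives the key superadditive-type inequality: writing $J^{(1)}, J^{(2)}, \dots$ for the i.i.d.\ increments $K_m - K_{m-1}$, the event that both $X(1,n)$ and $X(1, Nn)$ have no burgers contains the event that $K_m = n$ for some $m$ \emph{and} the fresh word $X(n+1, Nn)$ has no burgers (which has probability $p_{(N-1)n}$ and is independent of the former by the strong Markov property). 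Hence $\BB P(J > Nn \mid J > n) \geq \BB P(\text{$n$ is a renewal}\mid J > n) \cdot \inf$... — more directly, $p_{Nn} \geq \BB P(n = K_m\text{ for some }m)\, p_{(N-1)n}$ is \emph{false} in general; the correct statement is $p_{a+b} \geq $ (probability that $a$ is a renewal) $\times p_b$ only when $a$ is itself forced, which it is not. Instead: $\BB P(J > Nn \mid J > n)$ — here $\{J > n\}$ already means $n$ is \emph{not necessarily} a renewal but that $X(1,n)$ has no burgers, which \emph{is} the renewal event. So conditioned on $\{J > n\} = \{n = K_m \text{ for some }m\}$, by the strong Markov property at $K_m = n$, the future word $X(n+1, \cdot)$ is a fresh copy, and $\{J > Nn\}$ holds iff this fresh word $X(n+1, Nn)$ also has no burgers, i.e.\ with conditional probability $p_{(N-1)n}$. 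Thus $\BB P(J > Nn \mid J > n) = p_{(N-1)n}$. Now I just need $p_{(N-1)n} \succeq N^{-\mu} + o_n(1)$, which follows from Lemma~\ref{prop-X-asymp}: $p_{(N-1)n} \geq \BB P(R_{(N-1)n}(C)) \geq ((N-1)n)^{-\mu + o_{(N-1)n}(1)}$, and fixing $N$ and letting $n\rta\infty$ this is $\geq c N^{-\mu} n^{-\mu}$...

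The issue is the stray $n^{-\mu}$ factor. Let me reconsider: the conclusion $\BB P(J > Nn \mid J > n) \succeq N^{-\mu} + o_n(1)$ should have the $N^{-\mu}$ term \emph{not} multiplied by anything vanishing in $n$, so the Markov identity $\BB P(J > Nn\mid J>n) = p_{(N-1)n}$ cannot be right either, because $p_{(N-1)n}\rta 0$. The resolution: $\{J > n\}$ is NOT the renewal event — $J$ is the smallest $j$ with $X(-j,-1)$ containing a burger, so $\{J > n\}$ means $X(-n,-1)$ has NO burger; by translation invariance this equals $\BB P(X(1,n)\text{ has no burger}) = \BB P(K_1 > n) = \BB P(\text{no renewal in } [1,n])$, which is \emph{small}. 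So the Markov identity is: conditioned on $\{K_1 > n\}$, we are at a state with $\geq$ (some) burgers on the stack at time $n$ read backward — but actually in the backward picture $\{J > n\}$ means the first $n$ steps backward have no burger. Then $\{J > Nn\}$ requires the continuation (steps $n+1$ through $Nn$ backward) to also avoid creating a burger, which is \emph{harder} the more orders are on the stack. So $\BB P(J > Nn\mid J > n) = \BB E[\BB P(\text{no burger in next }(N-1)n\text{ steps}\mid \text{current stack})\mid J>n]$. I would lower-bound this by restricting to the event (of conditional probability bounded below, by Proposition~\ref{prop-F-regularity}, i.e.\ $E_n(\ep)$) that the stack at time $-n$ has $\asymp n^{1/2}$ orders of each type, and on that event use Lemma~\ref{prop-X-asymp} together with the scaling limit (Theorem~\ref{prop-burger-limit}) / Lemma~\ref{prop-bm-cone-asymp} to bound below the probability that the correlated walk, started from height $\asymp n^{1/2}$ above the boundary of the first quadrant, stays in the first quadrant for a further $(N-1)n$ steps — this is a Brownian-cone estimate giving a contribution $\succeq N^{-\mu}$ uniformly (Brownian scaling makes the starting offset $\asymp n^{1/2} = \asymp ((N-1)n)^{1/2}/\sqrt{N-1}$ which only costs a constant in $N$, or one absorbs it). Combining, $\BB P(J > Nn\mid J>n) \geq \BB P(E_n(\ep)\mid J>n)\cdot \inf_{\text{good stacks}}(\cdots) \succeq q_0 \cdot N^{-\mu} + o_n(1)$. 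The main obstacle is exactly this last step: making the Brownian-cone lower bound uniform in $N$, which requires carefully tracking how the $n^{1/2}$-scale starting offset interacts with the $\delta = 1/N$-scale cone-neighborhood in Lemma~\ref{prop-bm-cone-asymp}; I expect this to follow from the same break-into-$\delta^k$-blocks argument as in the proof of Lemma~\ref{prop-X-asymp}, now with the extra input that we start at positive distance from the boundary. Once Lemma~\ref{prop-J-compare} is in hand, Lemma~\ref{prop-F-pos} follows: $\mcl E_n(N)$ contains the event $\{\exists\, i \in [n, Nn] : X(1,i)\text{ has no burgers}\}$, and by the renewal structure and Lemma~\ref{prop-J-compare} (plus Lemma~\ref{prop-X-asymp} for a second-moment / Paley–Zygmund bound on the number of renewals in $[n, Nn]$) this has probability bounded below uniformly in $n$ for $N$ large.
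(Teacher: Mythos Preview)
After several false starts you eventually arrive at exactly the paper's approach: write
\[
\BB P(J > Nn \mid J > n) \;\geq\; \BB P(E_n(\ep) \mid J > n)\cdot \BB P(J > Nn \mid E_n(\ep)),
\]
bound the first factor below by a constant independent of $n$ and $N$ using Proposition~\ref{prop-F-regularity}, and bound the second factor below by $\succeq N^{-\mu} + o_n(1)$ using the scaling limit (Theorem~\ref{prop-burger-limit}) together with the Brownian cone estimate Lemma~\ref{prop-bm-cone-asymp}. The paper's proof is precisely this and fits in four lines.

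Your residual worry about uniformity in $N$ is not a genuine obstacle. The statement allows the $o_n(1)$ to depend on $N$ (and indeed the lemma is only ever applied with $N$ fixed, in the proof of Lemma~\ref{prop-F-pos}); what must be independent of $N$ is the implicit constant in front of $N^{-\mu}$. That constant comes straight from Lemma~\ref{prop-bm-cone-asymp}: on $E_n(\ep)$ the rescaled walk at time $-1$ sits at distance $\geq \ep$ from the boundary of the quadrant, and by Theorem~\ref{prop-burger-limit} the probability it stays in the quadrant for a further $N-1$ units of rescaled time converges to the corresponding Brownian probability, which by~\eqref{eqn-bm-cone-asymp} is $\asymp N^{-\mu}$ with constant depending on $\ep$ but not on $N$. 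No $\delta^k$-block decomposition is needed for this step. Your earlier attempts via renewal identities and direct tail bounds on $\BB P(J>n)$ were dead ends (the sharp upper bound on $\BB P(J>n)$ is Proposition~\ref{prop-J-reg-var}, proved later and using the present lemma), but you can safely discard them.
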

\begin{proof}
By Proposition~\ref{prop-F-regularity}, we can find $\ep  > 0$, independent of $n$, such that (in the notation of that lemma) we have $a_n(\ep ) \geq \frac12 + o_n(1)$. By \cite[Theorem 2.5]{shef-burger} and Lemma~\ref{prop-bm-cone-asymp} we have $\BB P\left( J  > N n \,|\, E_n(\ep) \right) \succeq 
N^{-\mu} + o_n(1)$, with the implicit constant depending on $\ep$ but not on $n$. Therefore,
\alb
 \BB P\left( J  > N n \,|\, J > n \right) \geq  \BB P\left( J  > N n \,|\, E_n(\ep) \right) a_n(\ep) \succeq N^{-\mu} + o_n(1). 
\ale
\end{proof}

\begin{proof}[Proof of Lemma~\ref{prop-F-pos}]
For $i\in\BB N$, let $E_i$ be the event that $X(1,i)$ contains no burgers.
For $j_1 \leq j_2 \in\BB N$, let $B(j_1,j_2)$ be the number of $i\in [j_1+1, j_2]_{\BB Z}$ such that $E_i$ occurs. Set $B_n := B(n,N n)$.
By Lemma~\ref{prop-renewal-relation} (applied with $S_m$ equal to the $m$th time $i$ for which $X(1,i)$ contains no burgers) we have
\begin{align} \label{eqn-second-moment-expand}
\BB E\left(B_n^2 \right) &=  \sum_{i=n}^{Nn-1} \BB P(E_i) +     2 \sum_{i=n}^{N n} \sum_{j=i+1}^{N n} \BB P\left( E_i \cap E_j \right) \notag \\
&=  \BB E(B_n)  +     2 \sum_{i= n}^{ N n -1 } \sum_{j=i+1}^{N n} \BB P\left( E_i  \right) \BB P\left(E_{j-i}\right)\notag \\
&= \BB E(B_n) +     2 \sum_{i= n}^{ N n -1 } \BB P\left( E_i  \right)  \sum_{j=1}^{Nn - i  }  \BB P\left(E_j \right)\notag \\ 
&=  \BB E(B_n) +     2 \sum_{i= n}^{ N n -1  } \BB P\left( E_i  \right) \BB E\left(B(1 , Nn - i   ) \right)       \notag  \\
&\leq  \BB E(B_n)+     2 \BB E(B_n) \BB E\left(B(1 , Nn ) \right)       .
\end{align}
By Lemma~\ref{prop-J-compare}, we can find a constant $c > 0$, independent from $N$ and $n$, such that for sufficiently large $i\in\BB N$ we have (with $J$ as in that lemma) that  
\eqbn
\BB P\left(E_{Ni} \right) = \BB P(J  > Ni ) \geq c N^{-\mu} \BB P(J>i) = c N^{-\mu}\BB P(E_i)  .
\eqen
Therefore,  
\[
\BB E\left(B(1 , Nn ) \right) = \sum_{i=1}^{Nn} \BB P(E_i) \leq c^{-1} N^{ \mu} \sum_{i=1}^{Nn} \BB P(E_{N i})  + O_n(1) . 
\]
Since $\BB P(E_i) = \BB P(J>i)$ is decreasing in $i$, this quantity is at most $c^{-1} N^{ \mu - 1} \BB E\left(B(1 , N^2n ) \right) + O_n(1)$. On the other hand,   
\eqb \label{eqn-B-expand}
 \BB E\left(B(1 , N^2n) \right)  = \BB E\left(B(1 , Nn ) \right) + \sum_{k=2}^{N } \BB E\left(B\left((k-1) Nn+1 , k Nn \right) \right)    .
\eqe 
By monotonicity each term in the big sum in~\eqref{eqn-B-expand} is at most $\BB E(B_n)$. Hence
\eqbn
 cN^{1-\mu}   \BB E\left( B(1 , Nn )    \right)    \leq    \BB E\left(B(1 , Nn ) \right) + (N  - 1) \BB E(B_n)   + O_n(1) .
\eqen
Upon re-arranging we get that for $N$ sufficiently large,
\eqbn
\BB E\left( B(1 , Nn )    \right) \leq \frac{N-1}{c N^{1-\mu}-1} \BB E(B_n)  + O_n(1) \preceq N^\mu \BB E(B_n) + O_n(1)     .
\eqen
By combining this with~\eqref{eqn-second-moment-expand}, we obtain
\alb
\BB E\left(  B_n^2 \right) \preceq  \BB E(B_n)+  N^\mu  \BB E(B_n)^2      .
\ale
Hence the Payley-Zygmund inequality implies
\eqbn
\BB P\left(\mcl E_n \right) =\BB P\left(B_n > 0\right) \succeq N^{-\mu}   .
\eqen
It is clear that $\BB P\left(\mcl E_n\right)$ is increasing in $N$, so we obtain the statement of the lemma. 
\end{proof}

\subsection{Upper bound on the number of orders}
\label{sec-upper-reg}

Proposition~\ref{prop-F-regularity} tells us that it is unlikely that there are fewer than $O_n(n^{1/2})$ hamburger orders or cheeseburgers orders in $X(-n,-1)$ when we condition on $\{J > n\}$. In this section, we will prove some results to the effect that it is unlikely that there are more than $O_n(n^{1/2})$ orders in $X(-n,-1)$ under this conditioning. These results are needed to prove tightness of the conditional law of $Z^n|_{[-1,0]}$ given $\{J > n\}$. 

We first need an elementary lemma which allows us to compare the lengths of the reduced words which we get when we read a given word forward to the lengths when we read the same word backward.

\begin{lem} \label{prop-reverse-length}
For $n\in\BB N$ and $j\in [2, n]_{\BB Z}$, we have 
\eqbn
  |X(j , n)| \leq     |X(1 , n)| +   |X(1,j-1 )|   .
\eqen
\end{lem}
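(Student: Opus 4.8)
The plan is to split the word $W := X_1\dots X_n$ at position $j$, writing $W = W_1 W_2$ with $W_1 := X_1\dots X_{j-1}$ and $W_2 := X_j\dots X_n$, and then to bookkeep how many symbols are cancelled when the two \emph{already reduced} pieces $\mcl R(W_1)$ and $\mcl R(W_2)$ are concatenated and reduced. Since $\mcl R$ is well defined on the semigroup generated by $\Theta$, we have $\mcl R(W) = \mcl R\bigl(\mcl R(W_1)\,\mcl R(W_2)\bigr)$. Set $a := |\mcl R(W_1)| = |X(1,j-1)|$, $b := |\mcl R(W_2)| = |X(j,n)|$, and $c := |\mcl R(W)| = |X(1,n)|$; the goal is the inequality $b \leq c + a$.

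The key observation is structural. Reading $\mcl R(W_1)\,\mcl R(W_2)$ from left to right, it is a concatenation of four blocks: the orders of $\mcl R(W_1)$, the burgers of $\mcl R(W_1)$, the orders of $\mcl R(W_2)$, and the burgers of $\mcl R(W_2)$ (each $\mcl R(W_i)$ is in normal form, all orders left of all burgers). The only new adjacencies introduced by the concatenation are between burgers of $\mcl R(W_1)$ and orders of $\mcl R(W_2)$, and I would argue, using only the relations~\eqref{eqn-theta-ful} and~\eqref{eqn-theta-com}, that every cancellation occurring in the reduction pairs exactly one burger of $\mcl R(W_1)$ with exactly one order of $\mcl R(W_2)$: the orders of $\mcl R(W_1)$ are to the left of all burgers and can never be consumed, the burgers of $\mcl R(W_2)$ are to the right of all orders and can never be consumed (order–then–burger never reduces, and the commutativity relations do not create a reducible pair among them), and cancelling a burger of $\mcl R(W_1)$ against an order of $\mcl R(W_2)$ only ever re-exposes burger–burger or order–order adjacencies, which do nothing. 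Consequently, if $k$ denotes the number of cancellations, then $c = a + b - 2k$.

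Finally, since each cancellation consumes a distinct burger of $\mcl R(W_1)$, we have $k \leq \mcl N_{\tc H}(\mcl R(W_1)) + \mcl N_{\tc C}(\mcl R(W_1)) \leq |\mcl R(W_1)| = a$. Substituting into $c = a + b - 2k$ gives $b = c - a + 2k \leq c - a + 2a = c + a$, i.e.\ $|X(j,n)| \leq |X(1,n)| + |X(1,j-1)|$, as desired.

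The proof is short and the only point requiring genuine care — the ``main obstacle,'' though it is a minor one — is the structural observation in the second paragraph: justifying rigorously that reducing a concatenation of two normal-form words produces cancellations \emph{only} between a burger of the left word and an order of the right word. I would isolate this as a one-line sub-claim about reducing $\mcl R(W_1)\mcl R(W_2)$, proved by noting that each order of $\mcl R(W_2)$, via the commutativity relations, either slides leftward past incompatible burgers of $\mcl R(W_1)$ until it annihilates a compatible one (a flexible order $\tb F$ annihilating either type of burger), or slides entirely to the left of all burgers and survives as a leftover order; in neither case is a burger of $\mcl R(W_2)$ or an order of $\mcl R(W_1)$ touched.
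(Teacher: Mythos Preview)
Your proof is correct. The argument that reducing $\mcl R(W_1)\mcl R(W_2)$ only cancels burgers of $\mcl R(W_1)$ against orders of $\mcl R(W_2)$ is sound: reading left to right, orders in the $O_1$ block have nothing to their left, burgers in the $B_2$ block have no orders to their right, and each order in $O_2$ (including $\tb F$'s) either annihilates a burger in $B_1$ or commutes past all of them and survives. The identity $c=a+b-2k$ and the bound $k\le a$ then give the inequality.

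The paper's proof is organized differently: it uses the matching function $\phi$ on the bi-infinite word, partitioning the surviving symbols of $X(j,n)$ into those whose match lies in $[1,j-1]$ (call this set $A_j$) and those whose match lies outside $[1,n]$ (call it $B_j$), then bounding $|A_j|\le |X(1,j-1)|$ and $|B_j|\le |X(1,n)|$ separately. The two arguments are secretly computing the same thing---your $k$ equals $|A_j|$---but your bookkeeping first produces the exact relation $b=c-a+2k$ and then bounds $k$, whereas the paper splits $b=|A_j|+|B_j|$ and bounds each piece. Your approach is slightly more self-contained in that it works for any deterministic word in the $\Theta$-semigroup and does not appeal to the almost-sure existence of matches in the bi-infinite word; the paper's approach is a touch shorter given that $\phi$ is already part of the ambient setup.
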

\begin{proof}
For $j\in [1, n]_{\BB Z}$, let $A_j$ denote the set of $k \in [j, n]_{\BB Z}$ with $\phi(k) \in [1, j-1]_{\BB Z}$ and let $B_j$ denote the set of $k \in [j, n]_{\BB Z}$ with $\phi(j) \leq 0$ or $\phi(j) \geq n+1$. Since every symbol in $X$ a.s.\ has a match, it follows that $| X(j,n)| = |A_j| + |B_j|$. On the other hand, for $k\in A_j$ we have that $X_{\phi(k)}$ appears in $X(1,j-1)$ and for $k\in B_j$ we have that $X_k$ appears in $ X(1,n) $. The statement of the lemma follows.
\end{proof}

We next prove a regularity result for the length of the reduced words condition on $\{J > m\}$ which holds for a set of times $m \in \BB N$ which is at most exponentially sparse.

\begin{lem} \label{prop-upper-regularity-subsequence}
For $C>1$ and $m\in\BB N$, let
\eqbn
\wh G_m(C) := \left\{\sup_{j\in [1, m]_{\BB Z}} |X(-j,-1)| \leq C m^{1/2}\right\} .
\eqen
There is an $N_* \in \BB N$ such that for each $N\geq N_*$, there is a constant $c_*(N) >0$ (depending only on $N$) such that the following is true. For each $q\in (0,1/2 )$, there exists $k_* = k_*(q, N)$ such that for $k\geq k_*$, we can find $m \in [N^{k-1} + 1 ,  N^k]_{\BB Z}$ satisfying 
\eqb \label{eqn-upper-regularity-subsequence}
\BB P\left(  \wh G_{m} \left( c_*(N) \log q^{-1} \right) \,|\, J > m\right) \geq 1-q .
\eqe 
\end{lem}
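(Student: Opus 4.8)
The goal is a regularity bound, along an at-most-exponentially-sparse subsequence of times $m$, for the running maximum of $|X(-j,-1)|$ conditioned on $\{J>m\}$. The strategy is to leverage Lemma~\ref{prop-F-pos} (which gives a uniformly positive lower bound on the probability that $X(1,i)$ contains no burgers for some $i\in[n,Nn]_{\BB Z}$) together with Lemma~\ref{prop-reverse-length}, using translation invariance of the word $X$ to convert statements about reading the word forward into statements about reading it backward. First I would fix $N$ large enough that Lemma~\ref{prop-F-pos} applies with some fixed $b>0$, so that for each $n$ there is an index in $[n,Nn]_{\BB Z}$ at which the forward burger stack is empty, with probability at least $b$. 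Averaging over $n\in\{N^{k-1}+1,\dots,N^k\}$ shows that for at least a $b/2$-fraction of such $n$ (say), the probability that $X(1,i)$ contains no burgers for some $i\in[n,Nn]$ is $\geq b/2$; combined with the fact that once $X(1,i)$ has no burgers the interval structure is reset, this will let me extract, for each large $k$, a time $m\in[N^{k-1}+1,N^k]_{\BB Z}$ with $\BB P(J>m)$ not too small relative to neighboring scales — more precisely, bounded below by a fixed power of $m$ times a constant depending only on $N$, via a renewal argument as in the proof of Lemma~\ref{prop-F-pos}.

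Once such an $m$ is fixed, the core estimate is a union bound over $j\in[1,m]_{\BB Z}$. For each fixed $j$, I would use translation invariance to rewrite the event $\{|X(-j,-1)|>C m^{1/2}\}$ in terms of a forward segment of the word, and then apply Lemma~\ref{prop-reverse-length} to control $|X(-j,-1)|$ in terms of $|X(-m,-1)|$ and $|X(-m,-j-1)|$ (reading backward). The event $\{J>m\}$ only constrains the backward word $X(-m,-1)$, and the length $|X(-m,-1)|$ is exactly the number of orders remaining, which on $\{J>m\}$ is controlled via the scaling limit \cite[Theorem 2.5]{shef-burger}: conditioned on $\{J>m\}$, $Z^m|_{[-1,0]}$ is (by Proposition~\ref{prop-F-regularity} and the ideas there) comparable to a Brownian motion conditioned to stay in the first quadrant, whose running maximum has Gaussian-type tails. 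Thus $\BB P(|X(-j,-1)|>C m^{1/2}\mid J>m) \leq e^{-cC^2} + o_m(1)$ after accounting for the ratio $\BB P(J>j)/\BB P(J>m)$ or $\BB P(J>m)/\BB P(J>m)$ in the conditioning, which is where the power-of-$m$ lower bound on $\BB P(J>m)$ from the renewal step enters: it costs only a polynomial factor, absorbed by choosing $C = c_*(N)\log q^{-1}$ with $c_*(N)$ large enough.

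Summing the union bound over $j\in[1,m]_{\BB Z}$ gives a total of order $m\cdot(\text{polynomial in }m)\cdot e^{-cC^2}$, and taking $C=c_*(N)\log q^{-1}$ with $c_*(N)$ chosen so that $e^{-c\,c_*(N)^2(\log q^{-1})^2}$ beats the polynomial factor by a factor $q$ yields~\eqref{eqn-upper-regularity-subsequence}. The main obstacle, I expect, is making the sparse-subsequence extraction precise: one needs to verify that there really is a time $m$ in each dyadic-type block $[N^{k-1}+1,N^k]$ simultaneously satisfying a polynomial lower bound on $\BB P(J>m)$ \emph{and} for which the conditional scaling-limit estimates hold (i.e.\ $m$ is large enough that the $o_m(1)$ errors are controlled). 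This requires combining the renewal-type second-moment bound on the number of empty-burger times in $[N^{k-1},N^k]$ (exactly as in the proof of Lemma~\ref{prop-F-pos}, via Lemma~\ref{prop-renewal-relation} and Payley–Zygmund) with a pigeonhole argument, and tracking that the constant $c_*(N)$ and the threshold $k_*(q,N)$ depend only on the indicated parameters; the interplay between the uniformity in $j$ of the union bound and the choice of $m$ is the delicate point.
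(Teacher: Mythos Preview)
Your proposal has two genuine gaps.

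First, it is circular. You write that ``conditioned on $\{J>m\}$, $Z^m|_{[-1,0]}$ is (by Proposition~\ref{prop-F-regularity} and the ideas there) comparable to a Brownian motion conditioned to stay in the first quadrant, whose running maximum has Gaussian-type tails.'' But Proposition~\ref{prop-F-regularity} only controls the \emph{lower} tail of the number of orders in $X(-n,-1)$; the comparison you invoke is exactly Theorem~\ref{thm-no-burger-conv}, whose proof (via Lemma~\ref{prop-initial-reg} and Lemma~\ref{prop-no-burger-tight}) relies on the present lemma. No conditional tail bound for the running maximum is available at this point in the paper.

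Second, the union bound cannot close. You propose to sum over $j\in[1,m]_{\BB Z}$ and obtain a total of order $m\cdot(\text{polynomial in }m)\cdot e^{-cC^2}$, then choose $C=c_*(N)\log q^{-1}$. But $m\in[N^{k-1}+1,N^k]$ grows with $k$ while $C$ depends only on $N$ and $q$; there is no way to make $m\cdot e^{-cC^2}\leq q$ uniformly in $k$ with $c_*(N)$ depending only on $N$.

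The paper avoids both problems by working \emph{unconditionally} and then extracting the conditioning via a random time. It applies the unconditional sub-exponential tail \cite[Lemma~3.13]{shef-burger} to $\sup_{i\leq N^k}|X(1,i)|$ (so there is no union bound and no factor of $m$), and uses Lemma~\ref{prop-F-pos} to bound $\BB P(K_{N^k}\in[N^{k-1}+1,N^k]_{\BB Z})$ below by a constant depending only on $N$, not on $k$. Since the conditional law of $X_1\dots X_m$ given $\{K_{N^k}=m+1\}$ coincides with its conditional law given that $X(1,m)$ has no burgers, a pigeonhole over the value of $K_{N^k}$ produces the desired $m$ directly, and Lemma~\ref{prop-reverse-length} converts the forward running-maximum bound into the backward one for $|X(-j,-1)|$. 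The missing idea in your plan is precisely this: obtain the tail bound unconditionally and transfer the conditioning through the random time $K_{N^k}$, rather than trying to estimate under the conditional law from the start.
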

\begin{proof} 
The proof is similar to that of Lemma~\ref{prop-F-regularity-subsequence}. For $k\in\BB N$, define the time $K_{N^k}$ as in Lemma~\ref{prop-F-symmetry} with $n = N^k$. Let $A_k$ be the event that $K_{N^k} \in [N^{k-1}+1,  N^k]_{\BB Z}$. For $C >1$, let 
\[
\wh G_k'(  C) := \left\{ \sup_{i\in [1, K_{N^k}]_{\BB Z}} |X(1,i)| \leq C K_{N^k}^{1/2} \right\} .
\]
By Lemma~\ref{prop-F-pos}, there is an $N_*\in\BB N$, a $k_* \in \BB N$, and a constant $c_0 > 0$ such that for $N\geq N_*$ and $k\geq k_*$ we have $\BB P\left( A_k \right) \geq c_0 N^{-\mu }  $. By~\cite[Lemma 3.13]{shef-burger},  
there are constants $c_1 >0$ and $c_2 > 0$ (depending only on $p$) such that for each $C>1$, 
\eqbn
\BB P\left( \sup_{i\in [1,\dots,N^k ]_{\BB Z} } |X(1,i)|    \geq C N^{k/2}   \right) \leq c_1 e^{-c_2 C} .
\eqen
Hence
\eqbn
\BB P\left(\wh G_k'(C)^c \cap A_k  \right) \leq c_1 e^{-c_2 N^{-1/2} C}  .
\eqen
The right side of this inequality is at most $   q c_0 N^{-\mu} \leq q \BB P(A_k)$ provided we take 
\[
C \geq 2c_*(N) \log q^{-1} , 
\]
for an appropriate choice of $c_*(N) > 0$ depending only on $N$. With this value of $c_*(N)$ we therefore have 
\eqbn
\BB P\left( \wh G_k'\left(2c_*(N) \log q^{-1} \right)    \right)  \geq  \left(1- q  \right) \BB P(A_k) .
\eqen
That is, 
\eqbn
 \sum_{n=N^{k-1}+1}^{N^k} \BB P\left( \wh G_k'\left(2c_*(N) \log q^{-1} \right)   \,|\, K_{N^k} = n\right) \BB P\left(K_{N^k} = n\right) \geq  \left(1-q \right)  \sum_{n=N^{k-1}+1}^{N^k}   \BB P\left(K_{N^k} = n\right).
\eqen
Hence we can find some $m \in [N^{k-1} ,  N^k - 1]_{\BB Z}$ for which
\eqbn
 \BB P\left(    \sup_{i\in [1, m ]_{\BB Z}} |X(1,i)|   \leq   2c_*(N) \log q^{-1}   m^{1/2}       \,|\, K_{N^k} = m +1\right) \geq 1- q .
\eqen
By taking a supremum over all $j$ in the inequality of Lemma~\ref{prop-reverse-length}, we also have
\eqbn
 \BB P\left(    \sup_{j\in [1,  m ]_{\BB Z}} |X(j,m)|   \leq   c_*(N) \log q^{-1}   m^{1/2}       \,|\, K_{N^k} = m +1\right) \geq 1- q .
\eqen
Since the conditional law of $X_1\dots X_m$ given $\{K_{N^k} = m+1\}$ is the same as its conditional law given that $X(1,m)$ contains no burgers and by translation invariance,
\eqbn
\BB P\left(   \wh G_m\left( c_*(N) \log q^{-1}\right)   \,|\, J > m\right) \geq 1- q .
\eqen
\end{proof}

In order to prove a tightness result for the conditional law of $X_{-n} \dots X_{-1}$ given $\{J > n\}$, we only need to prove a regularity condition for an initial segment of the word $X_{-n} \dots X_{-1}$ with length proportional to $n$. The reason why this is sufficient is that once we condition on such a segment and the event $\{J > n\}$, we can estimate the rest of the word using comparison to Brownian motion. In the following lemma, we use Lemma~\ref{prop-upper-regularity-subsequence} to obtain a regularity statement for such an initial segment. 

\begin{lem} \label{prop-initial-reg}
Let $q\in (0,1)$ and $\zeta > 0$. There exists $ \lambda_0, \lambda_1 \in (0,1)$ and $n_* \in \BB N$ (depending on $\zeta$ and $q$) such that for each $n\geq n_*$, we can find a deterministic $m_n = m_n(\zeta ,   q) \in [   \lambda_0  n   ,    \lambda_1 n   ]_{\BB Z}$ such that the following is true. Let $  G_{m_n}(  \zeta)$ be the event that $J  > m_n$ and $|X(-j,-1)| \leq \zeta n^{1/2}$ for each $j\in [1,  m_n]_{\BB Z}$. 
Then we have
\eqb \label{eqn-initial-reg}
\BB P\left( G_{m_n}(  \zeta) \,|\, J > n \right) \geq 1-q .
\eqe  
\end{lem}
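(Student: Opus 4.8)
The plan is to combine Lemma~\ref{prop-upper-regularity-subsequence}, which gives the desired length control only on an exponentially sparse set of scales, with Lemma~\ref{prop-J-compare}, which lets us transfer an estimate conditioned on $\{J>m\}$ to one conditioned on $\{J>n\}$ when $n/m$ is bounded. Fix $q\in(0,1)$ and $\zeta>0$, let $N\geq N_*$ be as in Lemma~\ref{prop-upper-regularity-subsequence} with associated constant $c_*(N)>0$, and let $q'\in(0,1/2)$ be a small parameter to be chosen at the end (depending only on $q,\zeta,N$). I would then set
\[
K := \max\left\{ N^2 ,\, \left\lceil \left(c_*(N)\zeta^{-1}\log q'^{-1}\right)^2 \right\rceil \right\} ,\qquad \lambda_1 := 1/K ,\qquad \lambda_0 := 1/K^2 ,
\]
and, for each large $n$, take $m_n$ to be the good scale produced by Lemma~\ref{prop-upper-regularity-subsequence} inside the window $[N^{j-1}+1 , N^j]_{\BB Z}$, where $j=j(n)$ is the largest integer with $N^j\leq n/K$.

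The first step is elementary scale bookkeeping. Since $j$ is maximal with $N^j\leq n/K$, we have $N^{j+1}>n/K$, hence $N^{j-1}>n/(KN^2)\geq n/K^2$ using $K\geq N^2$; therefore $[N^{j-1}+1,N^j]_{\BB Z}\subset[\lambda_0 n,\lambda_1 n]_{\BB Z}$, and in particular $\lambda_0 n<m_n\leq\lambda_1 n=n/K$, so $n/m_n<K^2$ is bounded and $m_n<n$. Since $j(n)\to\infty$, we have $j(n)\geq k_*(q',N)$ once $n$ is large, so Lemma~\ref{prop-upper-regularity-subsequence} applies and gives $\BB P\big(\wh G_{m_n}(c_*(N)\log q'^{-1})\mid J>m_n\big)\geq 1-q'$. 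Because $m_n\leq n/K$ and $K\geq(c_*(N)\zeta^{-1}\log q'^{-1})^2$, we have $c_*(N)\log q'^{-1}\cdot m_n^{1/2}\leq\zeta n^{1/2}$, so $\wh G_{m_n}(c_*(N)\log q'^{-1})\subset\{\sup_{j\in[1,m_n]_{\BB Z}}|X(-j,-1)|\leq\zeta n^{1/2}\}$.

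The second step is to transfer the conditioning. Since $m_n<n$, on $\{J>n\}$ the condition $J>m_n$ in the definition of $G_{m_n}(\zeta)$ is automatic, so by the previous paragraph and $\{J>n\}\subset\{J>m_n\}$,
\[
\BB P\big(G_{m_n}(\zeta)^c\mid J>n\big)\leq\BB P\big(\wh G_{m_n}(c_*(N)\log q'^{-1})^c\mid J>m_n\big)\cdot\frac{\BB P(J>m_n)}{\BB P(J>n)}\leq q'\cdot\frac{\BB P(J>m_n)}{\BB P(J>n)} .
\]
For the ratio, let $N'=\lceil n/m_n\rceil\leq K^2$; since $N'm_n\geq n$ we have $\{J>N'm_n\}\subset\{J>n\}$, so Lemma~\ref{prop-J-compare} applied with $m_n$ in place of $n$ and $N'$ in place of $N$ gives $\BB P(J>n\mid J>m_n)\succeq (N')^{-\mu}\succeq K^{-2\mu}$ for $n$ large (the finitely many possible values of $N'$ make the $o(1)$ error uniform). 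Hence $\BB P\big(G_{m_n}(\zeta)^c\mid J>n\big)\preceq q'K^{2\mu}\preceq q'(\log q'^{-1})^{4\mu}$, the last estimate using $K\asymp(\log q'^{-1})^2$ once $q'$ is small. Since $q'(\log q'^{-1})^{4\mu}\to 0$ as $q'\to 0$, choosing $q'$ small enough (depending only on $q,\zeta,N$) makes this $\leq q$, which is~\eqref{eqn-initial-reg}; the threshold $n_*$ absorbs the finitely many "$n$ large" requirements above, and $\lambda_0,\lambda_1\in(0,1)$ and $m_n$ depend only on $\zeta,q$.

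The main obstacle is exactly this bookkeeping: Lemma~\ref{prop-upper-regularity-subsequence} controls $X(-\cdot,-1)$ only on a sparse set of scales, and with a length bound $c_*(N)\log q'^{-1}$ that degrades as one demands higher confidence $1-q'$. The resolution is (i) to use a good scale $m_n$ a large factor $K$ below $n$ — harmless since we only need to control $|X(-j,-1)|$ for $j\leq m_n$, and $c_*(N)\log q'^{-1}\cdot m_n^{1/2}\leq\zeta n^{1/2}$ precisely because $m_n\leq n/K$ — and (ii) to pay the factor $\BB P(J>m_n)/\BB P(J>n)\asymp(n/m_n)^{\mu}$ for passing from $\{J>m_n\}$ to $\{J>n\}$. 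This factor is only polynomial in $K$, whereas $q'$ is (stretched-)exponentially small in $\sqrt K$, so the product still tends to $0$, and the one good scale per window guaranteed by Lemma~\ref{prop-upper-regularity-subsequence} always lands in $[\lambda_0 n,\lambda_1 n]$ by the choice $K\geq N^2$.
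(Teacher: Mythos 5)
Your proof is correct, and its overall architecture is the same as the paper's: pick the good scale $m_n$ from Lemma~\ref{prop-upper-regularity-subsequence} a factor $\asymp (\log q'^{-1})^2$ below $n$ so that the degrading length bound $c_*(N)\log q'^{-1}\, m_n^{1/2}$ still fits under $\zeta n^{1/2}$, then transfer the conditioning from $\{J>m_n\}$ to $\{J>n\}$ at a cost that is only polylogarithmic in $q'^{-1}$ and hence beaten by $q'$ itself. The one place you genuinely diverge is the transfer step. The paper runs a Bayes' rule computation, lower-bounding $\BB P(J>n \mid G_{m_n}(\zeta))$ directly by first inserting the event $E_{m_n}(\ep)$ (via Proposition~\ref{prop-F-regularity}) and then invoking the Brownian cone estimate; you instead use the elementary inequality $\BB P(A^c \mid J>n)\leq \BB P(A^c\mid J>m_n)\cdot \BB P(J>n\mid J>m_n)^{-1}$ and quote Lemma~\ref{prop-J-compare} for the ratio. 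Since Lemma~\ref{prop-J-compare} is proved from exactly the ingredients the paper redeploys inline, the two routes rest on the same foundations, but yours is cleaner: it avoids having to lower-bound the probability of $\{J>n\}$ conditioned on the regularity event $G_{m_n}(\zeta)$ itself, and it reuses an already-available lemma rather than re-deriving it. Your handling of the uniformity of the $o(1)$ error over the finitely many values of $N'=\lceil n/m_n\rceil\leq K^2$ is the right way to make the citation of Lemma~\ref{prop-J-compare} legitimate.
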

\begin{proof}
Fix $\alpha \in (0,1/4)$ to be chosen later (depending on $\zeta$ and $q$). 
Let $N_* \in\BB N$ be chosen sufficiently large that the conclusion of Lemma~\ref{prop-upper-regularity-subsequence} holds. Fix $N\geq N_*$ and let $c_*(N)$ be as in that lemma. Given $\zeta > 0$, let $k_n $ be the largest $k\in\BB N$ for which $c_*(N) \log \alpha^{-1} N^{k/2} \leq \zeta n^{1/2}$. If $n$ is chosen sufficiently large, then by Lemma~\ref{prop-upper-regularity-subsequence} we can find $m_n \in [N^{k_n-1} ,  N^{k_n}]_{\BB Z}$ such that~\eqref{eqn-upper-regularity-subsequence} holds with $\alpha$ in place of $q$. In the notation of~\eqref{eqn-upper-regularity-subsequence} we have
\eqbn
  \wh G_{m_n}\left(  c_*(N) \log \alpha^{-1} \right) \cap \{J > m_n\}  \subset G_{m_n}\left(   \zeta   \right) .
\eqen
Let
\eqbn
 \rho(\alpha) := \left( c_*(N) \log \alpha^{-1} \right)^{-1}  .
\eqen
Then $   \lambda_0(\alpha)  n   \leq m_n \leq   \lambda_1(\alpha)   n $ for $\lambda_0(\alpha)  = N^{-2}\rho(\alpha)^2 \zeta^2$ and $\lambda_1(\alpha)  = \rho(\alpha)^2 \zeta^2$.   

We have $\BB P\left(G_{m_n}(\zeta)  \,|\, J > m_n \right) \geq 1-\alpha$. We need to show that if $\alpha$ is chosen sufficiently small and $n$ is chosen sufficiently large (depending on $\zeta$ and $q$), then we can transfer this to a lower bound when we further condition on $\{J>n\}$. We will do this via a Bayes rule argument. 

By Proposition~\ref{prop-F-regularity}, we can find $\ep > 0$ (depending only on $p$) and $\wt n_* \in \BB N$ (depending on $\ep$, $\alpha$, $N$, and $\zeta$) such that (in the notation of that proposition) we have $a_{m_n}(\ep) \geq 1/2$ for each $n \geq \wt n_*$. 
For this choice of $\ep$, we have for $n\geq \wt n_*$ that
\alb
 \BB P\left(   J > n \,|\, G_{m_n}(\zeta)   \right)  
&\geq    \BB P\left(   J > n \,|\,  E_{m_n}(\ep)    \cap G_{m_n}(\zeta)   \right) \BB P\left(E_{ m_n}(\ep) \cap  G_{m_n}(\zeta) \,|\, J > m_n   \right) \\
&\geq   \frac14 \BB P\left(   J > n \,|\,  E_{m_n}(\ep)    \cap G_{m_n}(\zeta)   \right)   .
\ale
By \cite[Theorem 2.5]{shef-burger} and Lemma~\ref{prop-bm-cone-asymp}, there is an $n_* \geq \wt n_*$ (depending on $\ep$ and $\lambda_0$) and a constant $c_0  >0$ (depending only on $p$) such that for $n\geq n_*$, 
\eqbn
 \BB P\left(   J > n \,|\,  E_{ m_n}(\ep)   \cap G_{m_n}(\zeta)    \right) \geq c_0 \ep^\mu  \left(  N^{- 1} \rho(\alpha)  \zeta \right)^{ \mu} .
\eqen
Hence for $n\geq n_*$, 
\eqbn
 \BB P\left(   J > n \,|\, G_{m_n}(\zeta)  \right)  \geq \wh \rho(\alpha) := \frac{c_0 \ep^\mu}{4} \left(  N^{- 1} \rho(\alpha)  \zeta \right)^{2\mu}   .
\eqen
By Bayes' rule, 
\alb
\BB P\left( G_{m_n}(\zeta)  \,|\, J > n   \right)  
&\geq \frac{ \BB P\left(   J > n \,|\, G_{m_n}(\zeta)  \right)  \BB P\left(G_{m_n}(\zeta) \,|\, J>m_n\right)    }{ \BB P\left(   J > n \,|\, G_{m_n}(\zeta)   \right)  \BB P\left(G_{m_n}(\zeta) \,|\, J>m_n\right)   +  \BB P\left(G_{m_n}(\zeta)^c \,|\, J>m_n\right)     } \\
&\geq \frac{ (1-\alpha) \wh \rho(\alpha)       }{ (1-\alpha) \wh \rho(\alpha)    +  \alpha   } 
= \frac{ (1-\alpha)        }{ (1-\alpha)     +  \alpha  \wh \rho(\alpha)^{-1} }.
\ale
As $\alpha \rta 0$, we have $\alpha  \wh \rho(\alpha)^{-1} \rta 0$, so if $\alpha$ is chosen sufficiently small (depending on $\zeta$ and $q$), and hence $n_*$ is chosen sufficiently large (depending on $\zeta$, $q$, and $\alpha$) this quantity is at least $1-q$.  
\end{proof}

\subsection{Proof of tightness} 
\label{sec-no-burger-tight}

In this section we will prove tightness of the conditional laws of $Z^n|_{[-1,0]}$ given $\{J>n\}$. We first need the following basic consequence of the results of Section~\ref{sec-F-reg}.

\begin{lem} \label{prop-reverse-cond-regularity}
Suppose we are in the setting of Section~\ref{sec-F-reg-setup}. Let $\lambda \in (0,1/2)$ and $q\in (0,1)$. There exists $\ep > 0$ and $n_* \in \BB N$, depending only on $q$ and $\lambda$, such that for each $n\geq n_*$ and $m\in\BB N$ with $\lambda \leq m/n\leq 1-\lambda$, 
\eqbn
\BB P\left( E_m(\ep) \,|\, J > n  \right)  \geq 1-q .
\eqen
\end{lem}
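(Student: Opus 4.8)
\textbf{Proof proposal for Lemma~\ref{prop-reverse-cond-regularity}.}

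The plan is to combine Proposition~\ref{prop-F-regularity} with a Bayes' rule argument, essentially the ``forward half'' of the mechanism already developed in Section~\ref{sec-F-reg-reverse}. First I would use Proposition~\ref{prop-F-regularity} to fix $\ep > 0$ (depending only on $p$, hence a genuine constant) and an $m_* \in \BB N$ such that $a_m(\ep) \geq 1 - q/4$ for all $m \geq m_*$; that is, $\BB P(E_m(\ep) \mid J > m) \geq 1 - q/4$. Since $\lambda \leq m/n \leq 1 - \lambda$, once $n \geq n_* := m_*/(1-\lambda)$ we automatically have $m \geq m_*$, so this lower bound is available for the relevant $m$.

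Next I would write, by Bayes' rule,
\eqbn
\BB P\left( E_m(\ep) \,|\, J > n \right) = \frac{\BB P\left( J > n \,|\, E_m(\ep) \right) \BB P\left( E_m(\ep) \,|\, J > m \right)}{\BB P\left( J > n \,|\, E_m(\ep) \right) \BB P\left( E_m(\ep) \,|\, J > m \right) + \BB P\left( J > n ,\, E_m(\ep)^c \,|\, J > m \right)} ,
\eqen
using that $\{J > n\} \subset \{J > m\}$. The numerator is controlled from below: $\BB P(E_m(\ep) \mid J > m) \geq 1 - q/4$ by the choice of $\ep$, and $\BB P(J > n \mid E_m(\ep)) \succeq 1$ by \cite[Theorem 2.5]{shef-burger} together with~\eqref{eqn-bm-cone-asymp} of Lemma~\ref{prop-bm-cone-asymp} (conditioned on $E_m(\ep)$, the re-scaled path $D$ starts at uniformly positive distance from the boundary of the first quadrant, so it has a uniformly positive chance of staying in the first quadrant for the remaining time, which is of constant order since $\lambda \leq m/n \leq 1 - \lambda$). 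For the error term in the denominator I would bound $\BB P(J > n,\, E_m(\ep)^c \mid J > m) \leq \BB P(J > n \mid E_m(\ep)^c,\, J > m)$; on the event $E_m(\ep)^c \cap \{J > m\}$ the path $D$ is within $\ep n^{1/2}$ of the boundary of the first quadrant at time $-m$, and then \cite[Theorem 2.5]{shef-burger} (applied to $X(-n,-m-1)$, which is independent of $X_{-m}\dots X_{-1}$) shows this probability is $\preceq \ep^{c}$ for some exponent $c > 0$, hence can be made arbitrarily small by shrinking $\ep$.

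Putting these together, the ratio above is at least $\frac{(1-q/4)\, b}{(1-q/4)\, b + o_\ep(1)}$ for a constant $b > 0$, which exceeds $1 - q$ provided $\ep$ was chosen small enough (and then $n_*$ large enough). One subtlety: the lower bound $\ep$ coming from Proposition~\ref{prop-F-regularity} is a fixed constant, whereas the Bayes argument wants $\ep$ small; this is resolved exactly as in Lemma~\ref{prop-F-induct}, by first choosing a possibly smaller $\ep$ so that $E_m(\ep)^c$ is rare enough, and then noting $E_m(\ep') \supseteq E_m(\ep)$ for $\ep' \leq \ep$ so that $a_m(\ep') \geq a_m(\ep)$ — so the lower bound from Proposition~\ref{prop-F-regularity} still applies at the smaller threshold. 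The main obstacle is bookkeeping the dependence of constants (ensuring $\ep$ and $n_*$ depend only on $q$ and $\lambda$, not on $m$ or $n$), but all the genuinely probabilistic content is already packaged in Proposition~\ref{prop-F-regularity}, \cite[Theorem 2.5]{shef-burger}, and Lemma~\ref{prop-bm-cone-asymp}.
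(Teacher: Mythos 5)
Your overall skeleton is the same as the paper's: Bayes' rule applied to $\{J>n\}\subset\{J>m\}$, with Proposition~\ref{prop-F-regularity} supplying $a_m(\ep)\geq 1-q/4$ and a comparison between $\BB P(J>n\mid E_m(\ep))$ and $\BB P(J>n,\,E_m(\ep)^c\mid J>m)$ closing the argument. The paper obtains that comparison by citing Lemma~\ref{prop-empty-burger-ratio}, which it has already proved; you re-derive it inline, and that is where there is a genuine gap. Your error term $\BB P(J>n,\,E_m(\ep)^c\mid J>m)\preceq \ep+o_n(1)$ is fine (modulo the caveat below), but your main term is justified by ``$D$ starts at distance $\geq \ep n^{1/2}$ from the boundary, hence a uniformly positive chance of staying in the quadrant.'' That heuristic only gives $\BB P(J>n\mid E_m(\ep))\succeq \ep^{2\mu}$, with the constant degenerating as $\ep\rta 0$; since $\mu=\kappa/8\in(1/2,1)$ we have $2\mu>1$, so the ratio of error term to main term is of order $\ep^{1-2\mu}\rta\infty$ and the Bayes quotient does not tend to $1$ as you shrink $\ep$. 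The point of Lemma~\ref{prop-empty-burger-ratio} is precisely to get a lower bound on $\BB P(J>n\mid E_m(\ep))$ that is \emph{uniform in $\ep$}: one writes $\BB P(J>n\mid E_m(\ep))\geq \BB P(J>n\mid E_m(\ep_0))\,\BB P(E_m(\ep_0)\mid E_m(\ep))\geq \BB P(J>n\mid E_m(\ep_0))\,a_m(\ep_0)$, with $\ep_0$ the fixed threshold from Proposition~\ref{prop-F-regularity}, so that the constant depends only on $\ep_0$ and $\lambda$ and not on the small $\ep$ used in the error bound. With that repair your argument closes and is then essentially the paper's proof.

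Two smaller points. First, ``few orders of one type in $X(-m,-1)$'' only forces ``few burgers of that type allowed in $X(-n,-m-1)$'' up to the number of flexible orders in $X(-m,-1)$, so you must intersect with $F_m$ and use Lemma~\ref{prop-few-F} (together with the polynomial lower bound on $\BB P(J>m)$ from Lemma~\ref{prop-X-asymp}) to discard $F_m^c$; the paper does exactly this inside the proof of Lemma~\ref{prop-empty-burger-ratio}. Second, bound the joint probability $\BB P(J>n,\,E_m(\ep)^c\mid J>m)$ directly rather than passing to $\BB P(J>n\mid E_m(\ep)^c,\,J>m)$ as you propose: the latter requires a lower bound on $\BB P(E_m(\ep)^c\mid J>m)$, which you do not have, whereas the former needs none. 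Your handling of the interplay between the $\ep_0$ from Proposition~\ref{prop-F-regularity} and the small $\ep$ in the Bayes step (via monotonicity of $E_m(\cdot)$) is correct.
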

\begin{proof} 
Fix $\alpha \in (0,1)$ to be determined later, depending only on $q$. By Proposition~\ref{prop-F-regularity}, we can find $\ep_0 > 0$ and $m_* \in\BB N$ such that (in the notation of Section~\ref{sec-F-reg-setup}) it holds for each $m \geq m_*$ and $\ep \in (0,\ep_0]$ that $a_{m}(\ep ) \geq 1-\alpha$. By Lemma~\ref{prop-empty-burger-ratio}, we can find $\ep \in (0,\ep_0]$ and $n_* \in \BB N$ with $n_* \geq  \lambda^{-2} m_*$ such that for $n\geq n_*$ and $m$ as in the statement of the lemma, we have  
\eqbn
\BB P\left(  J > n \,|\, E_{m }(\ep)^c ,\, J > m \right)        \leq  \alpha \BB P\left(  J > n \,|\, E_{m}(\ep) \right)  .
\eqen
By Bayes' rule,  
\alb
\BB P\left( E_{m }(\ep) \,|\, J > n \right)  &= \frac{   \BB P\left(  J > n \,|\, E_{m }(\ep) \right) a_{m }(\ep) }{\BB P\left(  J > n \,|\, E_{m  }(\ep) \right) a_{m }(\ep) + \BB P\left(  J > n \,|\, E_{m }(\ep)^c ,\, J > m  \right)(1- a_{m }(\ep) )    } \\
 &\geq \frac{   1-\alpha  }{ 1-\alpha  + \alpha^2    }  .
\ale
By choosing $\alpha$ sufficiently small, in a manner which depends only on $q$, we can make this last quantity greater than or equal to $1-q$. 
\end{proof}

\begin{lem} \label{prop-no-burger-tight}
The conditional laws of $Z^n|_{[-1,0]}$ given $\{J>n\}$ for $n\in\BB N$ are a tight family of probability measures on the set of continuous functions on $[-1,0]$ in the topology of uniform convergence.
\end{lem}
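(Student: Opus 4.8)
The plan is to check the standard tightness criterion in $C([-1,0])$ with the uniform topology: tightness of $\{Z^n(0)\}$ (immediate, as $Z^n(0)=0$) together with a uniform modulus-of-continuity bound, namely that for every $\ep,\eta>0$ there exist $\delta>0$ and $n_0$ with $\BB P(\omega(Z^n,\delta,[-1,0])\ge\ep\mid J>n)<\eta$ for all $n\ge n_0$, where $\omega(f,\delta,I):=\sup\{|f(s)-f(t)|:s,t\in I,\ |s-t|\le\delta\}$. I would control $Z^n$ separately on an interval $[-\lambda_0,0]$ near $0$ — using the regularity results of Section~\ref{sec-F-reg} directly — and on the complementary interval $[-1,-\lambda_0]$, which is bounded away from $0$, by conditioning on the initial segment $X_{-\lfloor\lambda_0 n\rfloor}\dots X_{-1}$ and comparing to Brownian motion via Lemma~\ref{prop-Z-cond-limit}. (Throughout I will ignore $O(n^{-1})$ lattice effects at the splitting point, which are immaterial.)

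For the near-$0$ part, fix $\ep,\eta>0$, set $\zeta:=\ep/16$, and apply Lemma~\ref{prop-initial-reg} with this $\zeta$ and with $q=\eta/8$, obtaining $\lambda_0,\lambda_1\in(0,1)$ — which, after shrinking $\lambda_0$, we may take $\le 1/3$, so that $\lambda:=\lambda_0/2\in(0,1/2)$ — and, for each large $n$, an integer $m_n\in[\lambda_0 n,\lambda_1 n]$ with $\BB P(G_{m_n}(\zeta)\mid J>n)\ge 1-\eta/8$. On $G_{m_n}(\zeta)$ the bound $|X(-j,-1)|\le\zeta n^{1/2}$ for $j\le m_n$ dominates $|d(-j)|$ and $|d^*(-j)|$ (passing from $X$ to $Y$ does not change the matching, hence leaves all reduced-word lengths unchanged), so by linear interpolation $\sup_{t\in[-\lambda_0,0]}|Z^n(t)|\le\zeta+O(n^{-1/2})\le 2\zeta$ for $n$ large, whence $\omega(Z^n,\delta,[-\lambda_0,0])\le 4\zeta=\ep/4$ for every $\delta$.

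For the interval $[-1,-\lambda_0]$, set $m'_n:=\lfloor\lambda_0 n\rfloor$ and condition on $X_{-m'_n}\dots X_{-1}$. To invoke Lemma~\ref{prop-Z-cond-limit} at $s=\lambda_0$ I need the conditioned realization $x$ to be ``good'': $X(-m'_n,-1)$ has no burgers (automatic, since $\{J>n\}\subset\{X(-m'_n,-1)\text{ has no burgers}\}$); the leftover hamburger and cheeseburger order counts are each at least $\ep_\star n^{1/2}$ for a fixed $\ep_\star>0$; and $F_{m'_n}$ holds. The second condition is the event $E_{m'_n}(\cdot)$, which by Lemma~\ref{prop-reverse-cond-regularity} (applied with the above $\lambda$ and $q=\eta/8$) has conditional probability $\ge 1-\eta/8$ given $\{J>n\}$ for a suitable $\ep_\star>0$ (so in particular $h_n^{\lambda_0},c_n^{\lambda_0}\ge\ep_\star$); and $\BB P(F_{m'_n}\mid J>n)\to 1$ by the conditional form of Lemma~\ref{prop-few-F} together with Lemma~\ref{prop-J-compare} (which, since $n/m'_n\le 2\lambda_0^{-1}$ is bounded, gives $\BB P(J>n\mid J>m'_n)$ bounded below). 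Let $\mcl G_n$ be the intersection of these events, so $\BB P(\mcl G_n^c\mid J>n)<\eta/3$ for $n$ large; on $\mcl G_n$ every hypothesis of Lemma~\ref{prop-Z-cond-limit} holds with $s=\lambda_0$ and with $\ep_1:=h_n^{\lambda_0}$, $\ep_2:=c_n^{\lambda_0}$ (so $|h_n^{\lambda_0}-\ep_1|=0$ trivially and $\ep_1,\ep_2\ge\ep_\star$). That lemma then places the conditional law of $Z^n_{[-1,-\lambda_0]}$ given $\{J>n\}\cap\{X_{-m'_n}\dots X_{-1}=x\}$ within Prokhorov distance $\alpha$, for any prescribed $\alpha>0$, of the law of $Z_{[-1,-\lambda_0]}$ conditioned on $\wt G^{\lambda_0}(\ep_1,\ep_2)$. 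Since $\BB P(\wt G^{\lambda_0}(\ep_1,\ep_2))$ is nondecreasing in $\ep_1,\ep_2$, it is bounded below by $r:=\BB P(\wt G^{\lambda_0}(\ep_\star,\ep_\star))>0$ uniformly over $\ep_1,\ep_2\ge\ep_\star$; combined with $\BB P(\omega(Z,\delta,[-1,-\lambda_0])\ge\ep/8)\to 0$ as $\delta\to 0$, this gives $\BB P(\omega(Z,\delta,[-1,-\lambda_0])\ge\ep/8\mid\wt G^{\lambda_0}(\ep_1,\ep_2))\le r^{-1}\BB P(\omega(Z,\delta,[-1,-\lambda_0])\ge\ep/8)$ uniformly in $\ep_1,\ep_2\ge\ep_\star$. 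Choosing $\delta$ so small that this is $<\eta/9$, and then $\alpha$ small enough (using that $f\mapsto\omega(f,\delta,I)$ is Lipschitz for the sup-metric, and that $Z^n_{[-1,-\lambda_0]}$ differs from $Z^n|_{[-1,-\lambda_0]}$ only by a shift and an $O(n^{-1})$ endpoint), the Prokhorov bound transfers to the discrete model: for every good $x$ and all large $n$, $\BB P(\omega(Z^n,\delta,[-1,-\lambda_0])\ge\ep/4\mid J>n,\ X_{-m'_n}\dots X_{-1}=x)<\eta/9$. Averaging over $x$ and adding $\BB P(\mcl G_n^c\mid J>n)$ gives $\BB P(\omega(Z^n,\delta,[-1,-\lambda_0])\ge\ep/4\mid J>n)<\eta/9+\eta/3$ for $n$ large.

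Finally I would glue the two estimates: on $\mcl G_n\cap\{\omega(Z^n,\delta,[-1,-\lambda_0])<\ep/4\}$, any $s,t\in[-1,0]$ with $|s-t|\le\delta$ satisfy $|Z^n(s)-Z^n(t)|<\ep$, since either both lie in $[-\lambda_0,0]$ (where $\omega\le\ep/4$), or both lie in $[-1,-\lambda_0]$ (where $\omega<\ep/4$), or they straddle $-\lambda_0$, in which case $|Z^n(s)-Z^n(t)|\le\omega(Z^n,\delta,[-1,-\lambda_0])+\omega(Z^n,\delta,[-\lambda_0,0])<\ep/4+\ep/4$. Hence $\{\omega(Z^n,\delta,[-1,0])\ge\ep\}\subset\mcl G_n^c\cup\{\omega(Z^n,\delta,[-1,-\lambda_0])\ge\ep/4\}$, and a union bound gives the required bound $<\eta$ after a harmless adjustment of the numerical fractions, completing the proof of tightness. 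The genuine difficulty of the whole argument is entirely upstream: it lies in the near-$0$ estimate Lemma~\ref{prop-initial-reg} (which rests on all of Section~\ref{sec-F-reg} and on Lemma~\ref{prop-F-pos}) and in the fact that Lemma~\ref{prop-Z-cond-limit} demands an order-$n^{1/2}$ lower bound on the leftover orders of the conditioned prefix — forcing one to condition at a time bounded away from $0$ and to invoke Lemma~\ref{prop-reverse-cond-regularity}; once those inputs are available, the present lemma is the bookkeeping just described.
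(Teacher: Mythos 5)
Your proof is correct and follows essentially the same route as the paper's: a decomposition of $[-1,0]$ into a near-zero piece controlled by Lemma~\ref{prop-initial-reg} and a far piece controlled by Lemma~\ref{prop-Z-cond-limit} together with Lemma~\ref{prop-reverse-cond-regularity}, followed by gluing the two modulus-of-continuity bounds. The only differences are organizational — you split at the fixed time $-\lambda_0$ rather than at $-m_n/n$, use the $(\ep,\eta,\delta)$ formulation of tightness instead of building a single modulus function for Arz\'ela--Ascoli, and are somewhat more explicit than the paper about why $F_{m_n'}$ holds with high conditional probability — none of which changes the substance of the argument.
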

\begin{proof} 
For $\delta , \zeta>0$ and $n\in\BB N$, let $\wt G_n( \zeta , \delta)  $ be the event that the following is true. Whenever $t_1,t_2 \in [-1,0]$ with $|t_1-t_2| \leq \delta$, we have $|Z^n(t_1) - Z^n(t_2)| \leq \zeta$. For a continuous non-decreasing function $\rho : (0,\infty) \rta (0,\infty)$ with $\lim_{\zeta\rta 0} \rho(\zeta) = 0$, let 
\eqbn
\mcl G_n(\rho  ) :=  \bigcap_{\zeta > 0 } \wt G_n(\zeta , \rho(\zeta) )  .
\eqen
By the Arz\'ela-Ascoli theorem (note that equicontinuity implies uniform boundedness in this case since each $Z^n$ vanishes at the origin), it suffices to show that for each given $q \in (0,1)$, we can find $\rho$ as above, independent of $n$, such that  
\eqb \label{eqn-no-burger-tight}
\BB P\left(\mcl G_n(\rho)  \,|\, J > n\right) \geq 1-q ,\quad \forall n\in \BB N .
\eqe 

First suppose we are given $\zeta >0$ and $\alpha \in(0,1)$. By Lemma~\ref{prop-initial-reg}, we can find $n_1 \in \BB N$ and $\lambda_0 , \lambda_1\in (0,1)$ (depending on $\zeta$ and $\alpha$) such that for each $n\geq n_1$ there exists $m_n \in [ \lambda_0 n ,   \lambda_1 n ]_{\BB Z}$ such that (in the notation of Lemma~\ref{prop-initial-reg}), we have that~\eqref{eqn-initial-reg} holds with $1-\alpha/2$ in place of $1-q$. 

By Lemma~\ref{prop-reverse-cond-regularity}, we can find $\ep > 0$ and $n_2 \geq n_1$ (depending on $\zeta$ and $\alpha$) such that for $n \geq n_2$, we have $\BB P\left( E_{m_n}(\ep) \,|\, J > n  \right) \geq 1-\alpha/2$. 
By Lemma~\ref{prop-Z-cond-limit} that we can find $n_3 \geq n_2$ and $\delta_0 = \delta_0(\alpha,\zeta) >0$ such that if $n\geq n_3$, then with conditional probability at least $ 1-\alpha $ given $E_{m_n}(\ep) \cap G_{m_n}(\zeta) \cap  \{J>n\}$, it holds that whenever $t_1 , t_2 \in [-1,-m_n/n]$ with $|t_1-t_2|\leq \delta_0$, we have $|Z^n(t_1) - Z^n(t_2)| \leq \zeta$. Call this last event $A$. If $A$ occurs and $G_{m_n}(\zeta)$ occurs then $\wt G_n( \zeta , \delta_0)$ occurs.
Therefore, if $n\geq n_3$, then 
\alb
\BB P\left( \wt G_n( 2\zeta , \delta_0)    \,|\, J > n\right) &\geq \BB P\left(   A \cap G_{m_n}(\zeta) \,|\, J>n    \right)\\
&\geq \BB P\left(   A   \,|\,  E_{m_n}(\ep) \cap G_{m_n}(\zeta)  ,\, J>n \right) \BB P\left(E_{m_n}(\ep) \cap G_{m_n}(\zeta)  \,|\, J > n\right) \\
&\geq (1-\alpha)^2 . 
\ale
Since there are only finitely many $n\leq n_*^3$, we can find $\delta \in (0,\delta_0]$ depending only on $n_3$ such that  
\eqb \label{eqn-equicont1}
\BB P\left(\wt G_n(2\zeta,\delta)     \,|\, J > n\right) \geq (1-\alpha)^2 ,\quad \forall n\in\BB N. 
\eqe 

Now fix $q\in (0,1)$.  
For $j\in\BB N$, choose $\delta_j  > 0$ for which~\eqref{eqn-equicont1} holds with $\delta =\delta_j$, $\zeta = 2^{-j-1}$, and $\alpha$ chosen so that $(1-\alpha)^2 =1- q 2^{-j-1}$. Let 
\eqbn
\rho(\zeta) := C\BB 1_{[1, \infty)}(\zeta) +  \sum_{j=1}^\infty \delta_j \BB 1_{[2^{-j } , 2^{-j+1})}(\zeta). 
\eqen
Then~\eqref{eqn-no-burger-tight} holds for this choice of $\rho$. 
\end{proof}

\subsection{Identifying the limiting law} 
\label{sec-no-burger-proof}

To identify the law of a subsequential limit of the laws of $Z^n|_{[-1,0]}$ given $\{J>n\}$, we need the following fact from elementary probability theory.

\begin{lem} \label{prop-cond-law-conv}
Let $(X_n,Y_n)$ be a sequence of pairs of random variables taking values in a product of separable metric spaces $\Omega_X\times\Omega_Y$ and let $(X,Y)$ be another such pair of random variables. Suppose $(X_n ,Y_n) \rta (X,Y)$ in law. Suppose further that there is a family of probability measures $\{\mu_y : y\in \Omega_Y\}$ on $\Omega_X$, indexed by $\Omega_Y$, and a family of $\sigma(Y_n)$-measurable events $E_n$ with $\lim_{n\rta\infty} \BB P(E_n) =1$ such that for each bounded continuous function $f : \Omega_X\rta\BB R $, we have
\eqbn
\BB E\left(f(X_n) \, |\, Y_n  \right) \BB 1_{E_n} \rta \BB E_{\mu_Y}(f)  \quad \text{in law}.
\eqen
Then $\mu_Y$ is the regular conditional law of $X$ given $Y$.
\end{lem}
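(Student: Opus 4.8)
\textbf{Proof plan for Lemma~\ref{prop-cond-law-conv}.}

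The plan is to show that the deterministic map $y \mapsto \mu_y$ satisfies the defining property of a regular conditional law: for every bounded continuous $f : \Omega_X \to \BB R$ and every bounded continuous $g : \Omega_Y \to \BB R$, one has $\BB E\left( f(X) g(Y) \right) = \BB E\left( \BB E_{\mu_Y}(f) \, g(Y) \right)$. Since $\Omega_X$ and $\Omega_Y$ are separable metric spaces, bounded continuous functions are measure-determining, so this identity for all such $f,g$ identifies the regular conditional law (uniqueness up to a.s.\ equality); I would cite the standard fact that on a separable metric space the Borel $\sigma$-algebra is generated by a countable family of bounded continuous functions, so the disintegration is uniquely determined by the collection of such integral identities.

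First I would fix bounded continuous $f$ and $g$. By the tower property applied to the $\sigma(Y_n)$-measurable conditioning, $\BB E\left( f(X_n) g(Y_n) \right) = \BB E\left( \BB E(f(X_n) \mid Y_n) \, g(Y_n) \right)$. Next I would split off the bad event: since $\BB P(E_n) \to 1$ and $f,g$ are bounded, $\BB E\left( \BB E(f(X_n)\mid Y_n) \, g(Y_n) \right) = \BB E\left( \BB E(f(X_n)\mid Y_n)\BB 1_{E_n} \, g(Y_n) \right) + o_n(1)$, where the error is at most $\|f\|_\infty \|g\|_\infty \BB P(E_n^c) \to 0$. Now $\left( \BB E(f(X_n)\mid Y_n)\BB 1_{E_n}, \, Y_n \right) \to \left( \BB E_{\mu_Y}(f), \, Y \right)$ jointly in law: the first coordinate converges in law by hypothesis, and I would upgrade this to joint convergence by noting that $\BB E_{\mu_Y}(f)$ is a function of $Y$ (a subtlety here — one needs that $y \mapsto \BB E_{\mu_y}(f)$ is measurable, which follows since it is the in-law limit of measurable functions of $Y_n$ and $(X_n,Y_n)\to(X,Y)$; alternatively one works on the subsequence along which things converge and uses the Skorokhod representation), so convergence of the pair follows from $Y_n \to Y$ and the continuous mapping theorem together with the assumed convergence. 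Applying the bounded continuous test function $(a,y)\mapsto a \, g(y)$ gives $\BB E\left( \BB E(f(X_n)\mid Y_n)\BB 1_{E_n} g(Y_n)\right) \to \BB E\left( \BB E_{\mu_Y}(f) \, g(Y) \right)$.

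On the other hand, $(X_n, Y_n) \to (X,Y)$ in law and $(x,y) \mapsto f(x) g(y)$ is bounded continuous, so $\BB E\left( f(X_n) g(Y_n) \right) \to \BB E\left( f(X) g(Y) \right)$. Combining the two limits with the $o_n(1)$ estimate yields $\BB E\left( f(X) g(Y) \right) = \BB E\left( \BB E_{\mu_Y}(f) \, g(Y) \right)$ for all bounded continuous $f,g$, which is exactly the statement that $\mu_Y$ is the regular conditional law of $X$ given $Y$. The main obstacle I anticipate is the measurability/joint-convergence point in the previous paragraph: the hypothesis only gives convergence in law of $\BB E(f(X_n)\mid Y_n)\BB 1_{E_n}$ as a real random variable, not a priori that its limit is a function of $Y$, so some care (via Skorokhod representation on a convergent subsequence, or a direct argument that $y\mapsto\mu_y$ is the unique disintegration of the limit) is needed to justify pairing it with $g(Y_n)$ and passing to the limit. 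Everything else is a routine application of the continuous mapping theorem and the tower property.
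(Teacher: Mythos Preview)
Your approach is essentially identical to the paper's: both compute $\BB E(f(X)g(Y))$ as the limit of $\BB E(f(X_n)g(Y_n))$, insert the tower property and the indicator $\BB 1_{E_n}$, pass to the limit to obtain $\BB E(\BB E_{\mu_Y}(f)\, g(Y))$, and then extend from bounded continuous test functions to all bounded measurable ones (the paper does this last step via the functional monotone class theorem). The joint-convergence subtlety you flag is genuine and is equally glossed over in the paper's proof---the paper simply asserts the passage $\lim_n \BB E\big(\BB E(f(X_n)\mid Y_n)\BB 1_{E_n}\, g(Y_n)\big) = \BB E\big(\BB E_{\mu_Y}(f)\, g(Y)\big)$ without further comment---so your proposal is at least as careful as the original.
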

\begin{proof}
Let $g : \Omega_Y\rta \BB R$ be a bounded continuous function. Then for each bounded continuous function $f : \Omega_X\rta\BB R $,
\alb
\BB E\left(f(X) g(Y) \right) &= \lim_{n\rta \infty} \BB E\left(f(X_n) g(Y_n) \right) \\
&= \lim_{n\rta \infty} \BB E\left(f(X_n) g(Y_n) \BB 1_{E_n} \right) \\
&= \lim_{n\rta\infty} \BB E\left( \BB E\left(f(X_n) \, |\, Y_n \right) \BB 1_{E_n} g(Y_n)   \right)\\
&=  \BB E\left( \BB E_{\mu_Y}(f) g(Y)\right) .
\ale
By the functional monotone class theorem, we have $\BB E\left(F(X,Y) \right) = \BB E\left( \BB E_{\mu_Y}(F(\cdot,Y) ) \right)$ for every bounded Borel-measurable function $F$ on $\Omega_X\times\Omega_Y$. This implies the statement of the lemma. 
\end{proof}

\begin{proof}[Proof of Theorem~\ref{thm-no-burger-conv}]
By Lemma~\ref{prop-no-burger-tight} and the Prokhorov theorem, from any sequence of integers tending to $\infty$, we can extract a subsequence along which the conditional laws of $Z^{n }$ given $ J > n $ converge to the law of some random continuous function $\wt Z  = (\wt U , \wt V): [-1,0]\rta \BB R^2$ as $n\rta\infty$, restricted to this subsequence. We must show that $\wt Z \eqD \wh Z(-\cdot)$, with $\wh Z$ as defined in the statement of the theorem. 

By Lemma~\ref{prop-reverse-cond-regularity}, we a.s.\ have $\wt U(s) > 0$ and $\wt V(s) > 0$ for each $s  \in (0,1)$. 
By Lemma~\ref{prop-bm-meander}, it therefore suffices to show that for each $\zeta \in (0,1)$, the conditional law of $\wt Z|_{[-1,-\zeta]}$ given $\wt Z|_{[-\zeta,0]}$ is that of a Brownian motion with covariances as in~\eqref{eqn-bm-cov}, starting from $\wt Z(-\zeta)$, parametrized by $[-1,-\zeta]$, and conditioned to stay in the first quadrant. 

To lighten notation we henceforth consider only values of $n$ in our subsequence and implicitly assume that all statements involving $n$ are for $n$ restricted to this subsequence. 

Fix $\zeta   \in (0,1)$. Also let $\wh D_\zeta$ be the path defined in the same manner as the path $D$ of~\eqref{eqn-discrete-paths} in Section~\ref{sec-burger-prelim} but with the following modification: if $j\in [-\zeta n,  -1]_{\BB Z}$, $X_j = \tb F$, and $-\phi (-j)  > \zeta n$, then $\wh D_\zeta(-j) - \wh D_\zeta(-j+1)$ is equal to zero rather than $( 1,0)$ or $(0, 1)$. Extend $\wh D_\zeta$ to $[-\zeta n ,0]$ by linear interpolation (we require it to be constant on $[-\zeta n , -\lfloor \zeta n \rfloor]$). For $t\in [-\zeta,0]$, let 
$\ul Z^n_\zeta(t) := n^{-1/2} \wh D_\zeta(n t)$. It follows from Lemma~\ref{prop-few-F} that $\sup_{t\in [-\zeta ,0]} |\wh Z^n_\zeta(t) - Z^n(t)| \rta 0$ in law, even if we condition on $\{J > n\}$, whence $\ul Z^n_\zeta \rta \wt Z|_{[-\zeta , 0]}$ in law.   
We note that $\ul Z^n_\zeta$ determines and is determined by $X_{-\lfloor \zeta n \rfloor} \dots X_{-1}$, so is independent from $\dots X_{-\lfloor \zeta n \rfloor-2} X_{-\lfloor \zeta n \rfloor-1}$ and hence also from $Z^n_{[-\zeta,0]}$ (Notation~\ref{def-Z-restrict}). 

Let $(\wh X^n)$ be a sequence of random words distributed according to the conditional law of $X_{-n} \dots X_{-1}$ given $\{J>n\}$. 
Let $(\wh{  Z}^n)$ be the corresponding paths, so that each $\wh{ Z}^n$ has the conditional law of $ Z^n$ given $\{J >n\}$. Let $\ul{\wh Z}^n_\zeta$ be the corresponding random paths $\ul Z_\zeta^n$. By the Skorokhod theorem, we can couple $(\wh{ X}^n )$ with $\wt Z$ (with $n$ restricted to our subsequence) in such a way that a.s.\ $\ul{\wh Z}_\zeta^n \rta \wt Z|_{[-\zeta,0]}$ uniformly. 

For $\ep_1 , \ep_2 > 0$, define $\wt G^\zeta(\ep_1 , \ep_2)$ as in~\eqref{eqn-tilde-G^s-def} with $s=\zeta$. 
By Lemma~\ref{prop-Z-cond-limit}, for each fixed $\ep > 0$, the Prokhorov distance between the conditional law of $Z^n_{[-1,-\zeta]}$ given $J  > n$ and any realization of $\wh Z_\zeta^n$ for which $E_{\lfloor \zeta n \rfloor}(\ep) \cap F_{\lfloor \zeta n \rfloor}$ (defined as in Section~\ref{sec-F-reg-setup}) occurs; and the conditional law of $Z_{[-1,-\zeta]}$ given the event $\wt G^\zeta \left( \wt U(-\zeta) ,  \wt V(-\zeta)   \right)$ converges to zero as $n\rta\infty$. By combining this with Lemma~\ref{prop-reverse-cond-regularity},  
we obtain that for any bounded continuous function $f$ from the space of continuous functions on $[-1 , -\zeta]$ (in the uniform topology) to $\BB R$, we have
\eqb \label{eqn-cond-conv}
 \BB E\left( f\left(Z^n_{[-1,-\zeta]}\right)    \,|\, J > n,\,  \ul Z_\zeta^n    \right) \BB 1_{F_{\lfloor \zeta n \rfloor}} \rta  \BB E\left(f \left( Z_{[-1,-\zeta]}\right) \,|\,  \wt G^\zeta\left( \wt U(-\zeta) ,  \wt V(-\zeta)   \right)\right)  
\eqe  
in law.
We now conclude by applying Lemma~\ref{prop-cond-law-conv} with $X_n = \wh Z^n_{[-1,-\zeta]}$, $Y_n = \ul{ Z}_\zeta^n$, $X = \wt Z|_{[-1,-\zeta ]}$, and $Y    = \wt Z|_{[-\zeta,0]}$.
\end{proof}

\section{Convergence of the cone times}
\label{sec-cone-conv}

In this section we will conclude the proof of Theorem~\ref{thm-cone-limit}. 
We start in Section~\ref{sec-reg-var} by proving that the law of the random variable $J$ studied in Section~\ref{sec-F-reg} is regularly varying (Proposition~\ref{prop-J-reg-var}).
This will be accomplished by means of Theorem~\ref{thm-no-burger-conv}. 
We will deduce several consequences from this regular variation, including Proposition~\ref{prop-late-F}.
In Section~\ref{sec-cone-conv}, we will deduce Theorem~\ref{thm-cone-limit} from Proposition~\ref{prop-late-F}.
In Section~\ref{sec-cone-cond}, we record an analogue of Theorem~\ref{thm-cone-limit} in the setting of Theorem~\ref{thm-no-burger-conv}, i.e.\ when we condition on the event that the reduced word contains no burgers.

\subsection{Regular variation}
\label{sec-reg-var}

We say that the law of a random variable $A$ is \emph{regularly varying} with exponent $\alpha$ if for each $c>1$, 
\eqbn
\lim_{a\rta \infty} \frac{\BB P\left(A > c a\right)}{\BB P\left(A >a\right)} = c^{-\alpha} .
\eqen
In this subsection we will prove that the laws of several quantities associated with the word $X$ are regularly varying. In doing so, we will obtain Proposition~\ref{prop-late-F}. See Appendix~\ref{sec-no-order} for analogues of the results of this subsection when we read $X$ forward and condition on no orders.

\begin{prop} \label{prop-J-reg-var}
Let $J$ be the smallest $j\in\BB N$ for which $X(-j,-1)$ contains a burger. The law of $J$ is regularly varying with exponent $\mu$, as defined in~\eqref{eqn-cone-exponent}. 
If $\wt J$ denotes the smallest $j\in \BB N$ for which $X(-j,-1)$ contains no $\tb F$-symbols, then $\wt J$ is also regularly varying with exponent $\mu$. 
\end{prop}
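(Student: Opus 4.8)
The plan is to show that $\BB P(J>n)$ is regularly varying with index $-\mu$ by first proving that the ratios $\BB P(J>cn)/\BB P(J>n)$ converge for every $c>1$, which already forces a power law, and then pinning down the exponent using Lemma~\ref{prop-X-asymp} for the lower bound and a cone estimate for the complementary inequality. The regular variation of $\wt J$ will then be deduced by a parallel argument.

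First I would fix $c>1$ and condition on $\{J>n\}$ together with the $\sigma$-algebra generated by $X_{-n},\dots,X_{-1}$. Two observations make this tractable. (i) The increments of the backward walk $D$ on $(-cn,-n]$ are determined by $(X_j)_{j\le -n-1}$ (the increment at $-n-j$ is a function of $X_{-n-j}$ and, if $X_{-n-j}=\tb F$, of $X_{\phi(-n-j)}$ with $\phi(-n-j)<-n$), hence are independent of the conditioning; translation invariance of $X$ gives them the law of a fresh copy of the burger walk. (ii) The position $D(-n)$ agrees with an explicitly $\sigma(X_{-n},\dots,X_{-1})$-measurable quantity up to an additive error of order at most the number of $\tb F$-symbols in $X(-n,-1)$ whose match lies left of $-n$, which is $o_n^\infty(n^{1/2})$ by Lemma~\ref{prop-few-F}. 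Thus $\BB P(J>cn\mid\sigma(X_{-n},\dots,X_{-1}))$ equals, up to negligible error, the probability that this fresh walk, translated by $D(-n)$, stays in the first quadrant $Q$ for $(c-1)n$ steps. By Theorem~\ref{thm-no-burger-conv}, $n^{-1/2}D(-n)\to\wh Z(-1)$ in law under the conditioning, and $\wh Z(-1)$ lies a.s.\ in the open first quadrant (density~\eqref{eqn-f-density-def}); by Theorem~\ref{prop-burger-limit} the rescaled fresh walk converges to the correlated Brownian motion $Z$, jointly and independently. A Portmanteau/continuous-mapping argument — using that $Z$ a.s.\ does not linger on $\partial Q$, so the confinement functional is a.s.\ continuous at the limit, and that $z\mapsto\BB P(Z(\cdot)+z\in\overline Q$ on $[0,c-1])$ is continuous on the open quadrant — then yields
\eqbn
\lim_{n\to\infty}\frac{\BB P(J>cn)}{\BB P(J>n)}=\BB E\left[\BB P\left(Z(t)+\wh Z(-1)\in\overline Q\ \ \forall t\in[0,c-1]\right)\right]=:\psi(c).
\eqen
Since $\psi(c)$ exists for every $c>1$, the identity $\psi(c_1c_2)=\psi(c_1)\psi(c_2)$ follows by composing the limits, and $\psi$ is non-increasing; hence $\psi(c)=c^{-\rho}$ for some $\rho\ge0$, i.e.\ $\BB P(J>n)$ is regularly varying with index $-\rho$.

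It remains to identify $\rho=\mu$. The lower bound $\BB P(J>n)\ge\BB P(R_n(C))\ge n^{-\mu+o_n(1)}$ from Lemma~\ref{prop-X-asymp} forces $\rho\le\mu$. For the reverse inequality, apply the linear map $A$ of~\eqref{eqn-bm-matrix}, turning $Z$ into standard planar Brownian motion and $Q$ into the wedge $F_p$ of opening angle $\pi-\arctan(\sqrt{1-2p}/p)=\pi/(2\mu)$; the classical survival estimate for Brownian motion in a wedge (the same computation underlying Lemma~\ref{prop-bm-cone-asymp} and the survival term in~\eqref{eqn-f-density-def}) gives $\BB P(Z(\cdot)+z\in\overline Q$ on $[0,t])\le C|z|^{2\mu}t^{-\mu}$, and since $\wh Z(-1)$ has Gaussian-type tails we get $\psi(c)\le C' c^{-\mu}$ for $c\ge2$. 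Comparing with $\psi(c)=c^{-\rho}$ forces $\rho\ge\mu$, so $\rho=\mu$ and $\BB P(J>n)$ is regularly varying with exponent $\mu$.

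For $\wt J$ the argument runs along the same lines: writing out the flexible-order excursions straddling the successive endpoints $-1,-2,\dots$ shows that $\{\wt J>n\}$ is, up to an $o_n^\infty(n)$-probability event, a chain of consecutive $\tb F$-excursions, so that $\wt J$ has the law of a geometrically-bounded sum of i.i.d.\ $\tb F$-excursion lengths; each such length is regularly varying with exponent $\mu$ by the same reasoning as above (now running the walk forward from a burger matched to a flexible order instead of conditioning on no burgers), and $\wt J$ inherits regular variation with exponent $\mu$ from the single-big-jump behaviour of subexponential laws. The main obstacle throughout is the rigorous implementation of the continuation step: the joint scaling limit of the endpoint $D(-n)$ and the independent future walk, the almost-sure continuity of the first-quadrant confinement functional at the Brownian limit, the passage of the limit through the expectation defining $\psi(c)$, and the bookkeeping of the flexible-order corrections via Lemma~\ref{prop-few-F}.
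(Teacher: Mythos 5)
Your treatment of $J$ itself is essentially the paper's argument: condition on $X_{-n}\dots X_{-1}$ and $\{J>n\}$, use independence plus translation invariance for the continuation, Theorem~\ref{thm-no-burger-conv} for the law of the rescaled endpoint, and Theorem~\ref{prop-burger-limit} for the fresh walk, to get $\BB P(J>cn)/\BB P(J>n)\to\psi(c)$, then multiplicativity to force $\psi(c)=c^{-\rho}$. The only genuine divergence is how you identify $\rho$: you sandwich it, using Lemma~\ref{prop-X-asymp} for $\rho\le\mu$ and the wedge survival bound $\Phi_c(z)\preceq |z|^{2\mu}c^{-\mu}$ together with the Gaussian tail of $\wh Z(-1)$ for $\rho\ge\mu$, whereas the paper computes $\lim_{c\to\infty}c^{\mu}\BB E[\Phi_c(\wt Z(1))]$ directly via \cite[Equation 4.3]{shimura-cone} and Vitali convergence. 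Both rest on the same Shimura estimates and both are fine; your version has the mild advantage of reusing an estimate already in the paper, at the cost of needing the uniform-integrability bound anyway for the upper direction. (One cosmetic point: the $\tb F$-correction to $D(-n)$ is $O(n^{\nu})$ for some $\nu<1/2$ with probability $1-o_n^\infty(n)$; writing it as ``$o_n^\infty(n^{1/2})$'' clashes with Notation~\ref{def-o-notation}.)

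The $\wt J$ part, however, rests on a false structural claim. The event $\{\wt J>n\}$ is \emph{not} a chain of consecutive $\tb F$-excursions, and $\wt J$ is not a geometric sum of excursion lengths: since any two intervals $[\phi(i),i]_{\BB Z}$ with $X_i=\tb F$ are either nested or disjoint, no excursion with right endpoint in $[-n,-2]$ can extend coverage past the left endpoint of the excursion rooted at $-1$ without containing it (and hence having right endpoint $\ge 0$). Concretely, if $X_{-1}\ne\tb F$ then $\wt J=1$; and if $X_{-1}=\tb F$, then at the first time $j$ at which $X(-j,-2)$ contains a burger, the reduced word $X(-j+1,-2)$ can contain no unfulfilled $\tb F$ (otherwise that $\tb F$ would have consumed the new burger), so $X(-j,-1)$ is $\tb F$-free and $\wt J=j$ exactly. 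This gives the identity $\BB P(\wt J>n)=\frac{p}{2}\BB P(J>n-1)$, from which the second assertion is an immediate corollary of the first; no single-big-jump argument is needed, and your plan to prove that ``each excursion length is regularly varying with exponent $\mu$ by the same reasoning'' is in any case just this identity in disguise, so as written that step is close to circular. Replace the chain/geometric-sum picture with the exact one-step reduction and the second claim follows at once.
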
 

We note that Proposition~\ref{prop-J-reg-var} can be viewed as an analogue for the random path $D = (d,d^*)$ studied in this paper of the tail asymptotics for the exit time from a cone of a random walk with independent increments obtained in~\cite[Theorem 1]{dw-cones}. However, unlike the estimate which is implicit in Proposition~\ref{prop-J-reg-var}, the estimate of~\cite{dw-cones} does not involve a slowly varying correction.

\begin{proof}[Proof of Proposition~\ref{prop-J-reg-var}]
Fix $c > 1$. For $z  \in (0,\infty)^2$, write $\Phi_c(z)$ for the probability that a two-dimensional Brownian motion with covariances~\eqref{eqn-bm-cov} started from $z$ stays in the first quadrant until time $c-1$. Note that $\Phi_c$ is a bounded continuous function of $z$.

Let $\wt Z = (\wt U , \wt V)$ have the law of $Z|_{[-1,0]}$ conditioned to stay in the first quadrant. For $n\in\BB N$, let $\wh Z^n$ be defined in the same manner as the path $\wh Z^n_\zeta$ used in the proof of Theorem~\ref{thm-no-burger-conv}, but with 1 in place of $\zeta$, so that $\wh Z^n $ determines and is determined by $X_{-n} \dots X_{-1}$ and is independent from $\dots X_{- n-2} X_{- n-1}$ and hence also from $Z^n_{[-c,-1]}$.

By the same argument used to obtain~\eqref{eqn-cond-conv} in the proof of Theorem~\ref{thm-no-burger-conv}, we have that 
\eqb \label{eqn-J>cn-conv}
 \BB P\left( J  >c n   \,|\, J > n,\,\wh Z^n \right) \BB 1_{F_n}  \rta \Phi_c(\wt Z(1)) 
\eqe 
in law, where here (as usual) $F_n$ is the event that $X(-n,-1)$ contains at most $n^\nu$ flexible orders for some $\nu \in (\mu',1/2)$.

Since the conditional law of $\wh Z^n$ given $J > n$ converges to the law of $\wt Z$ and $\lim_{n\rta\infty} \BB P(F_n) = 1$, we can take expectations to get
\eqbn
 \BB P\left( J  >c n   \,|\, J > n  \right) = \frac{\BB P\left( J  > c n \right)}{\BB P\left(J > n\right)} \rta f(c),   
\eqen
where $f(c) : = \BB E\left( \Phi_c(\wt Z(1)) \right)$. 

We have $f(1) = 1$, $f(c) \in (0,1)$ for each $c > 1$, and
\eqbn
f(c) f(c') = \lim_{n\rta\infty} \frac{\BB P\left( J  > c n \right)}{\BB P\left(J > n\right)} \times \frac{\BB P\left( J  > c c' n \right)}{\BB P\left(J > c n\right)} = f(c c') .
\eqen
We infer that $f(c) = c^{-\alpha}$ for some $\alpha > 0$. 

To identify $\alpha$, we need only consider the asymptotics of $\BB E\left( \Phi_c(\wt Z(1)) \right)$ as $c\rta\infty$. To this end, we apply \cite[Equation 4.3]{shimura-cone} (c.f.\ the proof of Lemma~\ref{prop-bm-cone-asymp}) to get that for fixed $z\in (0,\infty)^2$, we have 
\[
\lim_{c\rta\infty} c^{ \mu} \Phi_c(z) = \Psi(z)
\]
for some positive continuous function $\Psi$ on $(0,\infty)^2$ which is bounded in every neighborhood of the origin. By the formula \cite[Equation 3.2]{shimura-cone} for the density of the law of $\wt Z(1)$, it follows that $\BB P\left(|\wt Z(1)| > A\right)$ decays quadratic-exponentially in $A$. By Brownian scaling and \cite[Equation 4.2]{shimura-cone}, 
\eqbn
\sup_{z\in B_A(0) \cap (0,\infty)^2} |\Phi_c(z)| \preceq c^{-\mu} A^{2\mu} 
\eqen
with the implicit constant depending only on $p$. Hence
\eqbn
\BB E\left(|c^{ \mu} \Phi_c(\wt Z(1) ) | \BB 1_{\{|c^{ \mu} \Phi_c(\wt Z(1) ) |  \geq A \}} \right) 
\rta 0
\eqen
 as $A\rta \infty$, uniformly in $c$. By the Vitalli convergence theorem, $ c^\mu f(c) =  \BB E\left( c^{ \mu} \Phi_c(\wt Z(1) ) \right)$ converges to a finite constant as $c\rta\infty$, whence we must have $\alpha = \mu$.

For the last statement, we note that with probability $1-p/2$ we have $\wt J = 1$, and with probability $p/2$, $\wt J$ is equal to the smallest $j\in\BB N$ for which $X(-j,-2)$ contains a burger. It follows that for $n \geq 2$ we have $\BB P\left(\wt J > n\right) = \frac{p}{2} \BB P\left(J > n-1\right)$. Hence
\eqbn
\lim_{n\rta\infty} \frac{\BB P\left( J  > c n \right)}{\BB P\left(J > n\right)} = \lim_{n\rta\infty} \frac{\BB P\left( \wt J  > c n \right)}{\BB P\left(\wt J > n\right)} .
\eqen
\end{proof}

From Proposition~\ref{prop-J-reg-var}, we can deduce that there a.s.\ exist macroscopic $\tb F$-excursions, which is the key input in our proof of Theorem~\ref{thm-cone-limit} in the next section. 

\begin{proof}[Proof of Proposition~\ref{prop-late-F}]
For $m\in\BB N$, let $\wt J_m$ be the $m$th smallest $j\in\BB N$ for which $X(-j,-1)$ contains no $\tb F$ symbols. Then the words $X_{-\wt J_m} \dots X_{-\wt J_{m-1}-1}$ are iid. By Corollary~\ref{prop-J-reg-var}, $\wt J_1$ is regularly varying with exponent $ \mu \in (0,1)$. For $n\in\BB N$ let $M_n$ be the largest $m\in\BB N$ for which $\wt J_m \leq n$. By the Dynkin-Lamperti theorem~\cite{dynkin-limits,lamperti-limits}, $n^{-1}\left(n - J_{M_n}\right)$ converges in law to a generalized arcsine distribution with parameter $\mu$. Since this distribution does not have a point mass at the origin we obtain the statement of the proposition.
\end{proof}

We end by recording some consequences of Proposition~\ref{prop-J-reg-var} which are of independent interest, but are not needed for the proof of Theorem~\ref{thm-cone-limit}. 

\begin{cor} \label{prop-few-F-optimal}
The statement of Lemma~\ref{prop-few-F} holds, exactly as stated, with $1-\mu$ in place of $\mu'$. 
\end{cor}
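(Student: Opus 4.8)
\textbf{Proof proposal for Corollary~\ref{prop-few-F-optimal}.}

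The plan is to revisit the proof of Lemma~\ref{prop-few-F} and replace the single quantitative input — the lower bound $\BB P(K_1 > n) \geq n^{-\mu' + o_n(1)}$ — with a sharper one coming from Proposition~\ref{prop-J-reg-var}. Recall from the proof of Lemma~\ref{prop-few-F} that $K_0 = 0$ and $K_m$ is the $m$th smallest $i\in\BB N$ for which $X(1,i)$ contains no burgers, that the increments $K_m - K_{m-1}$ are i.i.d.\ copies of $K_1$, and that every time $i$ at which $\mcl N_{\tb F}(X(1,i))$ increases is one of the $K_m$. By translation invariance, $K_1$ has the same law as the random variable $J$ of Section~\ref{sec-F-reg-setup} (the smallest $j\in\BB N$ for which $X(-j,-1)$ contains a burger), since $\{K_1 > n\}$ is the event that $X(1,n)$ contains no burgers, which by translation invariance has the same probability as $\{J>n\}$. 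By Proposition~\ref{prop-J-reg-var}, the law of $J$ (hence of $K_1$) is regularly varying with exponent $\mu$; in particular $\BB P(K_1 > n) = n^{-\mu + o_n(1)}$, which is a sharper statement than the bound used before since $\mu \leq \mu'$ (both equal $\kappa/8$ versus $\kappa/(4(\kappa-2))$, and $\kappa < \kappa/(\kappa - 2)$ fails... wait, for $\kappa \in (4,8)$, $\kappa - 2 \in (2,6)$, and $\mu'/\mu = 2/(\kappa-2) \in (1/3, 1)$, so indeed $\mu' < \mu$; I will instead just note $1-\mu \leq \mu'$ directly as in the statement, which follows from $\mu + \mu' \geq 1$, equivalently $\kappa/8 + \kappa/(4(\kappa-2)) \geq 1$, which holds for $\kappa \in (4,8)$).

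Next I would apply Lemma~\ref{prop-few-renewal} with the sequence $Y_j := K_j - K_{j-1}$, so that $S_m = K_m$, the event $E_i$ is ``$X(1,i)$ contains no burgers'', and $M_n$ counts the number of such $i \leq n$. Every time $i\geq n$ at which $\mcl N_{\tb F}(X(1,i)) \geq i^\nu$ forces $M_i \geq i^\nu$ as well (since each increase of $\mcl N_{\tb F}$ occurs at some $K_m$), so the event $\{\exists i \geq n \text{ with } \mcl N_{\tb F}(X(1,i)) \geq i^\nu\}$ is contained in $\{\exists i \geq n \text{ with } M_i \geq i^\nu\}$. I will verify hypothesis~\eqref{eqn-few-renewal-lower} of Lemma~\ref{prop-few-renewal} with $\alpha = \mu$: indeed $\BB P(Y_1 \geq n) = \BB P(K_1 \geq n) \geq n^{-(1-\mu) + o_n(1)}$ by the regular variation of $K_1$. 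Then Lemma~\ref{prop-few-renewal} gives that for each $\nu > 1-\mu$, $\BB P(\exists i \geq n \text{ with } M_i \geq i^\nu) = o_n^\infty(n)$, which yields~\eqref{eqn-few-F} with $1-\mu$ in place of $\mu'$.

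The conditional version follows exactly as in the proof of Lemma~\ref{prop-few-F}: combine the unconditional statement just obtained with Lemma~\ref{prop-X-asymp}, which provides the lower bound $\BB P(X(1,n') \text{ has no burgers}) \geq (n')^{-\mu + o_{n'}(1)} \geq n^{-\mu + o_n(1)}$ uniformly over $n' \in [n, Cn]_{\BB Z}$, so that conditioning on this event inflates probabilities by at most a polynomial factor $n^{\mu + o_n(1)}$, which is absorbed into the $o_n^\infty(n)$. I do not expect any serious obstacle here: the only subtlety is the bookkeeping check that $1 - \mu \leq \mu'$ so that the new statement is genuinely stronger, and the observation that the proof of Proposition~\ref{prop-J-reg-var} does not secretly rely on Corollary~\ref{prop-few-F-optimal} — it uses only Lemma~\ref{prop-few-F} (via $F_n$) and Theorem~\ref{thm-no-burger-conv}, so there is no circularity, consistent with Remark~\ref{remark-few-F-stronger}.
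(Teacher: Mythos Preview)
Your overall strategy is exactly the paper's: feed the regular variation of $J$ from Proposition~\ref{prop-J-reg-var} into Lemma~\ref{prop-few-renewal}, mirroring the proof of Lemma~\ref{prop-few-F}. However, the step where you verify the hypothesis of Lemma~\ref{prop-few-renewal} is wrong, and the error is not cosmetic.

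Regular variation of $K_1$ with exponent $\mu$ gives $\BB P(K_1 \geq n) = n^{-\mu + o_n(1)}$, as you correctly state earlier. Since $\mu \in (1/2,1)$ we have $\mu > 1-\mu$, so $n^{-\mu+o_n(1)}$ is \emph{smaller} than $n^{-(1-\mu)+o_n(1)}$; your asserted lower bound $\BB P(K_1 \geq n) \geq n^{-(1-\mu)+o_n(1)}$ is therefore false. If you plug the correct lower bound $\BB P(Y_1 \geq n) \geq n^{-\mu+o_n(1)}$ into hypothesis~\eqref{eqn-few-renewal-lower}, you are forced to take $\alpha = 1-\mu$, and the conclusion of Lemma~\ref{prop-few-renewal} is then only for $\nu > 1-\alpha = \mu$, which is \emph{weaker} than the original Lemma~\ref{prop-few-F} (since $\mu > \mu'$). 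So the route through~\eqref{eqn-few-renewal-lower} cannot yield the improvement.

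The fix, which is what the paper does, is to use the \emph{other} hypothesis~\eqref{eqn-few-renewal-upper}: by Proposition~\ref{prop-J-reg-var} and translation invariance, $\BB P(E_i) = \BB P(J>i) = i^{-\mu + o_i(1)}$, so in particular $\BB P(E_i) \leq i^{-\mu + o_i(1)}$. Applying Lemma~\ref{prop-few-renewal} with $\alpha = \mu$ via~\eqref{eqn-few-renewal-upper} gives the conclusion for every $\nu > 1-\mu$, as desired. The point is that the new input from Proposition~\ref{prop-J-reg-var} is an \emph{upper} bound on $\BB P(E_i)$ (equivalently on $\BB P(K_1 > i)$), not a sharper lower bound; this is precisely what unlocks hypothesis~\eqref{eqn-few-renewal-upper}, which was unavailable at the time Lemma~\ref{prop-few-F} was proved. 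Your treatment of the conditional version and the non-circularity check are fine.
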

\begin{proof}
For $i\in\BB N$, let $E_i$ be the event that $X(1,i)$ contains no burgers. By Proposition~\ref{prop-J-reg-var} and translation invariance,
\eqbn
\BB P\left(E_i\right) = \BB P(J > i) = i^{-\mu+o_i(1)} .
\eqen
The corollary now follows from Lemma~\ref{prop-few-renewal} (c.f.\ the proof of Lemma~\ref{prop-few-F}). 
\end{proof}

\begin{cor} \label{prop-K-reg-var}
Let $K^F$ be the smallest $i\in\BB N$ for which $X(1,i)$ contains a flexible order. The law of $ K^F$ is regularly varying with exponent $1-\mu$.  
\end{cor}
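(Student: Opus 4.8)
The plan is to express $K^F$ through the renewal process of burger-free times used in the proof of Proposition~\ref{prop-infinite-F}, and then combine the regular variation of $J$ from Proposition~\ref{prop-J-reg-var} with standard renewal/Tauberian arguments.

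First I would set up the renewal structure. Let $K_0 := 0$ and, for $m\in\BB N$, let $K_m$ be the $m$th smallest $i\in\BB N$ for which $X(1,i)$ contains no burgers; as in the proof of Proposition~\ref{prop-infinite-F} the increments $K_m - K_{m-1}$ are iid, and as in the proof of Corollary~\ref{prop-few-F-optimal} we have $u_i := \BB P(i = K_m\text{ for some }m) = \BB P(X(1,i)\text{ contains no burgers}) = \BB P(J > i)$ by translation invariance. A flexible order at position $j$ survives in the reduced word $X(1,i)$ (for $i\ge j$) if and only if $\phi(j)\le 0$, i.e.\ if and only if $X(1,j-1)$ contains no burgers, i.e.\ if and only if $j = K_m+1$ for some $m\ge 0$; since this condition does not depend on $i$, the event that $X(1,i)$ contains a flexible order is non-decreasing in $i$, so that $\{K^F > n\} = \{X(1,n)\text{ contains no }\tb F\}$ and
\[
K^F = K_{\tau - 1} + 1, \qquad \tau := \min\{m\ge 1 : X_{K_{m-1}+1} = \tb F\}.
\]
By the strong Markov property of $(X_i)$ at the stopping times $K_m$, the pairs $(K_m - K_{m-1},\, X_{K_{m-1}+1})$ for $m\ge 1$ are iid, the event $\{X_{K_{m-1}+1} = \tb F\}$ having probability $p/2$; hence $N := \tau - 1$ is geometric with $\BB P(N = k) = (1-p/2)^k(p/2)$, and conditionally on $\tau$ the increments $K_1 - K_0,\dots,K_{\tau-1} - K_{\tau-2}$ are iid with the law of $K_1$ given $\{X_1 \ne \tb F\}$. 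Letting $\tilde L_l$ be iid with that conditional law and independent of $N$, we obtain $K^F \eqD 1 + \sum_{l=1}^N \tilde L_l$.

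Next I would compute the generating function. Using $\BB E(z^N) = (p/2)/(1 - (1-p/2)z)$ and $\BB E(s^{\tilde L_1}) = \BB E(s^{K_1}\BB 1_{X_1\ne\tb F})/(1-p/2)$, together with the observation that $X_1 = \tb F$ forces $K_1 = 1$ (so $\BB E(s^{K_1}\BB 1_{X_1=\tb F}) = s\,p/2$), a short computation gives
\[
\BB E\big(s^{K^F}\big) = \frac{s\,p/2}{1 - \BB E\big(s^{K_1}\big) + s\,p/2}.
\]
By the renewal equation $\wh U(s) := \sum_{i\ge 0} u_i s^i = (1 - \BB E(s^{K_1}))^{-1}$, so $1 - \BB E(s^{K_1}) = 1/\wh U(s)$ and hence $1 - \BB E(s^{K^F}) = \wh U(s)^{-1}\big/(\wh U(s)^{-1} + s\,p/2) \sim (2/p)\,\wh U(s)^{-1}$ as $s\uparrow 1$. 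By Proposition~\ref{prop-J-reg-var} the sequence $u_i = \BB P(J>i)$ is non-increasing and regularly varying with index $-\mu$, so Karamata's theorem gives that $\wh U(s)$ is regularly varying at $1$ with index $-(1-\mu)$, whence $1 - \BB E(s^{K^F})$ is regularly varying at $1$ with index $1-\mu\in(0,1)$. Karamata's Tauberian theorem (applicable because the coefficients $\BB P(K^F>n)$ of $(1-\BB E(s^{K^F}))/(1-s)$ are non-increasing, so the monotone density theorem applies) then yields that $\BB P(K^F > n)$ is regularly varying with index $-(1-\mu)$, i.e.\ $K^F$ is regularly varying with exponent $1-\mu$, as desired.

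Most of this is bookkeeping; the one step that deserves care is the passage, via the renewal equation and the two Tauberian theorems, from the regular variation of the renewal mass function $\BB P(J>\cdot)$ to that of $\BB P(K^F>\cdot)$ — in effect a converse renewal theorem — and I would cite the Abelian/Tauberian machinery (e.g.\ \cite{dynkin-limits,lamperti-limits} or a standard reference on regular variation) rather than re-derive it. A secondary, minor subtlety, already handled above, is that the indicator $\BB 1_{X_{K_{m-1}+1}=\tb F}$ is \emph{not} independent of the renewal increment $K_m - K_{m-1}$ (a "success" forces that increment to equal $1$), which is why one conditions on $\{X_1\ne\tb F\}$ when writing $K_{\tau-1}$ as a geometric sum. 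Alternatively, once $\tilde L_1$ is seen to have a regularly varying (hence subexponential) tail, one may replace the generating-function computation by the randomly-stopped-sum asymptotic $\BB P(\sum_{l=1}^N\tilde L_l > n)\sim\BB E(N)\,\BB P(\tilde L_1>n)$.
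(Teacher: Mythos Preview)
Your argument is correct. The identification $K^F = K_{\tau-1}+1$ with $\tau$ a geometric time over the burger-free renewal process is exactly right (including the care you take about the dependence between $X_{K_{m-1}+1}$ and the $m$th increment), the generating-function identity $\BB E(s^{K^F}) = (s\,p/2)\big/(1-\BB E(s^{K_1})+s\,p/2)$ is easily verified, and the chain ``$u_i=\BB P(J>i)$ is RV$(-\mu)$ $\Rightarrow$ $\wh U(s)$ is RV at $1$ with index $-(1-\mu)$ $\Rightarrow$ $1-\BB E(s^{K^F})$ is RV with index $1-\mu$ $\Rightarrow$ $\BB P(K^F>n)$ is RV$(-(1-\mu))$'' is a clean application of Karamata's Abelian/Tauberian theorems plus the monotone density theorem (legitimate since $\mu\in(1/2,1)$, so all relevant indices lie strictly in $(0,1)$).

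The paper, however, takes a different and somewhat slicker route: it introduces the renewal sequence $K^F_m$ of times when the $m$th flexible order enters $X(1,\cdot)$, observes via translation invariance the exact duality $K^F_{M^*_n}\eqD n-\wt J_{M_n}$ with the backward $\tb F$-free renewal process from the proof of Proposition~\ref{prop-late-F}, concludes from the already established arcsine limit that $n^{-1}(n-K^F_{M^*_n})$ converges to the generalized arcsine law with parameter $1-\mu$, and then invokes the \emph{converse} to the Dynkin--Lamperti theorem to get regular variation of $K^F$. Thus the paper trades your generating-function/Tauberian machinery for an arcsine duality argument; its advantage is that it reuses the limit already proven for Proposition~\ref{prop-late-F}, while your approach is more self-contained and makes the link to the burger-free renewal process (and to Proposition~\ref{prop-J-reg-var}) completely explicit. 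One small remark on your closing ``alternatively'': the subexponential random-sum route would require knowing that $K_1$ itself (not just the renewal mass $u_i$) has a regularly varying tail, which is an extra step you have not carried out, so the generating-function proof you give first is the cleaner of the two.
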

\begin{proof}
For $m\in\BB N$, let $K^F_m$ be the smallest $i\in\BB N$ for which $X(1,i)$ contains at least $m$ flexible orders. The words $X_{K^F_{m-1}+1}\dots X_{K^F_m}$ are iid. For $n\in\BB N$, let $M_n^*$ be the largest $m\in\BB N$ for which $K^F_m \leq n$. Equivalently, $K^F_{M_n^*}$ is the greatest integer $i\in [1, n]_{\BB Z}$ such that $X_i =\tb F$ and $\phi(i) \leq 0$. By translation invariance, we have $K^F_{M_n^*} \eqD n - \wt J_{M_n}$, with the latter defined in the proof of Proposition~\ref{prop-late-F}. Hence the law of $n^{-1} K^F_{M_n^*}$ converges to the generalized arcsine distribution with parameter $\mu$. Therefore $n^{-1}\left( n - K^F_{M_n^*} \right)$ converges in law to a generalized arcsine distribution with parameter $1-\mu$. By the converse to the Dynkin-Lamperti theorem, $K^F_{M_n^*}$ is regularly varying with exponent $1-\mu$.
\end{proof}

\begin{remark}
In the terminology of~\cite{blr-exponents}, Corollary~\ref{prop-K-reg-var} states that the law of the area of the part traced after time 0 of the ``envelope" of the smallest loop surrounding the root vertex in the infinite-volume model is regularly varying with exponent $1-\mu$. In \cite[Section 1.2]{blr-exponents}, the authors conjecture that the tail exponent for the law of the area of this loop itself is $1-\mu$. We expect that this conjecture (plus a regular variation statement for the tail) can be deduced from Proposition~\ref{prop-J-reg-var} and Corollary~\ref{prop-K-reg-var} via arguments which are very similar to some of those given in Sections~\ref{sec-F-reg} and~\ref{sec-no-burger} of the present paper, but we do not carry this out here.
\end{remark}

\subsection{Proof of Theorem~\ref{thm-cone-limit}}
\label{sec-cone-proof}

In this section, we will complete the proof of Theorem~\ref{thm-cone-limit}. 
We first need a general deterministic statement about the convergence of $\pi/2$-cone times which in particular will allow us to deduce condition~\ref{item-cone-limit-times} in the theorem statement from the other conditions. 
To state this result, we need to introduce the notion of a strict $\pi/2$-cone time, which is defined in the same manner as a weak $\pi/2$-cone time (Definition~\ref{def-cone-time}) but with weak inequalities replaced by strict inequalities. 

\begin{defn}\label{def-strict-cone-time}
A time $t$ is called a \emph{strict $\pi/2$-cone time} for a function $Z = (U,V) : \BB R \rta \BB R^2$ if there exists $t' < t$ such that $U_s > U_{t }$ and $V_s > V_{t }$ for $s\in (t'   , t )$. Equivalently, $Z((t'   , t  ))$ is contained in the open cone $Z_{t } + \{z\in \BB C : \op{arg} z \in [0,\pi]\}$. We write $\wt v_Z(t)$ for the infimum of the times $t'$ for which this condition is satisfied.  
\end{defn} 
 
If $t$ is a strict $\pi/2$-cone time for $Z$, then $t$ is also a weak $\pi/2$-cone time for $Z$ and we have $\wt v_Z(t) \leq v_Z(t)$. The reverse inequality need not hold. For example, $Z$ might enter the closed cone at time $\wt v_Z(t)$, hit the boundary of the closed cone at time $v_Z(t) \in (\wt v_Z(t) , t)$, then stay in the open cone until time $t$.

\begin{lem} \label{prop-cone-conv}
Let $Z  = (U,V) : \BB R \rta \BB R^2$ be a continuous path with the following properties.
\begin{enumerate}
\item Each weak $\pi/2$-cone time $t$ for $Z$ is a strict $\pi/2$-cone time for $Z$ and satisfies $\wt v_Z(t) = v_Z(t)$. \label{item-Z-weak}
\item $Z$ has no weak $\pi/2$-cone times $t$ with $Z_{v_Z(t)} = Z_t$.\label{item-Z-direction}
\item $\liminf_{t\rta -\infty} U(t) = \liminf_{t\rta -\infty} V(t) = -\infty$. \label{item-Z-liminf}
\end{enumerate}
Let $Z^n = (U^n,V^n)$ be a sequence of continuous paths $\BB R \rta \BB R^2$ such that $Z^n\rta Z$ uniformly on compacts. Suppose that for each $n\in \BB N$, $t_n$ is a weak $\pi/2$-cone time for $Z^n$. Suppose further that almost surely $\liminf_{n\rta\infty}( t_n -  v_{Z^n}(t_n) ) > 0$. If $t_n \rta t $ for some $t\in \BB R$, then $t$ is a strict $\pi/2$-cone time for $Z$. Furthermore, $\lim_{n\rta\infty} v_{Z^n}(t_n)  = v_Z(t)$, $\lim_{n\rta\infty} u_{Z^n}(t_n) = u_Z(t)$, and the direction of the $\pi/2$-cone time $t_n$ for $Z^n$ is the same as the direction of the $\pi/2$-cone time $t$ for $Z$ for sufficiently large $n$. 
\end{lem}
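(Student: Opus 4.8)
\textbf{Plan of proof for Lemma~\ref{prop-cone-conv}.}
The plan is to work with the three defining inequalities of a weak $\pi/2$-cone time and pass them through the uniform convergence $Z^n \rta Z$, using the non-degeneracy hypotheses~\ref{item-Z-weak}, \ref{item-Z-direction}, and~\ref{item-Z-liminf} to rule out the boundary pathologies that could otherwise spoil the limit. First I would extract, from a given subsequence, a further subsequence along which $v_{Z^n}(t_n) \rta s$ for some $s \leq t$ (using hypothesis~\ref{item-Z-liminf} and the assumption $\liminf_n (t_n - v_{Z^n}(t_n)) > 0$ to see that the sequence $v_{Z^n}(t_n)$ stays bounded above by $t - c$ for some $c>0$ and bounded below, since otherwise $Z^n$ — and hence $Z$ — would have to stay above a fixed level on an arbitrarily long interval to the left, contradicting the $\liminf = -\infty$ condition). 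Since $U^n(r) \geq U^n(t_n)$ and $V^n(r) \geq V^n(t_n)$ for all $r \in [v_{Z^n}(t_n), t_n]$, passing to the limit (uniform convergence plus continuity of $Z$) gives $U(r) \geq U(t)$ and $V(r) \geq V(t)$ for all $r \in [s, t]$; thus $t$ is a weak $\pi/2$-cone time for $Z$ with $v_Z(t) \leq s$. By hypothesis~\ref{item-Z-weak}, $t$ is then automatically a strict $\pi/2$-cone time and $\wt v_Z(t) = v_Z(t)$.

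The next step is to upgrade $v_Z(t) \leq s$ to equality. Suppose $v_Z(t) < s$; then, since $t$ is a strict $\pi/2$-cone time with $\wt v_Z(t) = v_Z(t)$, on the interval $(v_Z(t), t)$ the path $Z$ stays strictly above the two lines through $Z_t$, so in particular on a small interval $[v_Z(t) + \epsilon, s]$ we have $U(r) > U(t) + \delta$ and $V(r) > V(t) + \delta$ for some $\delta > 0$. By uniform convergence, for large $n$ this forces $U^n(r) > U^n(t_n)$ and $V^n(r) > V^n(t_n)$ on the same interval, so that $v_{Z^n}(t_n) \leq v_Z(t) + \epsilon$ for large $n$; letting $\epsilon \rta 0$ contradicts $v_{Z^n}(t_n) \rta s > v_Z(t)$. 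Hence $\lim v_{Z^n}(t_n) = v_Z(t)$ along the subsequence, and since the subsequence was arbitrary the full limit holds. For the direction statement: $t_n$ is (say) a left $\pi/2$-cone time means $V^n(v_{Z^n}(t_n)) = V^n(t_n)$; by hypothesis~\ref{item-Z-direction} the limiting cone time $t$ has $Z_{v_Z(t)} \neq Z_t$, i.e.\ exactly one coordinate of $Z_{v_Z(t)} - Z_t$ is zero and the other is strictly positive, so by continuity and uniform convergence the same coordinate of $Z^n_{v_{Z^n}(t_n)} - Z^n_{t_n}$ must be the one tending to zero, pinning down the direction for large $n$.

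Finally I would handle the convergence $u_{Z^n}(t_n) \rta u_Z(t)$, which is the step I expect to be the main obstacle, since $u_Z$ is defined by a condition involving \emph{strict} infima ($\inf_{r \in [t^*,t]} U(r) < U(t)$ and similarly for $V$) and suprema of such times, and strict inequalities are not stable under limits in general. The idea is to use hypotheses~\ref{item-Z-weak} and~\ref{item-Z-direction} to show that $Z$ actually \emph{crosses} the relevant boundary line transversally near $u_Z(t)$, so that both $Z$ just before $u_Z(t)$ dips strictly below the level and $Z$ just after stays at or above it on $[u_Z(t), v_Z(t)]$ — in fact on $[u_Z(t), v_Z(t)]$ one coordinate, say $V$, stays $\geq V(t)$ while on a left neighborhood of $u_Z(t)$ that same coordinate goes strictly below $V(t)$ (this is where the ``no degenerate cone time'' and ``weak $=$ strict'' hypotheses are essential: they prevent $Z$ from merely touching the line). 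Transversality gives, for each small $\epsilon>0$, points $r_- \in (u_Z(t) - \epsilon, u_Z(t))$ with $V(r_-) < V(t) - \delta$ and the property that $\min_{r \in [u_Z(t)+\epsilon, v_Z(t)]} V(r) \geq V(t) - \text{(something small)}$; transferring both to $Z^n$ via uniform convergence and the already-established convergence $v_{Z^n}(t_n) \rta v_Z(t)$, $t_n \rta t$, sandwiches $u_{Z^n}(t_n)$ between $u_Z(t) - \epsilon$ and $u_Z(t) + \epsilon$ for large $n$. Letting $\epsilon \rta 0$ finishes the argument. Throughout, one should be careful to phrase everything along an arbitrary subsequence and invoke the standard ``every subsequence has a further subsequence converging to the same limit'' principle to conclude the full-sequence statements.
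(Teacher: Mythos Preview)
Your plan is correct and follows essentially the same strategy as the paper: pass to subsequential limits of $v_{Z^n}(t_n)$ and $u_{Z^n}(t_n)$, transfer the cone inequalities to $Z$ via uniform convergence, and then invoke hypotheses~\ref{item-Z-weak}--\ref{item-Z-liminf} to pin down the limits and the direction. The paper organizes the sub-arguments a little differently. Rather than your push-down contradiction for $v_{Z^n}(t_n)\to v_Z(t)$, the paper first establishes the direction: any subsequential limit $v$ lies in $[v_Z(t),t)$, and hypothesis~\ref{item-Z-direction} gives (say) $U(v)>U(t)$, hence $U^n(v_{Z^n}(t_n))>U^n(t_n)$ eventually, which forces $t_n$ to be a left cone time; then $V^n(v_{Z^n}(t_n))=V^n(t_n)\to V(t)$ together with strictness on $(v_Z(t),t)$ yields $v=v_Z(t)$. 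With the direction already in hand, the paper's treatment of $u_{Z^n}(t_n)\to u_Z(t)$ is also shorter than your transversality sketch: for left $t_n$ one has $U^n(u_{Z^n}(t_n))=U^n(t_n)$ and $U^n\geq U^n(t_n)$ on $[u_{Z^n}(t_n),t_n]$, and both pass to the limit to give $U(u)=U(t)$ with $U\geq U(t)$ on $[u,t]$, whence $u=u_Z(t)$. Your more careful transversality argument would also go through; the paper's route is just more economical once the direction has been fixed.
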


Note that the conditions on $Z$ of Proposition~\ref{prop-cone-conv} are a.s.\ satisfied for the correlated Brownian motion of~\eqref{eqn-bm-cov}. 

\begin{proof}[Proof of Lemma~\ref{prop-cone-conv}]
We can choose a compact interval $[a_0 ,b]\subset \BB R$ such that $t_n \in [a_0 ,b]$ for each $n\in\BB N$. By our assumption~\ref{item-Z-liminf} on $Z$, we can find $a_1 < a_0$ such that $\inf_{s\in [a_1 , a_0]} U(s) < \inf_{s\in [a_0,b]} U(s) $ and $\inf_{s\in [a_1,a_0]} V(s) < \inf_{s\in [a_0,b]} V(s)$. For sufficiently large $n$, the same is true with $(U^n ,V^n)$ in place of $(U,V)$. Therefore, we can find $a \in (-\infty , a_1]$ such that $t_n, v_{Z^n}(t_n)$, and $u_{Z^n}(t_n)$ belong to $[a,b]$ for each $n\in\BB N$. 

By local uniform convergence of $Z^n$ to $Z$, we can find $\delta>0$ such that $U(s) \geq U(t) $ and $V(s) \geq V(t)$ for each $s\in [t-\delta, t]$, so $t$ is a weak $\pi/2$-cone time for $Z$. By assumption~\ref{item-Z-weak}, $t$ is in fact a strict $\pi/2$-cone time for $Z$. 

Suppose without loss of generality that $t$ is a left $\pi/2$-cone time for $Z$, i.e.\ $V(v_Z(t)) = V(t)$. 
Let $v$ be any subsequential limit of the times $  v_{Z^n}(t_n)$. Then with $n$ restricted to our subsequence we have $  \lim_{n\rta\infty} U^n( v_{Z^n}(t_n)) = U(v)   $ and $ \lim_{n\rta\infty} V^n(  v_{Z^n}(t_n)) = V(v) $. Furthermore, $U(s) \geq U(t)$ and $V(s) \geq V(t)$ for each $s\in [v , t]$. Therefore $v \geq v_Z(t)$. We clearly have $v < t$, so since $t$ is not a right $\pi/2$-cone time for $Z$ (assumption~\ref{item-Z-direction}) we have $U(v) > U(t)$. Hence $U^n(  v_{Z^n}(t_n)) > U(t)$ for sufficiently large $n$ in our subsequence. Since $U^n(t_n) \rta U(t)$, we have $U^n(  v_{Z^n}(t_n)) > U^n(t_n)$ for sufficiently large $n$ in our subsequence. Hence $V^n(  v_{Z^n}(t_n)) = V^n(t_n)$ for sufficiently large $n$ in our subsequence. Since this holds for every choice of subsequence we infer $V^n( v_{Z^n}(t_n)) = V^n(t_n)$ for sufficiently large $n$. Moreover, for every  choice of subsequence we have $V(v) = \lim_{n\rta\infty} V^n(t_n) = V(t)$, whence $v = v_Z(t)$ and $  v_{Z^n}(t_n) \rta v_Z(t)$. 
 
Finally, let $u$ be any subsequential limit of the times $u_{Z^n}(t_n)$. Then along our subsequence we have $U(u) = \lim_{n\rta\infty} U^n(u_{Z^n}(t_n))  =\lim_{n\rta\infty} U^n(t_n) = U(t) $. Furthermore, $U(s) \geq U(t)$ for each $s\in [u , t]$. Therefore $u = u_Z(t)$. Since this holds for every such subsequential limit we obtain $\lim_{n\rta\infty}  u_{Z^n}(t_n)  = u_Z(t)$. 
\end{proof}

The following lemma is the main ingredient in the proof of Theorem~\ref{thm-cone-limit}.

\begin{figure}[ht!]
 \begin{center}
\includegraphics{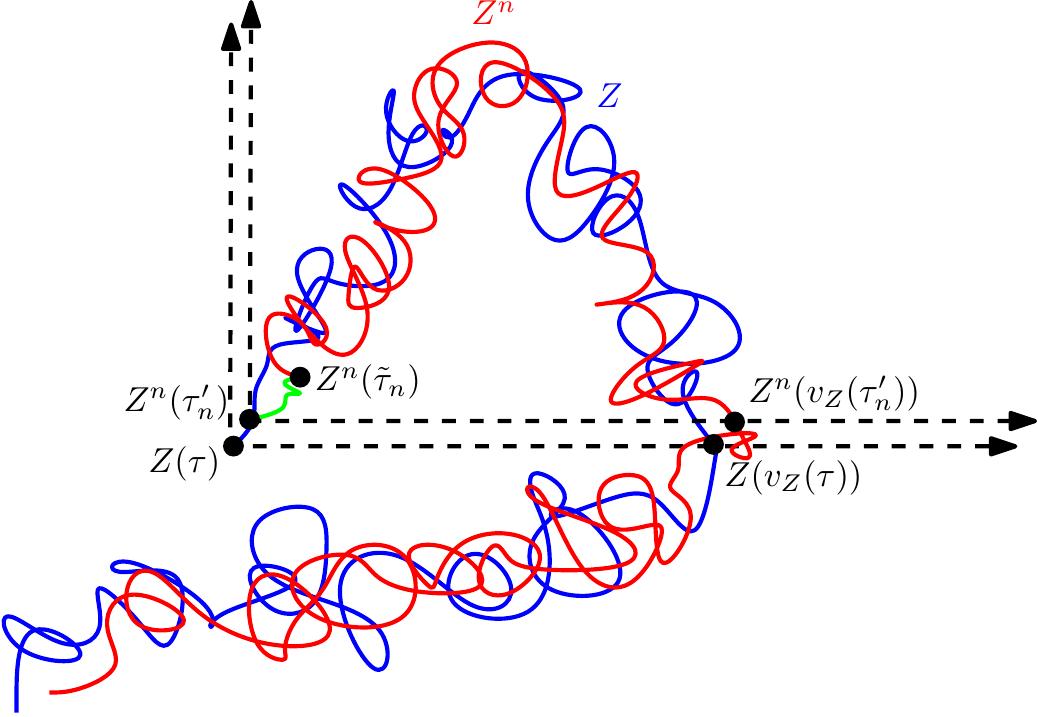} 
\caption{An illustration of the proof of Lemma~\ref{prop-F-conv}. By uniform convergence, we can find an ``approximate" $\pi/2$ cone time $\wt\tau_n$ for $Z^n$ which is close to $\tau$, and which is defined in such a way that $\wt\tau_n$ is a stopping time for the filtration generated by the word $X$. By the Markov property and Proposition~\ref{prop-late-F}, it holds with high probability that when we grow a little bit more of the path $Z^n$ (shown in green), then we arrive at a true $\pi/2$-cone time $\tau_n'$ for $Z^n$ shortly after time $\wt\tau_n$ which corresponds to a flexible order. This $\pi/2$-cone time $\tau_n'$ is close to the time $\tau_n = n^{-1}\iota_n$ which we are trying to show converges to $\tau$.} \label{fig-cone-conv}
\end{center}
\end{figure}

\begin{lem} \label{prop-F-conv}
Fix $a \in\BB R$ and $r>0$. Define the times $\tau^{a,r}$, $\iota_n^{a,r}$, and $\tau_n^{a,r}$ as in the statement of Theorem~\ref{thm-cone-limit}. 
Suppose we have (using \cite[Theorem 2.5]{shef-burger} and the Skorokhod representation theorem) coupled countably many instances $X^n$ of the infinite word $X$ with the Brownian motion $Z$ in such a way that $Z^n\rta Z$ uniformly on compacts a.s., with $Z^n$ constructed from the word $X^n$. Then $\tau_n^{a,r} \rta \tau^{a,r}$ in probability. 
\end{lem}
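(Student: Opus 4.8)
\textbf{Proof proposal for Lemma~\ref{prop-F-conv}.}

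The plan is to prove convergence $\tau_n^{a,r}\rta\tau^{a,r}$ in probability by sandwiching $\tau_n^{a,r}$ between a lower bound and an upper bound, each of which converges to $\tau^{a,r}$. Throughout I work under the Skorokhod coupling, so $Z^n\rta Z$ uniformly on compacts a.s., and I may freely use the a.s.\ properties of $Z$ from Lemma~\ref{prop-cone-conv} (each weak $\pi/2$-cone time is strict with $\wt v_Z=v_Z$, no cone time has $Z_{v_Z(t)}=Z_t$, and $\liminf_{t\rta-\infty}U=\liminf_{t\rta-\infty}V=-\infty$). Fix $\ep>0$. First I would establish the \emph{lower bound}: any subsequential limit $\tau_*$ of $\tau_n^{a,r}$ (along a subsequence where it converges — compactness after observing $\tau^{a,r}<\infty$ a.s.\ and that $\tau_n^{a,r}$ stays bounded with high probability, since $Z^n$ has an approximate cone excursion near $\tau^{a,r}$) satisfies $\tau_*\geq \tau^{a,r}$. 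Indeed $\iota_n^{a,r}$ is an $\tb F$-time for $X^n$ with $\iota_n^{a,r}-\phi^n(\iota_n^{a,r})\geq rn-1$, so $\tau_n^{a,r}$ and $n^{-1}\phi^n(\iota_n^{a,r})$ are (weak) $\pi/2$-cone data for $Z^n$ with $\tau_n^{a,r}-v_{Z^n}(\tau_n^{a,r})=n^{-1}(\iota_n^{a,r}-\phi^n(\iota_n^{a,r}))\geq r-n^{-1}$; applying Lemma~\ref{prop-cone-conv} to the subsequence, $\tau_*$ is a $\pi/2$-cone time for $Z$ with $\tau_*-v_Z(\tau_*)\geq r$ and $\tau_*\geq a$ (the latter since $\tau_n^{a,r}\geq a n/n$ up to a $1/n$ rounding). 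By minimality in the definition of $\tau^{a,r}$, $\tau_*\geq\tau^{a,r}$.

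The substantive part is the \emph{upper bound}: $\limsup_n\tau_n^{a,r}\leq\tau^{a,r}+\ep$ with probability $\rta 1$ as $\ep\rta 0$, uniformly in $n$ large. Here is where Proposition~\ref{prop-late-F} enters, following the strategy sketched in Figure~\ref{fig-cone-conv}. Set $\tau:=\tau^{a,r}$ and $v:=v_Z(\tau)$. Since $\tau$ is a strict $\pi/2$-cone time with $\wt v_Z(\tau)=v$ and $Z_v\not=Z_\tau$, and by continuity, for small $\eta>0$ the path $Z$ on $[v+\eta,\tau]$ stays strictly inside the open cone $Z_\tau+\{\op{arg}z\in(0,\pi/2)\}$ at uniformly positive distance from its boundary, and moreover $Z$ actually crosses \emph{below} both boundary lines strictly before time $v$ (by assumption~\ref{item-Z-liminf} applied locally, or rather by strictness of the cone time). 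By uniform convergence, for $n$ large there is an ``approximate'' cone time $\wt\tau_n$ for $Z^n$ with $\wt\tau_n\rta\tau$, $n^{-1}\lfloor\wt\tau_n n\rfloor$-measurability (i.e.\ $\wt\tau_n$ can be taken of the form $n^{-1}\wt i_n$ where $\wt i_n$ is a stopping time for the filtration generated by reading $X^n$ left to right), and such that the word $X^n$ restricted to the relevant interval has the property that the primal and dual burger stacks above the appropriate reference level each contain only $O(\ep^{1/2}n^{1/2})$ burgers at time $\wt i_n$. I would make this precise by choosing $\wt i_n$ to be, say, the first time $i\geq an$ after which $Z^n$ has just completed an excursion that is within $\ep^{1/2}$ (in rescaled coordinates) of being a genuine $\pi/2$-cone excursion of rescaled length $\geq r$ — the existence of such a time with $\wt i_n/n\rta\tau$ is exactly the statement that $\pi/2$-cone excursions of $Z$ are approximated by near-cone-excursions of $Z^n$ under uniform convergence (the easy direction, noted after Proposition~\ref{prop-late-F} in the text).

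Having fixed such a stopping time $\wt i_n$ with $\wt i_n/n\rta\tau$, I apply the strong Markov property of the i.i.d.\ word $X^n$ at time $\wt i_n$: conditionally on $X^n_{\dots}X^n_{\wt i_n}$, the future word $X^n_{\wt i_n+1}X^n_{\wt i_n+2}\dots$ is a fresh i.i.d.\ word. By Proposition~\ref{prop-late-F} applied to this fresh word (with $\delta$ small), with probability at least $1-o_\delta(1)$ there is some $j\in\{\lfloor\delta n\rfloor,\dots,n\}$ — wait, actually one needs the more useful form: there is, with probability $\rta 1$ as first $n\rta\infty$ then $\delta\rta 0$, a flexible order at some time $\iota_n'\in(\wt i_n,\wt i_n+\delta n]$ whose match $\phi^n(\iota_n')$ lies at most $O(\ep^{1/2}n^{1/2})$ below (in stack coordinates) the level reached at time $\wt i_n$ — because the residual $O(\ep^{1/2}n^{1/2})$ burgers on each stack get cleared out by a short stretch of fresh word with uniformly positive probability (this is precisely the content of Proposition~\ref{prop-late-F}, which produces an $\tb F$ whose match reaches all the way back past a given small stack). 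This $\iota_n'$ is then an $\tb F$-time with $\iota_n'-\phi^n(\iota_n')\geq rn-1$ for $n$ large (since the genuine cone excursion of $Z$ has rescaled length $\geq r$, and we lose only an $o(1)$ by the approximation and the short extra stretch), and $\iota_n'\geq an$, so by minimality $\iota_n^{a,r}\leq\iota_n'$, giving $\tau_n^{a,r}\leq\wt i_n/n+\delta\rta\tau+\delta$. Sending $\delta\rta 0$ after $n\rta\infty$, and combining with the lower bound and the fact that $\ep$ (hence $\delta$) is arbitrary, yields $\tau_n^{a,r}\rta\tau^{a,r}$ in probability.

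\textbf{Main obstacle.} The delicate step is the construction of the stopping time $\wt i_n$ and the bookkeeping that, at time $\wt i_n$, the only obstruction to $Z^n$ having a genuine $\pi/2$-cone time of rescaled length $\geq r$ near $\tau$ is a small ($O(\ep^{1/2}n^{1/2})$) residual burger stack of each type — and then the verification that Proposition~\ref{prop-late-F}, which is stated for the interval $[\delta n,n]$ starting from an \emph{empty} past, can be applied via the Markov property to clear \emph{this particular small stack} rather than an empty one. This requires either a mild strengthening of the bookkeeping (tracking the two stack heights separately and noting that Proposition~\ref{prop-late-F}'s $\tb F$ with match $\phi(i)\leq 0$ translates, after shifting the origin to $\wt i_n$ and accounting for the at-most-$O(\ep^{1/2}n^{1/2})$ offset, into an $\tb F$ with the required long match) or an intermediate lemma to that effect. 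Everything else is either soft (compactness, subsequences, Lemma~\ref{prop-cone-conv}) or a routine consequence of \cite[Theorem 2.5]{shef-burger} and the uniform convergence afforded by the Skorokhod coupling.
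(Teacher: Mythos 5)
Your proposal follows essentially the same route as the paper's proof: an approximate-cone stopping time $\wt\iota_n$ close to $\tau^{a,r}$, the strong Markov property of the word at $\wt\iota_n$, and Proposition~\ref{prop-late-F} combined with \cite[Theorem 2.5]{shef-burger} to produce a genuine $\tb F$-time $\iota_n'$ shortly after $\wt\iota_n$ with a sufficiently long match, sandwiching $\iota_n^{a,r}$ between $\wt\iota_n$ and $\iota_n'$. The ``main obstacle'' you flag is resolved by precisely the bookkeeping you describe: the paper arranges (via the scaling limit) that $X(\wt\iota_n+1,\iota_n')$ contains at least $\delta_1 n^{1/2}$ orders of each type while the approximate-cone condition guarantees the word $X(\wt\iota_n-rn,\wt\iota_n)$ has at most $\delta n^{1/2}\leq\delta_1 n^{1/2}$ burgers of each type, which forces $\phi(\iota_n')\leq\wt\iota_n-rn$, so no separate lemma is needed.
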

\begin{proof}
By translation invariance we can assume without loss of generality that $a=0$. To lighten notation, in what follows we fix $r$ and omit both $a$ and $r$ from the notation. To prove the lemma, we will define random times $\wt\iota_n , \iota_n'\ in \BB N$ and an event $G_n$ (depending on $X^n$ and $Z$) such that $\BB P(G_n) \rta 1$ and on $G_n$, $\wt\iota_n \leq \iota_n \leq \iota_n'$ and $n^{-1} \wt\iota_n$ and $n^{-1} \iota_n'$ are each close to $\tau$. 
See Figure~\ref{fig-cone-conv} for an illustration of the proof.

Let $\ep > 0$ be arbitrary.  
We observe the following.
\begin{enumerate}
\item By Proposition~\ref{prop-late-F}, we can find $\zeta_1 \in (0,\ep)$ (depending only on $\ep$) and an $\wt N \in \BB N$ such that for each $n\geq \wt N$, it holds with probability at least $1-   \ep/2$ that there is an $i\in [ \zeta_1 n    ,   \ep n ]_{\BB Z}$ such that $X_i = \tb F$ and $\phi(i) \leq 0$. Note that for such an $i$, $X(1,i)$ has no burgers. By \cite[Theorem 2.5]{shef-burger}, after possibly increasing $\wt N$ we can find $\delta_1 > 0$ (depending only on $\zeta_1)$ such that for $n\geq \wt N$, it holds with probability at least $1-  \ep$ that $X(1  ,  \zeta_1 n  )$ contains at least $\delta_1 n^{1/2}$ hamburger orders and at least $\delta_1 n^{1/2}$ cheeseburger orders. Hence with probability at least $1- \ep$, there is an $i\in [\zeta_1 n   ,   \ep n ]_{\BB Z}$ such that $X_i = \tb F$, $\phi(i) \leq 0$, and $X(1  ,i)$ contains at least $ \delta_1 n^{1/2}$ hamburger orders and at least $ \delta_1 n^{1/2}$ cheeseburger orders. \label{item-cones-late-F}
 
\item Since $\tau$ is a.s.\ finite, there is some deterministic $b > 1$ such that $\BB P(\tau < b -1) \geq 1-\ep$. \label{item-cones-b} 

\item  
For $t\geq 0$ let
\eqb \label{eqn-hat-Z-def}
\ol V(t) := V(t) - \inf_{s\in [t-r,t]} V(s) ,\quad \ol U(t) := U(t)- \inf_{s\in [t-r , t]} U(s), \quad \ol Z(t) = (\ol U(t),\ol V(t)) .
\eqe 
Note that zeros of $\ol Z$ are precisely the $\pi/2$-cone times of $Z$ in $[0,\infty)$ with $t-v_Z(t) \geq r$. 
For $\delta_2 > 0$, the sets $\ol Z^{-1}(\ol{B_{\delta_2}(0)}) \cap [0,b]$ are compact, and their intersection is $\ol Z^{-1}(0) \cap [0,b ]$. Therefore there a.s.\ exists a random $\delta_2 \in (0,1)$ such that $\ol Z^{-1}(\ol{B_{\delta_2}(0)}) \cap [0,b ] \subset B_{\zeta_1}(\ol Z^{-1}(0)) \cap [0,b ]$, i.e.\ whenever $t\in [0,b ]$ with $|\ol Z(t)| \leq \delta_2$, we have $\ol Z(s) = 0$ for some $s\in [0,b]$ with $|s-t| \leq \zeta$. We can find a deterministic $\delta_2 \in (0,1)$ such that this condition holds with probability at least $1-\ep$. \label{item-cones-excursions}

\item Set $\delta = \frac14 (\delta_1\wedge \delta_2)$. By equicontinuity we can find a deterministic $\zeta_2  \in (0,\zeta_1]$ such that with probability at least $1-\ep$, we have $|Z^n(t) - Z^n(s)| \leq \delta/2$ and $|Z(t) - Z(s)| \leq \delta/2$ whenever $t,s\in [-v_Z(\tau)-1,\tau+1]$ and $|t-s| \leq \zeta_2$. \label{item-cones-0-cont}

\item By uniform convergence, we can find a deterministic $N \in\BB N$ such that $N\geq \zeta_2^{-1} \vee \wt N$ and with probability at least $1-\ep $, we have for each $n\geq N$ that $\sup_{t\in[-r-1, b]} |Z(t) - Z^n(t)| \leq \delta/4$. \label{item-cones-conv}
\end{enumerate}
Let $E $ be the event that the events described in observations~\ref{item-cones-b} through~\ref{item-cones-conv} above hold simultaneously. Then $\BB P(E ) \geq 1-4\ep$. 

For $n\in\BB N$ let $\wt \iota_n$ be the smallest $i \in\BB N$ such that $V^n(n^{-1} i) \leq V^n(s) +  \delta$ and $U^n(n^{-1} i) \leq U^n(s) +  \delta$ for each $s\in  [n^{-1} i - r   ,n^{-1} i ] $ and let $\wt\tau_n = n^{-1} \wt\iota_n$. 
Since $\delta$ is deterministic, $\wt\iota_n$ is a stopping time for $X^n$, read forward.
We note that the defining condition for $\wt\iota_n$ is satisfied with $i = \iota_n$, so we necessarily have $\iota_n \geq \wt\iota_n$. 

We claim that if $n\geq N$, then on $E$ we have 
\eqb \label{eqn-tau-close}
\tau  - \zeta_1 \leq \wt \tau_n   \leq \tau .
\eqe 
Since $\tau$ is a $\pi/2$-cone time for $Z$ with $\tau-v_Z(\tau) \geq r$, it follows from our choice of $\zeta_2$ in observation~\ref{item-cones-0-cont} and our choice of $N$ in observation~\ref{item-cones-conv} that the condition in the definition of $\wt\iota_n$ is satisfied provided $i$ is chosen such that $n^{-1}i \in  [\tau- \zeta_2 , \tau]$ (such an $i$ must exist since $N \geq \zeta_2^{-1}$). Therefore $\wt\tau_n \leq \tau$. By our choice of $\delta$ in observation~\ref{item-cones-0-cont} and our choice of $N$ in observation~\ref{item-cones-conv} we have on $E$ (in the notation of~\eqref{eqn-hat-Z-def}) 
\eqbn
\ol V(\wt\tau_n) \leq V^n(\wt\tau_n) - \inf_{s\in [\wt \tau_n - r , \wt \tau_n]} V^n(s)  +2 \delta  \leq  \delta_2  ,
\eqen
and similarly with $\ol U$ in place of $\ol V$. 
By observation~\ref{item-cones-excursions} there exists $s \in [0,\tau+1]$ such that $|s-\wt\tau_n| \leq \zeta_1$ and $\ol Z(s) = 0$. This $s$ is a $\pi/2$-cone time for $Z$ with $s - v_Z(s) \geq r$.  
By definition, $s \geq \tau$, so $\wt \tau_n \geq s - \zeta_1 \geq \tau-\zeta_1$. This proves~\eqref{eqn-tau-close}.

Since $\wt \iota_n$ is a stopping time for $X^n$, the strong Markov property and observation~\ref{item-cones-late-F} together imply that it holds with probability at least $1-\ep$ that there exists $i\in [\wt \iota_n  +   \zeta_1 n    , \wt \iota_n  +  \ep n  ]_{\BB Z}$ such that $X_i = \tb F$, $\phi(i) \leq  \wt \iota_n  $, and $X(\wt \iota_n  + 1, i )$ contains at least $\delta_1 n^{1/2}$ hamburger orders and at least $\delta_1 n^{1/2}$ cheeseburger orders. Let $  \iota_n'$ denote the smallest such $i$ (if such an $i$ exists) and otherwise let $\iota_n' = \wt\iota_n$. For $n\in\BB N$ let $G_n$ be the event that $\iota_n' > \wt\iota_n$. Then for $n\geq N$ we have $\BB P(G_n \cap E) \geq 1- 5\ep$. 

Let $\tau_n' = n^{-1} \iota_n'$. 
By~\eqref{eqn-tau-close}, on the event $G_n \cap E$ we have $\tau_n' \geq \wt \tau_n + \zeta_1 \geq \tau  $ and $0\leq \tau_n' - \tau \leq |\wt \tau_n  -\tau| + \ep \leq 2\ep$. By combining this with~\eqref{eqn-tau-close} we obtain that if $E$ occurs (even if $G_n$ does not occur) then
\eqb \label{eqn-tau'-close}
|\tau_n' - \tau| \leq 2\ep \quad \op{and} \quad |\wt\tau_n - \tau| \leq \ep .
\eqe
Since $V^n(\wt \tau_n ) \leq V^n(s) +  \delta$ and $U^n(\wt \tau_n ) \leq U^n(s) +  \delta$ for each $s\in  [\wt \tau_n - r   , \wt \tau_n ] $ on the event $E\cap G_n$, the word $X(\wt \iota_n - r n , \wt \iota_n) $ contains at most $ \delta n^{1/2} \leq \delta_1 n^{1/2}$ burgers of each type. On $G_n$, the word $X(\wt\iota_n+1 , \iota_n' )$ contains at least $\delta_1 n^{1/2}$ hamburger orders and at least $\delta_1 n^{1/2}$ cheeseburger orders, so on $G_n \cap E$ we necessarily have $\phi(\iota_n') \leq \wt \iota_n - r n\leq \iota_n' - r n$. 
We showed above that $\iota_n \leq \iota_n$ on $E$, so on $G_n \cap E$,  
\eqb \label{eqn-iotas-compare}
\wt\iota_n \leq \iota_n \leq \iota_n '  . 
\eqe 
By~\eqref{eqn-tau'-close}, on $G_n\cap E$ we have $|\tau_n - \tau| \leq 2\ep$. Since $\BB P(G_n\cap E) \geq 1-5\ep$, we obtain the desired convergence in probability. 
\end{proof}

\begin{proof}[Proof of Theorem~\ref{thm-cone-limit}]
By \cite[Theorem 2.5]{shef-burger} and the Skorokhod representation theorem we can couple countably many instances of $X$ with $Z$ in such a way that a.s.\ $Z^n\rta Z$ uniformly on compacts. 
Define the times $\iota_n^{a,r}$ and $\tau_n^{a,r}$ as in condition~\ref{item-cone-limit-stopping} and the times $\wh\iota_n^{a,r}$, and $\wh\tau_n^{a,r}$ as in Lemma~\ref{prop-F-conv}. 
Then a.s.\  $ \tau_n^{a,r} \rta \tau^{a,r}$ as $n\rta\infty$ for each $(a,r) \in \mcl Q\times (\mcl Q\cap (0,\infty))$. It follows that the finite-dimensional marginals of the law of 
\[
\{Z^n\} \cup \left\{ \tau_n^{a,r} \,:\, (a,r) \in \mcl Q\times( \mcl Q \cap (0,\infty) )\right\}
\]
converge to those of 
\[
\{Z\} \cup \left\{ \tau^{a,r} \,:\, (a,r) \in \mcl Q\times( \mcl Q \cap (0,\infty) )\right\}
\]
as $n\rta\infty$. By the Skorokhod representation theorem, we can re-couple in such a way that $Z^n\rta Z$ uniformly on compacts and $ \tau_n^{a,r} \rta \tau^{a,r}$ a.s.\  as $n\rta\infty$ for each $a,r \in \mcl Q\times( \mcl Q \cap (0,\infty) )$. Henceforth fix such a coupling. By definition, in any such coupling conditions~\ref{item-cone-limit-Z} and~\ref{item-cone-limit-stopping} in the theorem statement are satisfied. We must verify conditions~\ref{item-cone-limit-maximal} and~\ref{item-cone-limit-times} for this coupling. 

We start with condition~\ref{item-cone-limit-times}. Suppose given sequences $n_k\rta\infty$ and $\{i_{n_k}\}_{k\in\BB N}$ with $n_k^{-1} i_{n_k} \rta t$ as in condition~\ref{item-cone-limit-times}. By Lemma~\ref{prop-cone-conv}, it is a.s.\ the case that $t$ is a $\pi/2$-cone time for $Z$ and we a.s.\ have $v_{Z^{n_k}}(n_k^{-1} i_{n_k}) \rta v_Z(t)$ and $u_{Z^{n_k}}(n_k^{-1} i_{n_k}) \rta u_Z(t)$. Since also $n_k^{-1}( i_{n_k} - 1) \rta t$ and $v_{Z^n}(n_k^{-1}(i_{n_k} -1)) = n_k^{-1} \phi^{n_k} (i_{n_k})$ (recall the discussion just after Definition~\ref{def-cone-time}), we infer that $n_k^{-1} \phi^{n_k}(i_{n_k}) \rta v_Z(t)$. 
The time $n_k u_{Z^{n_k}}(n_k^{-1}( i_{n_k} - 1))$ coincides with the largest $j < \phi^{n_k}_* (i_{n_k})$ for which the reduced word $X^{n_k}(j ,  \phi^{n_k}_*(i_{n_k}))$ contains a burger of the type opposite $X^{n_k}_{\phi^{n_k} (i_{n_k})}$. 
For each $\ep > 0$, if $t$ is a right $\pi/2$-cone time then there exists $\delta > 0 $ for which $V(s) \geq V(u_Z(t)) + \delta$ for each $s\in [u_Z(t) + \ep , t-\ep]$ and if $t$ is a left $\pi/2$-cone time the same holds with $U$ in place of $V$. Since $Z^{n_k} \rta Z$ uniformly on compacts, we infer that $\lim_{k\rta\infty} n_k^{-1} (u_{Z^{n_k}}(n_k^{-1}( i_{n_k} - 1)) - \phi_*^{n_k}(i_{n_k})) = 0$ so $n_k^{-1} \phi_*^{n_k}(i_{n_k}) \rta u_Z(t)$. 

Now we turn our attention to condition~\ref{item-cone-limit-maximal}. 
Fix a bounded open interval $I\subset \BB R$ with endpoints in $\mcl Q$, $a\in I\cap \mcl Q$, and $\ep > 0$. Let $t$, $i_n$, and $t_n$ be as in condition~\ref{item-cone-limit-maximal}. Since $t \not= a$ a.s., we can a.s.\ find $r \in \mcl Q \cap (0,\infty)$ (random and depending on $\ep$) such that 
$t \in [\tau^{a,r}   , \tau^{a,r} + \ep]$ and $v_Z(t) \in [v_Z(\tau^{a,r}) - \ep, v_Z(\tau^{a,r})]$ (in particular, we choose $r$ slightly smaller than $t-v_Z(t)$).
By condition~\ref{item-cone-limit-stopping}, we a.s.\ have $n^{-1} \iota_n^{a,r} \rta \tau^{a,r}$ as $n\rta\infty$. By condition~\ref{item-cone-limit-times}, we a.s.\ have $n^{-1}\phi( \iota_n^{a,r} ) \rta v_Z(\tau^{a,r})$ as $n\rta\infty$. Since $I$ is open and a.s.\ neither $t $ nor $v_Z(t)$ is equal to $a$, if we choose $\ep$ sufficiently small (random and depending on $a$ and $I$) then it is a.s.\ the case that for sufficiently large $n\in\BB N$, $a n \in [\phi( \iota_n^{a,r}  ),  \iota_n^{a,r}] \subset n I$. Hence for sufficiently large $n\in\BB N$, we have $t_n \geq n^{-1} \iota_n^{a,r} \geq t- 2\ep$. Since $\ep$ is arbitrary, a.s.\ $\liminf_{n\rta\infty} t_n \geq t$. Similarly $\limsup_{n\rta\infty } v_{Z^n}(t_n) \leq v_Z(t)$. 

To show that $\lim_{n\rta\infty} t_n= t$, we observe that from any sequence of integers tending to $\infty$, we can extract a subsequence $n_j\rta\infty$ and a $t' \in \ol I$ such that $t_{n_j} \rta t'$. Our result above implies that $[v_Z(t) , t]\subset [v_Z(t') , t']$. 
Since $\liminf_{j \rta\infty}( t_{n_j} - v_{Z^{n_j}}(t_{n_j}) )\geq t - v_Z(t)$, condition~\ref{item-cone-limit-times} implies that $ t'$ is a $\pi/2$-cone time for $Z$ with $[v_Z(t') ,t'] \subset \ol I$. Since $I$ has endpoints in $\mcl Q$ it is a.s.\ the case that neither of these endpoints is a $\pi/2$-cone time for $Z$ or $v_Z$ of a $\pi/2$-cone time for $Z$, simultaneously for all choices of $I$. Hence in fact $[v_Z(t' ) , t'] \subset I$ for every such choice of subsequence. By maximality $t' = t$. Thus $t_n \rta t$.
\end{proof}

\subsection{Convergence of the cone times conditioned on no burgers}
\label{sec-cone-cond}

For the sake of completeness, in this subsection we will state and prove a corollary to the effect that Theorem~\ref{thm-cone-limit} remains true if we condition on $\{J>n\}$, where as per usual $J$ is the smallest $j\in\BB N$ for which $X(-j,-1)$ contains a burger. This corollary will be used in the subsequent paper~\cite{gms-burger-finite}.

\begin{cor} \label{prop-cone-limit-no-burger}
Let $\wh Z = (\wh U , \wh V)$ be a correlated two-dimensional Brownian motion as in~\eqref{eqn-bm-cov}, defined on $(-\infty,0]$ and conditioned to stay in the first quadrant until time $-1$ when run backward. For $n \in\BB N$, let $\wh X^n $ be sampled according to the conditional law of the word $\dots X_{-1} X_0$ given $\{ J  >n\}$ and let $\wh Z^n : (-\infty , 0]$ be the path~\eqref{eqn-Z^n-def} corresponding to $\wh X^n$. Fix a countable dense set $\mcl Q\subset \BB R$. Fix a countable dense set $\mcl Q\subset \BB R$. There is a coupling of $\{\wh X^n\}_{n\in\BB N}$ with $Z$ such that when $Z^n$, $\phi^n$, and $\phi_*^n$ are defined as in~\eqref{eqn-Z^n-def} and Notation~\ref{def-match-function}, respectively, with $\wh X^n$ in place of $X$, the following holds a.s.  
\begin{enumerate}
\item $\wh Z^n \rta \wh Z$ uniformly on compact subsets of $(-\infty,0]$. \label{item-cone-limit-Z'} 
\item Suppose given a bounded open interval $I \subset (-\infty, 0)$ with endpoints in $\mcl Q$ and $a \in I \cap \mcl Q$. Let $t$ be the maximal (Definition~\ref{def-maximal}) $\pi/2$-cone time for $\wh Z$ in $I$ with $a\in [ v_{\wh Z}(t),t]$. For $n\in\BB N$, let $i_n$ be the maximal flexible order time (with respect to $X^n$) $i$ in $n I$ with $a n \in [\phi^n(i)   , i ]$ (or $i_n = \lfloor a n \rfloor$ if no such $i$ exists); and let $t_n := n^{-1} i_n$. Then $t_n \rta t$.  
\label{item-cone-limit-maximal'}  
\item For $r  > 0$ and $a\in (-\infty, 0)$, let $\wh \tau^{a,r}$ be the minimum of 0 and the smallest $\pi/2$-cone time $t$ for $\wh Z$ such that $t\geq a$ and $t - v_{\wh Z}(t) \geq r$. For $n\in\BB N$, let $\wh \iota_n^{a,r}$ be the minimum of 0 and the smallest $i\in\BB Z$ such that $\wh X^n_i = \tb F$, $i \geq a n$, and $i - \phi^n(i) \geq r n - 1$; and let $\tau_n^{a,r} := n^{-1} \iota_n^{a,r}$. Then $\tau_n^{a,r} \rta \iota_n^{a,r}$ 
for each $(a,r) \in (\mcl Q\cap (-\infty,0)   ) \times  (\mcl Q\cap(0,\infty) )$. \label{item-cone-limit-stopping'}
\item For each sequence of positive integers $n_k \rta \infty$ and each sequence $\{i_{n_k} \}_{k\in\BB N}$ such that $\wh X^{n_k}_{i_{n_k}} = \tb F$ for each $k\in\BB N$, $n_k^{-1} i_{n_k} \rta t \in \BB R$, and $\liminf_{k\rta\infty} (i_{n_k} - \phi^{n_k}(i_{n_k}) ) > 0$, it holds that $t$ is a $\pi/2$-cone time for $Z$ which is in the same direction as the $\pi/2$-cone time $n_k^{-1} i_{n_k}$ for $Z^{n_k}$ for large enough $k$ and in the notation of Definition~\ref{def-cone-time}, we have
\eqbn
\left(n_k^{-1} i_{n_k} , \, n_k^{-1} \phi^{n_k}(i_{n_k}) , \, n_k^{-1} \phi^{n_k}_*(i_{n_k}) \right) \rta \left( t , v_Z(t) , u_Z(t) \right) .
\eqen 
\label{item-cone-limit-times'}
\end{enumerate}
\end{cor}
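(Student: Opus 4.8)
The plan is to deduce Corollary~\ref{prop-cone-limit-no-burger} from Theorem~\ref{thm-cone-limit} and Theorem~\ref{thm-no-burger-conv} by the same scheme used to prove Theorem~\ref{thm-cone-limit} itself, replacing the unconditioned scaling limit of \cite[Theorem 2.5]{shef-burger} by the conditioned scaling limit of Theorem~\ref{thm-no-burger-conv}. First I would observe that Theorem~\ref{thm-no-burger-conv} (together with Lemma~\ref{prop-bm-meander} and the identification of $\wh Z$ as a Brownian motion conditioned to stay in the first quadrant, read backward) gives $\wh Z^n|_{[-1,0]} \rta \wh Z|_{[-1,0]}$ in law, and the Markov property (condition~\ref{item-bm-meander-markov} of Lemma~\ref{prop-bm-meander}) propagates this to $\wh Z^n|_{[-T,0]} \rta \wh Z|_{[-T,0]}$ in law for every $T > 0$: indeed, the conditional law of $\wh X^n$ given $\{J > n\}$ restricted to times $\geq -n$ is just the law of $X_{-n}\dots X_{-1}$ conditioned on $\{J>n\}$, and on $[-1,0]$ this is exactly the event in Theorem~\ref{thm-no-burger-conv}; the path on $[-T,-1]$ then evolves (via Theorem~\ref{prop-burger-limit} applied to the relevant sub-word, which is unconditioned once we pass time $-n$, or via a re-scaling of Theorem~\ref{thm-no-burger-conv}) as a correlated Brownian motion conditioned on a cone-type event, matching the continuum Markov property of $\wh Z$. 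By the Skorokhod representation theorem I would then couple $\{\wh X^n\}_{n\in\BB N}$ with $\wh Z$ so that $\wh Z^n \rta \wh Z$ uniformly on compact subsets of $(-\infty,0]$ almost surely; this gives condition~\ref{item-cone-limit-Z'}.

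Next, the analogue of Proposition~\ref{prop-late-F} in the conditioned setting is needed: for $\delta > 0$, the event that there is an $i \in \{-n+\lfloor \delta n\rfloor,\dots,-1\}$ with $\wh X^n_i = \tb F$ and $\phi^n(i) \leq -n$ has probability tending to $1$ as $\delta \rta 0$, uniformly in large $n$. This follows from Proposition~\ref{prop-late-F} itself: Proposition~\ref{prop-late-F} is a statement about the unconditioned word read forward from a fixed time, and since $\wh X^n$ agrees in law with $X$ conditioned on $\{J>n\}$ on $[-n,-1]$, one can run the argument of Lemma~\ref{prop-F-conv} with the forward-reading Proposition~\ref{prop-late-F} applied starting just after time $-n$ (where no conditioning is felt) and then transfer the conclusion back inside $[-n,-1]$ using that $\BB P(F_n \mid J>n) \rta 1$ (Lemma~\ref{prop-few-F}) and the regularity of the conditional law of $\wh Z^n$ near time $-1$ (Proposition~\ref{prop-F-regularity}, Lemma~\ref{prop-reverse-cond-regularity}). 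Alternatively, and more cleanly, one notes that the $\tb F$-excursions entirely contained in $(-n,-1)$ see only finitely much of the conditioning, so Proposition~\ref{prop-late-F} combined with absolute continuity of the conditioned law with respect to the unconditioned law on any interval bounded away from $-n$ suffices. With this in hand, the proofs of Lemma~\ref{prop-F-conv} and of conditions~\ref{item-cone-limit-stopping}, \ref{item-cone-limit-maximal}, \ref{item-cone-limit-times} in Theorem~\ref{thm-cone-limit} go through verbatim after replacing $Z,Z^n$ by $\wh Z,\wh Z^n$ and restricting all intervals $I$ and stopping times to subsets of $(-\infty,0)$ (which is why $\wh\tau^{a,r}$ and $\wh\iota_n^{a,r}$ are defined to be $\min$'s with $0$). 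In particular, the deterministic Lemma~\ref{prop-cone-conv} applies to $\wh Z$ since $\wh Z$ a.s.\ satisfies its hypotheses: conditions~\ref{item-Z-weak} and~\ref{item-Z-direction} hold because $\wh Z$ is absolutely continuous with respect to a correlated Brownian motion on compacts of $(-\infty,0)$, and condition~\ref{item-Z-liminf} holds because $\wh Z$ is a genuine correlated Brownian motion near $-\infty$ (the conditioning event $\{J>n\}$ only constrains the last $n$ steps).

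After establishing a.s.\ convergence of the finite-dimensional marginals of $\{\wh Z^n\}\cup\{\tau_n^{a,r}\}$ to those of $\{\wh Z\}\cup\{\wh\tau^{a,r}\}$, I would re-couple via Skorokhod so that $\wh Z^n \rta \wh Z$ locally uniformly and $\tau_n^{a,r}\rta\wh\tau^{a,r}$ a.s.\ for all $(a,r)\in(\mcl Q\cap(-\infty,0))\times(\mcl Q\cap(0,\infty))$; this is condition~\ref{item-cone-limit-stopping'}. Conditions~\ref{item-cone-limit-maximal'} and~\ref{item-cone-limit-times'} are then deduced from conditions~\ref{item-cone-limit-Z'} and~\ref{item-cone-limit-stopping'} exactly as in the proof of Theorem~\ref{thm-cone-limit}: condition~\ref{item-cone-limit-times'} from Lemma~\ref{prop-cone-conv} plus the description of $u_{Z^n}$, $v_{Z^n}$ in terms of matched burgers, and condition~\ref{item-cone-limit-maximal'} by sandwiching the maximal flexible order time between stopping times $\wh\iota_n^{a,r}$ with $r$ slightly below $t - v_{\wh Z}(t)$ and using that the endpoints of $I$ are a.s.\ not $\pi/2$-cone times of $\wh Z$ nor images under $v_{\wh Z}$ of such. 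The main obstacle, I expect, is the propagation of the scaling limit from $[-1,0]$ to all of $(-\infty,0]$ while keeping track of the conditioning — precisely, verifying that the coupled limit path $\wh Z$ on $(-\infty, -1]$ really is (a reparametrized piece of) a correlated Brownian motion conditioned to stay in a quadrant on $[-T,-1]$ and unconditioned beyond, so that Lemma~\ref{prop-cone-conv}'s hypotheses and the $\tb F$-excursion estimate both apply; once the right Markovian description of $\wh Z$ is pinned down, the rest is a faithful transcription of Section~\ref{sec-cone-proof}.
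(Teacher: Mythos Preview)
Your overall architecture---establish the stopping-time convergence first, re-couple via Skorokhod, then deduce conditions~\ref{item-cone-limit-maximal'} and~\ref{item-cone-limit-times'} from Lemma~\ref{prop-cone-conv} exactly as in Section~\ref{sec-cone-proof}---is the right one, and the paper follows the same outline. The difference is in how the stopping-time convergence (condition~\ref{item-cone-limit-stopping'}) is obtained.

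You propose to re-run Lemma~\ref{prop-F-conv} for the conditioned word $\wh X^n$, after first proving a conditioned analogue of Proposition~\ref{prop-late-F}. The difficulty is that the proof of Lemma~\ref{prop-F-conv} uses the \emph{strong Markov property} of the i.i.d.\ word at the approximate cone time $\wt\iota_n$: the word after $\wt\iota_n$ is a fresh i.i.d.\ word, to which Proposition~\ref{prop-late-F} can be applied. For $\wh X^n$ restricted to $[-n,-1]_{\BB Z}$ this fails---the conditioning $\{J>n\}$ couples all the symbols---so ``goes through verbatim'' is not correct. Your proposed fixes (apply Proposition~\ref{prop-late-F} ``starting just after time $-n$ where no conditioning is felt'', or invoke absolute continuity on intervals bounded away from $-n$) do not close the gap: reading forward from $-n$ immediately enters the conditioned region, and the Radon--Nikodym derivative of the conditioned law against the unconditioned law on $[an,bn]_{\BB Z}$ is $\BB P(J>n\mid X_{[an,bn]})/\BB P(J>n)$, which is not uniformly bounded. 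One can salvage this with the regularity results of Section~\ref{sec-F-reg}, but it is more work than you indicate.

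The paper avoids the issue by a different decomposition. Fix $\zeta\in(0,1)$ and split the word at time $-\lfloor\zeta n\rfloor$: write $\ul X^{n,\zeta}$ for the segment on $[-\lfloor\zeta n\rfloor,0]_{\BB Z}$ and $\ol X^{n,\zeta}$ for the tail on $(-\infty,-\lfloor\zeta n\rfloor-1]_{\BB Z}$. By Theorem~\ref{thm-no-burger-conv} and Skorokhod one couples so that the initial segment converges a.s. The key observation is that, given any realization $x$ of $\ul X^{n,\zeta}$, the conditional law of $\ol X^{n,\zeta}$ under $\{J>n\}$ is precisely the law of an unconditioned word conditioned on the event $G_n(x)=\{\mcl R(X(-n+\lfloor\zeta n\rfloor,-1)\,x)\ \text{contains no burgers}\}$. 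On the high-probability event that $x$ has order $n^{1/2}$ orders of each type (Proposition~\ref{prop-F-regularity}, Lemma~\ref{prop-few-F}), this event has probability bounded below uniformly in $n$. Now the truncated stopping times $\wh\tau_n^{a,r}\wedge(-\zeta)$ are measurable with respect to $\ol X^{n,\zeta}$, so one can apply Theorem~\ref{thm-cone-limit} \emph{directly} (its conclusion is an a.s.\ statement, hence stable under conditioning on a positive-probability event) together with the argument of Lemma~\ref{prop-Z-cond-limit} to obtain joint convergence in law of $\ol Z^{n,\zeta}$ and the truncated stopping times. Sending $\zeta\to 0$ removes the truncation. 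This sidesteps any need to reprove Proposition~\ref{prop-late-F} or Lemma~\ref{prop-F-conv} under conditioning.
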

\begin{proof}
We will prove that we can choose a coupling such that a.s.\ $\wh Z^n \rta \wh Z$ and $n^{-1} \iota_n^{a,r}  \rta  \tau^{a,r}$ for each  $(a,r) \in (\mcl Q\cap (-\infty,0)   ) \times  (\mcl Q\cap(0,\infty) )$. It follows as in the proof of Theorem~\ref{thm-cone-limit} that such a coupling also satisfies the other conditions in the statement of the corollary. 

Fix $\zeta \in (0,1)$. For $n\in \BB N$, define
\alb
&\ol X^{n,\zeta} :=  \dots \wh X^n_{-2-\lfloor \zeta n \rfloor} \wh X^n_{-1-\lfloor \zeta n \rfloor}    ,\quad   \ul X^{n,\zeta} := \wh X^n_{-\lfloor \zeta n \rfloor} \dots \wh X^n_{0} , \\
 &\ol Z^\zeta = (\ol U^\zeta , \ol V^\zeta) := (\wh Z - \wh Z(-\zeta))|_{(-\infty,-\zeta]} ,\quad \ul Z^\zeta  = (\ul U^\zeta , \ul V^\zeta)  := \wh Z|_{[-\zeta,0]} , \quad \op{and} \\
&\ol Z^{n,\zeta} = (\ol U^{n,\zeta}, \ol V^{n,\zeta}) := \left(\wh Z^n - \wh Z^n\left(-\zeta \right) \right)|_{\left(-\infty ,  -\zeta \right]} .
\ale
Also let $\wh D_\zeta^n$ be as in the proof of Theorem~\ref{thm-no-burger-conv}, i.e.\ $\wh D_\zeta^n$ is the path defined in the same manner as the path $D$ of~\eqref{eqn-discrete-paths} in Section~\ref{sec-burger-prelim} but with the following modification: if $j\in [-\zeta n,  -1]_{\BB Z}$, $\wh X^n_j = \tb F$, and $-j$ does not have a match in $\mcl R\left(\ul X^{n,\zeta} \right)$, then $\wh D_\zeta^n(-j) - \wh D_\zeta^n(-j+1)$ is equal to zero rather than $( 1,0)$ or $(0, 1)$. Extend $\wh D_\zeta^n$ to $[-\zeta n ,0]$ by linear interpolation. For $t\in [-\zeta,0]$, let 
$\ul Z^{n,\zeta}(t) = ( \ul U^{n,\zeta}(t) , \ul V^{n,\zeta}(t))  := n^{-1/2} \wh D_\zeta(n t)$. Then $\ul Z^{n,\zeta}$ is determined by $\ul X^{n,\zeta} $.

By Theorem~\ref{thm-no-burger-conv} and the Skorokhod representation theorem, we can find a coupling of the sequence of words $\{ \ul X^{n,\zeta} \}_{n\in\BB N}$ with $\ul Z^\zeta$ such that $\ul Z^{n,\zeta} \rta \ul Z^\zeta$ a.s. 
For each $n\in\BB N$ and each realization $x$ of $\ul X^{n,\zeta}$, the conditional law of $\ol Z^{\zeta, n}$ given $\{ \ul X^{n,\zeta} =x \}$ is the same as the conditional law of $\dots  X_{-2} X_{-1} $ given the event
\eqbn
G_n(x) := \left\{ \text{$\mcl R\left(X(-n +\lfloor \zeta n \rfloor , -1) x   \right)$ contains no burgers}   \right\} .
\eqen
By \cite[Theorem 2.5]{shef-burger}, Lemma~\ref{prop-few-F}, and our choice of coupling (see also the proof of Lemma~\ref{prop-Z-cond-limit}), the conditional law of $\ol Z^{n,\zeta}$ given $\ul X^{n,\zeta}$ converges a.s.\ to the conditional law of $\ol Z^\zeta$ given $ \wt G^\zeta(\ul U^\zeta(-\zeta), \ul V^\zeta(-\zeta) )$, with $\wt G^\zeta(\cdot,\cdot)$ as in~\eqref{eqn-tilde-G^s-def}. We note that $\wh\tau_n^{a,r} \wedge (-\zeta)$ (resp. $\wh\tau^{a,r}\wedge (-\zeta)$) is determined by $\ol X^{n,\zeta}$ (resp. $\ol Z^\zeta$), so it follows from Theorem~\ref{thm-cone-limit} and the argument of Lemma~\ref{prop-Z-cond-limit} that in fact the finite dimensional marginals of the joint conditional law given $\ul X^{n,\zeta}$ of 
\eqbn
\{\ol Z^{n,\zeta}\} \cup \left\{ \wh\tau_n^{a,r} \wedge (-\zeta) \,:\, (a,r) \in   (\mcl Q\cap (-\infty,0)   ) \times  (\mcl Q\cap(0,\infty) )  \right\}
\eqen
converge a.s.\ to the finite dimensional marginals of the joint conditional law given $\wt G^\zeta(\ul U^\zeta(-\zeta), \ul V^\zeta(-\zeta) )$ of 
\eqbn
\{\ol Z^{ \zeta}\} \cup \left\{ \wh\tau^{a,r} \wedge (-\zeta) \,:\, (a,r) \in  (\mcl Q\cap (-\infty,0)   ) \times  (\mcl Q\cap(0,\infty) )   \right\} .
\eqen
Therefore, finite dimensional marginals of the the joint law of 
\eqbn
\{ \wh Z^n\} \cup \left\{ \wh\tau_n^{a,r} \wedge (-\zeta) \,:\, (a,r) \in  (\mcl Q\cap (-\infty,0)   ) \times  (\mcl Q\cap(0,\infty) )   \right\}
\eqen
converge to finite dimensional marginals of the the joint law of
\eqbn
\{  \wh Z \} \cup \left\{ \wh\tau^{a,r} \wedge (-\zeta) \,:\, (a,r) \in  (\mcl Q\cap (-\infty,0)   ) \times  (\mcl Q\cap(0,\infty) )    \right\} .
\eqen
Since $\zeta$ is arbitrary and $|\wh\tau_n^{a,r} \wedge (-\zeta) - \wh\tau_n^{a,r}|  $ and $\wh\tau^{a,r} \wedge (-\zeta) - \wh\tau^{a,r}|$ are each at most $\zeta$, the same holds if we don't truncate at $-\zeta$. 

We now obtain a coupling such that a.s.\ $\wh Z^n \rta \wh Z$ and $n^{-1} \iota_n^{a,r}  \rta  \tau^{a,r}$ for each  $(a,r) \in (\mcl Q\cap (-\infty,0)   ) \times  (\mcl Q\cap(0,\infty) )$ by means of the Skorokhod theorem, and conclude as in the proof of Theorem~\ref{thm-cone-limit}.
\end{proof}

\appendix

\section{Results for times with no orders}
\label{sec-no-order}

In this appendix, we will explain how to adapt the proofs found in Sections~\ref{sec-F-reg},~\ref{sec-no-burger}, and~\ref{sec-cone-conv} to obtain analogues of the results of those sections when we consider the event that $X(1,n)$ contains no orders, rather than the event that $X(-n,-1)$ contains no burgers. Although the results of this appendix are not needed for the proof of Theorem~\ref{thm-cone-limit}, they are of independent interest and will be needed in the sequels to this work~\cite{gms-burger-local,gms-burger-finite}. 

In Section~\ref{sec-no-order-conv} we will consider an analogue of Theorem~\ref{thm-no-burger-conv} with no orders rather than no burgers and in Section~\ref{sec-no-order-reg-var} we will prove some regular variation estimates. In Section~\ref{sec-cone-limit-forward}, we will consider a generalization of Theorem~\ref{thm-cone-limit}. 

Throughout this section, we let $I$ denote the smallest $i \in \BB N$ for which $X(1,i)$ contains an order. 

\subsection{Convergence conditioned on no orders}
\label{sec-no-order-conv}

In this subsection we will explain how to adapt the arguments of Sections~\ref{sec-F-reg} and~\ref{sec-no-burger} to obtain the following result. 

\begin{thm} \label{thm-no-order-conv}
As $n\rta \infty$, the conditional law of the path $Z^n|_{[0,1]}$ defined in~\eqref{eqn-Z^n-def} given $\{I > n\}$ (i.e.\ the event that $X(1,n)$ contains no orders) converges to the law of a correlated Brownian motion as in~\eqref{eqn-bm-cov} conditioned to stay in the first quadrant until time 1. 
\end{thm}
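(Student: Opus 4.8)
The plan is to mirror, essentially line by line, the proof of Theorem~\ref{thm-no-burger-conv} given in Section~\ref{sec-no-burger}, reading the word $X$ forward rather than backward and interchanging the roles of ``burgers'' and ``orders'' in every reduced-word condition. The starting observation is that $\{I>n\}$ is exactly the event that $X(1,n)$ contains no orders, which in turn (up to a correction of size $o_n(n^{1/2})$ coming from flexible orders, handled exactly as in the main text via Lemma~\ref{prop-few-F} applied to the reduced word $X(1,i)$, together with the path-modification device of Notation~\ref{def-Z-restrict}) is the event that $Z^n|_{[0,1]}$ stays in the first quadrant. Thus the target limiting object is again a correlated Brownian motion as in~\eqref{eqn-bm-cov} conditioned to stay in the first quadrant, which is characterized by Lemma~\ref{prop-bm-meander}; that lemma, Lemma~\ref{prop-bm-cone-asymp}, the scaling limit Theorem~\ref{prop-burger-limit}, and Proposition~\ref{prop-infinite-F} (which already contains the ``infinitely many $j$ with $X(-j,-1)$ having no orders'' statement) all transfer without change.

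The first main step is the forward analogue of Proposition~\ref{prop-F-regularity}: writing $b_n(\ep):=\BB P\big(\text{$X(1,n)$ has $\geq\ep n^{1/2}$ hamburgers and $\geq\ep n^{1/2}$ cheeseburgers}\,\big|\,I>n\big)$, one shows $\lim_{\ep\to 0}\liminf_{n\to\infty} b_n(\ep)=1$. This is obtained by running the whole of Section~\ref{sec-F-reg} in the forward direction: first a regularity-along-a-subsequence statement (the analogue of Lemma~\ref{prop-F-regularity-subsequence}), obtained from a re-rooting identity together with the analogue of the symmetry Lemma~\ref{prop-F-symmetry} for the excursions of $D$ between consecutive times at which the reduced word has no orders; then the conditioning-on-an-initial-segment comparison (the analogue of Lemma~\ref{prop-Z-cond-limit}); and finally the Bayes-rule induction (the analogues of Lemmas~\ref{prop-pos-stack}, \ref{prop-empty-burger-ratio}, and~\ref{prop-F-induct}). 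The auxiliary bound $\BB P(I>n)\geq n^{-\mu+o_n(1)}$ needed at several points is already available from the estimate for $R_n^*(C)$ in Lemma~\ref{prop-X-asymp}.

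From this regularity statement one extracts, exactly as in Section~\ref{sec-burger-times-pos}: the forward analogue of Lemma~\ref{prop-J-compare} (comparing $\BB P(I>Nn\mid I>n)$ to $N^{-\mu}$ via the cone estimate Lemma~\ref{prop-bm-cone-asymp}), and then, via the second-moment/renewal computation of Lemma~\ref{prop-F-pos}, that for $N$ large there is $b>0$ with $\BB P\big(\exists\, i\in[n,Nn]_{\BB Z}:\text{$X(1,i)$ has no orders}\big)\geq b$ for all $n$. Feeding this into the arguments of Section~\ref{sec-upper-reg} yields the complementary upper bound on the reduced-word length: an initial segment $X(1,m_n)$ with $m_n\asymp n$ satisfies $\sup_{j\le m_n}|X(1,j)|\le\zeta n^{1/2}$ with probability $\geq 1-q$ conditioned on $\{I>n\}$, using the exponential tail bound \cite[Lemma 3.13]{shef-burger} and the length-comparison Lemma~\ref{prop-reverse-length}, both of which are insensitive to reading direction. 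Tightness of the conditional laws of $Z^n|_{[0,1]}$ given $\{I>n\}$ then follows as in Lemma~\ref{prop-no-burger-tight}, and a subsequential limit $\wt Z$ is identified using Lemma~\ref{prop-cond-law-conv}: the forward analogues of Lemmas~\ref{prop-Z-cond-limit} and~\ref{prop-reverse-cond-regularity} show that $\wt Z$ has a.s.\ positive coordinates on $(0,1]$ and the Markov/conditioning property of Lemma~\ref{prop-bm-meander}, whence $\wt Z\eqD\wh Z$.

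The main obstacle I anticipate is re-running Section~\ref{sec-F-reg} in the forward direction, and in particular the symmetry input (the analogue of Lemma~\ref{prop-F-symmetry}). One cannot simply transpose ``burger'' and ``order'' because the model is not invariant under that transposition: there are three order types and only two burger types, with unequal weights in~\eqref{eqn-theta-prob}, and reading $X$ backward does not convert the no-burgers event into the no-orders event. Concretely, reading forward, the objects playing the role of the $\tb F$-excursions around $0$ are the maximal time intervals on which $D$ lies below a simultaneous coordinate-wise running minimum, and one must re-derive a uniformly positive lower bound for the probability that such an excursion straddling $0$ has a positive fraction of its length to the left of $0$; the translation-invariance and counting scheme of Lemma~\ref{prop-F-symmetry} adapts, but each combinatorial inequality there must be checked afresh for the forward reduction. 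Everything downstream of the regularity statement (the analogues of Sections~\ref{sec-burger-times-pos}--\ref{sec-no-burger-proof}) is then a routine transcription.
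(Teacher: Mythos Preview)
Your proposal is correct and follows essentially the same route as the paper. The paper makes explicit the object you describe somewhat informally as ``maximal time intervals on which $D$ lies below a simultaneous coordinate-wise running minimum'': it introduces \emph{pre-burger times} (Definition~\ref{def-ancestor}), i.e.\ indices $i$ with $X_{i+1}$ a burger, together with the forward match $\ol\phi(i)$ (the first time $X(i+1,\cdot)$ acquires an order), notes their nesting property and that $n^{-1}i$ is a forward $\pi/2$-cone time for $Z^n$, and then uses the intervals $[i,\ol\phi(i)]_{\BB Z}$ exactly as the $\tb F$-excursions were used in Lemma~\ref{prop-F-symmetry}; after that the argument of Sections~\ref{sec-F-reg}--\ref{sec-no-burger} transcribes verbatim, just as you outline.
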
 

The first step in the proof of Theorem~\ref{thm-no-order-conv} is to establish an exact analogue of Proposition~\ref{prop-F-regularity}, which reads as follows.

\begin{prop} \label{prop-ancestor-reg}
For $\ep > 0$, let $E_n(\ep)$ be the event that $X(1,n)$ contains at least $\ep n^{1/2}$ burgers of each type. Then we have
\eqbn
\lim_{\ep\rta 0} \liminf_{n \rta \infty} \BB P\left(E_n(\ep) \,|\, I > n\right) = 1.
\eqen
\end{prop}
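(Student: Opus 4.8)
The plan is to mirror the structure of the proof of Proposition~\ref{prop-F-regularity} almost verbatim, reading the word $X$ forward rather than backward and interchanging the roles of burgers and orders throughout. First I would set up the analogous notation: let $I$ be the smallest $i\in\BB N$ with $X(1,i)$ containing an order (so $\{I>n\}$ is exactly the event that $X(1,n)$ has no orders, equivalently that the walk $D$ stays in the first quadrant for $n$ units of time when run forward), let $F_n$ be the high-probability event that $\mcl N_{\tb F}(X(1,n))\leq n^\nu$ for a fixed $\nu\in(\mu',1/2)$ (available from Lemma~\ref{prop-few-F}), and set $a_n(\ep):=\BB P(E_n(\ep)\mid I>n)$. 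The goal is $\lim_{\ep\to 0}\liminf_{n\to\infty}a_n(\ep)=1$.

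The two main inputs to reproduce are: (i) a subsequential regularity statement, the analogue of Lemma~\ref{prop-F-regularity-subsequence}, giving $\ep_0>0$, $q_0\in(0,1)$ and a sparse sequence $m_j\to\infty$ with $a_{m_j}(\ep)\geq q_0$; and (ii) an induction step, the analogue of Lemma~\ref{prop-F-induct}, upgrading a single lower bound at some large $m$ to $a_n(\ep)\geq 1-q$ for all $n$ with $\lambda\leq m/n\leq 1-\lambda$, which then bootstraps to all large $n$ exactly as in the proof of Proposition~\ref{prop-F-regularity}. For (i), the key symmetry lemma is the forward analogue of Lemma~\ref{prop-F-symmetry}: I would look at the ``$\tb F$-excursion around time $0$,'' i.e.\ the interval $[i,\phi(i)]_{\BB Z}$ with $i\leq 0<\phi(i)$ and $X_i=\tb F$ (reading $\phi$ as the match to the right now), and argue via translation invariance that with uniformly positive probability a positive fraction of this interval lies to the \emph{right} of $0$; this gives, as in Lemma~\ref{prop-K-regularity}, a uniformly positive lower bound on the number of leftover burgers of each type in $X(1,i)$ for the relevant stopping time. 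The analogue of Lemma~\ref{prop-Z-cond-limit} — comparing the conditional law of $Z^n_{[s,1]}$ given $\{I>n\}$ and an initial segment $X_1\dots X_{\lfloor sn\rfloor}$ to a Brownian motion conditioned to stay near the first quadrant — goes through with the same Bayes-rule and \cite[Theorem~2.5]{shef-burger} arguments, using now the ``no orders'' events $\ul G^s_n,\ol G^s_n$ bounding the number of orders (rather than burgers) in the trailing segment $X(\lfloor sn\rfloor+1,n)$. Likewise the analogues of Lemma~\ref{prop-pos-stack}, Lemma~\ref{prop-empty-burger-ratio} and Lemma~\ref{prop-F-induct} translate directly: the Brownian estimates (Lemma~\ref{prop-bm-cone-asymp} and \cite[Theorem~2]{shimura-cone}) are symmetric under time reversal, Lemma~\ref{prop-few-F} controls the $\tb F$-count in both directions, and Lemma~\ref{prop-X-asymp'} / Lemma~\ref{prop-X-asymp} supply the needed polynomial lower bounds $\BB P(E_m(\ep))/\BB P(I>m)\geq m^{-O(1)}$ via the forward statements $R_n^*(C)$ and $(R_n')^*(C)$.

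The main obstacle, as in the original argument, is establishing (i): one cannot extract even a sparse sequence of good times directly from \cite[Theorem~2.5]{shef-burger}, because $\pi/2$-cone times are discontinuous functionals of the path in the uniform topology, so the bulk of the work is the translation-invariance/counting argument behind the forward analogue of Lemma~\ref{prop-F-symmetry} and its consequence Lemma~\ref{prop-K-regularity}. Once (i) is in hand, the induction in (ii) and the final bootstrap are essentially formal. I would therefore write the proof as: (a) state and prove the forward symmetry lemma and its corollary on leftover burgers; (b) deduce the subsequential bound $a_{m_j}(\ep)\geq q_0$; (c) prove the forward versions of Lemmas~\ref{prop-pos-stack}, \ref{prop-empty-burger-ratio}, \ref{prop-F-induct}; (d) conclude by the same overlapping-intervals induction used for Proposition~\ref{prop-F-regularity}. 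Throughout I would simply remark that each step is obtained from the corresponding step in Sections~\ref{sec-F-reg}--\ref{sec-no-burger} by reversing the direction in which $X$ is read and swapping ``burger'' with ``order,'' pointing out the (few) places where the asymmetry between burgers and orders in the measure~\eqref{eqn-theta-prob} requires a separate check — none of which affects the exponents, since the relevant cone-exit exponent is again $\mu$.
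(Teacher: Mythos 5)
Your overall architecture --- forward symmetry lemma, subsequential bound, forward analogues of Lemmas~\ref{prop-Z-cond-limit}, \ref{prop-pos-stack}, \ref{prop-empty-burger-ratio}, \ref{prop-F-induct}, and the overlapping-intervals bootstrap --- is exactly the route the paper takes, and your steps (b)--(d) go through essentially as stated (the paper even notes that the analogue of Lemma~\ref{prop-Z-cond-limit} is slightly easier here, since the conditioned initial segment contains no flexible orders). The genuine gap is in step (a), which you yourself flag as the bulk of the work: your candidate for the discrete analogue of a forward $\pi/2$-cone time does not exist. You propose to use ``the interval $[i,\phi(i)]_{\BB Z}$ with $i\leq 0<\phi(i)$ and $X_i=\tb F$, reading $\phi$ as the match to the right,'' but a flexible order is always matched to a burger on its \emph{left}, so $\phi(i)<i$ whenever $X_i=\tb F$. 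Nor can this be repaired by ``swapping burger with order'' under time reversal: the measure~\eqref{eqn-theta-prob} has no flexible burger, so the model admits no such symmetry, and the asymmetry is structural rather than a matter of constants or exponents. The interval from a burger to its matching order is also not the right object, since the word strictly between them may contain (matched) orders and hence does not witness the event $\{I>n\}$.

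The missing ingredient is a new definition: call $i$ a \emph{pre-burger time} if $X_{i+1}$ is a burger, and let $\ol\phi(i)$ be the smallest $j\geq i+1$ for which $X(i+1,j)$ contains an order. Then $X(i+1,\ol\phi(i))$ consists of a single order together with burgers all of one type, $n^{-1}i$ is a weak forward $\pi/2$-cone time for $Z^n$ with exit time $n^{-1}(\ol\phi(i)-1)$, and the largest $k$ for which $X(-k,-1)$ contains no orders sits (up to a shift by one) at such a time. These intervals have genuinely different combinatorics from $\tb F$-excursions --- nested pre-burger intervals can share the same right endpoint $\ol\phi$, and $\ol\phi(i)$ need not equal $\phi(i)$ or $\phi(i+1)$ --- so the translation-invariance/counting argument of Lemma~\ref{prop-F-symmetry} must be rerun with maximal intervals $[k,\ol\phi(k)]_{\BB Z}$ in place of maximal $\tb F$-excursions. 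With that substitution made, the remainder of your outline is correct and matches the paper's argument.
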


To adapt the proof of Proposition~\ref{prop-F-regularity} in order to obtain Proposition~\ref{prop-ancestor-reg}, one needs an appropriate analogue of the times $i \in\BB Z$ with $X_i = \tb F$. 
 
\begin{defn} \label{def-ancestor}
Say that $i \in \BB Z$ is a \emph{pre-burger time} if $X_{i+1}$ is a burger. For a pre-burger time $i$, we write $\ol\phi(i)$ for the smallest $j \geq i+1$ for which $X(i+1,j)$ contains an order. 
\end{defn} 

Suppose $i$ is a pre-burger time. We observe the following
\begin{enumerate}
\item The word $X(i+1, \ol\phi(i))$ contains a single order and some number of burgers, all of the same type. If the single order is a $\tb F$, there are no burgers. Otherwise, the burgers are of the type opposite the order. \label{item-ancestor-reduced}
\item We need not have $\ol\phi(i) \in \{\phi(i) , \phi(i+1)\}$. To see this consider the word $X_1\dots X_5 = \tb C \tc H \tc C \tb C \tb C$. Here 1 is a pre-burger time and $\ol\phi(1) = 5$.  
\item If $i'$ is another pre-burger time with $i' \in (i , \ol \phi(i))_{\BB Z}$, then $\ol\phi(i') \in [i+1,\ol\phi(i)]_{\BB Z}$. To see this, we observe that $X_{\ol\phi(i)}$ is an order whose match is at a time before $i+1$ (and hence also before $i'+1$), whence $X(i'+1 , \ol\phi(i))$ contains an order. Note, however, that we can have $\ol\phi(i') = \ol\phi(i)$, which does not happen for nested $\tb F$-excursions. \label{item-ancestor-nested}
\item The time $n^{-1} i$ is a weak forward $\pi/2$-cone time for $Z^n$, as defined just below, and $\ol v_{Z^n}(n^{-1} i) = n^{-1} ( \ol\phi(i) - 1)$.\label{item-ancestor-cone}
\end{enumerate}

\begin{defn}\label{def-cone-time-forward}
A time $t$ is called a \emph{(weak) forward $\pi/2$-cone time} for a function $Z = (U,V) : \BB R \rta \BB R^2$ if there exists $t' > t$ such that $U_s \geq U_{t }$ and $V_s \geq V_{t }$ for $s\in  [t    , t']$. Equivalently, $Z([t,t'])$ is contained in the cone $Z_{t } + \{z\in \BB C : \op{arg} z \in [0,\pi/2]\}$. We write $ \ol v_Z(t)$ for the supremum of the times $t'$ for which this condition is satisfied, i.e. $\ol v_Z(t)$ is the exit time from the cone. We write $\ol u_Z(t)$ for the infimum of the times $t_* > t$ for which $\inf_{s\in [t,t_*]} U_s < U_t$ and $\inf_{s\in [t,t_*]} V_s < V_t$. 
\end{defn} 

Note that a forward $\pi/2$-cone time for $Z$ is a $\pi/2$-cone time in the sense of Definition~\ref{def-cone-time} for the time reversal of $Z$.

The following is the analogue of Lemma~\ref{prop-F-symmetry} for the case of no orders, rather than no burgers.

\begin{lem} \label{prop-ancestor-symmetry}
For $n\in\BB N$, let $P_n$ be the largest $k\in[1, n]_{\BB Z}$ for which $X(-k,-1)$ contains no orders (or $P_n = n+1$ if no such $j$ exists). For $\ep \geq 0$, let $A_n(\ep)$ be the event that $P_n < n+1$ and $P_n \leq (1-\ep) (\ol \phi(-P_n)+ P_n)$. There exists $\ep_0 > 0$, $n_0\in\BB N$, and $q_0 \in (0,1/3)$ such that for each $\ep \in (0,\ep_0]$ and $n\geq n_0$, 
\eqbn
\BB P\left(   A_n(\ep) \right)  \geq 3 q_0 .
\eqen
\end{lem}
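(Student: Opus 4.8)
The plan is to imitate the proof of Lemma~\ref{prop-F-symmetry} essentially verbatim, replacing $\tb F$-excursions $[\phi(i),i]_{\BB Z}$ by ``pre-burger excursions'' $[-i , \ol\phi(-i)]_{\BB Z}$ and reading the word $X$ in the reverse direction. The key point, just as in Lemma~\ref{prop-F-symmetry}, is that $\delta_0 = \ep_0$ is uniform (the analogue with a small loss in $\ep$ would be a triviality from translation invariance plus \cite[Theorem 2.5]{shef-burger}), so we need the combinatorial bookkeeping with a carefully chosen family of disjoint pre-burger intervals, together with translation invariance.

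\textbf{Setup.} For $n\in\BB N$ and $i\in\BB Z$ let $P_i^n$ be the largest $j\in [i-n, i-1]_{\BB Z}$ for which $X(j,i-1)$ contains no orders (if it exists; otherwise $P_i^n = i$), and for $\ep\geq 0$ let $A_i^n(\ep)$ be the event that $P_i^n\not= i$ and $i - P_i^n \leq (1-\ep)(i - P_i^n + \ol\phi(P_i^n) - P_i^n + 1)$ --- i.e.\ the appropriate reverse-direction analogue of $A_i^n(\ep)$ in the proof of Lemma~\ref{prop-F-symmetry}, phrased so that $A_0^n(\ep) = A_n(\ep)$ and $A_i^n(0) = \{P_i^n\not= i\}$. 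By translation invariance, $\BB P(A_i^n(\ep)) = \BB P(A_n(\ep))$ for all $i\in\BB Z$ and all $\ep\geq 0$. By Proposition~\ref{prop-infinite-F} (applied to the time reversal of $X$, using that there are infinitely many $j\in\BB N$ with $X(-j,-1)$ containing no orders, and that $X_{-j}$ is a burger with probability $1/2$), we have $\BB P(A_i^n(0))\rta 1$ as $n\rta\infty$, uniformly in $i$. I then define an event $Q_n$, in analogy with the $Q_n$ of Lemma~\ref{prop-F-symmetry} but with the roles of past and future interchanged, requiring: (i) for $t\in[-2,-1]$ either $U^n(t)\geq U^n(-1/2)+1$ or $V^n(t)\geq V^n(-1/2)+1$; (ii) for $t\in[-1,-1/2]$ either $U^n(t)\geq U^n(0)+1$ or $V^n(t)\geq V^n(0)+1$; and (iii) $A_0^n(0)$ and $A_{-\lfloor n/2\rfloor}^n(0)$ occur. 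By \cite[Theorem 2.5]{shef-burger} and Proposition~\ref{prop-infinite-F} there are $\wt q_0\in(0,1)$ and $\wt n_0$ with $\BB P(Q_n)\geq \wt q_0$ for $n\geq \wt n_0$; and conditions (i),(ii) force $P_i^n\geq -n$ for $i\in[-n/2,0]_{\BB Z}$ and $P_0^n > -\lfloor n/2\rfloor$.

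\textbf{Combinatorial core.} Using item~\ref{item-ancestor-nested} in the list after Definition~\ref{def-ancestor} (distinct pre-burger excursions are nested or disjoint, though they may share a right endpoint --- the latter causes no problem since we only ever use disjointness of interiors) I argue, exactly as in Lemma~\ref{prop-F-symmetry}: on $Q_n$, for $i\in[P_0^n, 0]_{\BB Z}$ we have $P_i^n = P_0^n$; and for $i$ in the ``middle'' interval $[\ol\phi(P_{-\lfloor n/2\rfloor}^n)+1, P_0^n - 1]_{\BB Z}$ the excursion $[P_i^n, \ol\phi(P_i^n)]_{\BB Z}$ is contained in that middle interval. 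Letting $\mcl I_n$ be the set of maximal pre-burger excursions inside the middle interval, one gets that for $i$ in the interior of any $I = [j, \ol\phi(j)]_{\BB Z}\in\mcl I_n$, the event $A_i^n(0)$ occurs and in fact $A_i^n(\ep)$ occurs whenever $i$ lies in the right $(1-\ep)$-fraction of $I$. This yields, exactly as in~\eqref{eqn-A(0)-sum} and~\eqref{eqn-A(ep)-sum}, an upper bound $\sum_{i=-\lfloor n/2\rfloor}^{-1}\BB 1_{A_i^n(0)} \leq \sum_{I\in\mcl I_n}|I| + (0 - P_0^n) + (\lfloor n/2\rfloor - (\ldots))$ on $Q_n$ and a matching lower bound $\sum_{i=-\lfloor n/2\rfloor}^{-1}\BB 1_{A_i^n(\ep)} \geq (1-\ep)\sum_{I\in\mcl I_n}|I| + (1-\ep)(0-P_0^n)$ on $Q_n$. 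Taking expectations, using $\BB P(A_i^n(0))\to 1$ and $\BB P(Q_n)\geq \wt q_0$, using that on $A_{-\lfloor n/2\rfloor}^n(\ep)^c$ one has $\lfloor n/2\rfloor - (\ldots) \leq \ep n$, bounding $\BB E(\BB 1_{Q_n}|P_0^n|)\leq n$, and invoking translation invariance~\eqref{eqn-A-translate} to write everything in terms of $\BB P(A_n(\ep))$, I obtain an inequality of the form $(1+\ep)\lfloor n/2\rfloor\BB P(A_n(\ep)) \geq (1-\ep)\tfrac{\wt q_0}{2}\lfloor n/2\rfloor - 2\ep n$, which on rearranging gives $\BB P(A_n(\ep))\geq 3q_0$ for suitable $\ep_0>0$, $q_0\in(0,1/3)$, $n_0\in\BB N$.

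\textbf{Main obstacle.} The delicate point, as in the original Lemma~\ref{prop-F-symmetry}, is the bookkeeping in the combinatorial core: one must be careful that $\mcl I_n$ really does capture all the indices $i$ in the middle interval for which $A_i^n(0)$ occurs (so that the two sums can be compared), and one must handle the fact that unlike $\tb F$-excursions, two pre-burger excursions can share their \emph{right} endpoint $\ol\phi(\cdot)$. This last feature means ``maximal pre-burger excursion'' must be defined with respect to containment, and one must check that distinct maximal ones still have disjoint interiors, which suffices for all the counting arguments; I expect this to require a short but careful case analysis using item~\ref{item-ancestor-reduced} (the reduced word $X(j,\ol\phi(j))$ has exactly one order). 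Everything else --- the appeals to \cite[Theorem 2.5]{shef-burger} for conditions (i),(ii) of $Q_n$, the use of Proposition~\ref{prop-infinite-F}, and the final rearrangement --- is routine and parallels Lemma~\ref{prop-F-symmetry} line for line.
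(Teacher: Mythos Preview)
Your proposal is correct and follows exactly the approach the paper indicates: the paper does not give a detailed proof but states that Lemma~\ref{prop-ancestor-symmetry} ``can be proven via an argument which is nearly identical to the proof of Lemma~\ref{prop-F-symmetry}, except that one reads the word $X$ backward and considers maximal discrete intervals of the form $[k,\ol\phi(k)]_{\BB Z}$ with $k$ a pre-burger time instead of maximal $\tb F$-excursions,'' which is precisely what you do. Your identification of the shared-right-endpoint issue (observation~\ref{item-ancestor-nested}) as the one place requiring extra care, and your observation that disjointness of interiors suffices for the counting, is exactly right.
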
 

In light of observations~\ref{item-ancestor-reduced} through~\ref{item-ancestor-cone} above, Lemma~\ref{prop-ancestor-symmetry} can be proven via an argument which is nearly identical to the proof of Lemma~\ref{prop-F-symmetry}, except that one reads the word $X$ backward and considers maximal discrete intervals of the form $[k,\ol\phi(k)]_{\BB Z}$ with $k$ a pre-burger time instead of maximal $\tb F$-excursions. Note that there are a.s.\ infinitely many such intervals containing any fixed $i\in\BB Z$ by Proposition~\ref{prop-infinite-F}.

Using Lemma~\ref{prop-ancestor-symmetry} and almost exactly the same argument which appears in Section~\ref{sec-F-reg-forward} one obtains the existence of a sequence of positive integers $m_j \rta \infty$ and an $\ep  >0$ such that (in the notation of Proposition~\ref{prop-ancestor-reg})
\eqbn
 \liminf_{j\rta\infty} \BB P\left(E_{m_j}(\ep) \,|\, I > m_j\right)  > 0 .
\eqen
This, in turn, leads to a proof of Proposition~\ref{prop-ancestor-reg} by means of the inductive argument of Section~\ref{sec-F-reg-reverse}, but with the word $X$ read forward rather than backward. We note that the obvious analogue of Lemma~\ref{prop-Z-cond-limit} holds in this setting, with a similar but slightly easier proof (since the initial segment of the word one is conditioning on contains no flexible orders). 

With Proposition~\ref{prop-ancestor-reg} established, the argument of Section~\ref{sec-no-burger} carries over more or less verbatim to yield Theorem~\ref{thm-no-order-conv}. The only difference is that the word is read in the opposite direction and times $j$ for which $X(-j,-1)$ contains no orders are used in place of times $i$ for which $X(1,i)$ contains no burgers.

\subsection{Regular variation for times with no orders}
\label{sec-no-order-reg-var}

In this subsection we will prove analogues of some of the results of Section~\ref{sec-reg-var} for times when the word has no orders, rather than no burgers. Recall the definition of regular variation from Section~\ref{sec-reg-var}. 
 
\begin{lem} \label{prop-I-reg-var}
Let $I$ be defined as in the beginning of this appendix. Then the law of $I$ is regularly varying with exponent $\mu$ (defined as in~\eqref{eqn-cone-exponent}).
\end{lem}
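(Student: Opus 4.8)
The plan is to follow the proof of Proposition~\ref{prop-J-reg-var} essentially word for word, with ``burger'' replaced by ``order'' throughout and with Theorem~\ref{thm-no-order-conv} and Proposition~\ref{prop-ancestor-reg} playing the r\^oles of Theorem~\ref{thm-no-burger-conv} and Proposition~\ref{prop-F-regularity}. By translation invariance $\BB P(I > n)$ equals the probability that $X(-n,-1)$ contains no orders, so I would instead prove that this latter probability is regularly varying with exponent $\mu$, reading the word $X$ backward exactly as in Sections~\ref{sec-F-reg} and~\ref{sec-no-burger}. Fix $c > 1$. For $z \in (0,\infty)^2$ write $\Phi_c(z)$ for the probability that a Brownian motion with covariances as in~\eqref{eqn-bm-cov} started from $z$ stays in the first quadrant until time $c-1$; this is a bounded continuous function of $z$. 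Let $\wt Z$ have the law of the path $\wh Z$ of Lemma~\ref{prop-bm-meander} (a Brownian motion as in~\eqref{eqn-bm-cov} conditioned to stay in the first quadrant until time $1$), and for $n \in \BB N$ let $\wh Z^n$ be the path built from $X_{-n} \dots X_{-1}$ exactly as in the proof of Proposition~\ref{prop-J-reg-var}, so that $\wh Z^n$ is independent of $\dots X_{-n-2} X_{-n-1}$. By Theorem~\ref{thm-no-order-conv} (equivalently, its backward version obtained by translation invariance) together with Lemma~\ref{prop-few-F}, the conditional law of $\wh Z^n$ given that $X(-n,-1)$ has no orders will converge to that of the time reversal of $\wt Z$; in particular the law of $\wh Z^n(-1)$ will converge to that of $\wt Z(1)$.

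Fix $\nu \in (\mu', 1/2)$ as in Section~\ref{sec-F-reg-setup}. The crucial step is the no-order analogue of~\eqref{eqn-J>cn-conv}: arguing exactly as in the proof of Theorem~\ref{thm-no-burger-conv}, but invoking the no-order counterparts of Lemma~\ref{prop-Z-cond-limit}, Lemma~\ref{prop-reverse-cond-regularity}, Proposition~\ref{prop-ancestor-reg} and \cite[Theorem~2.5]{shef-burger}, I would obtain
\[
\BB P\left(\text{$X(-cn,-1)$ has no orders} \,\big|\, \text{$X(-n,-1)$ has no orders},\ \wh Z^n\right)\, \BB 1_{\{\mcl N_{\tb F}(X(-n,-1)) \leq n^\nu\}} \;\longrightarrow\; \Phi_c(\wt Z(1))
\]
in law. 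Since $\BB P(\mcl N_{\tb F}(X(-n,-1)) \leq n^\nu) \to 1$ by Lemma~\ref{prop-few-F} and the conditional law of $\wh Z^n$ given $\{X(-n,-1)\text{ has no orders}\}$ converges as above, taking expectations will give
\[
\frac{\BB P(I > cn)}{\BB P(I > n)} \;\longrightarrow\; f(c) := \BB E\big(\Phi_c(\wt Z(1))\big) .
\]
Just as in the proof of Proposition~\ref{prop-J-reg-var}, $f(1) = 1$, $f(c) \in (0,1)$ for $c > 1$, and multiplying this relation for the intervals $[n,cn]$ and $[cn,cc'n]$ gives $f(cc') = f(c) f(c')$, whence $f(c) = c^{-\alpha}$ for some $\alpha > 0$.

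To identify $\alpha = \mu$ I would repeat verbatim the last paragraph of the proof of Proposition~\ref{prop-J-reg-var}: by \cite[Equation~4.3]{shimura-cone} (c.f.\ the proof of Lemma~\ref{prop-bm-cone-asymp}), $c^\mu \Phi_c(z) \to \Psi(z)$ for a positive continuous function $\Psi$ on $(0,\infty)^2$ that is bounded near the origin; the density formula \cite[Equation~3.2]{shimura-cone} shows that the law of $\wt Z(1)$ has quadratic-exponential tails; and \cite[Equations~3.2 and~4.2]{shimura-cone} together with Brownian scaling give $\sup_{z \in B_A(0) \cap (0,\infty)^2} \Phi_c(z) \preceq c^{-\mu} A^{2\mu}$. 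These bounds make $c^\mu \Phi_c(\wt Z(1))$ uniformly integrable, so by the Vitali convergence theorem $c^\mu f(c) = \BB E(c^\mu \Phi_c(\wt Z(1)))$ converges to a finite positive limit as $c \to \infty$, forcing $\alpha = \mu$.

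The main obstacle will be the bookkeeping required to record the no-order versions of Lemma~\ref{prop-Z-cond-limit} and of the regularity results of Sections~\ref{sec-F-reg} and~\ref{sec-no-burger} that feed into the first display above. Section~\ref{sec-no-order-conv} asserts that these hold with essentially the same proofs once one reverses the direction in which $X$ is read and replaces flexible-order times by pre-burger times (Definition~\ref{def-ancestor}); the only genuinely new point is keeping careful track of the $O(1)$ boundary corrections in the definition of $\wh Z^n$ (the no-order analogue of the correction for flexible orders whose match lies outside the relevant window). Once these inputs are in hand, the multiplicativity argument and the Shimura tail asymptotics identifying the exponent are identical to the no-burger case.
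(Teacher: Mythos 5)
Your overall template --- repeat the proof of Proposition~\ref{prop-J-reg-var} with Theorem~\ref{thm-no-order-conv} in place of Theorem~\ref{thm-no-burger-conv}, deduce $f(cc')=f(c)f(c')$, and identify the exponent via the asymptotics from \cite{shimura-cone} --- is exactly the paper's intended argument, and your exponent-identification paragraph is fine. However, the way you set up the key step is broken, because you read the word in the wrong direction. The burger/order asymmetry matters here: an order at position $i$ is matched by a burger at a position $\phi(i)<i$. Consequently the event $A_m:=\{\text{$X(-m,-1)$ contains no orders}\}$ is \emph{not} monotone in $m$ (an order of $X_{-n}\dots X_{-1}$ unmatched in $[-n,-1]_{\BB Z}$ may find its match in $[-cn,-n-1]_{\BB Z}$), so $A_{cn}\not\subset A_n$ and $\BB P(A_{cn}\mid A_n)\neq \BB P(I>cn)/\BB P(I>n)$. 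Worse, on $A_n$ the reduced word $X(-n,-1)$ consists only of burgers, and a burger lying to the \emph{right} of the segment $X_{-cn}\dots X_{-n-1}$ can never fulfil an order in that segment; hence for any realization $x$ whose reduction has no orders,
\[
\{X_{-n}\dots X_{-1}=x\}\cap A_{cn} \;=\; \{X_{-n}\dots X_{-1}=x\}\cap\{\text{$X(-cn,-n-1)$ has no orders}\} .
\]
So the conditional probability in your key display is the deterministic constant $\BB P(I>(c-1)n)$, not a quantity converging in law to $\Phi_c(\wt Z(1))$; and taking expectations yields $\BB P(A_{cn}\cap A_n)/\BB P(A_n)=\BB P(I>(c-1)n)$ rather than the ratio $\BB P(I>cn)/\BB P(I>n)$ you need.

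The fix is to read the word forward, which is how Theorem~\ref{thm-no-order-conv} and Proposition~\ref{prop-ancestor-reg} are stated in the first place. Since orders match burgers to their left, $\{I>n\}=\{\text{$X(1,n)$ has no orders}\}$ \emph{is} decreasing in $n$, and given $\{I>n\}$ and $X_1\dots X_n$ the event $\{I>cn\}$ is (up to the $\tb F$-corrections controlled by Lemma~\ref{prop-few-F}) the event that the walk $D(i)-D(n)$, $i\in[n,cn]_{\BB Z}$, stays in the quadrant translated by $\left(-\mcl N_{\tc H}(X(1,n)),\,-\mcl N_{\tc C}(X(1,n))\right)$: here the new orders \emph{can} be absorbed by the old burgers, which is exactly the mirror image of the no-burger computation. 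With the conditioning done in that direction, your display becomes the correct analogue of~\eqref{eqn-J>cn-conv}, the expectation computation gives $\BB P(I>cn)/\BB P(I>n)\to\BB E(\Phi_c(\wt Z(1)))$, and the rest of your argument (multiplicativity and the Vitali/Shimura identification of $\alpha=\mu$) goes through verbatim.
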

\begin{proof}
This follows from Theorem~\ref{thm-no-order-conv} and the results of \cite{shimura-cone} via exactly the same argument used in the proof of Proposition~\ref{prop-J-reg-var}. 
\end{proof}

Borrowing some terminology from~\cite{wedges}, we say that $i\in \BB N$ is \emph{ancestor free}
if there is no $k\in [1,i ]_{\BB Z}$ such that $X(k,i)$ contains no orders. Equivalently, $X(i-j, i)$ contains an order for every $j\in [0,i-1]_{\BB Z}$; or there is no pre-burger time (Definition~\ref{def-ancestor}) $k\leq i-1$ such that $i \in [k+1,\ol\phi(k)]_{\BB Z}$. The ancestor free times can be described as follows. 
 
\begin{lem} \label{prop-I-af}
Let $I_1 = I$ be the the smallest $i\in\BB N$ for which $X(1,i)$ contains an order. Inductively, for $m\geq 2$ let $I_m$ be the smallest $i\geq I_{m-1}+1$ for which $X(I_{m-1}+1 , i)$ contains an order. Then $I_m$ is precisely the $m$th smallest ancestor free time in $\BB N$.
\end{lem}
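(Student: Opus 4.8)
\textbf{Proof proposal for Lemma~\ref{prop-I-af}.}

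The plan is to prove the two descriptions of the times $I_m$ coincide by induction on $m$, using the recursive characterization of ancestor free times together with the definition of $I_m$. First I would set up the key observation from observation~\ref{item-ancestor-nested} (and the definition of ``ancestor free''): a time $i\in\BB N$ fails to be ancestor free precisely when there is a pre-burger time $k\le i-1$ with $i\in[k+1,\ol\phi(k)]_{\BB Z}$, equivalently when $X(k,i)$ contains no orders for some $k\in[1,i]_{\BB Z}$. Dually, $i$ \emph{is} ancestor free iff $X(k,i)$ contains an order for every $k\in[1,i]_{\BB Z}$, i.e.\ iff for every $k\le i$ the segment read from $k$ to $i$ still has an unmatched order.

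The base case $m=1$ is essentially the definition: $I_1=I$ is by construction the smallest $i$ with $X(1,i)$ containing an order; I need to check this is the smallest ancestor free time. If $i<I_1$ then $X(1,i)$ contains no orders, so $i$ is not ancestor free; and $I_1$ itself is ancestor free because for any $k\in[1,I_1]_{\BB Z}$, the word $X(1,I_1)$ contains exactly one order (namely $X_{I_1}$, by minimality of $I_1$ and observation~\ref{item-ancestor-reduced}), and that order is not matched within $X(k,I_1)$ since its match lies at an index $\le 0 < k$... wait, more carefully: $X_{I_1}$ is an order and all of $X_1,\dots,X_{I_1-1}$ are burgers, so $X(k,I_1)$ for any $k\in[1,I_1]$ equals a (nonempty) string of burgers followed by the order $X_{I_1}$, reduced; this always contains that order. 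Hence $I_1$ is ancestor free and is the smallest such time.

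For the inductive step, assume $I_{m-1}$ is the $(m-1)$st smallest ancestor free time. By translation invariance, applying the $m=1$ analysis to the word $X_{I_{m-1}+1}X_{I_{m-1}+2}\dots$ shows that $I_m$ (the smallest $i\ge I_{m-1}+1$ with $X(I_{m-1}+1,i)$ containing an order) is ancestor free \emph{relative to} the shifted word, and is the smallest index $>I_{m-1}$ with that property. I then need to upgrade ``ancestor free relative to the shifted word starting at $I_{m-1}+1$'' to ``ancestor free in $\BB N$''. This is where the main work lies: I must show that for $I_{m-1}< i$, the time $i$ is ancestor free (in $\BB N$) iff $X(I_{m-1}+1,i)$ contains an order and $i$ is ancestor free relative to that shifted word --- equivalently, that an ancestor interval $[k+1,\ol\phi(k)]_{\BB Z}$ containing $i$ with $k\le I_{m-1}$ cannot exist once we know $X(I_{m-1}+1,i)$ has an order. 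The point is that if $k\le I_{m-1}$ is a pre-burger time with $i\in[k+1,\ol\phi(k)]_{\BB Z}$, then $X(k+1,i)$, hence $X(I_{m-1}+1,i)$ as well, contains no order (since $i\le\ol\phi(k)$ means no order has appeared yet in $X(k+1,\cdot)$ past the unmatched initial burgers) --- contradicting the hypothesis. Conversely, if $i>I_{m-1}$ is ancestor free in $\BB N$ then in particular $X(I_{m-1}+1,i)$ contains an order, so $i$ is one of the candidates, and it is ancestor free relative to the shift because any relative ancestor interval is also an absolute one. Combining: the ancestor free times $>I_{m-1}$ are exactly the ancestor free times (relative to the shift by $I_{m-1}$) shifted back, and by the base case applied after the shift, $I_m$ is the smallest of these. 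Since $I_{m-1}$ is the $(m-1)$st smallest ancestor free time overall, $I_m$ is the $m$th.

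The main obstacle is the bookkeeping in that inductive step: carefully verifying that ancestor intervals straddling the boundary $I_{m-1}$ are ruled out, which hinges on observation~\ref{item-ancestor-reduced} (the structure of $X(k+1,\ol\phi(k))$ as unmatched burgers followed by one order) and on the fact that $I_{m-1}$ being ancestor free forces any pre-burger time $k\le I_{m-1}-1$ to have $\ol\phi(k)\le I_{m-1}$ --- so no ancestor interval with left endpoint $\le I_{m-1}-1$ reaches past $I_{m-1}$, and the only remaining case $k=I_{m-1}$ or $k$ between $I_{m-1}$ and $i$ is handled directly by the relative (shifted) analysis. Everything else is a routine translation-invariance argument.
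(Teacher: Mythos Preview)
Your proof is correct and follows essentially the same inductive strategy as the paper: both hinge on the observation that $I_{m-1}$ being ancestor free forces every pre-burger time $k\le I_{m-1}-1$ to satisfy $\ol\phi(k)\le I_{m-1}$, so no ancestor interval from the left can straddle $I_{m-1}$.

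One caution: the intermediate implication you assert --- that $X(k+1,i)$ containing no orders forces $X(I_{m-1}+1,i)$ to contain no orders --- is false in general, since passing to a shorter suffix can expose an order whose matching burger lies in the deleted prefix $[k+1,I_{m-1}]$. You do not actually need this step: as you correctly observe in your final paragraph, the premise itself cannot hold once $I_{m-1}$ is ancestor free (because the orders in $X(k+1,I_{m-1})$ persist in $X(k+1,i)$), and that is the real argument. The paper's version is slightly more direct: it verifies that $X(j,I_m)$ contains an order for each $j\in[1,I_m]$ by splitting into $j\le I_{m-1}$ (inductive hypothesis plus persistence of orders under right-extension) and $j\ge I_{m-1}+1$ (the symbol $X_{I_m}$ is an order with match $\le I_{m-1}$), and then checks $\wt I_m\ge I_m$ in one line. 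Also note a small off-by-one: the interval of times covered by a pre-burger time $k$ is really $[k+1,\ol\phi(k)-1]_{\BB Z}$, since $X(k+1,\ol\phi(k))$ already contains an order.
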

\begin{proof}
Let $I_0 = \wt I_0 = 0$. For $m\in\BB N$, let $\wt I_m$ denote the $m$th smallest ancestor free time. We must show $\wt I_m = I_m$ for each $m\geq 0$. We prove this by induction, starting with the trivial base case $m=0$. Now suppose $m\geq 1$ and we have shown $\wt I_{m-1} = I_{m-1}$. By definition, $X_{I_m}$ is an order whose match is $\leq I_{m-1}$. Hence this order appears in $X(j , I_m)$ for each $j\in [ I_{m-1} +1 , I_m]_{\BB Z}$. By the inductive hypothesis, $I_{m-1}$ is ancestor free, so $X(j,I_{m-1})$ contains an order for each $j\in [1,I_{m-1}]_{\BB Z}$. Therefore $X(j , I_m)$ contains an order for each $j \in [1,I_m]_{\BB Z}$, so $I_m$ is ancestor free and $I_m \geq \wt I_m$. 

Conversely, since $\wt I_m$ is ancestor free, the word $X(\wt I_{m-1} + 1 , \wt I_m)$ contains an order. Since $\wt I_{m-1}  = I_{m-1}$ (by the inductive hypothesis) it follows that $\wt I_m \geq I_m$.
\end{proof}
 
The following is the analogue of Corollary~\ref{prop-K-reg-var} for times with no orders.

\begin{lem} \label{prop-P-reg-var}
Let $P$ be the smallest $j \in \BB N$ for which $X(-j,-1)$ contains no orders. Then the law of $P$ is regularly varying with exponent $1-\mu$, with $\mu$ as in~\eqref{eqn-cone-exponent}. 
\end{lem}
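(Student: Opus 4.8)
The plan is to mirror the proof of Corollary~\ref{prop-K-reg-var} exactly, reading the word $X$ backward rather than forward and interchanging the roles of ``burgers'' and ``orders'' via the reflection symmetry built into Lemma~\ref{prop-ancestor-symmetry} and Lemma~\ref{prop-I-af}. Recall from Corollary~\ref{prop-K-reg-var} that the key device is to express the quantity of interest as the ``overshoot'' of a renewal process whose inter-renewal law is regularly varying, and then invoke the Dynkin--Lamperti theorem (and its converse).

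First I would set up the renewal structure. By Lemma~\ref{prop-I-af}, the ancestor-free times $I_1 < I_2 < \dots$ form a renewal process: the words $X_{I_{m-1}+1}\dots X_{I_m}$ are iid, each distributed as $X_1\dots X_{I_1}$ where $I_1 = I$ is the smallest $i\in\BB N$ for which $X(1,i)$ contains an order. By Lemma~\ref{prop-I-reg-var}, the law of $I$ is regularly varying with exponent $\mu$. For $n\in\BB N$, let $M_n$ be the largest $m$ for which $I_m \leq n$, so that $I_{M_n}$ is the largest ancestor-free time in $[1,n]_{\BB Z}$. Equivalently, by the characterization of ancestor-free times, $I_{M_n}$ is the largest $i\in[1,n]_{\BB Z}$ such that $X(-j,i)$ contains an order for every $j$ with $-j \geq \dots$; more usefully, using translation invariance by the shift sending index $k$ to $n+1-k$ (which also reverses the reading direction of the word and swaps the roles of the two kinds of symbols appropriately), one checks that $n - I_{M_n} \eqD P_n'$, where $P_n'$ is the largest $j\in[1,n]_{\BB Z}$ for which $X(-j,-1)$ contains no orders — i.e. $P_n' = P$ on the event $\{P\le n\}$, and more precisely $P \wedge (n+1)$ is the running analogue. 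The cleanest route, matching the proof of Corollary~\ref{prop-K-reg-var} verbatim, is: by the Dynkin--Lamperti theorem~\cite{dynkin-limits,lamperti-limits}, since $I_1$ is regularly varying with exponent $\mu\in(0,1)$, the law of $n^{-1} I_{M_n}$ converges to the generalized arcsine distribution with parameter $\mu$; hence $n^{-1}(n - I_{M_n})$ converges in law to a generalized arcsine distribution with parameter $1-\mu$; and by translation invariance $n - I_{M_n} \eqD P \wedge (n+1)$ (with $P$ the overshoot of the time-reversed renewal process whose inter-renewal times are iid copies of $I_1$). Since the limiting distribution has no atom at the origin or at $1$, this forces the law of $P$ to be regularly varying with exponent $1-\mu$ by the converse to the Dynkin--Lamperti theorem.

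Concretely, the steps in order are: (i) invoke Lemma~\ref{prop-I-af} to identify $\{I_m\}$ as a renewal process with inter-renewal law that of $I$; (ii) invoke Lemma~\ref{prop-I-reg-var} for the regular variation of $I$ with exponent $\mu$; (iii) apply Dynkin--Lamperti to get that $n^{-1}(n - I_{M_n})$ converges to a generalized arcsine law with parameter $1-\mu$; (iv) use translation invariance of $X$ to identify $n - I_{M_n}$ in distribution with the overshoot of a reversed renewal process built from iid copies of $I$, which is the quantity $P$ (more precisely $P \wedge (n+1)$, so the limit in law is unaffected); (v) apply the converse Dynkin--Lamperti theorem to conclude that $P$ is regularly varying with exponent $1-\mu$. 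This is exactly parallel to Corollary~\ref{prop-K-reg-var}, where $K^F$ played the role of $P$ and the smallest $i$ with $X(1,i)$ containing a flexible order played the role of $I$; indeed one can simply cite that proof and indicate the two substitutions.

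The only genuinely nontrivial point — and hence the main obstacle — is step (iv): carefully matching $n - I_{M_n}$ with the correct overshoot variable so that the translation-invariance argument is airtight. One must check that ``largest ancestor-free time $\le n$'' translates under the index reflection to ``largest $j\le n$ with $X(-j,-1)$ containing no orders,'' which requires unpacking the equivalent formulations of ancestor-free given just before Lemma~\ref{prop-I-af} (that $X(i-j,i)$ contains an order for all $j\in[0,i-1]_{\BB Z}$) and noting that this is precisely the reflected image of the condition ``$X(-j,-1)$ contains no orders.'' Everything else is a direct transcription of the proof of Corollary~\ref{prop-K-reg-var}, so no new estimates are needed; the subtlety is purely bookkeeping about reading directions and which symbol types are being counted. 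I expect roughly a paragraph of careful index-chasing to make step (iv) rigorous, after which steps (i)--(iii) and (v) are immediate from the cited results.
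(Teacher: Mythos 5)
Your overall strategy is the right one and is in fact exactly the paper's: set up the ancestor-free renewal process $\{I_m\}$ via Lemma~\ref{prop-I-af}, use Lemma~\ref{prop-I-reg-var} and the Dynkin--Lamperti theorem, transfer to the backward word by translation invariance, and finish with the converse theorem. However, step (iv) — which you correctly flag as the crux — contains a false identification that breaks the argument. The index reflection shows that $n - I_{M_n}$ is equal in law to $P_{L_n}$, the \emph{largest} $j\in[1,n]_{\BB Z}$ for which $X(-j,-1)$ contains no orders, i.e.\ the last renewal before time $n$ of the process $P_1<P_2<\cdots$ of all such times — not the \emph{smallest} such time $P=P_1$. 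You state the correct identification in passing (``$P_n'$ is the largest $j$\dots'') but then assert $P_n'=P$ on $\{P\le n\}$ and work with $P\wedge(n+1)$ thereafter. These are genuinely different random variables: by Proposition~\ref{prop-infinite-F} the time $P$ is a.s.\ finite, so $n^{-1}(P\wedge(n+1))\rta 0$ almost surely and cannot have a nondegenerate arcsine limit, whereas $n^{-1}P_{L_n}$ does. Moreover, the converse Dynkin--Lamperti theorem applies to the \emph{age} of a renewal process at time $n$, which $P\wedge(n+1)$ is not, so step (v) is also malformed as written.

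The repair is short and is what the paper does. First check that the times $P_l$ themselves form a renewal process with inter-renewal law that of $P$: given that $X(-P_{l-1},-1)$ contains no orders, $X(-j,-1)$ contains no orders if and only if $X(-j,-P_{l-1}-1)$ does, so the increments $P_l-P_{l-1}$ are iid copies of $P$. Then $n-P_{L_n}$ \emph{is} the age of this process at time $n$; since $n^{-1}P_{L_n}\eqD n^{-1}(n-I_{M_n})$ converges to a generalized arcsine law by Dynkin--Lamperti applied to the $I$-process, the reflection $y\mapsto 1-y$ gives that $n^{-1}(n-P_{L_n})$ converges to the arcsine law with parameter $1-\mu$, and the converse Dynkin--Lamperti theorem now applies legitimately to conclude that $P$ is regularly varying with exponent $1-\mu$. (Your assignment in step (iii) of which arcsine parameter goes with which quantity is the opposite of the paper's; that is only a naming convention for Beta laws and is harmless once the identification above is corrected, since the two swaps cancel.)
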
 
\begin{proof}
Define the times $I_m$ for $m\in\BB N$ as in Lemma~\ref{prop-I-af}. For $n\in\BB N$, let $M_n$ be the largest $m\in\BB N$ for which $I_m \leq n$. For $l\in\BB N$, let $P_l$ be the $l$th smallest $j\in\BB N$ for which $X(-j,-1)$ contains no orders and let $L_n$ be the largest $l\in\BB N$ for which $P_l\leq n$.

By Lemma~\ref{prop-I-af}, for $k\in\BB N$ the event $\{I_{M_n} = k\}$ is the same as the event that $k$ is ancestor free, i.e.\ $X(j , k)$ contains an order for each $j\in [1,k]_{\BB Z}$; and $I_{M_n+1}  > n$, i.e.\ $X(k+1,n)$ contains no orders. 
The event $\{P_{L_n} = k' \}$ is the same as the event that $X(-k' ,-1)$ contains no orders; and $X(-j,-k' )$ contains an order for each $j \in [k'  +1 ,n]_{\BB Z}$. By translation invariance, 
\eqbn
P_{L_n} \eqD n - I_{M_n}   .
\eqen

By Lemma~\ref{prop-I-reg-var} and the Dynkin-Lamperti theorem~\cite{dynkin-limits,lamperti-limits}, it follows that $n^{-1}(n - I_{M_n})$ converges in law to the generalized arcsine distribution with parameter $\mu$ as $n\rta\infty$. Therefore $n^{-1}(n - P_{L_n})$ converges in law to a generalized arcsine distribution with parameter $1-\mu$. By the converse to the Dynkin-Lamperti theorem, we obtain the statement of the lemma. 
\end{proof}

\subsection{Convergence of the forward cone times}
\label{sec-cone-limit-forward}

In this subsection we record a generalization of Theorem~\ref{thm-cone-limit} which includes convergence of the times of this subsection to the forward $\pi/2$-cone times of the correlated Brownian motion $Z$. We first need the following analogue of Definition~\ref{def-maximal}.

\begin{defn}\label{def-maximal-forward}
A forward $\pi/2$-cone time for a path $Z$ is called a \emph{maximal forward $\pi/2$-cone time} in an (open or closed) interval $I\subset \BB R$ if $[t, \ol v_Z(t)] \subset I$ and there is no forward $\pi/2$-cone time $t'$ for $Z$ such that $[t' , \ol v_Z(t')  ]\subset I$ and $[ t , \ol v_Z(t)] \subset (  t' , \ol v_Z(t'))$. Equivalently, $-t$ is a maximal $\pi/2$-cone time for $Z(-\cdot)$ (Definition~\ref{def-maximal}). An integer $i\in \BB Z$ is called a \emph{maximal pre-burger time} in an interval $I\subset \BB R$ if $i$ is a pre-burger time (Definition~\ref{def-ancestor}), $[   i ,\ol\phi(i) ]_{\BB Z} \subset I$, and there is no pre-burger time $i' \in \BB Z$ with $[i, \ol\phi(i)  ]_{\BB Z} \subset (i',  \ol \phi(i')   )_{\BB Z}$ and $[i', \ol\phi(i')  ]_{\BB Z} \subset I$. 
\end{defn}

\begin{thm} \label{thm-cone-limit-forward}
Let $Z$ be a correlated Brownian motion as in~\eqref{eqn-bm-cov}. There is a coupling of countably many instances $\{ X^n \}_{n\in\BB N} $ of the bi-infinite word $X$ described in Section~\ref{sec-burger-prelim} with $Z$ such that the conditions of Theorem~\ref{thm-cone-limit} are satisfied and the following additional conditions hold a.s. 
\begin{enumerate}
\setcounter{enumi}{5}  
\item Suppose given a bounded open interval $I \subset \BB R$ with endpoints in $\mcl Q$ and $a \in I \cap \mcl Q$. Let $\ol t$ be the maximal forward $\pi/2$-cone time for $Z$ in $I$ with $a\in [\ol t, \ol v_Z(\ol t)]$. For $n\in\BB N$, let $\ol i_n$ be the maximal pre-burger time $i$ (with respect to $X^n$) in $n I$ with $a n \in [ i , \ol\phi(i) ]$ (or $\ol i_n = \lfloor a n \rfloor$ if no such $i$ exists); and let $\ol t_n = n^{-1} \ol i_n$. Then 
\eqbn
\left( \ol t_n , v_{Z^n} (\ol t_n) , u_{Z^n}(\ol t_n) \right) \rta \left( \ol t , v_Z(\ol t) ,u_Z(\ol t) \right) .
\eqen
\label{item-cone-limit-maximal''}
\item For $r  > 0$ and $a\in \BB R$, let $\ol\tau^{a,r}$ be the greatest forward $\pi/2$-cone time $t$ for $Z$ such that $t\leq a$ and $\ol v_Z(t) - t \geq r$. For $n\in\BB N$, let $\ol\iota_n^{a,r}$ be the greatest pre-burger time $i\in\BB Z$ such that $i \leq a n$ and $\ol\phi(i) - i \geq r n - 1$ (or $\ol\iota_n^{a,r} = - \infty$ if no such $i$ exists); and let $\ol\tau_n^{a,r} = n^{-1} \ol\iota_n^{a,r}$. 
Then 
\eqbn
\left( \ol \tau_n^{a,r}  , v_{Z^n} (\ol \tau_n^{a,r}) , u_{Z^n}(\ol \tau_n^{a,r}) \right) \rta \left( \ol \tau^{a,r} , v_Z(\ol \tau^{a,r}) ,u_Z(\ol \tau^{a,r}) \right)  
\eqen
for each $(a,r) \in \mcl Q\times (\mcl Q\cap (0,\infty))$. \label{item-cone-limit-stopping''}
\item For each sequence of positive integers $n_k \rta \infty$ and each sequence $\{i_{n_k} \}_{k\in\BB N}$ such that $X^{n_k}_{i_{n_k}+1}  $ is a burger for each $k\in\BB N$, $n_k^{-1} i_{n_k} \rta t \in \BB R$, and $\liminf_{k\rta\infty} (  \ol v_{Z^{n_k}}(n_k^{-1} i_{n_k}) -  n_k^{-1} i_{n_k} ) > 0$, it holds that $t$ is a forward $\pi/2$-cone time for $Z$ and in the notation of Definition~\ref{def-cone-time-forward}, we have
\eqbn
\left(n_k^{-1} i_{n_k} , \,  \ol v_{Z^{n_k}}(n_k^{-1} i_{n_k}), \,  \ol u_{Z^{n_k}}(n_k^{-1} i_{n_k}) \right) \rta \left( t , v_Z(t) , u_Z(t) \right) .
\eqen
\end{enumerate}
\end{thm}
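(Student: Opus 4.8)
\textbf{Proof proposal for Theorem~\ref{thm-cone-limit-forward}.}
The plan is to deduce Theorem~\ref{thm-cone-limit-forward} from Theorem~\ref{thm-cone-limit} by the time-reversal symmetry of the word $X$ together with a bit of care to produce a \emph{single} coupling satisfying all eight conditions simultaneously. The key structural observations are the four bullet points recorded just after Definition~\ref{def-ancestor}: if $i$ is a pre-burger time, then $n^{-1} i$ is a weak forward $\pi/2$-cone time for $Z^n$ with $\ol v_{Z^n}(n^{-1} i) = n^{-1}(\ol\phi(i)-1)$, and the nesting structure of the intervals $[i,\ol\phi(i)]_{\BB Z}$ mirrors that of the intervals $[\phi(i),i]_{\BB Z}$ for $\tb F$-times; and by Definition~\ref{def-cone-time-forward}, a forward $\pi/2$-cone time for $Z$ is precisely a $\pi/2$-cone time (Definition~\ref{def-cone-time}) for the time reversal $Z(-\cdot)$. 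The word $X$ is invariant in law under the reflection $i \mapsto -i$ (up to swapping the roles of burgers and orders, which does not affect the set of cone times), so conditions (6)--(8) for $Z$ and the word read forward are \emph{exactly} conditions~\ref{item-cone-limit-maximal}, \ref{item-cone-limit-stopping}, \ref{item-cone-limit-times} of Theorem~\ref{thm-cone-limit} applied to $Z(-\cdot)$ and the word read backward, once one matches up the objects via the dictionary above.

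First I would make the time-reversal correspondence precise. Given an instance $X$ of the bi-infinite word, let $\check X$ be defined by $\check X_i := X_{-i}$ with each $\tb H$ and $\tb C$ burger re-interpreted as the corresponding order and vice versa (this is a measure-preserving bijection of the law~\eqref{eqn-theta-prob}), and check that pre-burger times for $X$ correspond to $\tb F$-times (more precisely, pre-order times) for $\check X$ under $i \mapsto -i$, with $\ol\phi$ for $X$ corresponding to $\phi$ for $\check X$ modulo the constant-order shifts already present in the statement of Theorem~\ref{thm-cone-limit}. One must also verify that the re-scaled path $\check Z^n$ built from $\check X$ equals $Z^n(-\cdot)$ in law (indeed pathwise after the identification), using that $d,d^*$ are antisymmetric under the reversal as in Remark~\ref{remark-minus-sign}. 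Under this dictionary, conditions (6), (7), (8) of Theorem~\ref{thm-cone-limit-forward} become, verbatim, conditions~\ref{item-cone-limit-maximal}, \ref{item-cone-limit-stopping}, \ref{item-cone-limit-times} for the reversed data (the definitions of maximal forward cone times and maximal pre-burger times in Definition~\ref{def-maximal-forward} were set up precisely so that this matching is exact).

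The one genuinely new point is that the conclusion of Theorem~\ref{thm-cone-limit} already fixes a coupling, and we now need a coupling that is good for \emph{both} $Z$ read forward and $Z$ read backward at the same time. I would handle this exactly as in the proof of Theorem~\ref{thm-cone-limit} itself: use \cite[Theorem 2.5]{shef-burger} and the Skorokhod representation theorem to couple countably many copies of $X$ with $Z$ so that $Z^n \rta Z$ uniformly on compacts a.s.; then observe that by Lemma~\ref{prop-F-conv} (applied to the word read forward) and by the time-reversed version of Lemma~\ref{prop-F-conv} (applied to the word read backward, which is the same statement since the word is reversible), the finite-dimensional marginals of
\[
\{Z^n\} \cup \left\{ \tau_n^{a,r} , \ol\tau_n^{a,r} \,:\, (a,r) \in \mcl Q \times (\mcl Q \cap (0,\infty)) \right\}
\]
converge to those of the corresponding limiting family. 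Re-coupling once more via Skorokhod gives a.s.\ convergence of $Z^n \rta Z$ and of all the stopping-time functionals $\tau_n^{a,r} \rta \tau^{a,r}$ and $\ol\tau_n^{a,r} \rta \ol\tau^{a,r}$, and then conditions~\ref{item-cone-limit-maximal}, \ref{item-cone-limit-times} (forward) and (6), (8) (backward) follow from these by the deterministic arguments in the proof of Theorem~\ref{thm-cone-limit} --- in particular Lemma~\ref{prop-cone-conv} is applied once to $Z$ and once to $Z(-\cdot)$, both of which a.s.\ satisfy its hypotheses.

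The main obstacle is bookkeeping rather than mathematics: one must carefully track the constant-order discrepancies between $\ol\phi$ and $\phi \circ (\text{reversal})$ (the second bullet after Definition~\ref{def-ancestor} shows $\ol\phi(i) \notin \{\phi(i),\phi(i+1)\}$ in general), and confirm that these $O(1)$ errors wash out under the $n^{-1}$ scaling in exactly the same way the analogous errors do in the passage from Notation~\ref{def-match-function} to Definition~\ref{def-cone-time} used throughout Section~\ref{sec-cone-proof}. A secondary point requiring a line of justification is that one needs the \emph{joint} convergence of the forward and backward stopping-time families together with $Z^n$, which is why the two applications of Lemma~\ref{prop-F-conv} must be combined before the final Skorokhod step rather than after; but since each is a convergence-in-probability statement with respect to the same coupling, assembling the joint finite-dimensional marginals is routine. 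With these checks in place the theorem follows, and I would simply write ``the proof is identical to that of Theorem~\ref{thm-cone-limit} after replacing $Z$ by $Z(-\cdot)$, $\tb F$-times by pre-burger times, and $\phi$ by $\ol\phi$, using the reversibility of the law of $X$.''
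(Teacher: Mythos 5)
Your overall assembly strategy (combine the forward and backward stopping-time convergences into a single family of finite-dimensional marginals, apply Skorokhod once, then run the deterministic arguments with Lemma~\ref{prop-cone-conv} applied to both $Z$ and $Z(-\cdot)$) is exactly what the paper does. But the justification you give for the backward half rests on a false symmetry claim, and this is a genuine gap. The map $\check X_i := X_{-i}$ with burgers and orders swapped is \emph{not} a measure-preserving bijection of the law~\eqref{eqn-theta-prob}: burgers each have probability $1/4$ while the corresponding orders have probability $(1-p)/4$, and the flexible order $\tb F$ (probability $p/2$) has no burger counterpart at all. The model is genuinely asymmetric between its two time directions precisely because of the $\tb F$ symbol, and the combinatorial object that plays the role of an $\tb F$-excursion under time reversal --- the pre-burger interval $[i,\ol\phi(i)]_{\BB Z}$ of Definition~\ref{def-ancestor} --- has different structure (e.g.\ nested pre-burger intervals can share an endpoint $\ol\phi$, which never happens for nested $\tb F$-excursions, and $\ol\phi(i)\notin\{\phi(i),\phi(i+1)\}$ in general). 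So ``the time-reversed version of Lemma~\ref{prop-F-conv}\dots which is the same statement since the word is reversible'' cannot be invoked; there is no reversibility to invoke.

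What is actually needed to get the pre-burger analogue of Lemma~\ref{prop-F-conv} is the forward analogue of Proposition~\ref{prop-late-F}, i.e.\ the existence with high probability of macroscopic pre-burger excursions. The paper obtains this from Lemma~\ref{prop-I-reg-var} (regular variation of the first time $X(1,i)$ contains an order) together with the Dynkin--Lamperti theorem; and Lemma~\ref{prop-I-reg-var} in turn requires Theorem~\ref{thm-no-order-conv}, hence Proposition~\ref{prop-ancestor-reg} and the re-run of the Section~\ref{sec-F-reg} machinery for the word read in the opposite direction (Lemma~\ref{prop-ancestor-symmetry} etc.). None of this can be bypassed by symmetry; it is independent probabilistic input, and it is the reason Appendix~\ref{sec-no-order} exists. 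Once you replace your symmetry step with a citation of that input, the rest of your argument --- including the correct observation that the two convergence-in-probability statements must be combined \emph{before} the final Skorokhod re-coupling --- goes through as in the paper.
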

\begin{proof}
From Lemma~\ref{prop-I-reg-var}, the Dynkin-Lamperti theorem, and the same argument used in the proof of Lemma~\ref{prop-F-conv}, one obtains an analogue of the latter lemma with the times $\ol\tau^{a,r}$ and $\ol\iota_n^{a,r}$ in place of the times $\tau^{a,r}$ and $\iota_n^{a,r}$. From this, Lemma~\ref{prop-cone-conv}, Lemma~\ref{prop-F-conv}, and the Skorokhod theorem, we infer that we can find a coupling of the sequence $(X^n)$ with the path $Z$ such that conditions~\ref{item-cone-limit-Z} and~\ref{item-cone-limit-stopping} of Theorem~\ref{thm-cone-limit} and condition~\ref{item-cone-limit-stopping''} of the present theorem hold simultaneously a.s. The rest of the theorem now follows from exactly the same argument given in the proof of Theorem~\ref{thm-cone-limit}.
\end{proof}

\begin{remark}
One can also obtain versions of Corollary~\ref{prop-cone-limit-no-burger} in the setting of this appendix, i.e.\ the natural analogues of Theorem~\ref{thm-cone-limit-forward} hold when we condition on the event that $X(-n,-1)$ contains no burgers (resp. $X(1,n)$ contains no orders) and consider only negative (resp. positive) time.  
\end{remark}

\section{Index of symbols}
\label{sec-index}

Here we record some commonly used symbols in the paper, along with the location where they are first defined. 

\begin{multicols}{2}
\begin{itemize}
\item $\mcl R(\cdot)$; Section~\ref{sec-burger-prelim}.
\item $|x|$; Section~\ref{sec-burger-prelim}.
\item $p$; Section~\ref{sec-burger-prelim}. 
\item $X$; Section~\ref{sec-burger-prelim}.
\item $X(a,b)$;~\eqref{eqn-X(a,b)}.
\item $\phi$; Notation~\ref{def-match-function}. 
\item $\phi_*$; Notation~\ref{def-match-function}. 
\item $\mcl N_{\tb F}(x)$, etc.; Definition~\ref{def-theta-count}. 
\item $D = (d,d^*)$; Definition~\ref{def-theta-count}.
\item $Z^n = (U^n ,V^n)$;~\eqref{eqn-Z^n-def}.
\item $Z = (U,V)$;~\eqref{eqn-bm-cov}. 
\item $v_Z(t)$ and $u_Z(t)$; Definition~\ref{def-cone-time}.
\item $\ell_j^n$; Section~\ref{sec-loop-limit}.
\item $M_j^{n,\infty}$; Section~\ref{sec-loop-limit}.
\item $M_j^{n,\op{in}}$; Section~\ref{sec-loop-limit}.
\item $\sigma_j$; Section~\ref{sec-loop-limit}.
\item $T_j$; Section~\ref{sec-loop-limit}.
\item $\Sigma_j$; Section~\ref{sec-loop-limit}.
\item $o_x^\infty(x)$; Notation~\ref{def-o-notation}. 
\item $(Q,\BB e_0)$; Section~\ref{sec-burger-bijection}. 
\item $\lambda$; Section~\ref{sec-burger-bijection}.
\item $\mcl I = \mcl I^L \cup \mcl I^R$; Section~\ref{sec-detecting-inner-discrete}.
\item $M_j^0$; Section~\ref{sec-fk-loops}.
\item $\iota_j$; Section~\ref{sec-fk-loops}. 
\item $\wt\theta_j$ and $\theta_j$; Section~\ref{sec-fk-loops}.
\item $I_j$; Section~\ref{sec-fk-loops}.
\item $\Theta_j$; Section~\ref{sec-fk-loops}.
\item $\mu$;~\eqref{eqn-cone-exponent}.
\item $\mu'$;~\eqref{eqn-cone-exponent}.
\item $J$; Section~\ref{sec-F-reg-setup}.
\item $E_n(\ep)$; Section~\ref{sec-F-reg-setup}.
\item $F_n$; Section~\ref{sec-F-reg-setup}.
\item $a_n(\ep)$; Section~\ref{sec-F-reg-setup}.  
\item $Z^n_{[t_1,t_2]}$ and $Z_{[t_1,t_2]}$; Definition~\ref{def-Z-restrict}. 
\end{itemize}
\end{multicols}

\bibliography{cibiblong,cibib} 

\def\cprime{$'$}
\begin{thebibliography}{GKMW16}

\bibitem[Ber07]{bernardi-maps}
O.~Bernardi.
\newblock Bijective counting of tree-rooted maps and shuffles of parenthesis
  systems.
\newblock {\em Electron. J. Combin.}, 14(1):Research Paper 9, 36 pp.
  (electronic), 2007. \MR{2285813 (2007m:05125)}

\bibitem[Ber08]{bernardi-sandpile}
O.~Bernardi.
\newblock Tutte polynomial, subgraphs, orientations and sandpile model: new
  connections via embeddings.
\newblock {\em Electron. J. Combin.}, 15(1):Research Paper 109, 53, 2008,
  \arxiv{math/0612003}. \MR{2438581 (2009f:05110)}

\bibitem[BJM14]{bjm-uniform}
J.~Bettinelli, E.~Jacob, and G.~Miermont.
\newblock The scaling limit of uniform random plane maps, {\it via} the
  {A}mbj\o rn-{B}udd bijection.
\newblock {\em Electron. J. Probab.}, 19:no. 74, 16, 2014, 1312.5842.
  \MR{3256874}

\bibitem[BKW76]{baxter-potts}
R.~J. Baxter, S.~B. Kelland, , and F.~Y. Wu.
\newblock {Equivalence of the Potts model or Whitney polynomial with an
  ice-type model}.
\newblock {\em Journal of Physics A: Mathematical and Theoretical}, 9(3):397,
  1976.

\bibitem[BLR15]{blr-exponents}
N.~{Berestycki}, B.~{Laslier}, and G.~{Ray}.
\newblock {Critical exponents on Fortuin--Kasteleyn weighted planar maps}.
\newblock {\em ArXiv e-prints}, February 2015, \arxiv{1502.00450}.

\bibitem[BS01]{benjamini-schramm-topology}
I.~Benjamini and O.~Schramm.
\newblock Recurrence of distributional limits of finite planar graphs.
\newblock {\em Electron. J. Probab.}, 6:no. 23, 13 pp. (electronic), 2001,
  \arxiv{0011019}. \MR{1873300 (2002m:82025)}

\bibitem[{Che}15]{chen-fk}
L.~{Chen}.
\newblock {Basic properties of the infinite critical-FK random map}.
\newblock {\em ArXiv e-prints}, February 2015, \arxiv{1502.01013}.

\bibitem[CL14]{curien-legall-plane}
N.~Curien and J.-F. {Le Gall}.
\newblock The {B}rownian plane.
\newblock {\em J. Theoret. Probab.}, 27(4):1249--1291, 2014, \arxiv{1204.5921}.
  \MR{3278940}

\bibitem[DMS14]{wedges}
B.~{Duplantier}, J.~{Miller}, and S.~{Sheffield}.
\newblock {Liouville quantum gravity as a mating of trees}.
\newblock {\em ArXiv e-prints}, September 2014, \arxiv{1409.7055}.

\bibitem[DS11]{shef-kpz}
B.~Duplantier and S.~Sheffield.
\newblock Liouville quantum gravity and {KPZ}.
\newblock {\em Invent. Math.}, 185(2):333--393, 2011, \arxiv{1206.0212}.
  \MR{2819163 (2012f:81251)}

\bibitem[DW15]{dw-cones}
D.~Denisov and V.~Wachtel.
\newblock Random walks in cones.
\newblock {\em Ann. Probab.}, 43(3):992--1044, 2015, \arxiv{1110.1254}.
  \MR{3342657}

\bibitem[Dyn55]{dynkin-limits}
E.~B. Dynkin.
\newblock Some limit theorems for sums of independent random quantities with
  infinite mathematical expectations.
\newblock {\em Izv. Akad. Nauk SSSR. Ser. Mat.}, 19:247--266, 1955. \MR{0076214
  (17,865b)}

\bibitem[Eva85]{evans-cone}
S.~N. Evans.
\newblock On the {H}ausdorff dimension of {B}rownian cone points.
\newblock {\em Math. Proc. Cambridge Philos. Soc.}, 98(2):343--353, 1985.
  \MR{795899 (86j:60185)}

\bibitem[FK72]{fk-cluster}
C.~M. Fortuin and P.~W. Kasteleyn.
\newblock On the random-cluster model. {I}. {I}ntroduction and relation to
  other models.
\newblock {\em Physica}, 57:536--564, 1972. \MR{0359655 (50 \#12107)}

\bibitem[Gar11]{garbit-cone-walk}
R.~Garbit.
\newblock A central limit theorem for two-dimensional random walks in a cone.
\newblock {\em Bull. Soc. Math. France}, 139(2):271--286, 2011,
  \arxiv{0911.4774}. \MR{2828570 (2012e:60095)}

\bibitem[GKMW16]{gkmw-burger}
E.~{Gwynne}, A.~{Kassel}, J.~{Miller}, and D.~B. {Wilson}.
\newblock {Active spanning trees with bending energy on planar maps and
  SLE-decorated Liouville quantum gravity for $\kappa \geq 8$}.
\newblock {\em ArXiv e-prints}, March 2016, \arxiv{1603.09722}.

\bibitem[GM16a]{gwynne-miller-inversion}
E.~Gwynne and J.~Miller.
\newblock Conformal invariance of whole-plane {$\op{CLE}_\kappa$} for {$\kappa
  \in (4,8)$}.
\newblock In preparation, 2016.

\bibitem[GM16b]{gwynne-miller-cle}
E.~Gwynne and J.~Miller.
\newblock Convergence of the topology of critical {Fortuin-Kasteleyn} planar
  maps to that of {CLE$_\kappa$} on a {Liouville} quantum surface.
\newblock In preparation, 2016.

\bibitem[Gri06]{grimmett-fk}
G.~Grimmett.
\newblock {\em The random-cluster model}, volume 333 of {\em Grundlehren der
  Mathematischen Wissenschaften [Fundamental Principles of Mathematical
  Sciences]}.
\newblock Springer-Verlag, Berlin, 2006. \MR{2243761 (2007m:60295)}

\bibitem[GS15a]{gms-burger-local}
E.~{Gwynne} and X.~{Sun}.
\newblock {Scaling limits for the critical Fortuin-Kasteleyn model on a random
  planar map II: local estimates and empty reduced word exponent}.
\newblock {\em ArXiv e-prints}, May 2015, \arxiv{1505.03375}.

\bibitem[GS15b]{gms-burger-finite}
E.~{Gwynne} and X.~{Sun}.
\newblock {Scaling limits for the critical Fortuin-Kasteleyn model on a random
  planar map III: finite volume case}.
\newblock {\em ArXiv e-prints}, October 2015, \arxiv{1510.06346}.

\bibitem[KN04]{kager-nienhuis-guide}
W.~Kager and B.~Nienhuis.
\newblock {A guide to stochastic L\"owner evolution and its applications}.
\newblock {\em Journal of Statistical Physics}, 115(5-6):1149--1229, 2004,
  \arxiv{math-ph/0312056}.

\bibitem[KW14]{werner-sphere-cle}
A.~{Kemppainen} and W.~{Werner}.
\newblock {The nested simple conformal loop ensembles in the Riemann sphere}.
\newblock {\em ArXiv e-prints}, February 2014, \arxiv{1402.2433}.

\bibitem[Lam62]{lamperti-limits}
J.~Lamperti.
\newblock An invariance principle in renewal theory.
\newblock {\em Ann. Math. Statist.}, 33:685--696, 1962. \MR{0137176 (25 \#632)}

\bibitem[{Le }92]{legall-bm-notes}
J.-F. {Le Gall}.
\newblock Some properties of planar {B}rownian motion.
\newblock In {\em \'{E}cole d'\'{E}t\'e de {P}robabilit\'es de {S}aint-{F}lour
  {XX}---1990}, volume 1527 of {\em Lecture Notes in Math.}, pages 111--235.
  Springer, Berlin, 1992. \MR{1229519 (94g:60156)}

\bibitem[{Le }13]{legall-uniqueness}
J.-F. {Le Gall}.
\newblock Uniqueness and universality of the {B}rownian map.
\newblock {\em Ann. Probab.}, 41(4):2880--2960, 2013, \arxiv{1105.4842}.
  \MR{3112934}

\bibitem[{Le }14]{legall-sphere-survey}
J.-F. {Le Gall}.
\newblock {Random geometry on the sphere}.
\newblock {\em ArXiv e-prints}, March 2014, \arxiv{1403.7943}.

\bibitem[Mie09]{miermont-survey}
G.~Miermont.
\newblock Random maps and their scaling limits.
\newblock In {\em Fractal geometry and stochastics {IV}}, volume~61 of {\em
  Progr. Probab.}, pages 197--224. Birkh\"auser Verlag, Basel, 2009.
  \MR{2762678 (2012a:60017)}

\bibitem[Mie13]{miermont-brownian-map}
G.~Miermont.
\newblock The {B}rownian map is the scaling limit of uniform random plane
  quadrangulations.
\newblock {\em Acta Math.}, 210(2):319--401, 2013, \arxiv{1104.1606}.
  \MR{3070569}

\bibitem[MP10]{peres-bm}
P.~M{\"o}rters and Y.~Peres.
\newblock {\em Brownian motion}.
\newblock Cambridge Series in Statistical and Probabilistic Mathematics.
  Cambridge University Press, Cambridge, 2010.
\newblock With an appendix by Oded Schramm and Wendelin Werner. \MR{2604525
  (2011i:60152)}

\bibitem[MS13]{ig4}
J.~{Miller} and S.~{Sheffield}.
\newblock {Imaginary geometry IV: interior rays, whole-plane reversibility, and
  space-filling trees}.
\newblock {\em ArXiv e-prints}, February 2013, \arxiv{1302.4738}.

\bibitem[MS15a]{tbm-characterization}
J.~{Miller} and S.~{Sheffield}.
\newblock {An axiomatic characterization of the Brownian map}.
\newblock {\em ArXiv e-prints}, June 2015, \arxiv{1506.03806}.

\bibitem[MS15b]{lqg-tbm1}
J.~{Miller} and S.~{Sheffield}.
\newblock {Liouville quantum gravity and the Brownian map I: The QLE(8/3,0)
  metric}.
\newblock {\em ArXiv e-prints}, July 2015, \arxiv{1507.00719}.

\bibitem[MS15c]{sphere-constructions}
J.~{Miller} and S.~{Sheffield}.
\newblock {Liouville quantum gravity spheres as matings of finite-diameter
  trees}.
\newblock {\em ArXiv e-prints}, June 2015, \arxiv{1506.03804}.

\bibitem[MS16a]{ig3}
J.~{Miller} and S.~{Sheffield}.
\newblock {Imaginary geometry III: reversibility of SLE$_\kappa$ for $\kappa
  \in (4,8)$}.
\newblock {\em Annals of Mathematics}, 184(2):455--486, 2016,
  \arxiv{1201.1498}.

\bibitem[MS16b]{lqg-tbm2}
J.~{Miller} and S.~{Sheffield}.
\newblock {Liouville quantum gravity and the Brownian map II: geodesics and
  continuity of the embedding}.
\newblock {\em ArXiv e-prints}, May 2016, \arxiv{1605.03563}.

\bibitem[MS16c]{lqg-tbm3}
J.~{Miller} and S.~{Sheffield}.
\newblock {Liouville quantum gravity and the Brownian map III: the conformal
  structure is determined}.
\newblock In preparation, 2016.

\bibitem[MS16d]{qle}
J.~{Miller} and S.~{Sheffield}.
\newblock {Quantum Loewner Evolution}.
\newblock {\em Duke Mathematics Journal}, to appear, 2016, \arxiv{1312.5745}.

\bibitem[MS16e]{ig1}
J.~Miller and S.~Sheffield.
\newblock Imaginary geometry {I}: interacting {SLE}s.
\newblock {\em Probab. Theory Related Fields}, 164(3-4):553--705, 2016,
  \arxiv{1201.1496}. \MR{3477777}

\bibitem[MS16f]{ig2}
J.~Miller and S.~Sheffield.
\newblock Imaginary geometry {II}: {R}eversibility of {$\operatorname{SLE}\sb
  \kappa(\rho\sb 1;\rho\sb 2)$} for {$\kappa\in(0,4)$}.
\newblock {\em Ann. Probab.}, 44(3):1647--1722, 2016, \arxiv{1201.1497}.
  \MR{3502592}

\bibitem[Mul67]{mullin-maps}
R.~C. Mullin.
\newblock On the enumeration of tree-rooted maps.
\newblock {\em Canad. J. Math.}, 19:174--183, 1967. \MR{0205882 (34 \#5708)}

\bibitem[RS05]{schramm-sle}
S.~Rohde and O.~Schramm.
\newblock Basic properties of {SLE}.
\newblock {\em Ann. of Math. (2)}, 161(2):883--924, 2005, \arxiv{math/0106036}.
  \MR{2153402 (2006f:60093)}

\bibitem[Sch97]{schaeffer-bijection}
G.~Schaeffer.
\newblock Bijective census and random generation of {E}ulerian planar maps with
  prescribed vertex degrees.
\newblock {\em Electron. J. Combin.}, 4(1):Research Paper 20, 14 pp.\
  (electronic), 1997. \MR{1465581 (98g:05074)}

\bibitem[Sch00]{schramm0}
O.~Schramm.
\newblock Scaling limits of loop-erased random walks and uniform spanning
  trees.
\newblock {\em Israel J. Math.}, 118:221--288, 2000, \arxiv{math/9904022}.
  \MR{1776084 (2001m:60227)}

\bibitem[She09]{shef-cle}
S.~Sheffield.
\newblock Exploration trees and conformal loop ensembles.
\newblock {\em Duke Math. J.}, 147(1):79--129, 2009, \arxiv{math/0609167}.
  \MR{2494457 (2010g:60184)}

\bibitem[{She}16a]{shef-zipper}
S.~{Sheffield}.
\newblock {Conformal weldings of random surfaces: SLE and the quantum gravity
  zipper}.
\newblock {\em Annals of Probability}, to appear, 2016, \arxiv{1012.4797}.

\bibitem[{She}16b]{shef-burger}
S.~{Sheffield}.
\newblock {Quantum gravity and inventory accumulation}.
\newblock {\em Annals of Probability}, to appear, 2016, \arxiv{1108.2241}.

\bibitem[Shi85]{shimura-cone}
M.~Shimura.
\newblock Excursions in a cone for two-dimensional {B}rownian motion.
\newblock {\em J. Math. Kyoto Univ.}, 25(3):433--443, 1985. \MR{807490
  (87a:60095)}

\bibitem[Shi91]{shimura-cone-walk}
M.~Shimura.
\newblock A limit theorem for two-dimensional random walk conditioned to stay
  in a cone.
\newblock {\em Yokohama Math. J.}, 39(1):21--36, 1991. \MR{1137264 (93b:60153)}

\bibitem[SW15]{sun-wilson-unicycle}
X.~{Sun} and D.~B. {Wilson}.
\newblock {Sandpiles and unicycles on random planar maps}.
\newblock {\em ArXiv e-prints}, June 2015, \arxiv{1506.08881}.

\end{thebibliography}
\bibliographystyle{hmralphaabbrv}

\end{document}